\newcommand{\GL}{\mathrm{GL}}
\newcommand{\SO}{\mathrm{SO}}
\newcommand{\Spin}{\mathrm{Spin}}
\newcommand{\U}{\mathrm{U}}
\newcommand{\SU}{\mathrm{SU}}
\newcommand{\Sp}{\mathrm{Sp}}
\newcommand{\Ker}{\mathrm{Ker}}
\newcommand{\Hom}{\mathrm{Hom}}
\newcommand{\End}{\mathrm{End}}
\newcommand{\Ad}{\mathrm{Ad}}
\newcommand{\Sym}{\mathrm{Sym}}
\newcommand{\Z}{\mathbb{Z}}
\newcommand{\R}{\mathbb{R}}
\newcommand{\C}{\mathbb{C}}
\newcommand{\Sph}{\mathbb{S}}
\newcommand{\CP}{\mathbb{CP}}
\newlength{\wdth}
\newcommand{\mres}{\mathbin{\vrule height 1.6ex depth 0pt width
0.13ex\vrule height 0.13ex depth 0pt width 1.3ex}}
\theoremstyle{plain}
\newtheorem{theorem}{Theorem}[section]
\theoremstyle{definition}
\newtheorem{definition}[theorem]{Definition}
\newtheorem{example}[theorem]{Example}
\theoremstyle{plain}
\newtheorem{corollary}[theorem]{Corollary}
\newtheorem{lemma}[theorem]{Lemma}
\newtheorem{proposition}[theorem]{Proposition}
\theoremstyle{remark}
\newtheorem*{acknowledgments}{Acknowledgments}
\newtheorem{remark}[theorem]{Remark}
\title{Cohomogeneity-One Lagrangian Mean Curvature Flow}
\author{Jesse Madnick, Albert Wood}
\begin{document}
\maketitle

\begin{abstract}
We study mean curvature flow of Lagrangians in $\C^n$ that are cohomogeneity-one with respect to a compact Lie group $G \leq \SU(n)$ acting linearly on $\C^n$.  Each such Lagrangian necessarily lies in a level set $\mu^{-1}(\xi)$ of the standard moment map $\mu \colon \C^n \to \mathfrak{g}^*$, and mean curvature flow preserves this containment. \\
\indent We classify all cohomogeneity-one self-similarly shrinking, expanding and translating solutions to the flow, as well as cohomogeneity-one smooth special Lagrangians lying in $\mu^{-1}(0)$. Restricting to the case of almost-calibrated flows in the zero level set $\mu^{-1}(0)$, we classify finite-time singularities, explicitly describing the Type I and Type II blowup models.  Finally, given any cohomogeneity-one special Lagrangian in $\mu^{-1}(0)$, we show it occurs as the Type II blowup model of a Lagrangian MCF singularity.

Throughout, we give explicit examples of suitable group actions, including a complete list in the case of $G$ simple. This yields infinitely many new examples of shrinking and expanding solitons for Lagrangian MCF, as well as infinitely many new singularity models.
\end{abstract}

\tableofcontents

\section{Introduction}

\indent \indent The discovery that Lagrangian submanifolds of Calabi-Yau manifolds are preserved by mean curvature flow \cite{Smoczyk1996}, a phenomenon referred to as \emph{Lagrangian mean curvature flow}, has inspired ambitious conjectures in geometry and theoretical physics. Most notably, the Thomas-Yau conjecture (proposed by Thomas and Yau \cite{Thomas2002a} and refined by Joyce \cite{Joyce2015}) states that, under a stability assumption, an almost-calibrated Lagrangian submanifold deformed by mean curvature flow should converge to the unique special Lagrangian in its Hamiltonian isotopy class. Recently, Lagrangian MCF has been utilised to find special Lagrangian fibrations in log Calabi-Yau manifolds \cite{Collins2019}, confirming a particular case of the SYZ conjecture in Mirror Symmetry. Lagrangian MCF is also of interest in geometric analysis as a particular case of high-codimension mean curvature flow, a challenging subject which is presently far less well understood than the hypersurface case.
 
Lagrangian mean curvature flow typically forms finite-time singularities. Therefore, to resolve the Thomas-Yau conjecture, a surgery procedure for continuing the flow past a singularity must be developed. Surgery for mean curvature flow has been defined in several special cases, e.g., for two-convex hypersurfaces by Huisken-Sinestrari \cite{Huisken2009} and for quadratically pinched manifolds in high-codimension by Nguyen \cite{Nguyen2020}.  In those works, defining the surgery procedure hinges on a complete understanding of the nature of finite-time singularities. To that end, the precise geometry of singularities may be analysed using Type I and Type II blowup procedures, which are limits of rescaled flows at the singular space-time point (see \S\ref{sec-2.3} for definitions). Type I blowups are self-similarly shrinking solutions, and in all known cases, Type II blowups are static or translating soliton solutions. To complete the surgery procedure, one must glue in suitable model manifolds. Self-similarly expanding soliton solutions are ideal candidates for this gluing, see for example \cite{Begley2017}. Therefore, to define a suitable surgery procedure for Lagrangian mean curvature flow, we must classify the possible Type I and II blowups of finite-time singularities, and classify the soliton solutions.

In this work, we study mean curvature flow of Lagrangian submanifolds $L\subset \mathbb{C}^n$ that are invariant under the Hamiltonian action of a compact subgroup $G\leq \SU(n)$. 
The crucial advantage of working with this sub-class is that a $G$-invariant Lagrangian $L$ must lie in a single level set of the \textit{moment map} of the action $\mu:\mathbb{C}^n \to \mathfrak{g}^*$ at a central value $\xi \in \mathfrak{g}^*$, i.e.\ $L \subset \mu^{-1}(\xi)$ (see \S\ref{sec-3} for details). Moreover, this containment is preserved under mean curvature flow. This containment effectively reduces the codimension of $L$, mitigating the key difficulty of working with high-codimension submanifolds. 
	
To maximise this advantage, we study \textit{cohomogeneity-one} Lagrangians. By this, we mean that there exists a conjugacy class $(H)$ of subgroups of $G$ such that for all $z\in L$, the orbit $\mathcal{O}_z \cong G/{G_z}$ has dimension $n-1$, and the isotropy subgroup $G_z$ belongs to $(H)$. It turns out that a cohomogeneity-one Lagrangian is a \textit{hypersurface} within the $(n+1)$-dimensional coisotropic smooth manifold $M_\xi := \mu^{-1}(\xi)\cap \mathbb{C}^n_{(H)}$, where $\mathbb{C}^n_{(H)}$ is the subset of points with isotropy subgroup conjugate to $H$. For example, $\SO(n)$-invariant Lagrangians $L \subset \mathbb{C}^n \setminus 0$ are necessarily cohomogeneity-one, with isotropy type $\SO(n-1)$. Taking the quotient of $M_\xi$ by the $G$-action, we obtain a bijection between $G$-invariant Lagrangians in $M_\xi$ and curves in the 2-dimensional \emph{K\"{a}hler quotient} $Q := M_\xi / G$ (Proposition \ref{prop-bijection1}). In short, cohomogeneity-one Lagrangian mean curvature flow in $\mathbb{C}^n$ corresponds to a modified curve shortening flow in $Q$.
	
We will focus primarily on the case $\xi = 0$, i.e.\ we assume our Lagrangian submanifolds satisfy $L \subset \mu^{-1}(0)$.  This condition occurs naturally in several important settings:
	\begin{itemize}
    \item An almost-calibrated, cohomogeneity-one Lagrangian is exact if and only if it lies in $\mu^{-1}(0)$ (Proposition \ref{prop-exact}). 
    \item All cohomogeneity-one self-similarly shrinking solitons, expanding solitons, and special Lagrangian cones lie in $\mu^{-1}(0)$.
    \item If $G$ is compact and semisimple (e.g.\ $G = \SO(n)$\,), then $0 \in \mathfrak{g}^*$ is the only central value, so \textit{all} $G$-invariant Lagrangian submanifolds lie in $\mu^{-1}(0)$.
\end{itemize}
	
A key feature of $\mu^{-1}(0)$ is its invariance under the multiplicative action of $\mathbb{C}^*$, i.e.\ if $z \in \mu^{-1}(0)$, then the complex line $P_z := \mathbb{C}\cdot z$ is contained in $\mu^{-1}(0)$.  Given a cohomogeneity-one Lagrangian $L \subset \mu^{-1}(0)$, the intersection $l := L \cap P_z$ is a smooth curve, which we refer to as the \textit{profile curve of $L$}.  This yields an alternative bijection, one between cohomogeneity-one Lagrangians in $\mu^{-1}(0)$ and smooth curves in $P_z$ (Proposition \ref{prop-profilebijection}).

\begin{figure}
    \centering
    \includegraphics[scale=0.38]{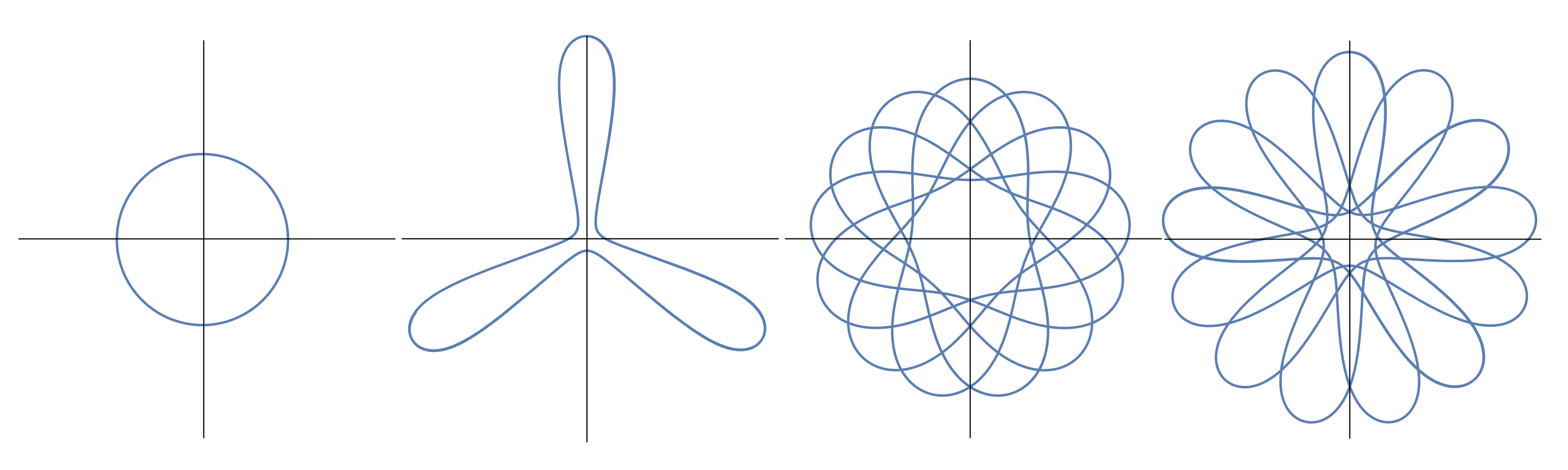}
    \caption{Four examples of self-similarly shrinking solutions to (\ref{eq-introflowequation}) in the case $n=2$, corresponding to $\SO(2)$-invariant shrinking solitons to LMCF in $\mathbb{C}^2$. The first corresponds to the Clifford torus in $\mathbb{C}^2$, and the others are examples of Anciaux with $(p,q)$ equal to $(1,3)$, $(6,13)$ and $(5,13)$ respectively. The first two comprise the only embedded examples of self-similarly shrinking solutions when $n=2$.}
    \label{fig-introshrinkers}
\end{figure}

A surprising observation is that the mean curvature of a cohomogeneity-one Lagrangian $L \subset \mu^{-1}(0)$ may be expressed solely in terms of the profile curve $l \subset P_z$, independently of the group $G$ and isotropy type $(H)$ (Proposition \ref{prop-curvature}). This provides a correspondence between cohomogeneity-one Lagrangian mean curvature flows, and solutions to the following flow of immersed curves in $P_z$:
	\begin{align}
		\frac{\partial f_t}{\partial t}^\perp & = \vec{k} - (n-1)\frac{f_t^\perp}{|f_t|^2}.\label{eq-introflowequation}
	\end{align}
Equation (\ref{eq-introflowequation}) has been well-studied in the context of the $\SO(n)$-action on $\mathbb{C}^n$.  There are two distinct static solutions, corresponding to the $\SO(n)$-invariant \textit{Lawlor neck} (first documented by Harvey and Lawson \cite{Harvey1982}) and the flat special Lagrangian plane. Furthermore, Anciaux, Castro, and Romon \cite{Anciaux2006,AnciauxCastroRomon2006,AnciauxRomon2009} classify all connected shrinking and expanding solutions to (\ref{eq-introflowequation}). We denote these $l^{(p,q)}$ and $l^\alpha$ respectively, where $(p,q)$ ranges over coprime pairs of integers satisfying $\frac{p}{q} \in (\frac{1}{2n}, \frac{1}{\sqrt{2n}})$  ($p$ is the winding number and $q$ the number of maxima of curvature) and $\alpha \in (0, \frac{\pi}{n})$ represents the angle between the asymptotes. This yields a classification of $\SO(n)$-invariant shrinking/expanding solitons of Lagrangian mean curvature flow. Prior work on $\SO(n)$-equivariant Lagrangian MCF also includes singularity analysis by Neves \cite{Neves2007}, Savas-Halilaj and Smoczyk \cite{Savas-Halilaj2018}, Viana \cite{Viana2021}, the second author \cite{Wood2019} and Evans \cite{Evans2022}, as well as long-time existence and convergence results by Evans, Lambert and the second author \cite{Wood2019a} and Su \cite{Su2019}; for a survey, see \cite{Lotay2020b}.  Self-similar solutions to Lagrangian mean curvature flow have also been studied by Lee-Wang \cite{Lee2007, Lee2008}, Joyce-Lee-Tsui \cite{Joyce2010}, Castro-Lerma \cite{Castro2010} and Su \cite{Su2019a, Su2020}.

Symmetry methods have been utilised in the study of Lagrangian mean curvature flow and special Lagrangians in other ways. For example, Lagrangian MCF of orbits (i.e. the  cohomogeneity-zero case) was investigated by Pacini \cite{Pacini2002}, and a construction of $G$-invariant special Lagrangians due to Joyce \cite{Joyce2001a} has been generalised to Lagrangian MCF by Konno \cite{Konno2017} and Ochiai \cite{Ochiai2021}. 
	 
\subsection*{Overview of Results}

\indent \indent In Section $\ref{sec-5}$, we classify self-similarly shrinking, expanding, and translating solutions to Lagrangian mean curvature flow (henceforth LMCF), as well as special Lagrangians, in the setting of cohomogeneity-one Lagrangians in $\mu^{-1}(0)$. In particular, this generalises the work of Anciaux, Castro and Romon.
	 \begin{theorem}[Solitons for Cohomogeneity-one LMCF]\label{thm-introsolitons}
	 	Let $G \leq \SU(n)$ be a compact connected Lie group. Define the immersed curves:
	 	\begin{align*}
	 	    \widetilde c_{k,\overline\theta}: (0,\infty)\rightarrow \mathbb{C}, \quad \widetilde c_{k,\overline\theta}(r) &:= re^{i\left(\tfrac{\overline\theta}{n} + \tfrac{k\pi}{n}\right)},\\
	 	    \widetilde l_{B,k,\overline{\theta}}: \left[-\tfrac{\pi}{2n},\tfrac{\pi}{2n}\right]\rightarrow \mathbb{C},\quad
    \widetilde l_{B,k,\overline{\theta}}(\alpha) &:= \frac{B}{\sqrt[n]{\cos\left( n\alpha\right)}}e^{i\left(\alpha+\tfrac{\overline\theta}{n} - \tfrac{\pi}{2n} + \tfrac{k\pi}{n}\right)},
	 	\end{align*}
	 	and let $l^{(p,q)}$, $l^\alpha$ be the self similarly shrinking/expanding solutions to (\ref{eq-introflowequation}) of Anciaux \cite{Anciaux2006}. Let $L \subset \mathbb{C}^n$ be a connected immersed cohomogeneity-one $G$-invariant Lagrangian submanifold, $z \in L$, and $P_z := \mathbb{C} \cdot z \subset \mathbb{C}^n$. Then:
	 	\begin{itemize}
	 		\item $L$ is a special Lagrangian cone if and only if it is the $G$-orbit of $\widetilde c_{k, \overline\theta} \subset P_z$ for unique $k \in \mathbb{Z}$, $\overline\theta \in \mathbb{R}$.
	 		\item $L$ is a complete special Lagrangian contained in $\mu^{-1}(0)$ if and only if it is the $G$-orbit of $\widetilde l_{B,k, \overline\theta} \subset P_z$ for some $B > 0$, $k \in \mathbb{Z}$, $\overline\theta \in \mathbb{R}$.
	 		\item $L$ is a complete shrinking soliton for LMCF if and only if it is the $G$-orbit of $e^{i\phi}\cdot l^{(p,q)}\subset P_z$ for some coprime $p,q \in \mathbb{Z}$ satisfying $\frac{p}{q} \in \left(\frac{1}{2n}, \frac{1}{\sqrt{2n}}\right)$ and $e^{i\phi} \in S^1$.
	 		\item $L$ is a complete expanding soliton for LMCF if and only if it is the $G$-orbit of $e^{i\phi}\cdot l^{\alpha}\subset P_z$ for some $\alpha \in (0,\frac{\pi}{n})$, $e^{i\phi} \in S^1$.
	 		\item $L$ is a complete translating soliton for LMCF if and only if $n=1$ and $L \subset \mathbb{C}$ is the grim reaper curve - the unique non-static translating solution to curve shortening flow in $\mathbb{C}$.
	 	\end{itemize}
	 \end{theorem}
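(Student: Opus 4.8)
The plan is to reduce the classification of cohomogeneity-one solitons in $\C^n$ to a problem about the profile curve $l \subset P_z \cong \C$, and then to invoke the curvature formula (Proposition \ref{prop-curvature}) which identifies the induced flow with \eqref{eq-introflowequation}. First I would observe that each of the five soliton/special-Lagrangian conditions on $L$ is a pointwise equation relating the mean curvature vector $\vec{H}$ of $L$ to the position vector: special Lagrangian means $\vec{H} = 0$, shrinker means $\vec{H} = -\tfrac{1}{2}x^\perp$, expander means $\vec{H} = +\tfrac{1}{2}x^\perp$, translator means $\vec{H} = V^\perp$ for a fixed vector $V$, and a cone is a special Lagrangian invariant under dilations. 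Since $L \subset \mu^{-1}(0)$ in all the soliton and cone cases (this containment being forced — shrinkers, expanders and SL cones all lie in the zero level set, as stated in the introduction, because the relevant quantities are homogeneous and the orbit integral of the moment map must vanish), Proposition \ref{prop-profilebijection} gives the bijection $L \leftrightarrow l = L \cap P_z$, and Proposition \ref{prop-curvature} says $\vec{H}$ is computed purely from $l$ via the right-hand side of \eqref{eq-introflowequation}. Hence the soliton equations for $L$ become ODEs for $l$ in the plane $P_z \cong \C$, which are exactly the soliton equations for the flow \eqref{eq-introflowequation}.

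Next I would solve each ODE explicitly. For the special Lagrangian cone case: a dilation-invariant solution of $\vec{k} = (n-1)\tfrac{f^\perp}{|f|^2}$ is a ray through the origin, and writing $f(r) = r e^{i\psi(r)}$ the cone condition forces $\psi$ constant; substituting into the equation (the curvature of a logarithmic-spiral-type curve) pins down the admissible arguments to be $\tfrac{\overline\theta}{n} + \tfrac{k\pi}{n}$ — the factor $\tfrac{1}{n}$ coming from the Lagrangian angle condition $\arg(z_1\cdots z_n)$ being constant along $l^n$. This yields $\widetilde c_{k,\overline\theta}$, with $k \in \Z$ and $\overline\theta \in \R$ parametrising the discrete choice of ray and the overall phase. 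For the complete special Lagrangian case: $\vec{H} = 0$ means the Lagrangian angle is constant, which integrates to the condition that $f(\alpha)^n$ traces a horizontal (or fixed-angle) line; parametrising by the argument $\alpha$ and integrating $|f|^n \cos(n\alpha) = \text{const}$ gives $|f| = B / \sqrt[n]{\cos(n\alpha)}$, i.e.\ $\widetilde l_{B,k,\overline\theta}$, defined on $(-\tfrac{\pi}{2n}, \tfrac{\pi}{2n})$ where $\cos(n\alpha) > 0$; completeness of the $G$-orbit follows since the curve runs off to infinity at both ends while the orbits stay compact. For the shrinker and expander cases, the ODE is precisely the equation studied by Anciaux–Castro–Romon, whose classification \cite{Anciaux2006, AnciauxCastroRomon2006, AnciauxRomon2009} gives exactly the curves $l^{(p,q)}$ (with $\tfrac{p}{q} \in (\tfrac{1}{2n}, \tfrac{1}{\sqrt{2n}})$) and $l^\alpha$ (with $\alpha \in (0, \tfrac{\pi}{n})$); the extra $S^1$-factor $e^{i\phi}$ appears because \eqref{eq-introflowequation} is rotationally symmetric, so any rotate of an Anciaux solution is again a solution, and one must check no new completeness phenomena arise. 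For the translator case: a non-static translating solution of \eqref{eq-introflowequation} would need $\vec{k} - (n-1)\tfrac{f^\perp}{|f|^2} = V^\perp$; when $n \geq 2$ the $|f|^{-2}$ term is genuinely present and an ODE/asymptotic analysis (e.g.\ examining behaviour near $f = 0$ or at infinity, or a first-integral argument) shows no complete solution exists, whereas for $n = 1$ the term vanishes and \eqref{eq-introflowequation} is ordinary curve-shortening flow in $\C$, whose only non-static translator is the grim reaper.

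The main obstacle I anticipate is the translator case for $n \geq 2$: ruling out complete translators requires a genuine analysis of the ODE $\vec{k} = V^\perp + (n-1)\tfrac{f^\perp}{|f|^2}$ rather than quoting an existing classification, and one must argue carefully either that every maximal solution fails to be complete (e.g.\ shrinks to the origin in finite arclength, where the singular $|f|^{-2}$ term causes a blow-up) or that completeness forces $V = 0$, reducing to the special Lagrangian case which then contradicts the non-static hypothesis. A secondary technical point is verifying that the passage from $L$ to $l$ preserves \emph{completeness} in both directions — a complete cohomogeneity-one $L$ need not have a complete profile curve in the naive sense, but rather one whose endpoints correspond to the orbit degenerating or escaping to infinity — so the statements "$L$ complete $\iff$ $l$ is (the appropriate) Anciaux curve" need the precise dictionary of Proposition \ref{prop-profilebijection}, including how the isotropy type $(H)$ and the boundary behaviour of $l$ interact. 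Once these two points are handled, assembling the five bullet points is a matter of matching the explicit curves against the parameter ranges.
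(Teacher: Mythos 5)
Your reduction of the first four bullets to the profile-curve ODE is essentially the paper's argument: Propositions \ref{prop-profilebijection} and \ref{prop-curvature} turn each soliton equation into an ODE in $P_z\cong\C$, the Lagrangian-angle formula of Lemma \ref{lem-laganglelemma} ($\theta\equiv\arg(\gamma')+(n-1)\arg(\gamma)$) pins down the rays $\widetilde c_{k,\overline\theta}$ and integrates to $r'/r=\tan(n\alpha)$ for $\widetilde l_{B,k,\overline\theta}$, and the shrinker/expander cases are exactly Anciaux--Castro--Romon via the bijection of Proposition \ref{prop-shrinkerbijection}. The justification that shrinkers, expanders and cones are forced into $\mu^{-1}(0)$ should be sharpened (the paper's argument: the flow sweeps out an $\R^+$-invariant set and $\mu(rz)=r^2\mu(z)$ forces $\xi=0$), but that is cosmetic.

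The genuine gap is the translator case, and it is twofold. First, a cohomogeneity-one translator is \emph{not} known to lie in $\mu^{-1}(0)$ --- unlike shrinkers, expanders and cones, a translating flow does not sweep out an $\R^+$-invariant set, and Proposition \ref{prop-flowlevelset} only says it stays in whatever level set $\mu^{-1}(\xi)$ it starts in. For $\xi\neq 0$ the profile plane $P_z$ is not available (Proposition \ref{prop-zerosymmetry} and the curvature formula of Proposition \ref{prop-curvature} are specific to the zero level set), so the ODE $\vec k-(n-1)f^\perp/|f|^2=V^\perp$ that you propose to analyse does not even describe the general case; e.g.\ for $G=T^{n-1}$ every $\xi$ is central and there are cohomogeneity-one Lagrangians in every level set. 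Second, even restricted to $\mu^{-1}(0)$, you have only announced an "ODE/asymptotic analysis" rather than supplied one, and you flag it yourself as the main obstacle. The paper sidesteps both problems with a short Lie-theoretic argument that never touches the profile curve: since the $G$-action preserves each $L_t=L_0+tV$, the group $G$ fixes $V$, hence preserves the complex line $E=\mathrm{span}_{\R}(V,JV)$ and embeds in $\SU(E^\perp)\cong\SU(n-1)$; each orbit $\mathcal{O}_z$ then sits in an affine copy of $\C^{n-1}$ as a compact \emph{homogeneous} Lagrangian with constant Lagrangian angle, i.e.\ a compact special Lagrangian in $\C^{n-1}$, which does not exist for $n>1$. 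You should either adopt an argument of this kind or (i) prove that cohomogeneity-one translators must lie in $\mu^{-1}(0)$ and (ii) actually carry out the nonexistence analysis for the resulting ODE; as written, the fifth bullet is not established.
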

We remark that Theorem \ref{thm-introsolitons} implicitly identifies $P_z$ and $\mathbb{C}$. When this identification is made carefully, the value of $\overline\theta$ in the first two results is the Lagrangian angle of the special Lagrangian submanifolds. See Theorems \ref{thm-splagcones}, \ref{thm-splags}, \ref{thm-shrinkers} and \ref{thm-translators} for the precise statements.

Theorem \ref{thm-introsolitons} completely classifies cohomogeneity-one shrinking, expanding and translating solitons, and cohomogeneity-one special Lagrangian cones. However, there exist cohomogeneity-one special Lagrangians in $\mathbb{C}^n$ that do not lie in the zero level set of the corresponding moment map, and are therefore not included in Theorem \ref{thm-introsolitons}. For example, there exists a foliation of $\mathbb{C}^3$ by $T^2$-invariant special Lagrangians, discovered by Harvey and Lawson \cite{Harvey1982}.
	
In Section \ref{sec-6}, we turn our attention to singularity analysis, and with a view to applications to the Thomas-Yau conjecture, we focus on flows of almost-calibrated Lagrangians. In \cite{Wood2019}, the author works with $\SO(n)$-invariant Lagrangians, demonstrating that every singularity must occur at the origin, and explicitly describing the Type I and Type II blowup models. We establish analogous results in the general cohomogeneity-one case. 


\begin{figure}[b]
    \centering
    \includegraphics[scale=0.5]{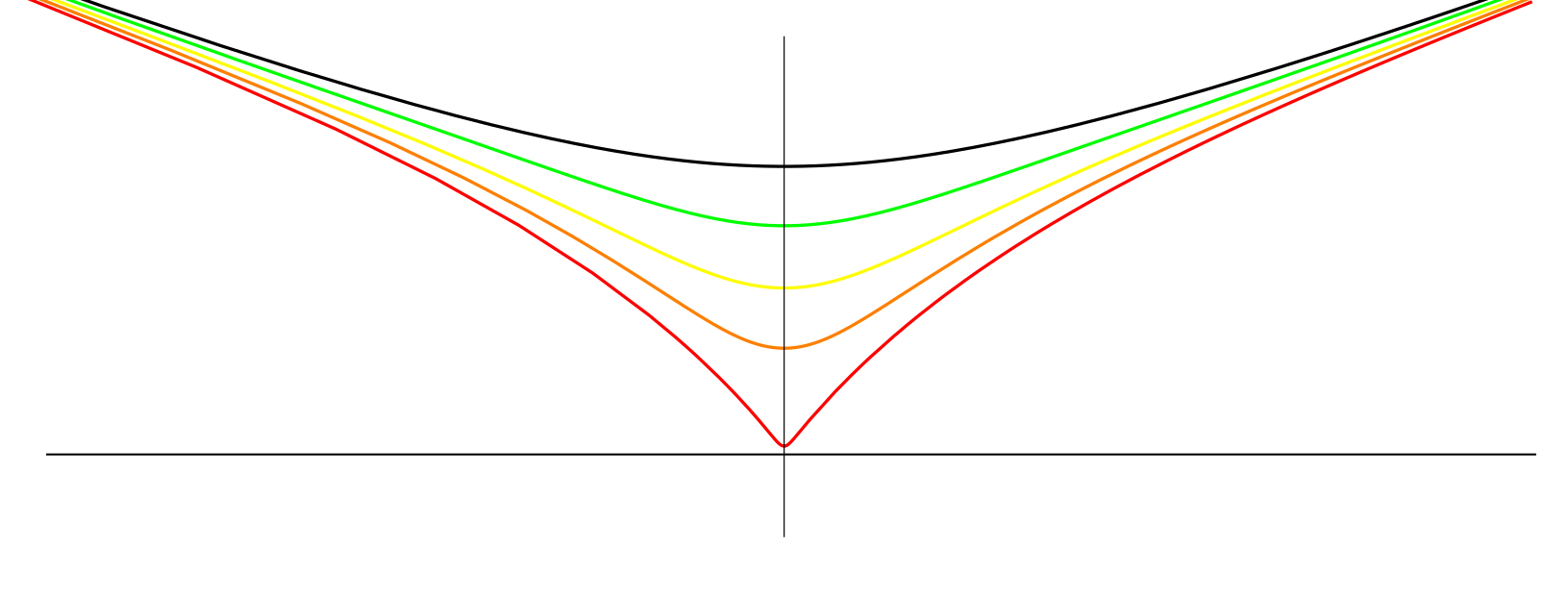}
    \caption{A solution to (\ref{eq-introflowequation}) forming a finite-time singularity at the origin, in the case $n=3$. The uppermost curve is the initial condition.}
    \label{fig-introsingularity}
\end{figure}

\begin{theorem}[Singularities of Cohomogeneity-one LMCF]\label{thm-introsingularities}
	Let $G \leq \SU(n)$ be a compact connected Lie group. Let $L_t \subset \mu^{-1}(0)$ be a connected cohomogeneity-one $G$-invariant almost-calibrated LMCF for $t \in (0,T)$, with a finite-time singularity at a space-time point $(X,T)$. 
	
	Then $X$ must be the origin. Moreover, there exist $\overline\theta \in \mathbb{R}$, $B>0$, $k \in\mathbb{Z}$ and a complex line $P_z \subset \mathbb{C}^n$ such that:
		\begin{itemize}
			\item Every Type I blowup at time $T$ is the special Lagrangian cone $L^\infty$ given by the $G$-orbit of the curve $\widetilde c_{k-1,\overline\theta}\cup \widetilde c_{k, \overline\theta} \subset P_z$.
			\item Every Type II blowup at time $T$ is the special Lagrangian $\widetilde{L}^\infty$ given by the $G$-orbit of the profile curve $\widetilde l_{B,k,\overline\theta} \subset P_z$. The asymptotic cone of $\widetilde L^\infty$ is the Type I blowup $L^\infty$.
		\end{itemize}
\end{theorem}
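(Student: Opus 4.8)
The plan is to reduce Theorem~\ref{thm-introsingularities} to a statement about the profile curve flow \eqref{eq-introflowequation}, exploiting the bijection between cohomogeneity-one Lagrangians in $\mu^{-1}(0)$ and curves in $P_z$ (Proposition~\ref{prop-profilebijection}) and the fact that the mean curvature depends only on the profile curve (Proposition~\ref{prop-curvature}). First I would establish that the singularity must occur at the origin: using the almost-calibrated hypothesis, the Lagrangian angle is monotone along the flow (a standard consequence of Smoczyk's work), and one applies White's regularity theorem together with the $\C^*$-invariance of $\mu^{-1}(0)$. Concretely, a Type~I blowup at a point $X \neq 0$ would be a cohomogeneity-one shrinking soliton that does not pass through the origin, but by the soliton classification (Theorem~\ref{thm-introsolitons}, third bullet) every such shrinker is the $G$-orbit of $e^{i\phi}l^{(p,q)}$, all of which pass through $0$; alternatively, and more robustly, the profile curve at $X\neq 0$ stays in a half-plane, forcing the monotonicity quantity to be subcritical and ruling out singularity formation there. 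So $X = 0$.

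Next, for the Type~I blowup: the parabolic rescaling $L_\lambda(t) = \lambda(L_{T + \lambda^{-2}t})$ corresponds, at the level of profile curves, to the rescaling $f \mapsto \lambda f$, which is exactly the scaling under which \eqref{eq-introflowequation} is invariant. By Huisken monotonicity (valid since $L_t$ is Lagrangian with bounded Lagrangian angle, hence has the required entropy bounds), any subsequential limit of the rescaled flows is a self-shrinker, which is cohomogeneity-one and lies in $\mu^{-1}(0)$ since that containment passes to the limit. Invoking the shrinker classification again, the limit must be a union of the special Lagrangian cones $\widetilde c_{k,\overline\theta}$. The almost-calibrated condition constrains the Lagrangian angle to an interval of length $<\pi$, which forces the blowup cone to be a union of at most two planes; a degree/winding argument on the profile curve near the singular time — the curve must ``pinch'' to the origin, sweeping angular sectors on both sides — pins it down to the pair $\widetilde c_{k-1,\overline\theta}\cup \widetilde c_{k,\overline\theta}$ with the two rays at angle $\tfrac{\pi}{n}$ apart, and identifies $\overline\theta$ as the limiting Lagrangian angle. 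Uniqueness of the Type~I model (independence of the subsequence) follows because the monotonicity quantity has a limit, together with the rigidity of the classified cones.

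For the Type~II blowup: one uses the standard Hamilton point-selection to produce a sequence of parabolic rescalings converging (in $C^\infty_{loc}$, by Brakke/White regularity and the codimension reduction to a curve in $P_z$) to an eternal LMCF. This limit is cohomogeneity-one, almost-calibrated, lies in $\mu^{-1}(0)$, and — by the eternal nature plus monotonicity — has vanishing mean curvature flow derivative, i.e.\ it is a special Lagrangian (the absence of translating solitons in this setting for $n\geq 2$, per Theorem~\ref{thm-introsolitons}'s last bullet, is what rules out the translator alternative). Applying the special Lagrangian classification (Theorem~\ref{thm-introsolitons}, second bullet), the limit is the $G$-orbit of some $\widetilde l_{B,k,\overline\theta}$. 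That its asymptotic cone is the Type~I model $L^\infty$ follows by letting $B \to 0$ in the explicit formula for $\widetilde l_{B,k,\overline\theta}$: as $\cos(n\alpha)^{-1/n}\to \infty$ only at $\alpha = \pm\tfrac{\pi}{2n}$, the curve $\widetilde l_{B,k,\overline\theta}$ converges locally to the two rays $\widetilde c_{k-1,\overline\theta}\cup\widetilde c_{k,\overline\theta}$; matching the Lagrangian angles and the index $k$ between the two blowups then requires showing the relevant conserved/monotone quantities agree in both limits, which they do since both arise from the same flow $L_t$ at the same space-time point.

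The main obstacle I anticipate is the \emph{matching} step: showing that the integers $k$ (and the angle $\overline\theta$) appearing in the Type~I cone and the Type~II special Lagrangian are the \emph{same}, and that the asymptotic cone of $\widetilde{L}^\infty$ is precisely $L^\infty$ rather than some other classified cone. This requires carefully tracking the Lagrangian angle and a winding/index invariant of the profile curve through both rescaling limits, using that the curve-shortening-type flow \eqref{eq-introflowequation} controls the oscillation of the profile curve's argument — essentially an Angenent-style analysis of the nodal/winding structure of the planar curve near the singular time, adapted to the modified flow with the $-(n-1)f^\perp/|f|^2$ term.
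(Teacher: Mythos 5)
Your skeleton matches the paper's (reduce to the profile curve, invoke the classification theorems, Type I $=$ pair of cones, Type II $=$ the unique asymptotically conical special Lagrangian), but three steps have genuine gaps. First, the location-of-singularity argument: a Type I blowup centred at $X\neq 0$ is \emph{not} a cohomogeneity-one shrinker --- recentring at $X$ destroys the linear $G$-invariance --- so you cannot invoke the shrinker classification there, and your fallback ("the monotonicity quantity is subcritical") is not an argument. The paper instead proves an a priori curvature estimate $|A|^2\leq C(1+r^{-2})$ by a maximum-principle argument applied to $|\vec H|^2 f(\theta)\psi$ (Theorem \ref{thm-equicurvestimates}), which immediately localises singularities to the origin. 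Second, and most seriously, your Type I analysis treats the blowup as a smooth limit to which Theorem \ref{thm-introsolitons} applies. Since every singularity of almost-calibrated LMCF is Type II, the Type I rescalings do \emph{not} converge smoothly; the limit is a priori only an integral Brakke/varifold limit, i.e.\ a union of special Lagrangian cones \emph{with integer multiplicities} (Neves, Theorem \ref{thm-nevesab}). Ruling out higher multiplicity (e.g.\ a double-density plane) is the crux of the paper's argument --- the Multiplicity-One Lemma \ref{lem-keylemma}, which combines the integral convergence of the Lagrangian angle with the curvature estimates --- and is exactly where the almost-calibrated hypothesis earns its keep. Your proposal does not engage with this issue at all.

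Third, in the Type II step you must handle the possibility that the rescaled base points $A_iF_{t_i}(p_i)$ escape to infinity: in that case the limit is invariant under an isotropic $(n-1)$-plane of translations, hence is a flat plane by Harvey--Lawson, contradicting $\sup|A|=1$ (Proposition \ref{prop-convergence2}); only after excluding this can one conclude that the limit is a $G$-invariant special Lagrangian in $M$. Conversely, the ``matching'' step you flag as the main obstacle is handled far more simply than you anticipate: the Wedge Lemma \ref{lem-wedge} confines each connected component of the profile curve to a fixed wedge of angle $<2\pi/n$ for all time, the Lagrangian angle of the Type II rescalings converges uniformly to the \emph{same} $\overline\theta$ as the Type I blowup (Theorem \ref{prop-typeIIangle}), and by Theorems \ref{thm-splagcones} and \ref{thm-splags} there is, up to scale, a unique special Lagrangian of angle $\overline\theta$ with profile curve in that wedge, whose asymptotes are precisely $\widetilde c_{k-1,\overline\theta}\cup\widetilde c_{k,\overline\theta}$; no winding-number or Angenent-type analysis is needed. (Also, the asymptotic cone of $\widetilde l_{B,k,\overline\theta}$ is $\widetilde c_{k-1,\overline\theta}\cup\widetilde c_{k,\overline\theta}$ for every fixed $B$ --- one does not let $B\to 0$.)
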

The reader may wonder whether there exist almost-calibrated Lagrangian mean curvature flows with a finite-time singularity, to which Theorem \ref{thm-introsingularities} may be applied. We answer this in the affirmative, and therefore prove the following existence statement for singularities modelled on cohomogeneity-one special Lagrangians.

\begin{theorem}[Existence of Singularities with Prescribed Models]\label{thm-introexistence}
Let $L^\infty \subset \mathbb{C}^n$ be a complete connected cohomogeneity-one special Lagrangian such that $L^\infty \subset \mu^{-1}(0)$.

Then $L^\infty$ is asymptotically conical, and there exists an almost-calibrated Lagrangian mean curvature flow $L_t$ forming a Type II singularity at the origin, such that:
\begin{itemize}
    \item Any Type I blowup is the asymptotic cone of $L^\infty$,
    \item Any Type II blowup is $L^\infty$.
\end{itemize}
\end{theorem}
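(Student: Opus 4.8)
The plan is to run the correspondence of Proposition \ref{prop-profilebijection} in reverse: encode everything in the profile curve, construct a suitable initial curve, and invoke Theorem \ref{thm-introsingularities} to identify the blowups. First I would apply Theorem \ref{thm-introsolitons} (second bullet) to conclude that the given special Lagrangian $L^\infty$ is the $G$-orbit of a profile curve $\widetilde l_{B,k,\overline\theta} \subset P_z$ for some $B > 0$, $k \in \Z$, $\overline\theta \in \R$. Since $\widetilde l_{B,k,\overline\theta}(\alpha) = B(\cos n\alpha)^{-1/n} e^{i(\alpha + \overline\theta/n - \pi/2n + k\pi/n)}$ is defined on $\alpha \in [-\tfrac{\pi}{2n}, \tfrac{\pi}{2n}]$ and blows up in modulus as $\alpha \to \pm\tfrac{\pi}{2n}$, one checks directly that its two ends are asymptotic to the rays $\widetilde c_{k-1,\overline\theta}$ and $\widetilde c_{k,\overline\theta}$; thus $L^\infty$ is asymptotically conical with asymptotic cone $L^\infty_{\mathrm{cone}}$ equal to the $G$-orbit of $\widetilde c_{k-1,\overline\theta} \cup \widetilde c_{k,\overline\theta}$, which is precisely the Type I model appearing in Theorem \ref{thm-introsingularities}. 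This establishes the first claim of the theorem.

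Next I would construct the initial Lagrangian $L_0$. Take a profile curve $l_0 \subset P_z$ that agrees with a rescaling $\lambda \cdot \widetilde l_{B,k,\overline\theta}$ near the origin (so that the flow will develop the correct local structure), closes up smoothly away from the origin so that the resulting $G$-orbit $L_0$ is a smooth compact (or complete, as needed) embedded Lagrangian, and is almost-calibrated — the last point requires the Lagrangian angle of $L_0$ to stay within an open interval of length $\pi$, which is arranged by keeping $l_0$ close to the special Lagrangian profile where the angle is constant equal to $\overline\theta$, and controlling the angle along the "cap". Exactness in $\mu^{-1}(0)$ is automatic by Proposition \ref{prop-exact} once $L_0$ is almost-calibrated and cohomogeneity-one. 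I would then run the flow $L_t$: by the maximum-principle / avoidance arguments already used in Section \ref{sec-6} for the curve flow (\ref{eq-introflowequation}) — comparing $l_t$ with the static solution $\widetilde l_{B,k,\overline\theta}$ and with the shrinkers $l^{(p,q)}$ of Anciaux — one shows the flow exists on a maximal interval $(0,T)$ with $T < \infty$ and forms a finite-time singularity; almost-calibratedness is preserved since the Lagrangian angle satisfies a parabolic equation and the interval is preserved.

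Finally, once it is known that $L_t$ is a connected cohomogeneity-one $G$-invariant almost-calibrated LMCF in $\mu^{-1}(0)$ with a finite-time singularity, Theorem \ref{thm-introsingularities} applies verbatim: the singularity is at the origin, every Type I blowup is the $G$-orbit of $\widetilde c_{k'-1,\overline\theta'} \cup \widetilde c_{k',\overline\theta'}$ and every Type II blowup is the $G$-orbit of $\widetilde l_{B',k',\overline\theta'}$ for some parameters. The remaining task — and the main obstacle — is to show that these parameters are the ones we started with, i.e. that the blowup is genuinely $L^\infty$ and not some other cohomogeneity-one special Lagrangian. For $\overline\theta$ this follows because the Lagrangian angle is preserved along an almost-calibrated flow (it is harmonic, hence its range only shrinks) and the blowup inherits the angle $\overline\theta'=\overline\theta$ of the special Lagrangian model near which $l_0$ was built. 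For $B$ and $k$ one must argue that the rescaling factors in the Type II blowup procedure are pinned down by the local geometry of $l_0$ near the origin: the choice of $l_0$ matching $\lambda\cdot\widetilde l_{B,k,\overline\theta}$ near $0$ forces the type of the singularity to be II (the blowup rate is strictly faster than parabolic, since a Type I blowup would have to be a smooth shrinker through the origin, contradicting the structure near $0$), and forces the Type II model to be the corresponding $\widetilde l_{B,k,\overline\theta}$ up to the overall scaling that Type II blowups are only defined up to — which is exactly the scaling invariance inherent in $L^\infty$ being a cone-asymptotic special Lagrangian. I would also record that the asymptotic cone of the Type II blowup equals the Type I blowup, as already asserted in Theorem \ref{thm-introsingularities}, closing the circle.
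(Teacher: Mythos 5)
Your opening step (identifying $L^\infty$ with a profile curve $\widetilde l_{B,k,\overline\theta}$ via the classification theorems and reading off the asymptotic cone) matches the paper, and so does the closing appeal to Theorems \ref{thm-typeiblowup} and \ref{thm-typeiiblowup}. The middle of your argument, however, contains a genuine conceptual gap. You propose to build $l_0$ so that it ``agrees with a rescaling $\lambda\cdot\widetilde l_{B,k,\overline\theta}$ near the origin'' and then argue that the Type II model is ``pinned down by the local geometry of $l_0$ near the origin.'' This cannot work: $\widetilde l_{B,k,\overline\theta}$ is bounded away from the origin (its closest approach is at distance $B$), and, more fundamentally, the Type II blowup at the singular time $T>0$ is not read off from the initial profile curve near $O$ --- it emerges dynamically as the curve collapses onto the origin. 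The correct mechanism, which is what the paper uses, is that Theorems \ref{thm-typeiblowup} and \ref{thm-typeiiblowup} already force the blowups of \emph{any} cohomogeneity-one almost-calibrated flow with a finite-time singularity to be $C_m\cdot(\widetilde c_{k'-1,\overline\theta'}\cup\widetilde c_{k',\overline\theta'})$ and $C_m\cdot\widetilde l_{B',k',\overline\theta'}$ for some parameters; one then matches the prescribed $(k,\overline\theta)$ simply by applying a $\U(1)_\Delta$ rotation to the flow (which shifts the Lagrangian angle by $n\phi$), with $B$ then fixed up to the scale normalisation inherent in the blowup. No engineering of the initial data near the origin is needed, and no argument of the kind you sketch could supply the parameters.

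The second gap is in singularity formation. Comparing $l_t$ with the \emph{static} solution $\widetilde l_{B,k,\overline\theta}$ cannot force finite-time collapse, and the Anciaux shrinkers $l^{(p,q)}$ are in general only immersed, so the avoidance principle (Theorem \ref{thm-avoidance}), which requires embedded disjoint asymptotically linear curves, does not apply to them. The paper instead uses Neves' explicit solution $\eta_t$ with asymptotic spanning angle $\beta\in(\tfrac{\pi}{n},\tfrac{2\pi}{n})$, which is known to collapse to the origin in finite time; it serves as the outer barrier in the ``wide asymptotic angle'' theorem and, in the proof of Theorem \ref{thm-existenceofsingularities}, is taken directly as the flow itself. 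To repair your argument you should either adopt that example outright or exhibit some other embedded, asymptotically linear supersolution that reaches the origin in finite time; without it, the claim that your constructed flow becomes singular is unsupported.
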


\noindent These are proven as Theorems \ref{thm-locationofsingularities}, \ref{thm-typeiblowup}, \ref{thm-typeiiblowup} and \ref{thm-existenceofsingularities}. 

Explicit examples of Type II blowups are rare in the literature, especially in the Lagrangian case. Theorem \ref{thm-introexistence} provides infinitely many previously unobserved singularity models -- one for each group action admitting cohomogeneity-one Lagrangians. In particular, taking $G = T^{n-1} \leq \SU(n)$ as in the work of Harvey-Lawson, we find a singularity with Type I blowup equal to a \textit{pair} of $T^{n-1}$-invariant cones. This contrasts with the result of Lambert-Lotay-Schulze \cite[Thm. 1.2]{Lambert2021} that there is no singularity of almost-calibrated Lagrangian mean curvature flow in a Calabi-Yau 3-fold such that the blowdown of the Type II blowup is given by a \textit{single} Harvey-Lawson $T^{2}$-cone.
	
It should be noted that Type I and Type II blowups are typically non-unique, and so Theorem \ref{thm-introsingularities} includes a uniqueness statement for blowups of cohomogeneity-one flows. It also provides further evidence for the conjecture that the blowdown of a Type II blowup of a singular mean curvature flow should be equal to a Type I blowup. 
	
The assumptions of Theorem \ref{thm-introsingularities} are necessary. Examples of Neves \cite{Neves2007} exhibit several distinct singular behaviours for $\SO(2)$-equivariant LMCF in $\mathbb{C}^2$, showing that the almost-calibrated condition is required for uniqueness of blowup models. Indeed, our work requires the almost-calibrated condition to rule out double-density planes in the Type I blowup (see Lemma \ref{lem-keylemma}). If one considers cohomogeneity-one LMCF in a level set $\mu^{-1}(\xi)$ for $\xi \neq 0$, then the quotient $l := L/G \subset Q$ must be used in place of the profile curve. We expect that as in the $\xi = 0$ case, singularities will occur only as a result of the orbits collapsing, i.e.\  the singular space-time point $(X,T)$ will have a different isotropy type than the flow. Finally, in order to consider flows with non-constant isotropy type, one would need to work within the full level set $\mu^{-1}(\xi)$ in place of $M_\xi$; these level sets are not in general smooth manifolds. 

Finally in Section \ref{sec-7}, we consider the question of which compact connected Lie groups $G \leq \SU(n)$ admit $(n-1)$-dimensional isotropic orbits in $\mathbb{C}^n$, and so yield $G$-invariant cohomogeneity-one Lagrangian submanifolds.  Restricting to the case of $G$ simple, a result of Bedulli-Gori \cite{Bedulli2008} quickly implies a classification of admissible group actions: 
	
\begin{theorem}[Classification for $G$ Simple]
		Let $G \leq \SU(n)$ be a compact simple Lie group. Then there exists a cohomogeneity-one $G$-invariant Lagrangian submanifold in $\mathbb{C}^n$ if and only if $G$ appears in Bedulli-Gori's table, given in Figure \ref{fig-bedulligori}.
\end{theorem}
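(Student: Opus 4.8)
The plan is to reduce the statement to a known representation-theoretic classification, namely Bedulli--Gori's list of compact simple Lie groups $G \leq \SU(n)$ acting on $\C^n$ with a cohomogeneity-two orbit space, and then show that the existence of an $(n-1)$-dimensional isotropic orbit is equivalent to membership in that list. First I would recall the structural fact established earlier in the paper (the discussion preceding Proposition \ref{prop-bijection1}): a cohomogeneity-one $G$-invariant Lagrangian $L \subset \C^n$ exists precisely when $G$ admits an isotropic orbit $\mathcal{O}_z = G/G_z$ of dimension $n-1$ lying in some moment map level set $\mu^{-1}(\xi)$, and since $G$ is simple (hence semisimple) the only central value is $\xi = 0$, so the orbit must lie in $\mu^{-1}(0)$. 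The principal orbits of the $G$-action on $\C^n$ then have dimension $n-1$ as well (a cohomogeneity-one Lagrangian is swept out by principal orbits of this dimension together with possibly lower-dimensional special orbits), which forces the cohomogeneity of the $G$-action on $\C^n \cong \R^{2n}$ to be $2n - (n-1) = n+1$; equivalently, the orbit space $\C^n/G$ has dimension $n+1$. But one must be careful: not every such action produces an \emph{isotropic} orbit. The key extra input is that an $(n-1)$-dimensional isotropic submanifold of $\C^n$ is automatically Lagrangian, and the containment in $\mu^{-1}(0)$ combined with the $\C^*$-invariance of $\mu^{-1}(0)$ shows the cone over such an orbit is special Lagrangian.

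The main step is the following equivalence: for $G$ simple, $G$ admits an $(n-1)$-dimensional isotropic orbit in $\C^n$ if and only if the generic orbit is Lagrangian in a real-$2(n-1)$-dimensional symplectic slice, which by a theorem of Bedulli--Gori is equivalent to the $G$-representation $\C^n$ appearing in their table (the table of representations for which a generic orbit is isotropic, equivalently "coisotropic actions" of the smallest possible orbit-space dimension compatible with Lagrangian orbits). Concretely, I would invoke \cite{Bedulli2008}: they classify the compact simple $G \leq \SU(n)$ whose action on $\C^n$ has a Lagrangian (hence principal and isotropic) orbit, equivalently for which the Kähler quotient $\mu^{-1}(0)/G$ has real dimension two. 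Matching their hypothesis to ours requires checking that "$G$-invariant cohomogeneity-one Lagrangian submanifold exists" $\iff$ "$\dim_\R \mu^{-1}(0)/G = 2$" $\iff$ "a principal orbit is isotropic of dimension $n-1$". The forward direction of the first equivalence is immediate from Proposition \ref{prop-bijection1} (curves in $Q$ correspond to $G$-invariant Lagrangians in $M_\xi$, and $Q$ is two-dimensional precisely when such curves exist and sweep out hypersurfaces); the reverse direction requires producing an actual embedded or immersed curve in $Q$, which is trivial once $Q$ is a surface.

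The hard part will be the bookkeeping in translating between the various equivalent formulations — isotropic orbit of dimension $n-1$, Lagrangian principal orbit, two-dimensional Kähler quotient, and cohomogeneity $n+1$ of the linear $G$-action — and in verifying that Bedulli--Gori's hypotheses coincide exactly with ours rather than merely implying one another in one direction. In particular, one must confirm that every entry of their table does yield an orbit that is genuinely isotropic (not just of the right dimension), and conversely that no simple $G$ outside the table could slip through because of a non-principal special orbit of dimension $n-1$ that happens to be isotropic while principal orbits are not; this is ruled out because principal orbits have maximal dimension, so if any orbit has dimension $n-1$ and the action has a Lagrangian orbit somewhere, principal orbits have dimension $\geq n-1$, and dimension $> n-1$ would make isotropic orbits impossible by the linear-algebra bound $\dim(\text{isotropic}) \leq n$. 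Once these identifications are pinned down, the theorem follows by directly citing Figure \ref{fig-bedulligori}; I would present the argument in two short paragraphs (the reduction, then the citation) and relegate the representation-theoretic verification entirely to \cite{Bedulli2008}.
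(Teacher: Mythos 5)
Your high-level instinct --- reduce the statement to Bedulli--Gori and cite their table --- is the same as the paper's, but the reduction you sketch contains genuine errors, and the actual bridge to \cite{Bedulli2008} is missing. Bedulli--Gori classify the unitary representations of compact simple groups admitting a \emph{Lagrangian orbit in $\CP^{n-1}$}; they do not classify actions on $\C^n$ with a Lagrangian orbit, nor actions of cohomogeneity $n+1$, nor actions whose \emph{principal} orbit is isotropic. The step the paper supplies (and your proposal lacks) is the projectivisation correspondence of Corollary \ref{cor-LagIsoCorresp}: an $(n-1)$-dimensional isotropic $G$-orbit in $\C^n$ sits in $\mu^{-1}(0)$, the Fubini--Study moment map on $\CP^{n-1}$ is $\nu([z]) = \mu(z)/|z|^2$ so zero level sets correspond, and the projection $\mathcal{O}_z \to \mathcal{O}_{[z]}$ is a finite covering because $\widetilde H/H \cong C_m$ is finite (Proposition \ref{prop-profilesymmetry}), so the dimension is preserved and $\mathcal{O}_{[z]}$ is Lagrangian in $\CP^{n-1}$ exactly when $\mathcal{O}_z$ is isotropic of dimension $n-1$. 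Without this, "matching their hypothesis to ours" is not bookkeeping but the entire content of the proof.

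Two specific steps in your reduction would fail. First, the claim that the principal orbits have dimension $n-1$ (equivalently, that the linear action has cohomogeneity $n+1$) is false: for the diagonal $\SO(n)$-action on $\C^n$ (Example \ref{ex-SO(n)}), the principal orbits are Stiefel manifolds of dimension $2n-3$, and the $(n-1)$-dimensional isotropic orbits are \emph{singular} orbits with isotropy $\SO(n-1)$. Your closing argument --- that a principal orbit of dimension greater than $n-1$ "would make isotropic orbits impossible by the linear-algebra bound" --- is exactly backwards: the bound $\dim \leq n$ only shows the principal orbit itself is not isotropic, and says nothing about lower strata; applied as you state it, it would wrongly exclude $\SO(n)$ from the classification. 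The relevant orbits in Bedulli--Gori's table are in general not principal, so any formulation in terms of "generic orbit", "cohomogeneity-two orbit space", or "$\dim_\R \mu^{-1}(0)/G = 2$" (which only makes sense stratum by stratum) does not match their hypotheses. Second, the assertion that "an $(n-1)$-dimensional isotropic submanifold of $\C^n$ is automatically Lagrangian" is false --- Lagrangian means isotropic of dimension $n$; what is true, and what the paper uses, is that the $\C^*$-cone over such an orbit in $\mu^{-1}(0)$ is Lagrangian (indeed a special Lagrangian cone, Theorem \ref{thm-splagcones}), and conversely that any cohomogeneity-one $G$-invariant Lagrangian is foliated by such orbits.
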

	
In particular, for each group action in this table, by Theorem \ref{thm-introsolitons} there exists a cohomogeneity-one $G$-invariant special Lagrangian, and by Theorem \ref{thm-introexistence} there exists a Lagrangian mean curvature flow with a finite-time singularity modelled on this special Lagrangian.\\

\noindent \textbf{Conventions:} We set the following conventions.
\begin{enumerate}
    \item A connected immersed submanifold $L$ in a manifold $M$ is a subset that is the image of an immersion with connected domain.
    \item We say $L$ is cohomogeneity-one if $L$ is $G$-invariant of isotropy type $(H)$ for a Lie group $G$ and $H \leq G$ satisfying $\text{dim}(G/H) = n-1$.  This definition is slightly more restrictive than usual, in that we disallow $L$ from containing exceptional or singular orbits.
    \item We say that a mean curvature flow $L_t$ has a singularity at time $T$ if there is some singular spacetime point $(X,T)$ (see Section \ref{sec-2.3} for the definition). In particular, we do not consider singularities at infinity.
\end{enumerate}

\begin{acknowledgments} We thank Chung-Jun Tsai, Wei-Bo Su and Jason Lotay for their invaluable support and conversation, and Ben Lambert for sharing with us his proof of curvature estimates for LMCF, included in Section \ref{sec-6.1}. This work was completed during the authors' postdoctoral fellowships at the National Center for Theoretical Sciences and National Taiwan University. We thank these institutions for their support.
\end{acknowledgments}

\section{Preliminaries}

\subsection{Hermitian Linear Algebra}

\indent \indent Let $(V, \langle \cdot, \cdot \rangle, J, \omega)$ be a Hermitian vector space of real dimension $2n$, where $\langle \cdot, \cdot \rangle$ is a positive-definite inner product, $J \in \End(V)$ is a complex structure, and $\omega \in \Lambda^2(V^*)$ is the non-degenerate $2$-form given by $\omega(X,Y) = \langle JX, Y\rangle$.  A subspace $W \subset V$ is called \textit{isotropic} if $\omega|_W = 0$.  Note that
$$W \text{ is isotropic } \iff  JW \text{ is isotropic } \iff JW \subset W^\perp.$$
If $W \subset V$ is isotropic, then $\dim_\R(W) \leq n$. 
 In the other direction, a subspace $F \subset V$ is called \textit{coisotropic} if $JF \supset F^\perp$.  Note that $F$ is coisotropic if and only if $(JF)^\perp$ is isotropic.  In particular, if $F \subset V$ is coisotropic, then $\dim_\R(F) \geq n$. \\
\indent A subspace $L \subset V$ is \textit{Lagrangian} if $L$ is both isotropic and coisotropic.  Thus, $L$ is Lagrangian if and only if $\omega|_L = 0$ and $\dim_\R(L) = n$, or equivalently, if $JL = L^\perp$.  For future use, we record the following easy linear algebra fact.

\begin{lemma} \label{lem:IsotropicDecomp} Let $(V, \langle \cdot, \cdot \rangle, J, \omega)$ be a Hermitian vector space, and let $W \subset V$ be an isotropic subspace.  Then there exists a complex subspace $E \subset V$ and an orthogonal decomposition
$$V = E \oplus W \oplus JW.$$
\end{lemma}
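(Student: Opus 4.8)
The plan is to take $E$ to be the orthogonal complement of $W \oplus JW$ inside $V$ and check that this choice does the job. First I would record the preliminary observation that, since $W$ is isotropic, $JW \subset W^\perp$; in particular $W \perp JW$, so $W \cap JW = 0$ and the sum $W \oplus JW$ is an \emph{orthogonal} direct sum, of real dimension $2\dim_\R(W)$. Setting $E := (W \oplus JW)^\perp$ then yields the orthogonal decomposition $V = E \oplus W \oplus JW$ for free, and the only remaining point is to verify that $E$ is a complex subspace, i.e.\ that $JE \subseteq E$ (equivalently $JE = E$, since $J$ is invertible).

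For that, the key identity is $(JW)^\perp = J(W^\perp)$. This follows from $J$ being orthogonal with $J^{-1} = -J$: for $v \in V$ one has $\langle v, Jw\rangle = \langle J^{-1}v, w\rangle = \langle -Jv, w\rangle$ for all $w \in W$, so $v \in (JW)^\perp \iff Jv \in W^\perp \iff v \in J^{-1}(W^\perp) = J(W^\perp)$, where the last equality uses that $W^\perp$ is a linear subspace. Consequently
$$E = W^\perp \cap (JW)^\perp = W^\perp \cap J(W^\perp),$$
and applying $J$ gives $JE = J(W^\perp) \cap J^2(W^\perp) = J(W^\perp) \cap W^\perp = E$, using $J^2 = -\mathrm{id}$. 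Hence $E$ is $J$-invariant, which completes the argument; as a sanity check, $\dim_\R(E) = 2n - 2\dim_\R(W)$ is indeed even.

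I do not anticipate any genuine obstacle here, since the statement is elementary Hermitian linear algebra; the only thing that needs care is the sign bookkeeping in $\langle v, Jw\rangle = \langle -Jv, w\rangle$ (i.e.\ that $J$ is skew-adjoint) and the resulting compatibility of $J$ with orthogonal complements. An alternative, more hands-on route would be to pick an orthonormal basis $e_1,\dots,e_m$ of $W$, note that $e_1,\dots,e_m,Je_1,\dots,Je_m$ is then orthonormal by isotropy of $W$, extend to an orthonormal basis of $V$, and pair off the extra basis vectors; but one still has to argue that the leftover span can be chosen $J$-invariant, so the orthogonal-complement argument above is the cleaner path.
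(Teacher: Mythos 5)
Your proof is correct: taking $E := (W \oplus JW)^\perp$ and verifying $JE = E$ via the skew-adjointness of $J$ (so that $(JW)^\perp = J(W^\perp)$) is exactly the standard argument, and all the steps check out. The paper itself records this lemma as an ``easy linear algebra fact'' and omits the proof entirely, so there is nothing to compare against; your write-up supplies precisely the missing details.
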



\indent We now let $(V, \langle \cdot, \cdot \rangle, J, \omega, \Omega)$ be a special Hermitian vector space of real dimension $2n$, meaning that $(\langle \cdot, \cdot\rangle, J, \omega)$ is a Hermitian structure as above, and $\Omega \in \Lambda^{n,0}(V^*)$ is a \textit{complex volume form} --- i.e.\ , an $(n,0)$-form satisfying
\begin{equation}
    \textstyle \frac{1}{n!}\omega^n = (-1)^{n(n-1)/2}\left(\frac{i}{2}\right)^n\,\Omega \wedge \overline{\Omega}.\label{eq-complexvolumeform}
\end{equation}
In particular, since $\frac{1}{n!}\omega^n = \text{vol}_V$ is the volume form of $\langle \cdot, \cdot \rangle$, the $(n,0)$-form $\Omega$ is non-zero.  Finally, note that for each Lagrangian subspace $L \subset V$, there exists $\theta \in \mathbb{R}$, unique up to adding an integer multiple of $\pi$, for which
\begin{equation}
    \Omega|_L = e^{i\theta}\,\text{vol}_L \label{eq-lagangle}
\end{equation}
where $\text{vol}_L$ is a volume form for $L$.  The angle $\theta \in \mathbb{R}$ is called a \textit{Lagrangian angle} (or \textit{phase}) of $L$.

\subsection{Lagrangian Submanifolds of K\"ahler and Calabi-Yau Manifolds}

\indent \indent Let $(\overline{M}^{2n}, \langle \cdot, \cdot \rangle, J, \overline{\omega})$ be a K\"{a}hler manifold, with Levi-Civita connection $\overline \nabla$. 
An immersed submanifold $F:L \to \overline{M}$ is \textit{Lagrangian} (resp., \textit{isotropic}, \textit{coisotropic}) if each of the subspaces $F_*(T_x L) \leq T_{F(x)}\overline{M}$ is Lagrangian (resp., isotropic, coisotropic).

If $F: L \to \overline{M}$ is Lagrangian, then $J: TL \rightarrow T^\perp L$ is a bundle isometry by the compatibility of $J$ and $\langle \cdot, \cdot \rangle$.  Thus, the second fundamental form $A \in \Gamma(\Sym^2(T^*L) \otimes T^\perp L)$ of $L$ may be realised as a fully symmetric $(0,3)$-tensor on $TL$:
\begin{equation}
    h \in \Gamma(\Sym^3(T^*L)), \quad h(X,Y,Z) := \langle \overline\nabla_{X} Y, JZ \rangle. \label{def-h}
\end{equation} 
The mean curvature $\vec{H} \in \Gamma(TL)$ may be similarly represented by a 1-form, $\alpha \in \Omega^1(L)$, which is obtained by taking a trace of $h$. In local coordinates, denoting the components of $h$ by $h_{ijk}$ and the components of the inverse of the metric by $g^{ij},$
\begin{equation}
    \alpha \in \Gamma(T^*L), \quad \alpha_i = g^{ij}h_{ijk}. \label{def-alpha}
\end{equation}Note that since $J$ is an isometry, the norms of these new tensors are the same as originally:
\[ \big| H\big| = |\alpha|, \quad |A| = |h|.\]

Now let $(\overline{M}^{2n}, \langle \cdot, \cdot \rangle, J, \omega, \Omega)$ be a Calabi-Yau manifold, so that $(\overline{M}^{2n}, \langle \cdot, \cdot \rangle, J, \omega)$ is a K\"ahler manifold and $\Omega$ is a \textit{holomorphic volume form}, i.e.\  a complex volume form satisfying $\nabla \Omega = 0$. If $F \colon L \to \overline M$ is an immersed oriented Lagrangian, and $\text{vol}_L$ a volume form for $L$, then by (\ref{eq-lagangle}) there exists $\phi \colon L \rightarrow S^1$ for which $F^*\Omega = \phi \cdot \text{vol}_L$. If there exists a function $\theta: L \rightarrow \mathbb{R}$ such that $\phi= e^{i\theta}$, then the Lagrangian is said to be \textit{zero-Maslov}. The function $\theta$ is known as a \emph{Lagrangian angle} for $L$, and the pair $(L,\theta)$ is known as a \textit{graded Lagrangian}.

The importance of $\theta$ is that it is a primitive for the mean curvature:
\begin{equation}
    d\theta = \alpha, \quad \quad J\nabla \theta = \vec H. \label{eq-jgradtheta}
\end{equation}
In particular, if $\theta = \overline \theta$ is constant, then $L$ is an immersed minimal submanifold. In fact, $L$ is calibrated by the form $\text{Re}(e^{-i\overline\theta}\Omega)$, and is therefore volume minimising by the theory of calibrations \cite{Harvey1982}.  A graded Lagrangian with constant angle $\overline \theta$ is known as a \textit{special Lagrangian of angle $\overline \theta$}. It is also natural to consider graded Lagrangians satisfying the weaker condition $\theta \in (\overline\theta - \frac{\pi}{2} + \varepsilon,\, \overline\theta + \frac{\pi}{2} - \varepsilon)$ for some $\varepsilon > 0$ and  $\overline \theta \in \mathbb{R}$; these are known as \textit{almost-calibrated Lagrangians}.

In the case of $\overline M^{2n} = \mathbb{C}^n$, there is a natural primitive for $\omega$ known as the \textit{Liouville form}:
\begin{equation}\label{eq-liouvilleform}
\lambda := \frac{1}{2} \sum_{j=1}^n x_j dy_j - y_j dx_j.
\end{equation}
We say $L$ is \textit{exact} if the closed $1$-form $F^*\lambda \in \Omega^1(L)$ is exact.  We say $L$ is \emph{rational} if there is $a \in \mathbb{R}$ with
\begin{equation}\label{eq-rational}
    \lambda\left(H_1(L, \mathbb{Z})\right) = \{2\pi k a\,|\, k \in \mathbb{Z}\}.
\end{equation}
Note that $L$ is exact if and only if $L$ is rational with $a = 0$.

\subsection{Mean Curvature Flow}\label{sec-2.3}

\indent \indent Consider a Riemannian manifold $(M^m,\langle\cdot,\cdot\rangle)$ and a smooth manifold $L^n$. A smooth family of immersions $F_t:L\rightarrow M$ for $t \in [t_0,t_1]$ is a \textit{mean curvature flow} if 
\begin{equation} \frac{dF}{dt}^\perp = \vec{H}.\label{eq-mcf}\end{equation}
We denote the image of the immersion by $L_t:= F_t(L)$, and will often refer to a mean curvature flow $L_t$, suppressing mention of the immersion. 

A submanifold $F:L \rightarrow \mathbb{R}^m$ with $\vec H = 0$ is known as a \emph{minimal submanifold}; such submanifolds provide static solutions to the mean curvature flow equation (\ref{eq-mcf}). Other simple solutions are given by \emph{soliton solutions} --- flows which move by ambient isometries or scaling.
Firstly, if a submanifold $F: L \rightarrow \mathbb{R}^m$ satisfies 
\begin{equation} \vec H + \lambda\frac{F^\perp}{2} = 0 \label{eq-shrinker} \end{equation}
then it follows that $F_t := \sqrt{-\lambda t}F$ is a solution to mean curvature flow. If $\lambda < 0$, then $F$ is known as a \emph{shrinking soliton}, and if $\lambda > 0$, then $F$ is known as an \emph{expanding soliton}.
Secondly, if $F$ satisfies 
\begin{align}\vec{H} - V^\perp = 0, \label{eq-translator}\end{align}
for a constant vector $V \in \mathbb{R}^m$, then it follows that $F_t := F + tV$ is a solution to mean curvature flow, which translates in the direction $V$. Such submanifolds are known as \emph{translating solitons}.

For compact submanifolds, the following theorem (proven by Huisken in the hypersurface case \cite{Huisken1990}) describes the behaviour of the flow at the maximal time of existence:
\begin{theorem}[\cite{Smoczyk2012}]\label{thm-curvatureblowup}
    Let $L$ be a closed manifold, $(M,\langle\cdot,\cdot\rangle)$ a complete Riemannian manifold, and $F: L \times [0,T) \to (M,\langle\cdot,\cdot\rangle)$ a smooth solution to the mean curvature flow. Suppose $T$ is the maximal time of existence. Then
    \[ \lim_{t\rightarrow T}\sup_{L_t} |A|^2 = \infty.\]
    Furthermore, if $(M,\langle\cdot,\cdot\rangle) = (\mathbb{R}^n,\langle\cdot,\cdot\rangle_{\text{Eucl}})$, then there exists a constant $c > 0$ such that
    \[ \max_{L_t}|A|^2 \geq \frac{c}{T-t}\quad\forall t \in [0,T). \]
\end{theorem}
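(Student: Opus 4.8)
The plan is to run the classical maximum-principle argument of Huisken, adapted to arbitrary codimension. The one genuinely computational ingredient is the evolution equation of the second fundamental form: a Simons-type calculation — commuting $\nabla$ with $\partial_t$ along the flow and invoking the Gauss, Codazzi and Ricci equations — yields, along any mean curvature flow in a complete ambient manifold $(M,\langle\cdot,\cdot\rangle)$, an inequality of the form
\[
  \left(\partial_t - \Delta_{L_t}\right)|A|^2 \;\le\; -2|\nabla A|^2 + c_1\,|A|^4 + c_2\,|A|^2 + c_3,
\]
where $c_1$ depends only on $n=\dim L$ and $c_2,c_3$ depend in addition on $\sup(|\mathrm{Rm}|+|\nabla\mathrm{Rm}|)$ over a fixed compact neighbourhood of $\bigcup_{t\in[0,T']}L_t$ for any $T'<T$ (this union is relatively compact: comparing the extrinsic distance from a fixed point with a solution of the heat equation and using Laplacian comparison confines the flow to a ball). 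When the ambient space is Euclidean the curvature terms drop out, leaving $(\partial_t-\Delta)|A|^2 \le -2|\nabla A|^2 + c_1|A|^4$.

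Next I would pass to $f(t):=\max_{L_t}|A|^2$, which is locally Lipschitz since $L$ is closed; by Hamilton's trick, at a.e.\ $t$ one has $f'(t)=(\partial_t-\Delta)|A|^2$ evaluated at a spatial maximum, where $\Delta|A|^2\le 0$, so $f'\le c_1 f^2 + c_2 f + c_3$ in the barrier sense. Comparing with the ODE $y'=c_1 y^2+c_2 y+c_3$ produces a doubling-time estimate: there is $\delta=\delta(f(t_0),c_1,c_2,c_3)>0$ with $f\le 2f(t_0)+1$ on $[t_0,t_0+\delta)$. I would then deduce $\limsup_{t\to T}f=\infty$: if instead $f\le C_0$ on $[0,T)$, the standard Bernstein/Shi-type interior estimates (bootstrapping the evolution inequalities for $|\nabla^k A|^2$ via the maximum principle) bound all $|\nabla^k A|$ on $[T/2,T)$; since $\partial_t F=\vec H$ and its derivatives are then bounded, the metrics and immersions converge in $C^\infty$ to a smooth immersion $F_T$, and short-time existence restarted from $F_T$ extends the flow past $T$, contradicting maximality of $T$. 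Finally, upgrading to $\lim_{t\to T}f=\infty$: if $f(t_i)\le M_0$ along some $t_i\uparrow T$, the doubling-time estimate bounds $f$ by $2M_0+1$ on $[t_i,t_i+\delta_0)$ with $\delta_0$ independent of $i$; choosing $i$ with $T-t_i<\delta_0$ bounds $f$ on all of $[0,T)$, contradicting the previous step.

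For the rate in the Euclidean case, the cleaner inequality $f'\le c_1 f^2$ gives $\frac{d}{dt}\!\left(\tfrac1f\right)\ge -c_1$ wherever $f>0$ — and $f>0$ for all $t$, since $f(t_\ast)=0$ would make $L_{t_\ast}$ totally geodesic, hence an affine $n$-plane, which flows eternally and contradicts $T<\infty$. Integrating from $t$ to $s<T$ and letting $s\uparrow T$, using $f(s)\to\infty$ from the first part, gives $\tfrac{1}{f(t)}\le c_1(T-t)$, i.e.\ $\max_{L_t}|A|^2\ge \frac{1}{c_1(T-t)}$; so $c=1/c_1$ works.

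I expect the main obstacle to be the evolution inequality for $|A|^2$ together with the conversion of a curvature bound into an extension of the flow: in higher codimension the normal-bundle curvature enters through the Ricci equation, and one must check that the reaction term genuinely has the stated $|A|^4$-type structure with a purely dimensional leading constant; the higher-derivative bootstrap and the $C^\infty$-convergence/restart argument are standard but require care. Everything after that is elementary ODE comparison.
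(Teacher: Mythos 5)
Your argument is correct and is precisely the standard Huisken-style proof that the cited reference [Smoczyk2012] gives: the paper itself quotes this theorem without proof, so there is nothing different to compare against. All the ingredients you flag as delicate — the $|A|^4$ reaction term with dimensional constant surviving in higher codimension (the normal curvature contributes only terms quadratic in $A\ast A$), the confinement of the flow to a compact set under the contradiction hypothesis via the $|\vec H|$ bound, the doubling-time upgrade from $\limsup$ to $\lim$, and the ODE comparison $\bigl(1/f\bigr)'\ge -c_1$ for the Euclidean rate — are exactly as in the standard treatment and are handled correctly.
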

This theorem motivates the following definition. If $F_t:L\to M$ is a mean curvature flow for $t\in [0,T)$, then $X = (x, T) \in M \times [0,T]$ is a \textit{singular space-time point} if there exists a sequence of space-time points $(p_i,t_i) \in L \times [0,T]$ such that
\[(F_{t_i}(p_i),t_i) \rightarrow X, \quad \limsup_{i\rightarrow \infty}|A(p_i,t_i)| = \infty,\]
and we say that $F_t$ has a \textit{singularity at time $T$}. We say the singularity is \textit{Type I} if there exists $C > 0$ such that 
\[ \max_{L_t}|A|^2 \leq \frac{C}{T-t} \quad\forall t \in [0,T), \]
and otherwise we call it \textit{Type II}.

There are two common procedures for analysing the structure of a singularity at a singular space-time point $X$; we describe only the case where $M = \mathbb{R}^m$ since it is most relevant to our setting, though these procedures are possible also for general manifolds $M$. Consider a mean curvature flow $F_t:L \rightarrow M$, with singular space-time point $(x,T)$ and area bounds $\mathcal{H}^n(L_0 \cap B_R(0)) \leq C_0 R^n$, and consider a sequence $\lambda_i \in \mathbb{R}^+$ with $\lambda_i \to \infty$, and the corresponding sequence of \textit{Type I rescalings},
\[ F_s^{\lambda_i} := \lambda_i(F_{\lambda_i^{-2}s + T} - x), \]
which may be seen to be solutions to mean curvature flow. It may be proven that the flows subsequentially converge in the sense of Radon measures to a limiting flow $F^\infty_s$, which is known as a \textit{Type I blowup} of $F_t$ at $T$. In general, this is a weak solution to mean curvature flow given by a family of rectifiable varifolds, known as a Brakke flow. However, if the singularity is Type I, then this convergence may be shown to be smooth by the bound on $|A|$, and the limiting smooth flow of complete submanifolds is a shrinking soliton mean curvature flow. In the Type II case, by choosing a \emph{sequence} of points and scaling around these points so as to normalise the curvature, it is also possible to extract a smooth limit from a blowup sequence. To achieve this, for each integer $k \in \mathbb{N}$, choose $(p_k,t_k)$ such that
\[ |A(p_k,t_k)|^2\left(T - \frac{1}{k} - t_k\right) = \max_{t < T-\frac{1}{k}, \, p \in M}|A(p,t)|\left( T - \frac{1}{k} - t\right), \]
and define $A_k := |A(p_k,t_k)|,\,\, \alpha_k = -A_k^2 t_k$, and $\omega_k:= A_k^2(T- \tfrac{1}{k} -t_k)$. Then the family of \textit{Type II rescalings}
\[ F_s^{(p_k,t_k)} := A_k \left( F_{A_k^{-2}s + t_k} - F_{t_k}(p_k) \right), \quad s \in [\alpha_k,\omega_k) \]
are mean curvature flows, and subsequentially smoothly converge to an eternal mean curvature flow $\widetilde F^\infty_s$ with $\sup_{L_0^\infty}|A| = 1$, known as a \textit{Type II blowup}. Such blowups are typically static solutions or translating solutions to mean curvature flow, although this has not been proven in the general case. Note that neither Type I nor Type II blowups are unique in general, in the sense that they depend on the choice of blowup sequence.

Of key importance to this work is the fact that in a Calabi-Yau manifold $(\overline M, \langle\cdot,\cdot\rangle,J,\overline\omega,\Omega)$, the class of Lagrangian submanifolds is preserved by mean curvature flow \cite{Smoczyk1996}. A flow of Lagrangian submanifolds will be referred to as a \emph{Lagrangian mean curvature flow}, or LMCF for short. We will require the following key facts about LMCF:
\begin{proposition}
    Let $F_t : L^n \to (\overline M^{2n}, \langle\cdot,\cdot\rangle,J,\overline\omega,\Omega)$ be a graded Lagrangian mean curvature flow in a Calabi-Yau manifold, with $\theta_t$ the Lagrangian angle of $L_t$. Then:\\
\indent     (a) The Lagrangian angle satisfies $\frac{d\theta_t}{dt} = \Delta \theta_t$, and therefore the graded and almost-calibrated conditions are preserved by the flow.\\
\indent    (b) \emph{\cite{Neves2007}} Any singularity of $F_t$ is Type II.\end{proposition}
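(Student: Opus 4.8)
The plan is to prove part (a) by differentiating the defining relation $F_t^\ast\Omega = e^{i\theta_t}\,\vol_{L_t}$ for the Lagrangian angle, and to prove part (b), which is the theorem of Neves \cite{Neves2007}, by a contradiction argument: a Type I singularity would produce a smooth shrinking-soliton blowup whose Lagrangian angle is forced to be constant, hence a flat Lagrangian plane, which is incompatible with the blowup point being singular.

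For part (a) I would parametrise the flow so that $\tfrac{\partial F_t}{\partial t}=\vec H$ is purely normal. Since $\Omega$ is parallel, hence closed, Cartan's formula gives $\tfrac{\partial}{\partial t}F_t^\ast\Omega = F_t^\ast(\mathcal L_{\vec H}\Omega)=d\bigl(\iota_{\vec H}\Omega|_{TL_t}\bigr)$. Here the key input is the pointwise identity $\iota_{JX}\Omega|_L = i\,e^{i\theta}\,\iota_X\vol_L$, valid for any $X$ tangent to a Lagrangian $L$ of angle $\theta$ (immediate from the complex-multilinearity of $\Omega$, using $JX = iX$), which together with $\vec H = J\nabla\theta$ from (\ref{eq-jgradtheta}) identifies $\iota_{\vec H}\Omega|_{TL_t} = i\,e^{i\theta_t}\,\iota_{\nabla\theta_t}\vol_{L_t}$. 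Expanding the exterior derivative by means of $d(\iota_{\nabla\theta}\vol_L)=(\Delta\theta)\,\vol_L$ and $d\theta\wedge\iota_{\nabla\theta}\vol_L=|\nabla\theta|^2\,\vol_L$ gives $\tfrac{\partial}{\partial t}F_t^\ast\Omega = e^{i\theta}\bigl(i\Delta\theta-|\nabla\theta|^2\bigr)\vol_L$. On the other hand, differentiating $e^{i\theta_t}\vol_{L_t}$ using the first variation of area $\tfrac{\partial}{\partial t}\vol_{L_t}=-|\vec H|^2\vol_{L_t}$ and $|\vec H|=|\nabla\theta|$ yields $e^{i\theta}\bigl(i\dot\theta-|\nabla\theta|^2\bigr)\vol_L$; matching imaginary parts gives $\dot\theta_t=\Delta\theta_t$. (This equation also drops out of Smoczyk's evolution equations for the second fundamental form \cite{Smoczyk1996}.) Preservation of the graded condition is then immediate, since the heat flow produces a globally-defined real lift $\theta_t$ of the $S^1$-valued phase from a globally-defined lift $\theta_0$; and preservation of the almost-calibrated condition $\theta_t(L_t)\subset(\overline\theta-\tfrac\pi2+\varepsilon,\overline\theta+\tfrac\pi2-\varepsilon)$ follows from the parabolic maximum principle applied to $\dot\theta=\Delta_{g_t}\theta$ on the moving manifold: for closed $L$, $\max_{L_t}\theta_t$ is non-increasing and $\min_{L_t}\theta_t$ non-decreasing, so the range of $\theta_t$ can only shrink (with a maximum principle at infinity needed in the complete non-compact case).

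For part (b) I would argue by contradiction: suppose $(x,T)$ is a singular space-time point of Type I. The bound $|A|^2\le C/(T-t)$ makes the Type I rescalings at $(x,T)$ have uniformly bounded curvature, so a subsequence converges in $C^\infty_{\mathrm{loc}}$ to a shrinking-soliton Lagrangian MCF; let $L^\infty$ be a time slice, a graded Lagrangian self-shrinker with $\vec H=-\tfrac12 F^\perp$ and Lagrangian angle $\theta^\infty$ (a smooth limit of reparametrised $\theta_{t_i}$, hence bounded by the range control from part (a)). Rewriting $\dot\theta=\Delta\theta$ in the rescaled variables shows $\theta^\infty$ is a bounded solution of $\Delta\theta^\infty-\tfrac12\langle F^\top,\nabla\theta^\infty\rangle=0$, i.e.\ it lies in the kernel of the weighted Laplacian that is self-adjoint with respect to $e^{-|F|^2/4}\,\vol$. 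Since the blowup has Euclidean area growth (Huisken's monotonicity formula), integration by parts yields $\int_{L^\infty}|\nabla\theta^\infty|^2\,e^{-|F|^2/4}=0$, so $\theta^\infty$ is constant; hence $L^\infty$ is special Lagrangian, so $\vec H=0$, so $F^\perp=0$, so $L^\infty$ is a cone with vertex at the origin, and being a smooth complete submanifold it is a Lagrangian $n$-plane. Thus the tangent flow at $(x,T)$ is a static plane of some integer multiplicity $k$; if $k=1$ the local regularity theorem for mean curvature flow forces the flow to be smooth near $(x,T)$, contradicting that $(x,T)$ is singular, and the case $k>1$ is ruled out using the finer structure of the Lagrangian angle for zero-Maslov flows established in \cite{Neves2007}.

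The main obstacle lies entirely in part (b): the two delicate points are (i) making the integration by parts rigorous, which requires the Gaussian-weighted volume bounds coming from the monotonicity formula so that the boundary terms genuinely vanish, and (ii) excluding the higher-multiplicity tangent plane. Both are the substance of Neves's theorem, whose argument I would follow. By contrast, part (a) and its two corollaries are routine once the algebraic identity $\iota_{JX}\Omega|_L = i\,e^{i\theta}\,\iota_X\vol_L$ and the parabolic maximum principle are in hand.
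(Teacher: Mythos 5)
The paper states this proposition without proof, citing Smoczyk for the evolution of the angle and \cite{Neves2007} for part (b), so there is no in-paper argument to compare against. Your part (a) is the standard and correct derivation: differentiating $F_t^*\Omega = e^{i\theta_t}\mathrm{vol}_{L_t}$ via Cartan's formula, the identity $\iota_{J X}\Omega|_{TL} = i e^{i\theta}\iota_X \mathrm{vol}_L$, and the first variation of volume, with the maximum principle giving preservation of the almost-calibrated range (your caveat about needing a maximum principle at infinity is apt, since graded embedded Lagrangians in $\C^n$ are necessarily non-compact). Your part (b) is a faithful outline of Neves's proof — bounded curvature of Type I rescalings, the drift-harmonic equation for the limiting angle, Gaussian-weighted integration by parts forcing the tangent flow to be a plane, and White's regularity — and the one step you do not supply, the exclusion of higher-multiplicity planes, is exactly the hard content of \cite{Neves2007}, which you correctly identify and defer to, just as the paper does.
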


Restricting to the case of flows in $\mathbb{C}^n$, note that by \cite[Lem.\ 3.26]{Joyce2015} a graded and embedded Lagrangian MCF must be noncompact; we therefore will often be dealing with noncompact flows. In this situation it is natural to make the assumption of \textit{bounded area ratios}, i.e.\  there exist $R,C>0$ such that $\forall r > R$, $\mathcal{H}^n(L_t \cap B_r(0)) < C.$ 

Under this assumption, it was shown by Neves that Type I blowups of singularities of graded Lagrangian mean curvature flows are unions of special Lagrangian cones:
\begin{theorem}[\cite{Neves2007}]\label{thm-nevesab}
Let $F_t:L\rightarrow \mathbb{C}^n$ be a graded Lagrangian mean curvature flow with bounded area ratios and $\theta$ bounded, $(x,T)$ be a singular spacetime point, and $F^i_s$ be a sequence of Type I rescalings. 

(A) There exist angles $\{\overline\theta_1,\ldots \overline\theta_N\}$ and integral special Lagrangian cones $L_1,\ldots, L_N$ such that after passing to a subsequence, for all $\phi \in C^\infty_c(\mathbb{C}^n)$, $f \in C^2(\mathbb{R})$, and $s<0$,
\[ \lim_{i\rightarrow \infty} \int_{L^i_s} f(\theta_{i,s})\phi \, d\mathcal{H}^n \, = \, \sum_{k=1}^N m_j f(\overline\theta_j)\mu_j(\phi),\]
where $m_j$ and $\mu_j$ denote the multiplicity and underlying Radon measure of $L_j$ respectively. Furthermore, the set of angles is independent of rescaling sequence. 

(B) If furthermore $L_t$ is almost-calibrated and rational, then for all $R>0$ and almost all $s<0$ and for any convergent subsequence of connected components $\Sigma^i$ of $B_{4R}(0) \cap L^i_s$ intersecting $B_R(0)$, there exists a special Lagrangian cone $L$ with angle $\overline \theta$ such that for every $\phi \in C^\infty_c(B_{2R})$, $f \in C^2(\mathbb{R})$,
\[ \lim_{i\rightarrow \infty} \int_{\Sigma^i} f(\theta_{i,s})\phi \, d\mathcal{H}^n \, = \,  m \,f(\overline\theta)\mu(\phi).\]
\end{theorem}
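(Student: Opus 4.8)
The plan is to reconstruct the argument of Neves \cite{Neves2007}. After a translation we take the singular spacetime point to be $(0,T)$, and we write $\rho = \rho_{(0,T)}$ for the backward heat kernel on $\mathbb{C}^n \times (-\infty, T)$. The two analytic inputs are Huisken's monotonicity formula and the fact, recorded above, that along a graded LMCF the Lagrangian angle evolves by $\tfrac{d}{dt}\theta_t = \Delta\theta_t$.

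\textbf{Step 1: self-shrinking limits.} Huisken's monotonicity formula \cite{Huisken1990} gives that $t \mapsto \int_{L_t}\rho\,d\mathcal{H}^n$ is non-increasing, with $\tfrac{d}{dt}\int_{L_t}\rho\,d\mathcal{H}^n = -\int_{L_t}\bigl|\vec{H} + \tfrac{F^\perp}{2(T-t)}\bigr|^2\rho\,d\mathcal{H}^n$. The bounded-area-ratios hypothesis makes the limiting Gaussian density finite, so the monotonicity defect is integrable on $[0,T)$. For a sequence of Type I rescalings $F^i_s = \lambda_i F_{\lambda_i^{-2}s + T}$, for which the monotonicity formula is scale-invariant, Ilmanen's compactness theorem for integral Brakke flows (with Allard's theorem, using the area bounds) yields a subsequential limit Brakke flow $F^\infty_s$, $s < 0$; vanishing of the rescaled monotonicity defect forces $F^\infty_s$ to be backwardly self-similar, $\vec{H} = -\tfrac{F^\perp}{2s}$ in the varifold sense, with $F^\infty_s = \sqrt{-s}\,F^\infty_{-1}$.

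\textbf{Step 2: the limiting angle is constant, and part (A).} Crucially, the Lagrangian angle is scale-invariant, so every rescaled flow carries the \emph{same} bounded function $\theta$. Combining $\tfrac{d}{dt}\theta_t = \Delta\theta_t$ with the weighted monotonicity identity applied to $\theta_t^2$ gives
\[ \frac{d}{dt}\int_{L_t}\theta_t^2\,\rho\,d\mathcal{H}^n \;=\; -2\int_{L_t}|\nabla\theta_t|^2\,\rho\,d\mathcal{H}^n \;-\; \int_{L_t}\theta_t^2\,\Bigl|\vec{H} + \tfrac{F^\perp}{2(T-t)}\Bigr|^2\rho\,d\mathcal{H}^n \;\le\; -2\int_{L_t}|\nabla\theta_t|^2\,\rho\,d\mathcal{H}^n. \]
Since $\int_{L_t}\theta_t^2\,\rho$ is non-negative and bounded above ($\theta$ is bounded and $\int_{L_t}\rho$ is bounded via the area bounds), integrating in $t$ gives $\int_0^T \int_{L_t}|\nabla\theta_t|^2\,\rho\,d\mathcal{H}^n\,dt < \infty$. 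Rewritten in rescaled variables, $\int_{s_1}^{s_2}\int_{L^i_s}|\nabla\theta|^2\,\rho_{(0,0)}\,d\mathcal{H}^n\,ds$ becomes the integral of $\int|\nabla\theta|^2\rho$ over a time interval near $T$ shrinking to a point, hence $\to 0$. Thus the limit flow has weightedly locally constant angle; packaging this through the measures $f(\theta)\,d\mathcal{H}^n$ yields the stated convergence $\int_{L^i_s}f(\theta_{i,s})\phi\,d\mathcal{H}^n \to \sum_j m_j\,f(\overline\theta_j)\,\mu_j(\phi)$. On a piece where $\theta \equiv \overline\theta_j$ we have $\vec{H} = J\nabla\theta = 0$, so the self-shrinker equation forces $F^\perp = 0$: it is a minimal Lagrangian, hence a \emph{special Lagrangian cone} of angle $\overline\theta_j$, integral by Allard regularity, with only finitely many pieces by the density bound. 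Independence of the angle set from the rescaling sequence follows because $\tfrac{d}{dt}\int_{L_t}f(\theta)\rho = -\int f''(\theta)|\nabla\theta|^2\rho - \int f(\theta)\bigl|\vec{H}+\tfrac{F^\perp}{2(T-t)}\bigr|^2\rho$ is integrable on $[0,T)$ for $f \in C^2$, so $\int_{L_t}f(\theta)\rho$ has a limit as $t \to T$ that is intrinsic to the original flow.

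\textbf{Step 3: part (B).} With the extra hypotheses, the almost-calibrated condition confines $\theta$ to an interval of length strictly less than $\pi$, which excludes the cancellation degeneration in which a connected sheet of the blowup carries two angles differing by $\pi$ (a multiplicity-two plane with opposed orientations). The rational condition has the feature that its constant $a$ is preserved by the flow — for a loop $\gamma_t \subset L_t$, $\tfrac{d}{dt}\int_{\gamma_t}\lambda = -\int_{\gamma_t}\alpha$, which vanishes since $\alpha = d\theta$ is exact on $L_t$ — and together with the almost-calibrated bound it prevents a connected component of $B_{4R}(0) \cap L^i_s$ from splitting, in the limit, into cones of different angle or acquiring non-integer density. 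Combined with White's local regularity theorem on the multiplicity-one part and the cone structure from (A), each convergent subsequence of connected components $\Sigma^i$ converges to $m$ times a single special Lagrangian cone $L$ of a single angle $\overline\theta$.

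\textbf{Main obstacle.} The delicate point is Step 2 at the level of Brakke flows: defining ``the Lagrangian angle of the limit'' when $F^\infty_s$ is only a family of integral varifolds, upgrading the spacetime $L^2$ bound on $\nabla\theta$ to genuine local constancy, and the bookkeeping ensuring only finitely many angles occur and that the cones are integral. This is handled by exploiting the parabolic regularity of $\theta$ (interior gradient and Hölder estimates where $|A|$ is bounded, which carry $\theta$ along the varifold convergence) and by phrasing everything — as in the statement — through the measures $f(\theta)\,d\mathcal{H}^n$ rather than pointwise. In part (B), the subtle step is transporting the rationality constant through the non-smooth convergence and using it, alongside the almost-calibrated bound, to rule out splitting and higher-multiplicity degenerations of connected components.
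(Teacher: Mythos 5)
First, a point of order: the paper does not prove this statement at all — it is imported verbatim from Neves \cite{Neves2007} (his Theorems A and B), so there is no in-paper proof to compare against. Judged against Neves's actual argument, your Steps 1 and 2 are a faithful reconstruction of the proof of part (A): Huisken monotonicity plus Brakke/Allard compactness for the Type I rescalings, the scale-invariance of $\theta$, the almost-monotonicity of $\int_{L_t}\theta_t^2\,\rho$ yielding the spacetime $L^2$ bound on $\nabla\theta$, hence a limit with locally constant angle, hence (via $\vec H = J\nabla\theta$ and the self-shrinker identity) a finite union of integral special Lagrangian cones; and the independence of the angle set follows, as you say, from the existence of $\lim_{t\to T}\int_{L_t}f(\theta_t)\rho$ for all $f\in C^2$.

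Part (B) is where your reconstruction has a genuine gap. You correctly identify the roles of the two extra hypotheses, but the paragraph asserts the conclusion ("prevents a connected component from splitting into cones of different angle") without the mechanism that actually delivers it. In Neves's proof the rationality condition is used to produce, on each rescaled Lagrangian $L^i_s$, a primitive $\beta_{i,s}$ of the Liouville form that is well defined modulo $2\pi a\lambda_i^2$ (the periods blow up under rescaling, so on fixed balls $\beta_{i,s}$ is effectively single-valued), and a second Gaussian-weighted $L^2$ estimate is proved for the quantity $\beta_{i,s} - 2s\,\theta_{i,s}$, which satisfies a clean heat-type equation along the flow. In the limit, $\lambda$ vanishes identically on any Lagrangian cone, so $\beta$ is constant on each limit cone; combining the constancy of $\beta - 2s\theta$ on the limit of a \emph{connected} component with the constancy of $\beta$, and varying $s$, forces a single angle $\overline\theta$ (and a single density $m$) across all cones arising from one connected component. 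The almost-calibrated hypothesis enters to keep the angle range below length $\pi$ and so exclude the multiplicity-two opposed-orientation plane degeneration — that part you have. Without the $\beta$-estimate, however, nothing in your Step 3 actually rules out a connected $\Sigma^i$ limiting onto two cones with distinct angles from the admissible set of part (A), so as written the step would not close.
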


\subsection{Group Actions on Manifolds} \label{sec:GrpAct}

\indent\indent Let $M$ be a smooth manifold equipped with a smooth left $G$-action, where $G$ is a Lie group.  The left action of $g \in G$ on $p \in M$ will be denoted by $L_g(p) = g \cdot p \in M$.  For each point $p \in M$, we let $\mathcal{O}_p \subset M$ denote its $G$-orbit, and let $H_p := \text{Stab}(p) \leq G$ denote its stabiliser subgroup.  If $G$ is a compact Lie group, then $H_p \leq G$ is a closed Lie subgroup, and $\mathcal{O}_p \subset M$ is a compact embedded submanifold diffeomorphic to $G/H_p$. \\
\indent For each point $p \in M$, consider the orbit map $L^{(p)} \colon G \to M$ via $L^{(p)}(g) = g \cdot p$.  The derivative of $L^{(p)}$ at the identity $e \in G$ yields the \textit{infinitesimal action}
\begin{align*}
\rho_p &= -dL^{(p)}_e \colon \mathfrak{g} \to T_pM \\
\rho_p(X) & = dL^{(p)}_e(-X) = \left.\frac{d}{dt}\right|_{t = 0} \exp (-tX) \cdot p.
\end{align*}
The image of $\rho_p$ is the tangent space to the orbit of $p$, and the kernel of $\rho_p$ is the Lie algebra of the stabiliser:
\begin{align*}
    \text{Im}(\rho_p) & = T_p\mathcal{O}_p, & \Ker(\rho_p) & = \mathfrak{h}_p = \text{Lie}(H_p).
\end{align*}
Varying the point $p \in M$ yields a map
$\rho \colon \mathfrak{g} \to \Gamma(TM)$, which is both $G$-equivariant and (by our sign convention) a Lie algebra homomorphism:
	\begin{align}
	\rho(\text{Ad}_g(X)) & = (L_g)_*\rho(X), \\
		\rho([X,Y]) &= [\rho(X),\rho(Y)]. 
	\end{align}

\indent Two stabiliser groups $H_p$ and $H_q$ are said to have the same \textit{isotropy type} if they are conjugate in $G$.  
Isotropy type is an equivalence relation on the collection of stabiliser groups $\{\text{Stab}(p) \colon p \in M\}$; the equivalence class of $H$, denoted $(H)$, is simply the conjugacy class of $H$ in $G$.  We write $(H) \leq (K)$ if $H$ is conjugate to a subgroup of $K$. Similarly, two $G$-orbits $\mathcal{O}_p$ and $\mathcal{O}_q$ are said to have the same \textit{orbit type} if their stabiliser groups $H_p$ and $H_q$ have the same isotropy type.  
\begin{theorem}[Principal Orbit Theorem \cite{Alexandrino2015}] \label{thm:PrincipalOrbit} Let $G$ be a compact Lie group acting isometrically on a connected Riemannian manifold $M$.  Then there exists a unique maximal orbit type (called the \emph{principal orbit type}).  Let $M_0 \subset M$ denote the union of principal orbits.  Then:
\begin{enumerate}
    \item The subset $M_0$ is open and dense in $M$.
    \item The quotient $M_0/G$ is a connected smooth manifold, and is open and dense in $M/G$.
    \item The projection $M_0 \to M_0/G$ is a fiber bundle with fiber $G/H$, where $H$ is a principal stabiliser group.
\end{enumerate}
\end{theorem}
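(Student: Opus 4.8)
The plan is to deduce the theorem from the \emph{Slice Theorem} (equivariant tubular neighbourhoods), together with an induction on $\dim M$. Since $G$ is compact and acts isometrically, the $G$-equivariance of the normal exponential map gives, for each $p$, a $G$-invariant open neighbourhood of $\mathcal{O}_p$ that is $G$-equivariantly diffeomorphic to the associated bundle $G \times_{H_p} B_\varepsilon(N_p)$, where $N_p = (T_p\mathcal{O}_p)^\perp$ carries the linear isotropy representation of $H_p$ and $B_\varepsilon$ is a metric ball. Inside such a ``tube'' the stabiliser of the point represented by $v \in N_p$ is $(H_p)_v \leq H_p$, so isotropy types can only decrease near $p$; in particular the orbit-dimension function is lower semicontinuous. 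This local model is the engine of the whole argument.

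For the inductive step: if $\dim N_p = 0$ then $M = \mathcal{O}_p$ is a single orbit and the theorem is trivial, while $\dim N_p = 1$ can be treated by hand (the isotropy representation factors through $\{\pm 1\}$). Otherwise, for $v \neq 0$ the stabiliser $(H_p)_v$ depends only on the ray $\R_{>0}v$, so the action of $H_p$ on $N_p$ is the cone on the action of $H_p$ on the unit sphere $S(N_p)$, whose dimension is \emph{strictly} less than $\dim M$. Applying the inductive hypothesis to this action on $S(N_p)$ produces a unique principal isotropy class $(K)$ of the $H_p$-action, with the set of $(K)$-points open and dense in $S(N_p)$; coning off, the $(K)$-points are open and dense in $N_p$, and inside the tube they form an open dense $G$-invariant set all of a single isotropy type (computed in $G$). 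Calling a point $x \in M$ \emph{principal} when some neighbourhood has all isotropy conjugate to $H_x$, this is an open condition that holds on an open dense subset of every slice; hence $M_0$ is open and dense in $M$. Also, since a compact subgroup of $G$ that is $G$-conjugate to a group containing it must equal it, at a principal point $H_p$ acts trivially on $N_p$, i.e.\ the tube is $(G/H_p) \times B_\varepsilon(N_p)$.

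The main obstacle is the global statement that all principal orbits have the \emph{same} type (and that this type is maximal), because $M_0$ itself need not be connected, so one cannot argue on $M_0$ alone. I would define $\Psi(x)$ to be the principal class $(K)$ extracted from a slice at $x$ (when $x \in M_0$ this is just $(H_x)$), and prove $\Psi$ is locally constant on $M$: if $y$ lies in the tube at $x$, then a neighbourhood of $y$ contains principal points, which have type $\Psi(x)$ (being the $(K)$-points of the tube) and also type $\Psi(y)$ (applying the previous paragraph at $y$), forcing $\Psi(y) = \Psi(x)$. Connectedness of $M$ then makes $\Psi$ constant, say equal to $(H)$, so every principal orbit has type $(H)$; and a slice at an arbitrary $q$ exhibits $H$ up to conjugacy as a subgroup of $H_q$, giving $(H) \leq (H_q)$ for all $q$, i.e.\ the principal orbit type is the unique maximal one.

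Finally, the quotient structure. Near a principal point the tube is $(G/H) \times B_\varepsilon(N_p)$, so $\pi\colon M_0 \to M_0/G$ is locally the projection $(G/H) \times B_\varepsilon \to B_\varepsilon$; this simultaneously endows $M_0/G$ with a smooth manifold structure (the balls $B_\varepsilon(N_p)$ as charts, overlaps smooth by the Slice Theorem) and exhibits $\pi$ as a locally trivial bundle with fibre $G/H$. That $M_0/G$ is open and dense in $M/G$ follows since $\pi$ is an open surjection and $M_0$ is open dense. For connectedness of $M_0/G$: $M/G$ is connected as the continuous image of $M$, and by the same induction — using that the principal points of an orthogonal $H$-representation, modulo $H$, form a connected set (reduce to the sphere, then cone) — the principal points in each slice descend to a connected subset of the quotient; patching these along a path in $M$ joining two prescribed principal orbits shows $M_0/G$ is connected, completing the proof.
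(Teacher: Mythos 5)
The paper does not prove this statement---it is quoted from Alexandrino--Bettiol \cite{Alexandrino2015}---and your sketch reproduces the standard proof given there: the Slice/Tube Theorem combined with induction on dimension via the isotropy representation on the normal sphere, local constancy of the slice-principal class to globalise the orbit type over connected $M$, and the trivialised tube $(G/H)\times B_\varepsilon$ at principal points to obtain the bundle structure and the smooth, connected quotient. The argument is correct; the only steps left implicit (smoothness of the transition maps between slice charts on $M_0/G$, and the identification of the slice-principal vectors in a tube about a \emph{non}-principal point with the globally principal points of $M$ lying in that tube) are routine consequences of the same compactness fact you already invoke, namely that a compact subgroup of $G$ conjugate to a group containing it must equal it.
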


\indent More generally, for any isotropy type $(H)$, we let $M_{(H)} \subset M$ denote the union of $G$-orbits in $M$ with isotropy type $(H)$.  The projection $M_{(H)} \to M_{(H)}/G$ is again a fiber bundle with fiber $G/H$.  For more information, we refer the reader to \cite[pg.\ 43-47]{Boyer2008}, \cite[$\S$3.4-3.5]{Alexandrino2015}.

Throughout this work, we will be concerned with $G$-invariant submanifolds.  We say that a $G$-invariant subset $L \subset M$ is a \emph{$G$-invariant immersed submanifold} if there exists a smooth manifold $\hat L$ with a smooth left $G$-action and a $G$-equivariant immersion $F:\hat L \rightarrow M$ (i.e.\  such that $F(g\cdot p) = g\cdot F(p)$ for $p \in \hat L$, $g \in G$) with image equal to $L$. The following proposition provides a sufficient condition for the quotient of such an immersion to be smooth.

\begin{proposition}[Quotients of Equivariant Immersions]\label{prop-equivariantquotient}

Let $\hat L,M$ be smooth manifolds with a smooth left $G$-action, and $F:\hat L \rightarrow M$ be a smooth $G$-equivariant immersion. Assume that the $G$-action on $M$ has constant isotropy type, i.e.\  $M = M_{(H)}$.

Then $\hat l:=\hat L / G$, $Q := M / G$ are smooth manifolds, and there exists a smooth immersion $f: \hat l \rightarrow Q$, such that $f \circ \pi = \pi \circ F$, where $\pi \colon M \to M/G$ is the quotient map.
\end{proposition}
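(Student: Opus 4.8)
The plan is to unpack the hypotheses, promote the various quotient constructions to genuine smooth manifolds using the Principal Orbit Theorem (or rather its constant-isotropy-type refinement stated after it), and then check that the induced map $f$ is well-defined, smooth, and an immersion by working locally in slice coordinates. First I would observe that since $M = M_{(H)}$, the projection $\pi \colon M \to M/G$ is, by the remark following Theorem~\ref{thm:PrincipalOrbit}, a fiber bundle with fiber $G/H$; in particular $Q := M/G$ is a smooth manifold and $\pi$ is a smooth submersion. The same statement applied to the $G$-action on $\hat L$ requires knowing that $\hat L$ also has constant isotropy type $(H)$: this follows because $F$ is a $G$-equivariant immersion, so $\mathrm{Stab}_{\hat L}(p) \leq \mathrm{Stab}_M(F(p))$, and conversely any $g$ fixing $F(p)$ must fix $p$ since near $p$ the map $F$ is injective and $G$-equivariant (so $g \cdot p$ and $p$ have the same image under the local embedding $F$, forcing $g \cdot p = p$). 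Hence $\mathrm{Stab}_{\hat L}(p) = \mathrm{Stab}_M(F(p)) \in (H)$ for all $p$, so $\hat L = \hat L_{(H)}$, and therefore $\hat l := \hat L/G$ is a smooth manifold with $\pi_{\hat L} \colon \hat L \to \hat l$ a smooth fiber bundle.

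Next I would define $f$ and check it is well-defined: since $F$ is $G$-equivariant, $\pi \circ F$ is constant on $G$-orbits of $\hat L$, so by the universal property of the quotient (the submersion $\pi_{\hat L}$ admits local sections, so $\pi \circ F$ descending is automatic once constancy on fibers is known), there is a unique continuous $f \colon \hat l \to Q$ with $f \circ \pi_{\hat L} = \pi \circ F$. Smoothness of $f$ is then immediate from local sections of $\pi_{\hat L}$: near any point choose a smooth section $\sigma$, and $f = \pi \circ F \circ \sigma$ locally.

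The main step — and the place I expect the real work to be — is verifying that $f$ is an immersion. The cleanest route is via slices. Fix $p \in \hat L$, let $x = F(p)$, and use the slice theorem: there is a $\mathrm{Stab}(p)$-invariant complement to $T_p\mathcal{O}_p$ in $T_p\hat L$, call it $S_p$, and likewise a complement $S_x$ to $T_x\mathcal{O}_x$ in $T_xM$, with $dF_p(S_p) \subseteq$ (some $\mathrm{Stab}(p)$-invariant subspace of) $T_xM$. Because $F$ is $G$-equivariant it intertwines $\rho^{\hat L}$ and $\rho^M$, so $dF_p(T_p\mathcal{O}_p) = T_x\mathcal{O}_x$ and $dF_p$ descends to a map $\bar{d}F_p \colon T_p\hat L / T_p\mathcal{O}_p \to T_xM / T_x\mathcal{O}_x$, which is exactly $df$ at $[p]$ under the canonical identifications $T_{[p]}\hat l \cong T_p\hat L / T_p\mathcal{O}_p$ and $T_{[x]}Q \cong T_xM / T_x\mathcal{O}_x$ furnished by the submersions. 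Injectivity of $df_{[p]}$ is then equivalent to $\ker(dF_p) \subseteq T_p\mathcal{O}_p$; but $F$ is an immersion, so $\ker(dF_p) = 0 \subseteq T_p\mathcal{O}_p$ trivially. Hence $df_{[p]}$ is injective, so $f$ is an immersion. The care needed here is purely in setting up the canonical identifications of tangent spaces of the quotients with the normal spaces to the orbits — this is standard for fiber bundles $\pi$ with $d\pi_x$ having kernel exactly $T_x\mathcal{O}_x$ — and in checking the $G$-equivariance of $dF$ really does give $dF_p(T_p\mathcal{O}_p) = T_x\mathcal{O}_x$, which is immediate from $F \circ L_g^{\hat L} = L_g^M \circ F$ differentiated at $e$. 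Finally $f \circ \pi_{\hat L} = \pi \circ F$ holds by construction, which is the stated compatibility.
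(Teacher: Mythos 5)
Your overall architecture is sound and genuinely different from the paper's: you put a smooth structure on $\hat l$ by showing $\hat L \to \hat L/G$ is itself a fiber bundle and then compute $df$ as the map induced by $dF$ on the orbit-normal quotients, whereas the paper never discusses the isotropy of $\hat L$ at all --- it uses the local embeddings $F|_U$ to transport the smooth structure of subsets of $Q$ back to $\pi(U) \subset \hat l$ and glues. However, your route has a genuine gap at its foundation: the claim that $\mathrm{Stab}_{\hat L}(p) = \mathrm{Stab}_M(F(p))$ is false, and the argument you give for it does not work. If $g$ fixes $F(p)$ then indeed $F(g\cdot p) = F(p)$, but $g \cdot p$ need not lie in the neighbourhood of $p$ on which $F$ is injective, so you cannot conclude $g\cdot p = p$. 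Concretely: let $G = S^1$ act freely on $\hat L = S^1$ in the standard way and on $M = S^1$ by $e^{i\theta}\cdot z = e^{2i\theta}z$ (so $M = M_{(C_2)}$ has constant isotropy type), and let $F(w) = w^2$. This is a $G$-equivariant immersion, yet $\mathrm{Stab}_{\hat L}(w) = \{1\}$ while $\mathrm{Stab}_M(w^2) = \{\pm 1\}$; the element $-1$ fixes $F(w)$ but sends $w$ to the antipodal point. Since everything downstream (smoothness of $\hat l$, the submersion $\pi_{\hat L}$, the identification $T_{[p]}\hat l \cong T_p\hat L/T_p\mathcal{O}_p$) rests on $\hat L$ having locally constant isotropy type, this is a real hole, not a cosmetic one.

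The conclusion you need is nevertheless true and can be patched in two ways. One is the slice-representation argument: because $M = M_{(H)}$, the slice representation of $H_x := \mathrm{Stab}_M(x)$ on $T_xM/T_x\mathcal{O}_x$ is trivial; equivariance plus the immersion property shows $\mathcal{O}_p \to \mathcal{O}_{F(p)}$ is a finite covering, so $K_p := \mathrm{Stab}_{\hat L}(p)$ has finite index in $H_{F(p)}$ and $\overline{dF}_p$ is a $K_p$-equivariant injection of the slice representation of $\hat L$ at $p$ into a trivial representation; hence the slice representation of $\hat L$ is trivial and its isotropy type is locally constant (though possibly a proper finite-index subgroup of $H$, as in the example above --- so you should invoke the fiber-bundle remark for $\hat L_{(K)}$, not $\hat L_{(H)}$). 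The other is to follow the paper and simply define the smooth structure on $\hat l$ via the local bijections $\pi(U) \to \pi(F(G\cdot U)) \subset Q$. Separately, a minor imprecision: injectivity of $df_{[p]}$ is equivalent to $dF_p^{-1}(T_x\mathcal{O}_x) = T_p\mathcal{O}_p$, not merely to $\ker(dF_p)\subseteq T_p\mathcal{O}_p$; you need both $\ker(dF_p)=0$ and the surjectivity of $dF_p\colon T_p\mathcal{O}_p \to T_x\mathcal{O}_x$ (a dimension count using $[H_{F(p)}:K_p]<\infty$) to close that step.
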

\begin{proof}

Choose $y \in \hat l := \hat L / G$, and $x \in \hat L$ such that $\pi(x) = y$. Since $F$ is an immersion, there exists $U \subset \hat L$, $U \ni x$ such that $F|_U \colon U\rightarrow M$ is a diffeomorphism onto its image. The $G$-invariant set $G \cdot U \subset \hat L$ is an open subset, and $\pi(G\cdot U) = \pi(U)$ is an open neighbourhood of $ y \in \hat l$.

Since $F$ is a $G$-invariant immersion with image in $M$, it follows that $F(G\cdot U) \subset M$ is a $G$-invariant submanifold of constant isotropy type. From Theorem \ref{thm:PrincipalOrbit}, the quotient $Q := M / G$ is a smooth manifold, and the quotient $\pi \colon F(G\cdot U) \rightarrow F(G\cdot U)/G$ is a smooth submersion to a smooth manifold $l_U \subset Q$. Since $F|_U$ is a $G$-invariant diffeomorphism to its image, it follows that there are diffeomorphisms $\pi(U) \approx \pi(F(U))\approx \pi(F(G\cdot U)) \approx l_U$. We have therefore found a diffeomorphism from a neighbourhood of an arbitrary $y \in \hat l$ to a smooth manifold, and so $\hat l$ is itself a smooth manifold.

Finally, choosing a suitable open cover $U_\alpha$ such that $\bigcup U_\alpha = \hat L$, the smooth maps $\pi(U_\alpha) \xrightarrow{\approx} l_{U_\alpha} \subset Q$ produced by the above construction glue together to produce the required immersion $f\colon \hat l \rightarrow Q$.
\end{proof}

\begin{remark}\label{rmk-immersedunionofembedded}
We note that the proof of Proposition \ref{prop-equivariantquotient} shows that a $G$-invariant immersed submanifold $L \subset M$ is the union of $G$-invariant embedded submanifolds, $L = \bigcup F (G \cdot U_\alpha)$.
\end{remark}

\begin{definition}
A $G$-invariant submanifold $L \subset M$ is \textit{type $(H)$} if $L \subset M_{(H)}$. In analogy with Proposition \ref{prop-equivariantquotient}, this will ensure that our quotients are smooth manifolds in the sequel.
\end{definition}

\section{Hamiltonian K\"{a}hler Actions and $G$-Invariant Lagrangians} \label{sect:HamKahler}\label{sec-3}

\indent \indent In this section, we deal with the theory of Hamiltonian actions on K\"ahler and Calabi-Yau manifolds, and of $G$-invariant Lagrangian submanifolds. Section \ref{sec-3.1} provides an introduction to moment maps for Hamiltonian K\"ahler actions and K\"ahler reduction in the case of general group actions. Section \ref{sec-3.2} is concerned with $G$-invariant Lagrangians in K\"ahler manifolds, particularly the crucial fact that such submanifolds lie in level sets of the moment map (Proposition \ref{prop:constrain}). This allows one to leverage K\"ahler reduction to reduce their dimension and codimension. Finally, in $\S$\ref{sec-3.3} we consider the Calabi-Yau case, and in particular show that if the $G$-action preserves the Calabi-Yau structure, then $G$-invariant Lagrangians remain in the same level set of the moment map under LMCF (Proposition \ref{prop-flowlevelset}).

\indent We maintain the following setup throughout this section.  Let $(\overline{M}^{2n}, \langle \cdot, \cdot \rangle, J, \omega)$ be a connected K\"{a}hler manifold equipped with a smooth $G$-action, where $G$ is a compact connected Lie group.  
The $G$-action on $\overline{M}$ is always assumed to be \textit{K\"{a}hler}, meaning that it preserves the K\"{a}hler structure:
\begin{align*}
L_g^*\langle \cdot, \cdot \rangle & = \langle \cdot, \cdot \rangle & L_g^*J & = J & L_g^*\omega  & = \omega
\end{align*}
for all $g \in G$.  Let $\mathfrak{g}$ be the Lie algebra of $G$, and recall that $\rho \colon \mathfrak{g} \to \Gamma(T\overline{M})$ denotes the infinitesimal action.  Since the $G$-action is K\"{a}hler, the vector fields $\rho(X)$ are Killing, real-holomorphic, and symplectic:
\begin{align}
\mathscr{L}_{\rho(X)} \langle\cdot,\cdot\rangle & = 0 & \mathscr{L}_{\rho(X)} J & = 0 &
\mathscr{L}_{\rho(X)} \omega & = 0.
\label{eq-killingfield}
\end{align}
We let $\Ad \colon G \to \GL(\mathfrak{g})$ be the adjoint action, and $\Ad^* \colon G \to \GL(\mathfrak{g}^*)$ denote the coadjoint action, where $\mathfrak{g}^* = \Hom(\mathfrak{g}; \R)$.  These $G$-actions are related via
$\langle \Ad_g^*(\xi), X \rangle = \langle \xi, \Ad_{g^{-1}}(X) \rangle$
for $\xi \in \mathfrak{g^*}$, $X \in \mathfrak{g}$, and $g \in G$, where here $\langle \cdot, \cdot \rangle \colon \mathfrak{g}^* \times \mathfrak{g} \to \R$ is the dual pairing.  

\subsection{Hamiltonian K\"{a}hler Actions on K\"{a}hler Manifolds}\label{sec-3.1}

\begin{definition} A $G$-action on $\overline{M}$ is \textit{Hamiltonian} if there exists a smooth map $\mu \colon \overline{M} \to \mathfrak{g}^*$, called a \textit{moment map}, such that: \\
\indent 1. For each $X \in \mathfrak{g}$, the function $\mu^X \colon \overline{M} \to \R$ given by $\mu^X(p) = \langle \mu(p), X \rangle$ is a Hamiltonian for the vector field $\rho(X)$, meaning that:
$$d\mu^X = -\iota_{\rho(X)}\omega.$$
\indent 2. The map $\mu$ is $G$-equivariant:
$$\mu(g \cdot p) = \Ad_g^*(\mu(p)).$$
\end{definition}
If there exists a $G$-invariant primitive of the symplectic form, then the action is Hamiltonian with a canonical choice of moment map:
\begin{lemma}\label{lem-momentliouville} Let $\lambda \in \Omega^1(\overline{M})$ be a $1$-form.  If $\lambda$ is a primitive of $\omega$ and a symplectic $G$-action on $(\overline{M}, \langle \cdot, \cdot \rangle, J, \omega)$ infinitesimally preserves $\lambda$, meaning that
\begin{align*}
d\lambda & = \omega, \\    
\mathscr{L}_{\rho(X)}\lambda & = 0, \ \ \ \forall X \in \mathfrak{g},
\end{align*}
	then $\mu^* := \lambda \circ \rho \colon \mathfrak{g} \to C^\infty(\overline{M})$ defines a moment map for the $G$-action via $\mu^X = \mu^*(X)$.
\end{lemma}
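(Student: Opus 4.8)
The plan is to verify the two defining properties of a moment map directly from the hypotheses using Cartan's magic formula. First I would check property 1: for $X \in \mathfrak{g}$, I need $d\mu^X = -\iota_{\rho(X)}\omega$ where $\mu^X = \lambda(\rho(X)) = \iota_{\rho(X)}\lambda$. By Cartan's formula, $\mathscr{L}_{\rho(X)}\lambda = d\iota_{\rho(X)}\lambda + \iota_{\rho(X)}d\lambda$. The left side vanishes by hypothesis, and $d\lambda = \omega$, so $d(\iota_{\rho(X)}\lambda) = -\iota_{\rho(X)}\omega$, i.e.\ $d\mu^X = -\iota_{\rho(X)}\omega$. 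This is the content of property 1, and it is essentially immediate.

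The slightly more delicate point is property 2, the $G$-equivariance $\mu(g\cdot p) = \Ad_g^*(\mu(p))$, equivalently $\mu^{\Ad_{g^{-1}}(X)}(p) = \mu^X(g\cdot p)$ for all $X \in \mathfrak{g}$, $g \in G$, $p \in \overline{M}$. I would first establish infinitesimal equivariance — that $\mu^{[Y,X]} = \rho(Y)\mu^X$, i.e.\ $\mathscr{L}_{\rho(Y)}\mu^X = \mu^{[Y,X]}$ — and then integrate this over the connected group $G$. For the infinitesimal statement: $\mathscr{L}_{\rho(Y)}\mu^X = \iota_{\rho(Y)}d\mu^X = -\iota_{\rho(Y)}\iota_{\rho(X)}\omega$ using property 1 (the $d\iota_{\rho(Y)}\mu^X$ term contributes nothing to the Lie derivative of a function beyond $\iota_{\rho(Y)}d\mu^X$). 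On the other hand, one computes $\iota_{\rho(X)}\iota_{\rho(Y)}\omega = \omega(\rho(Y),\rho(X))$, and since $\mathscr{L}_{\rho(X)}\lambda = 0$ one can show $\omega(\rho(Y),\rho(X)) = \rho(Y)(\iota_{\rho(X)}\lambda) - \iota_{[\rho(Y),\rho(X)]}\lambda$ and use $[\rho(Y),\rho(X)] = \rho([Y,X])$ from \eqref{eq-killingfield} and the Lie-algebra homomorphism property; alternatively, use that $L_g^*\lambda = \lambda$ for all $g \in G$ (which follows from $\mathscr{L}_{\rho(X)}\lambda = 0$ for all $X$ together with connectedness of $G$), so $\lambda$ itself is $G$-invariant, and then $\mu^X \circ L_g = \iota_{\rho(X)}\lambda \circ L_g = \iota_{(L_{g^{-1}})_*\rho(X)}\lambda = \iota_{\rho(\Ad_{g^{-1}}X)}\lambda = \mu^{\Ad_{g^{-1}}X}$, which is exactly property 2.

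The main obstacle, such as it is, lies in the bookkeeping of this last chain of identities: correctly tracking the sign conventions in the infinitesimal action $\rho_p(X) = \tfrac{d}{dt}|_0 \exp(-tX)\cdot p$ and in the pushforward formula $(L_g)_*\rho(X) = \rho(\Ad_g X)$, so that $L_g^*\lambda = \lambda$ translates into the $\Ad_{g^{-1}}$ appearing on the right-hand side of property 2 rather than $\Ad_g$. Once the $G$-invariance of $\lambda$ is in hand — itself a consequence of $\mathscr{L}_{\rho(X)}\lambda = 0$ for all $X$ and connectedness of $G$, since the flows of the $\rho(X)$ generate $G$ — property 2 is a one-line substitution, and the whole proof reduces to two applications of Cartan's formula plus this equivariance argument. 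I would organize the write-up as: (i) Cartan's formula gives property 1; (ii) $\mathscr{L}_{\rho(X)}\lambda = 0$ for all $X$ plus connectedness gives $L_g^*\lambda = \lambda$; (iii) substitute to get property 2.
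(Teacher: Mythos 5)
Your proof is correct and is the standard argument: Cartan's formula $\mathscr{L}_{\rho(X)}\lambda = d\iota_{\rho(X)}\lambda + \iota_{\rho(X)}d\lambda$ gives the Hamiltonian property $d\mu^X = -\iota_{\rho(X)}\omega$ immediately, and integrating $\mathscr{L}_{\rho(X)}\lambda = 0$ over the connected group gives $L_g^*\lambda = \lambda$, which combined with $\rho(\Ad_g X) = (L_g)_*\rho(X)$ yields $\mu^X \circ L_g = \mu^{\Ad_{g^{-1}}X}$, i.e.\ $\Ad^*$-equivariance. The paper states this lemma without proof, so there is nothing to compare against; your write-up fills the gap completely, and your care with the sign conventions in $\rho$ is exactly where the only pitfall lies.
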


Henceforth, we consider a Hamiltonian K\"{a}hler $G$-action on $(\overline{M}, \langle \cdot, \cdot \rangle, J, \omega)$ and fix a moment map $\mu \colon \overline{M} \to \mathfrak{g}^*$.  The $G$-action then gives rise to two distinguished classes of subsets: namely, the $G$-orbits $\mathcal{O}_z \subset \overline{M}$ and the $\mu$-level sets $\mu^{-1}(\xi) \subset \overline{M}$.  The following series of lemmas explores the relationships between them.

\begin{lemma} \label{lem:KerImgMoment} Let $p \in \overline{M}$, let $\mathcal{O} \subset \overline{M}$ be the $G$-orbit of $p$, let $G_p = \mathrm{Stab}(p)$ be the stabiliser of $p$, and let $\mathfrak{g}_p$ be the Lie algebra of $G_p$. Then:
\begin{align*}
\Ker(d\mu_p) & = [J(T_p\mathcal{O})]^\perp. \\
\mathrm{Im}(d\mu_p) & = \mathrm{Ann}(\mathfrak{g}_p) = \{\xi \in \mathfrak{g}^* \colon \xi(X) = 0, \ \forall X \in \mathfrak{g}_p\}.
\end{align*}
\end{lemma}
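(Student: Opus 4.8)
The plan is to deduce both statements from a single pointwise identity relating $d\mu_p$, the infinitesimal action $\rho_p$, and the complex structure $J$. First I would fix $v \in T_p\overline{M}$ and $X \in \mathfrak{g}$ and unwind the definitions: by the defining property $d\mu^X = -\iota_{\rho(X)}\omega$ of the moment map, together with $\omega(A,B) = \langle JA,B\rangle$ and the skew-adjointness of $J$ (which holds since $J$ is orthogonal with $J^2 = -\mathrm{id}$), one obtains
\[
\langle d\mu_p(v), X\rangle \;=\; d\mu^X_p(v) \;=\; -\,\omega_p\big(\rho_p(X),v\big) \;=\; -\,\langle J\rho_p(X),v\rangle \;=\; \langle \rho_p(X), Jv\rangle .
\]
This identity is the heart of the argument; everything else is linear algebra.

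For the kernel: $v \in \Ker(d\mu_p)$ if and only if $\langle \rho_p(X), Jv\rangle = 0$ for every $X \in \mathfrak{g}$, i.e.\ if and only if $Jv$ is orthogonal to $\mathrm{Im}(\rho_p) = T_p\mathcal{O}$. Using skew-adjointness of $J$ once more to rewrite $\langle Jv, w\rangle = -\langle v, Jw\rangle$, this is equivalent to $v \perp J(T_p\mathcal{O})$, i.e.\ $v \in [J(T_p\mathcal{O})]^\perp$, which is the first claim. A dimension count ($\dim[J(T_p\mathcal O)]^\perp = 2n - \dim\mathcal O = 2n - \mathrm{rank}\,d\mu_p$) confirms consistency but is not needed.

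For the image: I would instead compute its annihilator in $\mathfrak{g}$. An element $X \in \mathfrak{g}$ annihilates $\mathrm{Im}(d\mu_p)$ precisely when $\langle d\mu_p(v), X\rangle = 0$ for all $v$, which by the key identity says $\langle \rho_p(X), Jv\rangle = 0$ for all $v \in T_p\overline{M}$; since $v \mapsto Jv$ is a bijection of $T_p\overline{M}$, this forces $\rho_p(X) = 0$, i.e.\ $X \in \Ker(\rho_p) = \mathfrak{g}_p$, the identification of $\Ker\rho_p$ with the isotropy Lie algebra being recorded in Section \ref{sec:GrpAct}. Hence $\mathrm{Ann}_{\mathfrak g}(\mathrm{Im}(d\mu_p)) = \mathfrak{g}_p$, and applying $\mathrm{Ann}$ again in $\mathfrak{g}^*$ (using that $\mathrm{Ann}(\mathrm{Ann}(W)) = W$ in finite dimensions) gives $\mathrm{Im}(d\mu_p) = \mathrm{Ann}(\mathfrak{g}_p)$.

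There is no serious obstacle here: the proof is a few lines once the key identity is in place. The only points requiring care are the sign bookkeeping — tracking the minus sign from $d\mu^X = -\iota_{\rho(X)}\omega$ and the two applications of the skew-adjointness of $J$ — and the observation that passing from a subspace of $\mathfrak{g}^*$ to its annihilator and back is an involution only because all the spaces in sight are finite-dimensional.
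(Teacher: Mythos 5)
Your proof is correct and rests on the same key identity $\langle d\mu_p(v),X\rangle = -\langle v, J\rho_p(X)\rangle$ as the paper's, and the kernel computation is identical. The only difference is in the image statement: the paper proves the inclusion $\mathrm{Im}(d\mu_p)\subset \mathrm{Ann}(\mathfrak{g}_p)$ directly and obtains the reverse inclusion by a dimension count (rank--nullity combined with the kernel result), whereas you compute $\mathrm{Ann}_{\mathfrak{g}}(\mathrm{Im}(d\mu_p)) = \mathfrak{g}_p$ and invoke the finite-dimensional double-annihilator identity --- an equally valid and slightly more self-contained variant, since it does not rely on the first claim.
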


\begin{proof} For any $V \in T_p\overline{M}$ and $X \in \mathfrak{g}$, we have
$$\langle d\mu(V), X \rangle = d\mu^X(V) = \omega(V, \rho(X)) = -\langle V, J \rho(X) \rangle.$$
Thus, $V \in \text{Ker}(d\mu_p)$ if and only if $V$ is orthogonal to $J\rho(X)$ for all $X \in \mathfrak{g}$, which proves the first claim.  For the second, note that if $X \in \mathfrak{g}_p$, then $\rho(X) = 0$, so that $\langle d\mu(V), X \rangle = \omega(V, \rho(X)) = 0$ for all $V \in \mathfrak{g}$.  This shows that $\text{Im}(d\mu_p) \subset \text{Ann}(\mathfrak{g}_p)$.  The reverse inclusion follows from the following dimension count:
$$\dim(\text{Im}(d\mu_p)) = \dim(T_p\overline M) - \dim(J (T_p\mathcal{O})^\perp) = \dim(T_p\mathcal{O}) = \dim(\mathfrak{g}/\mathfrak{g}_p) = \dim(\text{Ann}(\mathfrak{g}_p)).$$
\end{proof}

\indent For the next result, let $\langle \cdot, \cdot \rangle_{\mathfrak{g}}$ denote an $\Ad$-invariant inner product on $\mathfrak{g}$.  For each covector $\xi \in \mathfrak{g}^*$, let $\xi^\sharp \in \mathfrak{g}$ be its dual vector --- i.e.\  the unique vector for which $\xi(Y) = \langle \xi^\sharp, Y \rangle_{\mathfrak{g}}$ holds for all $Y \in \mathfrak{g}$.  Considering the coadjoint $G$-action on $\mathfrak{g}^*$, for each $\xi \in \mathfrak{g}^*$, we let $G_\xi \leq G$ denote the corresponding stabiliser group:
$$G_\xi = \{g \in G \colon \Ad_g^*(\xi) = \xi\}.$$
\begin{lemma} \label{lem:CentralValue} Let $\xi \in \mathfrak{g}^*$.  Then: \\
\indent (a) The $G_\xi$-action on $\overline{M}$ preserves the $\mu$-level set $\mu^{-1}(\xi) \subset \overline{M}$. \\
\indent (b) We have $G_\xi = G$ if and only if $\xi^\sharp \in \mathfrak{z}(\mathfrak{g})$.  In this case, we call $\xi \in \mathfrak{g}^*$ a \emph{central value}.
\end{lemma}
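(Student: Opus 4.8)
The plan is to obtain part (a) immediately from the $G$-equivariance of $\mu$, and to prove part (b) by infinitesimalising the condition $G_\xi = G$ and transporting it across the $\Ad$-invariant inner product on $\mathfrak{g}$.

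For (a): if $p \in \mu^{-1}(\xi)$ and $g \in G_\xi$, then $G$-equivariance of the moment map gives $\mu(g\cdot p) = \Ad_g^*(\mu(p)) = \Ad_g^*(\xi) = \xi$, so $g\cdot p \in \mu^{-1}(\xi)$. Hence $G_\xi$ preserves the level set. This uses nothing beyond the definitions.

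For (b): since $G$ is connected it is generated by $\exp(\mathfrak{g})$, so $G_\xi = G$ if and only if $\Ad^*_{\exp(tX)}(\xi) = \xi$ for all $X \in \mathfrak{g}$ and $t \in \mathbb{R}$; differentiating, and using uniqueness of solutions to the linear ODE $\gamma'(t) = \ad^*_X\,\gamma(t)$, this is equivalent to $\ad^*_X(\xi) = 0$ for all $X \in \mathfrak{g}$, where $\ad^*_X$ is dual to $-\ad_X$ following the sign convention of the excerpt, i.e.\ $\langle \ad^*_X(\xi), Y\rangle = -\langle \xi, [X,Y]\rangle$ for $Y \in \mathfrak{g}$. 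To convert this into a statement about $\xi^\sharp$, I would use the $\Ad$-invariant inner product in its cyclic form $\langle [X,Y], Z\rangle_{\mathfrak{g}} = \langle [Z,X], Y\rangle_{\mathfrak{g}}$ to compute, for every $Y \in \mathfrak{g}$,
\[
\langle \ad^*_X(\xi), Y\rangle = -\langle \xi^\sharp, [X,Y]\rangle_{\mathfrak{g}} = -\langle [\xi^\sharp, X], Y\rangle_{\mathfrak{g}}.
\]
Since $\langle\cdot,\cdot\rangle_{\mathfrak{g}}$ is nondegenerate, $\ad^*_X(\xi) = 0$ if and only if $[\xi^\sharp, X] = 0$; hence $\ad^*_X(\xi) = 0$ for all $X \in \mathfrak{g}$ if and only if $\xi^\sharp$ commutes with every element of $\mathfrak{g}$, i.e.\ $\xi^\sharp \in \mathfrak{z}(\mathfrak{g})$. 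This proves (b).

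There is no genuine obstacle here: part (b) is essentially the standard fact that, under a bi-invariant metric, the fixed-point set of the coadjoint action corresponds to $\{\xi \in \mathfrak{g}^* : \xi^\sharp \in \mathfrak{z}(\mathfrak{g})\}$. The only point that needs care is sign bookkeeping --- keeping the excerpt's convention $\langle \Ad_g^*(\xi), X\rangle = \langle \xi, \Ad_{g^{-1}}(X)\rangle$ straight when differentiating to identify $\ad^*_X$, and then invoking the correct cyclic form of $\Ad$-invariance of $\langle\cdot,\cdot\rangle_{\mathfrak{g}}$.
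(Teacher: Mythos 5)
Your proof is correct and follows essentially the same route as the paper: part (a) is the identical one-line equivariance argument, and part (b) uses the same two ingredients (the $\Ad$-invariance of the inner product and the connectedness of $G$ via the exponential map), merely applied in the opposite order — you infinitesimalise the coadjoint condition first and then transport across $\sharp$, whereas the paper first converts $\Ad_g^*\xi = \xi$ into $\Ad_g\xi^\sharp = \xi^\sharp$ and then infinitesimalises the adjoint action.
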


\begin{proof} (a) Let $p \in \mu^{-1}(\xi)$ and $g \in G_\xi$.  The $G$-equivariance of $\mu \colon \overline{M} \to \mathfrak{g}^*$ gives $\mu(g \cdot p) = \text{Ad}_g^*(\mu(p)) = \text{Ad}_g^*(\xi) = \xi$, so $g \cdot p \in \mu^{-1}(\xi)$. \\
\indent (b) We have $\text{Ad}_g\xi^\sharp = (\text{Ad}_g^*\xi)^\sharp$, and therefore
	\begin{align*}
		G_\xi = G \,\iff\,  \text{Ad}_g^*(\xi) = \xi, \, \forall g \in G  
		\,\iff\, \text{Ad}_g\xi^\sharp = \xi^\sharp, \,  \forall g \in G & \,\iff\, [Y,\xi^\sharp] = 0, \, \forall Y \in \mathfrak{g},
	\end{align*}
where the last reverse implication follows from surjectivity of the exponential map $\exp \colon \mathfrak{g} \to G$ when $G$ is a compact connected Lie group \cite[$\S$4.2]{Brocker1985}
\end{proof}
\begin{remark}\label{rem-semisimple1}
If $G$ is a compact semisimple Lie group, then the only central value is $0 \in \mathfrak{g}$.
\end{remark}
\indent By Lemma \ref{lem:CentralValue}, if $\xi \in \mathfrak{g}^*$ is a central value, then the subset $\mu^{-1}(\xi) \subset \overline{M}$ is $G$-invariant, and is therefore is a union of $G$-orbits.  Conversely, if a $\mu$-level set $\mu^{-1}(\xi) \subset \overline{M}$ contains at least one $G$-orbit, then $\xi \in \mathfrak{g}^*$ must be a central value.  This last claim follows from the following Lemma:

\begin{lemma}[\cite{Pacini2002}]\label{lem-isotropiclevelset} Let $\mathcal{O} \subset \overline{M}$ be a $G$-orbit in $\overline{M}$.  The following are equivalent: \\
\indent 1. The $G$-orbit $\mathcal{O}$ is isotropic. \\
\indent 2. The $G$-orbit $\mathcal{O}$ is contained in a $\mu$-level set $\mu^{-1}(\xi)$ for some $\xi \in \mathfrak{g}^*$. \\
\indent 3. There exists $p \in \mathcal{O}$ such that $\mu(p) \in \mathfrak{g}^*$ is central. \\
\indent 4. For all $p \in \mathcal{O}$, the image $\mu(p) \in \mathfrak{g}^*$ is central.
\end{lemma}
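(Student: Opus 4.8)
The plan is to prove the four conditions equivalent along the cycle $(1) \Leftrightarrow (2)$, then $(2) \Rightarrow (4) \Rightarrow (3) \Rightarrow (2)$, relying on the two preceding lemmas so that essentially no new computation is required. The key reduction is that the derivative of $\mu$ has already been identified: Lemma \ref{lem:KerImgMoment} gives $\Ker(d\mu_p) = [J(T_p\mathcal{O})]^\perp$. Combined with the elementary Hermitian linear algebra recorded at the start of the preliminaries --- that a subspace $W$ is isotropic if and only if $W \subseteq (JW)^\perp$, using that $J$ is an orthogonal complex structure --- this says precisely that $T_p\mathcal{O}$ is isotropic $\iff$ $T_p\mathcal{O} \subseteq \Ker(d\mu_p)$.

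First I would handle $(1) \Leftrightarrow (2)$. Since $T_p\mathcal{O} = \Img(\rho_p)$, the condition $T_p\mathcal{O} \subseteq \Ker(d\mu_p)$ for every $p \in \mathcal{O}$ is equivalent to $d(\mu|_{\mathcal{O}}) \equiv 0$; as $\mathcal{O} \cong G/G_p$ is connected (because $G$ is connected), this is in turn equivalent to $\mu$ being constant on $\mathcal{O}$, i.e.\ to $\mathcal{O} \subseteq \mu^{-1}(\xi)$ with $\xi := \mu(p_0)$ for any chosen $p_0 \in \mathcal{O}$. Together with the displayed equivalence above, this yields $(1) \Leftrightarrow (2)$. (Strictly, isotropy of $\mathcal{O}$ is a pointwise condition on each $T_p\mathcal{O}$, but $G$-invariance of $\omega$ makes it uniform along the orbit; one may invoke this, or simply run the equivalence pointwise as above.)

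Next, for $(2) \Rightarrow (4)$ and $(3) \Rightarrow (2)$ I would bring in the $G$-equivariance of $\mu$ and Lemma \ref{lem:CentralValue}(b). If $\mathcal{O} \subseteq \mu^{-1}(\xi)$, then for any $p \in \mathcal{O}$ and $g \in G$ we get $\xi = \mu(g \cdot p) = \Ad_g^*(\mu(p)) = \Ad_g^*(\xi)$, hence $G_\xi = G$, so $\xi$ is central by Lemma \ref{lem:CentralValue}(b); since $\mu(p) = \xi$ for all $p \in \mathcal{O}$ this is $(4)$, and $(4) \Rightarrow (3)$ is immediate since $\mathcal{O} \neq \emptyset$. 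Conversely, if $\mu(p_0)$ is central for some $p_0 \in \mathcal{O}$, put $\xi := \mu(p_0)$; then $G_\xi = G$ by Lemma \ref{lem:CentralValue}(b), so $\mu(g \cdot p_0) = \Ad_g^*(\xi) = \xi$ for all $g \in G$, i.e.\ $\mu \equiv \xi$ on $\mathcal{O} = G \cdot p_0$, which is $(2)$. This closes the cycle.

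I do not expect a genuine obstacle here: the analytic content of $d\mu$ is packaged in Lemma \ref{lem:KerImgMoment} and that of "central value" in Lemma \ref{lem:CentralValue}, so the argument is bookkeeping. The only points requiring care are (i) correctly extracting ``$W$ isotropic $\iff W \subseteq (JW)^\perp$'' from the stated conventions, keeping track of the orthogonality of $J$; (ii) the passage from $d(\mu|_{\mathcal{O}}) = 0$ to ``$\mu$ constant on $\mathcal{O}$'', which genuinely uses connectedness of the orbit; and (iii) consistency with the paper's sign conventions for $\rho_p$ and the moment map --- but since the argument uses only $\Ker(d\mu_p)$ and $G$-equivariance, the sign choices are harmless.
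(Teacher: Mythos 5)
Your proposal is correct and follows essentially the same route as the paper: the equivalence $(2)\Leftrightarrow(3)\Leftrightarrow(4)$ is the same equivariance argument, and your derivation of $(1)\Leftrightarrow(2)$ via $\Ker(d\mu_p)=[J(T_p\mathcal{O})]^\perp$ is just the paper's computation $\mathscr{L}_{\rho(X)}\mu^Y=\omega(\rho(X),\rho(Y))$ repackaged through Lemma \ref{lem:KerImgMoment}. The points you flag (connectedness of the orbit, the characterisation of isotropy as $W\subseteq (JW)^\perp$) are handled correctly.
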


	\begin{proof}
	For the equivalence of 1 and 2, note that
		\begin{equation*}
			\mathscr{L}_{\rho(X)}\mu^Y \, = \, d\mu^Y(\rho(X)) \, = \, -(\iota_{\rho(Y)}\omega)(\rho(X)) \, = \, \omega( \rho(X), \rho(Y)).
		\end{equation*}
		Therefore,
		\begin{align*}
			\mu \text{ constant on }\mathcal{O} &\iff \mathscr{L}_{\rho(X)}\mu^Y = 0 \text{ on }\mathcal{O} &\text{ for all } X,Y \in \mathfrak{g}\\
			&\iff \omega(\rho(X),\rho(Y)) = 0 \text{ on }\mathcal{O} &\text{ for all }X,Y \in \mathfrak{g}\\
			&\iff \mathcal{O} \text{ isotropic}. &
		\end{align*}
    For the equivalence of 2, 3, and 4:
		\begin{align*}
			\mu(p) \text{ central} \iff \text{Ad}_g^*(\mu(p)) = \mu(p),\ \forall g \in G 
			&\iff \mu(g \cdot p) = \mu(p), \ \forall g \in G\\
			&\iff \mu \text{ constant on } \mathcal{O}.
		\end{align*}
	\end{proof}
	
\indent In summary, a $G$-orbit $\mathcal{O} \subset \overline{M}$ is isotropic if and only if it is contained in a $\mu$-level set $\mu^{-1}(\xi) \subset \overline{M}$ for some $\xi \in \mathfrak{g}^*$, and in this case, $\xi$ must be a central value.  Moreover, if $\mathcal{O}$ is an isotropic submanifold of dimension $n-k$, then at each $x \in \mathcal{O}$, there is a complex subspace $\mathcal{E}_x \subset T_x\overline{M}$ of complex dimension $k$ and orthogonal decompositions
\begin{align}
T_x\overline{M} & = \mathcal{E}_x \oplus T_x\mathcal{O} \oplus J(T_x\mathcal{O}) \label{eq-orthdecompositon} \\
\text{Ker}(d\mu_x) & = \mathcal{E}_x \oplus T_x\mathcal{O}. \label{eq-orthdecompositon2}
\end{align}
Here, the first splitting follows from Lemma \ref{lem:IsotropicDecomp}, and the second from Lemma \ref{lem:KerImgMoment}.

\begin{example} \label{ex-SO(n)} Take $\overline{M} = \C^n$, $n \geq 3$, with its usual flat K\"{a}hler structure.  Let $G = \SO(n)$ act on $\C^n = \R^n \oplus \R^n$ diagonally, meaning that $A \in \SO(n)$ acts on $(x,y) \in \R^n \oplus \R^n$ via
$$A \cdot (x,y) := (Ax, Ay).$$
This $\SO(n)$-action has three orbit types, as summarized in the following table:
$$\begin{tabular}{| c | c | c |} \hline
$(x,y)$ & Isotropy type & Diffeomorphism type of orbit \\ \hline \hline 
$(0,0)$ & $\SO(n)$ & Singleton \\ \hline
$(x,y) \neq (0,0)$ & $\SO(n-1)$ & $(n-1)$-dim sphere \\
$\{x,y\}$ linearly dependent & & $\Sph^{n-1} = \SO(n)/\SO(n-1)$ \\ \hline
$(x,y) \neq (0,0)$ & $\SO(n-2)$ & $(2n-3)$-dim Stiefel manifold \\
$\{x,y\}$ linearly independent & & $V_2(\R^n) = \SO(n)/\SO(n-2)$ \\ \hline
\end{tabular}$$
The (singular) orbits with isotropy type $(\SO(n-1))$ are $(n-1)$-dimensional isotropic submanifolds of $\C^n$.  Further, those that lie in the unit sphere $\Sph^{2n-1}(1) \subset \C^n$ are special Legendrian submanifolds of $\Sph^{2n-1}(1)$.

A moment map for the $\SO(n)$-action is $$\mu \colon \C^n \to \mathfrak{so}(n), \quad \mu(z) = \frac{i}{2}\begin{pmatrix} z_i\overline{z}_j - \overline{z}_i z_j \end{pmatrix}_{1 \leq i,j \leq n}.$$
Since $\SO(n)$ is simple, we have $\mathfrak{z}(\mathfrak{so}(n)) = 0$.  The level set $\mu^{-1}(0) \subset \C^n$ is the real affine variety
\begin{align*}
\mu^{-1}(0) & = \left\{z \in \C^n \colon z_i\overline{z}_j = \overline{z}_iz_j, \ \forall 1 \leq i,j \leq n \right\} \\
& = \left\{ (x,y) \in \R^n \oplus \R^n \colon y_ix_j - x_iy_j = 0, \ \forall 1 \leq i,j \leq n \right\} \\
& = \left\{ (x,y) \in \R^n \oplus \R^n \colon \{x,y\} \text{ is linearly dependent} \right\}\!.
\end{align*}
In particular, $\mu^{-1}(0) \subset \C^n$ is precisely the singular locus of the $\SO(n)$-action, consisting of those points with isotropy types $(\SO(n))$ and $(\SO(n-1))$.  Thus, $\mu^{-1}(0)$ decomposes as the disjoint union of the two subsets
\begin{align*}
    \mu^{-1}(0) \cap \C^n_{(\SO(n))} & = \{0\}, &
    \mu^{-1}(0) \cap \C^n_{(\SO(n-1))} & = \mu^{-1}(0) \setminus \{0\}.
\end{align*}
Finally, we point out that the principal locus of the $\SO(n)$-action on $\mu^{-1}(0)$, i.e.\ , the subset $\mu^{-1}(0) \cap \C^n_{(\SO(n-1))} \subset \C^n$, is an $(n+1)$-dimensional coisotropic cone.
\end{example}

\begin{example} \label{ex-Torus}  Take $\overline{M} = \C^n$, $n \geq 3$, with its usual flat K\"{a}hler structure.  Let $G = T^{n-1}$, the maximal torus of $\U(n)$, embedded in the standard way:
$$T^{n-1} = \left\{ \begin{pmatrix} e^{i\theta_1} & & \\ & \ddots & \\ & & e^{i\theta_n} \end{pmatrix} \colon \theta_1 + \cdots + \theta_n = 0 \right\}\!.$$
Accordingly, the induced $T^{n-1}$-action on $\C^n$ is:
$$(e^{i\theta_1}, \ldots, e^{i\theta_n}) \cdot (z_1, \ldots, z_n) := (e^{i\theta_1}z_1, \ldots, e^{i\theta_n}z_n), \ \ \text{ where } \theta_1 + \cdots + \theta_n = 0.$$
This $T^{n-1}$-action has $n$ orbit types, as summarized in the following table:
$$\begin{tabular}{| c | c | c |} \hline
$z = (z_1, \ldots, z_n)$ & Isotropy type & Diffeomorphism type of orbit \\ \hline \hline 
$0$ & $T^{n-1}$ & Singleton \\ \hline
Exactly $(n-1)$ $z_j$'s are $0$ & $T^{n-2}$ & $T^1$ \\ \hline
$\vdots$ & $\vdots$ & $\vdots$ \\ \hline
Exactly $2$ $z_j$'s are $0$ & $T^1$ & $T^{n-2}$ \\ \hline
At most one $z_j$ is $0$ & $\{\mathrm{Id}\}$ & $T^{n-1}$ \\ \hline
\end{tabular}$$
All of the $T^{n-1}$-orbits (regardless of isotropy type) are isotropic submanifolds of $\C^n$.  Note that the singular locus is the union of the $\binom{n}{2}$ axis complex $(n-2)$-planes in $\C^n$. We remark that the principal $T^{n-1}$-orbits that lie in the unit sphere $\Sph^{2n-1}(1) \subset \C^n$ are special Legendrian submanifolds of $\Sph^{2n-1}(1)$. \\
\indent Identifying $\mathfrak{t}^{n-1} = \{(i\xi_1, \ldots, i\xi_n) \in i\R^n \colon \sum \xi_j = 0\}$, a moment map for the action is $$\mu \colon \C^n \to \mathfrak{t}^{n-1},\quad \mu(z) = -\frac{1}{2n}i\left( n|z_1|^2 - |z|^2,\, \ldots,\,  n|z_n|^2 - |z|^2 \right)\!.$$
Since $T^{n-1}$ is abelian, we have $\mathfrak{z}(\mathfrak{t}^{n-1}) = \mathfrak{t}^{n-1} \cong \R^{n-1}$.  For $\xi = (i\xi_1, \ldots, i\xi_n) \in \mathfrak{t}^{n-1}$, the level set $\mu^{-1}(\xi) \subset \C^n$ is 
\begin{align*}
    \mu^{-1}(\xi) & = \left\{ z \in \C^n \colon |z_i|^2 -|z_1|^2 = 2(\xi_1 - \xi_i),\,\, \forall i \in \{2,\ldots,n\} \right\}.
\end{align*}
\end{example}

We now turn to the process of K\"{a}hler reduction.  Classically, if one assumes that the $G$-action on $\mu^{-1}(\xi) \subset \overline{M}$ is free, where $\xi \in \mathfrak{g}^*$ is a central value, then both $\mu^{-1}(\xi)$ and $\mu^{-1}(\xi)/G$ are smooth manifolds, and the latter inherits a natural K\"{a}hler structure.  However, since we are not assuming that the $G$-action is free, the sets $\mu^{-1}(\xi)$ and $\mu^{-1}(\xi)/G$ need not be smooth manifolds in general.  To remedy this, we fix an isotropy type $(H)$ and restrict attention to the stratum $\overline{M}_{(H)} \subset \overline{M}$, the union of $G$-orbits in $\overline{M}$ with isotropy type $(H)$.

\begin{theorem}[K\"{a}hler Reduction]\label{thm:KahlerReduction}
Let $(\overline{M}^{2n}, g, J, \omega)$ be a K\"{a}hler $2n$-manifold equipped with a Hamiltonian Kähler $G$-action, where $G$ is a compact connected Lie group, and fix a moment map $\mu \colon \overline{M} \to \mathfrak{g}^*$.  Moreover: \begin{itemize}
\item Fix a central value $\xi \in \mathfrak{g}^*$, so that $\mu^{-1}(\xi) \subset \overline{M}$ is $G$-invariant.
\item Fix an isotropy type $(H)$ for which the $G$-orbits $G/H$ are $(n-k)$-dimensional.
\item Fix a connected component $M$ of the intersection $M_\xi := \mu^{-1}(\xi) \cap \overline M_{(H)}$.
\end{itemize}
Then: \\
\indent (a) $M$ is a smooth manifold and the quotient space $Q := M/G$ is a smooth manifold.\\
\indent (b) Let $\pi \colon M \to Q$ denote the quotient map.  Then $Q$ admits a K\"ahler structure $(g_Q, J_Q, \omega_Q)$ such that:
\begin{itemize}
	  \item[(i)] $\pi:(M, g)\rightarrow (Q,g_Q)$ is a Riemannian submersion,
	  \item[(ii)] For all $X \in \mathcal{H} := \Ker(\pi_*)^\perp$, we have $\pi_* \circ J(X) = J_Q \circ \pi_*(X)$,
	  \item[(iii)] $\pi^*\omega_Q = \left.\omega\right|_{M}$.
\end{itemize}
\indent(c) For $z\in M$, the pushforward map $\pi_*:TM\rightarrow TQ$ restricted to $\mathcal{H}_z$ is a Hermitian isomorphism. In particular, the horizontal subbundle $\mathcal{H} \subset TM$ is $J$-invariant.
\end{theorem}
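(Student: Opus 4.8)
The plan is to prove (a) by putting $\overline M$ into Marle--Guillemin--Sternberg normal form near an orbit inside $M$, from which the smoothness of $M$ and of $Q=M/G$ is immediate, and then to obtain the K\"ahler structure on $Q$ by transporting $g$, $J$, $\omega$ through $\pi$ along an explicitly identified horizontal distribution. \emph{For (a)}, fix $z\in M$ and, after conjugating $(H)$, assume $G_z=H$. Since $\xi$ is central, $\mathcal O_z$ is isotropic (Lemma \ref{lem-isotropiclevelset}), so (\ref{eq-orthdecompositon})--(\ref{eq-orthdecompositon2}) provide a complex subspace $\mathcal E_z\subset T_z\overline M$ with $T_z\overline M=\mathcal E_z\oplus T_z\mathcal O_z\oplus J(T_z\mathcal O_z)$ and $\Ker(d\mu_z)=\mathcal E_z\oplus T_z\mathcal O_z$; moreover $\mathcal E_z$ is $H$-invariant, since $H$ fixes $z$ and preserves $J$, $g$, and $T_z\mathcal O_z$. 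The symplectic slice theorem now identifies a $G$-invariant neighbourhood of $z$ in $\overline M$, equivariantly and symplectically, with a neighbourhood of the zero section of the model $Y:=G\times_H\big(\mathrm{Ann}(\mathfrak h)\oplus\mathcal E_z\big)$ --- here the symplectic slice is $\Ker(d\mu_z)/T_z\mathcal O_z\cong\mathcal E_z$ --- on which the moment map is $\mu_Y([g,\eta,v])=\Ad^*_g\big(\xi+\eta+\mu_{\mathcal E_z}(v)\big)$, with $\mu_{\mathcal E_z}\colon\mathcal E_z\to\mathfrak h^*$ the quadratic moment map of the linear $H$-action. Because $\xi$ is central, $\mu_Y=\xi$ forces $\eta=0$ and $\mu_{\mathcal E_z}(v)=0$, while the isotropy-type-$(H)$ locus of $Y$ meets $\{0\}\oplus\mathcal E_z$ exactly in $\{0\}\oplus\mathcal E_z^H$; as $H$ acts trivially on $\mathcal E_z^H$, $\mu_{\mathcal E_z}$ vanishes identically there, so near $z$ one has $M\cong G\times_H\mathcal E_z^H$. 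Hence $M$ is a smooth $G$-manifold of constant isotropy type $(H)$ with $T_zM=T_z\mathcal O_z\oplus\mathcal E_z^H$, and by the constant-isotropy-type version of Theorem \ref{thm:PrincipalOrbit} (as in the discussion following it), $Q=M/G$ is a smooth manifold and $\pi\colon M\to Q$ is a fibre bundle with fibre $G/H$, in particular a submersion.

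\emph{For (b)(i)--(ii) and (c)}, the vertical space is $\Ker(\pi_*)_z=T_z\mathcal O_z$, so the horizontal space is $\mathcal H_z:=(T_z\mathcal O_z)^\perp\cap T_zM=\mathcal E_z^H$, and $\pi_*|_{\mathcal H_z}\colon\mathcal H_z\to T_{\pi(z)}Q$ is a linear isomorphism. Since $\mathcal E_z$ is $J$-invariant and $J$ commutes with the $H$-action, $\mathcal H_z=\mathcal E_z^H=(J\mathcal E_z)^H$ is $J$-invariant, which is the last assertion of (c). Define $g_Q$ and $J_Q$ on $Q$ by declaring $\pi_*|_{\mathcal H_z}$ to be an isometry intertwining $J|_{\mathcal H_z}$ with $J_Q$; these are independent of the choice of $z$ in a fibre because $g$, $J$ and the orthogonal splitting $TM=\Ker\pi_*\oplus\mathcal H$ are $G$-invariant and $\pi$ is equivariant, and $(g_Q,J_Q)$ is almost-Hermitian because $(\mathcal H_z,g,J)$ is Hermitian. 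By construction $\pi$ is a Riemannian submersion, $\pi_*\circ J=J_Q\circ\pi_*$ on $\mathcal H$, and $\pi_*|_{\mathcal H_z}$ is a Hermitian isomorphism.

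\emph{For (b)(iii) and the K\"ahler condition}, set $\omega_Q:=g_Q(J_Q\,\cdot\,,\,\cdot\,)$. Both $\pi^*\omega_Q$ and $\omega|_M$ vanish whenever one slot is vertical --- the former trivially, the latter because $\omega(\rho(X),W)=-d\mu^X(W)=0$ for $W\in T_zM\subset\Ker(d\mu_z)$ --- and on horizontal $W_1,W_2\in\mathcal H_z$, using $JW_1\in\mathcal H_z$ and the properties just established,
\[ (\pi^*\omega_Q)(W_1,W_2)=g_Q(J_Q\pi_*W_1,\pi_*W_2)=g_Q(\pi_*JW_1,\pi_*W_2)=g(JW_1,W_2)=\omega(W_1,W_2), \]
so $\pi^*\omega_Q=\omega|_M$, which is (b)(iii). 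Then $\pi^*(d\omega_Q)=d(\omega|_M)=0$ together with the injectivity of $\pi^*$ on forms gives $d\omega_Q=0$. Finally $J_Q$ is integrable: this one may either quote from the classical K\"ahler reduction theorem (Guillemin--Sternberg/Hitchin), applicable verbatim once (a) has reduced us to the smooth constant-isotropy manifold $M$, or verify by lifting vector fields on $Q$ to horizontal fields on $M$ and computing the Nijenhuis tensor from $N_J=0$ on $\overline M$ and the $J$-invariance of $\mathcal H$. With $\omega_Q$ closed, $g_Q$-compatible, and $J_Q$ integrable, $(Q,g_Q,J_Q,\omega_Q)$ is K\"ahler.

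The step I expect to be the main obstacle is (a): pinning down the germ of $M=\mu^{-1}(\xi)\cap\overline M_{(H)}$ is precisely the single-stratum case of singular symplectic reduction à la Sjamaar--Lerman, and it genuinely relies on the symplectic slice theorem near an isotropic orbit. Once the normal form is in hand, parts (b) and (c) are a routine transport of structure through the submersion, the only secondary subtlety being the integrability of $J_Q$, which is most cleanly obtained by citing classical K\"ahler reduction.
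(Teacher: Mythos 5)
Your proof is correct in outline but takes a genuinely different route from the paper's. For part (a), you derive the smoothness of $M$ from the Marle--Guillemin--Sternberg normal form at an isotropic orbit, identifying $M$ locally with $G\times_H\mathcal{E}_z^H$; the paper instead simply cites Sjamaar--Lerman's stratification theorem (whose proof is essentially the slice-theorem argument you carry out), so your version is more self-contained but reproves known machinery. For part (b), you transport $g$, $J$, $\omega$ directly through $\pi$ along the $J$-invariant horizontal distribution $\mathcal{H}=\mathcal{E}^H$ and verify (i)--(iii) by hand, whereas the paper reduces to the classical \emph{free} K\"ahler reduction theorem by restricting to the fixed-point submanifold $\overline M_H=\{w:\mathrm{Stab}(w)=H\}$, on which $L:=N_G(H)/H$ acts freely, and then pushes the resulting K\"ahler structure across the identification $M_H/L\cong M/G$. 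Your approach makes the geometry of the horizontal bundle more explicit; the paper's buys the full K\"ahler statement (in particular integrability of $J_Q$) for free from the classical theorem.

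The one point where your argument is not right as written is the integrability of $J_Q$. You cannot quote classical K\"ahler reduction ``verbatim'' on $M$: the $G$-action on $M$ still has stabilisers conjugate to $H$, which may be positive-dimensional, and the classical theorem requires a free action on the level set --- constant isotropy type is not enough. The standard repair is precisely the paper's device of passing to $\overline M_H$ and the free $N_G(H)/H$-action; alternatively your proposed Nijenhuis computation does work, but it needs care because $J$ preserves $\mathcal{H}$ and not $TM$, so the bracket terms with vertical components must be handled using the integrability of $J$ on the ambient $\overline M$ rather than on $M$ alone, and you have not carried this out. Everything else --- the identification of the type-$(H)$ locus of the local model with $G\times_H\mathcal{E}_z^H$, the vanishing of the quadratic slice moment map on the $H$-fixed subspace, the well-definedness of the transported structures via $G$-equivariance, and the verification of (b)(iii) and of $d\omega_Q=0$ --- is sound.
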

\begin{proof}
By redefining the moment map to be $\overline\mu := \mu - \xi$ if necessary, we may without loss of generality assume that $\xi = 0$. In the case where $G$ acts freely on $\overline M$, the theorem is standard, see for example \cite[Thm. 3.1]{Hitchin1987}.

(a) That each connected component $M \subset M_0 := \mu^{-1}(0) \cap \overline M_{(H)}$ is a smooth manifold is shown in \cite[Thm. 3.1]{Sjamaar1991}. That $Q = M/G$ is a smooth manifold then follows from the Principal Orbit Theorem \ref{thm:PrincipalOrbit}.

(b) The following argument is mostly taken from \cite{Sjamaar1991} and \cite{Mayrand2022}, where more details may be found. We consider the set $\overline M_{H} := \{w \in \overline M \, : \, \text{Stab}(w) = H\}$, which may be shown to be a K\"ahler submanifold of $\overline M$. The group $L := N_G(H)/H$ acts freely on $\overline M_H$, where $N_G(H)$ denotes the normaliser of $H$ in $G$. The Lie coalgebra $\mathfrak{l}^*$ may be identified with the subalgebra $\mathfrak{h}^o\cap (\mathfrak{g}^*)^H \leq \mathfrak{g}^*$, where $(\mathfrak{g}^*)^H$ denotes the fixed point set under the coadjoint action and $\mathfrak{h}^o$ is the annihilator of $\mathfrak{h}$ in $\mathfrak{g}^*$. In this way, $\mu$ restricts to a map $\mu_H:\overline M_H \rightarrow \mathfrak{l}^*$, which may be seen to be a moment map for the action of $L$ on $\overline M_H$. \\
\indent Now, $M_H := M \cap \overline M_H$ is a union of connected components of $\mu^{-1}_H(0)$, satisfying $G\cdot M_H = M$. By \cite[Thm. 3.1]{Hitchin1987}, the quotient $M_H / L$ may be given the structure of a K\"ahler manifold, and if $\mathcal{H}'$ denotes the horizontal bundle of the fibration $\pi_L:M_H \rightarrow M_H/L$, then the restriction of the quotient map $\pi_L |_{\mathcal{H}'}$ is a Hermitian isomorphism. Moreover $M_H/L \cong M/G = Q$, and therefore $Q$ inherits a K\"ahler structure $(g_Q, \omega_Q, J_Q)$ from $M_H / L$. We therefore have the following commutative diagram, where $\pi_L$ is a Riemannian submersion and $\iota$ is the inclusion map:
\[\xymatrix{
M_H \ar[r]^\iota \ar[rd]_{\pi_L} &M \ar[d]^{\pi} \\
&Q}\]
Restricting to the horizontal bundle at a point $z \in M_H$, we therefore have Hermitian vector space isomorphisms \[\iota_*|_{\mathcal{H}'_z}: \mathcal{H}'_z \rightarrow \iota_*(\mathcal{H}'_z) \leq T_z M, \quad \pi_L|_{\mathcal{H}'_z}:\mathcal{H}'_z \rightarrow T_{\pi_L(z)}Q,\]
and therefore $\pi|_{\iota_*(\mathcal{H}'_z)}: \iota_*(\mathcal{H}'_z) \rightarrow T_{\pi_L (z)}Q$ is also a Hermitian vector space isomorphism. We use this isomorphism at the points $z\in M_H$, pushed forwards by $(L_g)_*$ to a general point $g \cdot z \in M$, to prove the three claims of part (b).

(i) Note that $\Ker(\pi_*) = T\mathcal{O}$, and so we necessarily have the vector space decomposition $T_{g\cdot z}M = T_{g \cdot z}\mathcal{O} \,\oplus\, (L_g)_*\iota_* \mathcal{H}'_z$. It remains to show that this decomposition is orthogonal, so that $(L_g)_*\iota_*\mathcal{H}' = \mathcal{H}_{g\cdot z} = \Ker(\pi_*)^\perp_{g\cdot z}$. Since the action is isometric, it suffices to prove this at a point $z\in M_H$, from which it follows for any $g\cdot z \in G\cdot M_H = M$. Taking $X \in \mathfrak{g}$, $Y \in \iota_*\mathcal{H}'_z$, note that
\begin{equation}
\langle \rho_z(X),\, Y\rangle \, = \, \omega(\rho_z(X), JY) \, = \, -d\mu^X(JY) = 0,\label{eq-rhocalc1} 
\end{equation}
since $Y\in\iota_*\mathcal{H}'_z \implies JY \in \iota_*\mathcal{H}'_z \leq T_z M$ and $\mu$ is identically 0 on $M$.

(ii) Since $\pi_L:M_H\rightarrow Q$ is a K\"ahler reduction, for $z\in M_H$, $X \in \mathcal{H}_z$, we have $J_Q \circ \pi_* X = \pi_* \circ J X$. Then, since $G\circlearrowright \overline M$ is a K\"ahler action, for $X \in \mathcal{H}_{g\cdot z}$:
\[ J_Q\circ \pi_* X = J_Q \circ \pi_* \circ (L_{g^{-1}})_* X = \pi_*\circ J(L_{g^{-1}})_*X = \pi_*(L_{g^{-1}})_* JX = \pi_*\circ JX. \]

(iii) We prove the statement in two cases. If $X \in \mathcal{H}$, then for all $Y \in TM$,
\[\pi^* \omega_Q(X,Y) \, = \, \omega_Q(\pi_*(X), \pi_*(Y)) \, = \, \langle \pi_* J_Q X, \pi_* Y\rangle \, = \, \langle JX, Y\rangle \, = \, \omega|_M(X,Y).\]
If $X \in \mathfrak{g}$, then by an identical argument to (\ref{eq-rhocalc1}), $\pi^* \omega_Q(\rho(X),Y) = \omega|_M(X,Y) = 0$. \\
\indent (c) Any $z \in M$ is of the form $g \cdot w$ for $w \in M_H$. Then,  $\pi_*|_{\mathcal{H}_{z}} = \pi_*|_{\mathcal{H}_w} \circ (L_{g^{-1}})_*|_{\mathcal{H}_{z}}$ is a composition of Hermitian isomorphisms.
\end{proof}

\begin{remark} Let $\xi \in \mathfrak{g}^*$ be a central value, let $(H)$ be an isotropy type, and let $M$ be a connected component of $M_\xi = \mu^{-1}(\xi) \cap \overline{M}_{(H)}$.  At a point $z \in M$, there is an orthogonal decomposition
\begin{equation} \label{eq:OrthDecomp2}
T_zM = \mathcal{H}_z \oplus T_z\mathcal{O}_z.
\end{equation}
 Comparing with (\ref{eq-orthdecompositon2}), we see that each $\mathcal{H}_z$ is a complex subspace of $\mathcal{E}_z$.  In particular, if the $G$-orbits of type $(H)$ are $(n-k)$-dimensional, then $\dim(M) \leq n+k$. \\
 \indent We are primarily interested in the $k=1$ case.  In this situation, $M$ automatically attains its maximum dimension, $\dim(M) = n+1$.  Indeed, by (\ref{eq-orthdecompositon2}), we have $n-1 \leq \dim(M) \leq n+1$.  Moreover, since $\mathcal{H}$ is a symplectic subspace, it is even-dimensional, so $\dim(\mathcal{H}) = 2$.
 \end{remark}


For more on K\"{a}hler reduction, see \cite{Sjamaar1991}, \cite{Boyer2008}.

\subsection{$G$-Invariant Lagrangians of K\"{a}hler Manifolds}\label{sec-3.2}

\indent \indent We now consider the $G$-invariant Lagrangian submanifolds of our K\"{a}hler manifold $\overline{M}^{2n}$. We first show that such submanifolds are constrained to lie in level sets of the moment map:

\begin{proposition}
     \label{prop:constrain} If $L \subset \overline{M}$ is a connected $G$-invariant immersed Lagrangian submanifold, then $L \subset \mu^{-1}(\xi)$ for some central value $\xi \in \mathfrak{g}^*$.
\end{proposition}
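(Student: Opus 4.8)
<br>

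The key fact I'd want to use is Lemma \ref{lem-isotropiclevelset}: a $G$-orbit is isotropic iff it lies in a $\mu$-level set, and in that case the value is central. So the plan is to reduce Proposition \ref{prop:constrain} to showing that every $G$-orbit contained in $L$ is isotropic.

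\textbf{Plan of proof.}

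\emph{Step 1: Orbits in $L$ are isotropic.} Let $L \subset \overline{M}$ be a connected $G$-invariant immersed Lagrangian, presented via a $G$-equivariant immersion $F \colon \hat L \to \overline M$. For $p \in \hat L$ with image $z = F(p)$, the orbit $\mathcal{O}_z$ satisfies $T_z\mathcal{O}_z = \mathrm{Im}(\rho_z) \subseteq F_*(T_p\hat L)$, since $F$ is $G$-equivariant (differentiating $F(g\cdot p) = g\cdot F(p)$ at $g = e$ gives $F_*\rho_p = \rho_z$, so the orbit direction is tangent to $L$). Because $F_*(T_p\hat L)$ is a Lagrangian subspace, $\omega$ vanishes on it, hence $\omega|_{T_z\mathcal{O}_z} = 0$; thus $\mathcal{O}_z$ is isotropic. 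By Lemma \ref{lem-isotropiclevelset} (equivalence of 1 and 3/4), $\mu(z) \in \mathfrak{g}^*$ is a central value, and $\mu$ is constant on $\mathcal{O}_z$.

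\emph{Step 2: The central value is locally constant on $L$, hence globally constant.} Define $\psi := \mu \circ F \colon \hat L \to \mathfrak{g}^*$. I claim $\psi$ is locally constant. Two ways to see this: (i) directly, for $V \in T_p\hat L$ and $X \in \mathfrak{g}$, compute $d\psi^X(V) = d\mu^X(F_*V) = \omega(F_*V, \rho_z(X)) = 0$ since $F_*V$ and $\rho_z(X)$ both lie in the Lagrangian subspace $F_*(T_p\hat L)$ on which $\omega$ vanishes — so $d\psi = 0$ identically; or (ii) observe $\psi$ takes values in the discrete... no, the central values need not be discrete, so (i) is the clean argument. Since $\hat L$ is connected and $d\psi \equiv 0$, the map $\psi$ is constant, equal to some $\xi \in \mathfrak{g}^*$. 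By Step 1, $\xi$ is a central value. Therefore $F(\hat L) = L \subseteq \mu^{-1}(\xi)$, as desired.

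\emph{Main obstacle.} There is essentially no hard analytic content; the only subtlety is bookkeeping around the immersed (rather than embedded) and possibly non-equivariantly-presented nature of $L$. One must make sure the argument runs on the domain $\hat L$ of an equivariant immersion and uses only that $F$ is $G$-equivariant and that each $F_*(T_p\hat L)$ is Lagrangian — in particular that $T_z\mathcal{O}_z \subseteq F_*(T_p\hat L)$, which is where $G$-invariance of $L$ is genuinely used. Connectedness of $\hat L$ (guaranteed by the paper's convention that connected immersed submanifolds have connected domain) is what upgrades "locally constant" to "constant"; without it one would only get that each connected component lies in some level set, with a priori different central values. I would also remark that the computation $d\psi = 0$ is exactly the infinitesimal version of "$\mu$ is constant on isotropic orbits" extended to all tangent directions of $L$, which is why the Lagrangian (not merely coisotropic) hypothesis is what makes it work.
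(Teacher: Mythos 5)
Your proposal is correct and takes essentially the same route as the paper's proof: both show that $d\mu$ annihilates every tangent direction of $L$ because $\omega$ vanishes on the Lagrangian subspace containing both $F_*V$ and $\rho_z(X)$ (your direct computation is exactly the content of Lemma \ref{lem:KerImgMoment} as the paper uses it), then invoke connectedness to get constancy and Lemma \ref{lem-isotropiclevelset} to get centrality. The only cosmetic difference is that the paper first reduces to the embedded case via Remark \ref{rmk-immersedunionofembedded}, whereas you work directly on the domain $\hat L$ of the equivariant immersion; both handle the immersed case adequately.
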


\begin{proof}
Without loss of generality, we may assume that $L$ is embedded, since by Remark \ref{rmk-immersedunionofembedded} a $G$-invariant immersed submanifold is a union of $G$-invariant embedded submanifolds. 
Fix $z \in L$, and let $\mathcal{O} = G \cdot z$ be its $G$-orbit.  Since $L$ is a $G$-invariant Lagrangian, we have $\mathcal{O} \subset L$, so that $T_z\mathcal{O} \subset T_zL$, and hence $J(T_z\mathcal{O}) \subset J(T_zL) = (T_zL)^\perp$.  It follows that $T_zL \subset [J(T_z\mathcal{O})]^\perp = \Ker(d\mu_z)$ by Lemma  \ref{lem:KerImgMoment}.  That is, each $X \in T_zL$ has $d\mu_z(X) = 0$.  Since $L$ is connected, we deduce that $\mu$ is constant on $L$.  Finally, since $\mathcal{O}\subset L$ is an isotropic orbit, Lemma \ref{lem-isotropiclevelset} implies that the value of $\mu$ on $L$ is central.
\end{proof}


\begin{remark}\label{rem-semisimple2} Suppose $G$ is semi-simple. By Remark \ref{rem-semisimple1}, if $L \subset \overline{M}$ is a connected $G$-invariant immersed Lagrangian submanifold, then $L \subset \mu^{-1}(0)$.
\end{remark}

\indent There is also a converse to Proposition \ref{prop:constrain}:

\begin{proposition}\label{prop-lagissymmetric}
    Let $\xi \in \mathfrak{g}^*$ be a central value, and let $L \subset \mu^{-1}(\xi)$ be a closed embedded Lagrangian submanifold. Then $L$ is $G$-invariant.
\end{proposition}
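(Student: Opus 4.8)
The plan is to show that the identity component $G_0$ of $G$ preserves $L$, and then deduce invariance under all of $G$ using that $L$ is closed and connected together with the fact (used throughout the paper) that $G$ is compact connected --- so in fact $G = G_0$. The key point is that each fundamental vector field $\rho(X)$, for $X \in \mathfrak{g}$, is \emph{tangent} to $L$ at every point of $L$. Granting this, the flow of $\rho(X)$ starting at a point of $L$ stays in $L$ (since $L$ is closed, the integral curve cannot escape $L$ in finite time), so $L$ is invariant under $\exp(tX)$ for all $t$ and all $X$; by surjectivity of $\exp \colon \mathfrak{g} \to G$ for compact connected $G$ (cited in the proof of Lemma \ref{lem:CentralValue}), $L$ is $G$-invariant.

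So the heart of the matter is the tangency claim: for $z \in L$ and $X \in \mathfrak{g}$, we have $\rho(z)(X) \in T_zL$. First I would observe that since $\xi$ is a central value, $\mu^{-1}(\xi)$ is $G$-invariant (Lemma \ref{lem:CentralValue}(a)), hence a union of $G$-orbits, so $\rho(X)$ is everywhere tangent to $\mu^{-1}(\xi)$; in particular $\rho(z)(X) \in T_z(\mu^{-1}(\xi))$. Now I would use Lemma \ref{lem:KerImgMoment}: differentiating $\mu$, we have $T_z(\mu^{-1}(\xi)) \subseteq \Ker(d\mu_z) = [J(T_z\mathcal{O}_z)]^\perp$ (here $\mathcal{O}_z$ is the $G$-orbit of $z$). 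On the other hand $L$ is Lagrangian, so $T_zL$ is a Lagrangian subspace of $T_z\overline M$, and since $\mu$ is constant on $L$ (which follows from Proposition \ref{prop:constrain}, or directly: $T_zL \subseteq \Ker(d\mu_z)$ as in that proof) we also know $T_zL \subseteq \Ker(d\mu_z)$. The decomposition to exploit is the one from (\ref{eq-orthdecompositon})--(\ref{eq-orthdecompositon2}): writing $\mathcal{O} = \mathcal{O}_z$, which is an isotropic orbit of some dimension $n-k$, there is a complex subspace $\mathcal{E}_z$ with $\Ker(d\mu_z) = \mathcal{E}_z \oplus T_z\mathcal{O}$ and $T_z\overline M = \mathcal{E}_z \oplus T_z\mathcal{O} \oplus J(T_z\mathcal{O})$. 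Thus both $T_zL$ and $T_z\mathcal{O}$ live inside $\mathcal{E}_z \oplus T_z\mathcal{O}$.

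To finish the tangency claim, I would argue that $T_z\mathcal{O} \subseteq T_zL$. Suppose not. Then there is $v \in T_z\mathcal{O}$ with $v \notin T_zL$; write $v = e + w$ with $e \in \mathcal{E}_z$, $w \in T_z\mathcal{O}$ according to the splitting --- but $T_z\mathcal{O}$ is one of the summands, so actually $v \in T_z\mathcal{O}$ already, and the real content is a dimension/symplectic-orthogonality count inside $\mathcal{E}_z \oplus T_z\mathcal{O}$. Here is the clean way: $T_zL$ is Lagrangian, so $T_zL = (T_zL)^{\omega}$ (symplectic orthogonal complement within $T_z\overline M$). Since $\mathcal O \subseteq L$ (because $L$ is $G$-invariant... wait, that is what we are proving). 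Let me instead argue: $T_zL \subseteq \Ker(d\mu_z)$ means, by the identity $\langle d\mu_z(V),X\rangle = \omega(V,\rho(z)(X))$ from the proof of Lemma \ref{lem:KerImgMoment}, that $\omega(V, \rho(z)(X)) = 0$ for all $V \in T_zL$ and all $X \in \mathfrak{g}$; i.e. $T_z\mathcal{O} = \Img(\rho(z)) \subseteq (T_zL)^{\omega} = T_zL$, the last equality because $T_zL$ is Lagrangian. Hence $T_z\mathcal{O} \subseteq T_zL$, and therefore $\rho(z)(X) \in T_zL$ for every $X$, which is the tangency claim.

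The main obstacle, and the only place some care is needed, is the passage from ``$L$ is invariant under $\exp(\mathfrak g)$'' to ``$L$ is $G$-invariant'': this needs the integral curves of $\rho(X)$ through points of $L$ to remain in $L$ for all time, which I would justify by noting that $L$ closed and $\overline M$ (or at least the relevant orbit, which is compact) complete forces the integral curve to exist and stay in the closed set $L$ once we know it is tangent to $L$ --- this is the standard fact that a vector field tangent to a closed embedded submanifold restricts to a complete vector field there whenever its flow on the ambient space is defined, and $\rho(X)$ is complete since $G$ is compact. (Closedness of $L$ is exactly why the hypothesis is stated for closed $L$.) Then surjectivity of $\exp$ for compact connected $G$ gives full $G$-invariance. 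Everything else is bookkeeping with the decompositions already established in the excerpt.
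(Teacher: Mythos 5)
Your proposal is correct and follows essentially the same route as the paper's proof: one first shows $T_z\mathcal{O}_z \subseteq T_zL$ from $T_zL \subseteq \Ker(d\mu_z)$ (your symplectic-orthogonality phrasing $T_z\mathcal{O}_z \subseteq (T_zL)^{\omega} = T_zL$ is equivalent to the paper's use of $\Ker(d\mu_z) = [J(T_z\mathcal{O}_z)]^\perp$ together with $J(T_zL) = (T_zL)^\perp$), and then integrates the tangent fundamental vector fields, using closedness of $L$ and surjectivity of $\exp$ for compact connected $G$ to get full $G$-invariance. The detour through $G$-invariance of $\mu^{-1}(\xi)$ and the aborted dimension-count are unnecessary, but the clean argument you settle on is exactly the paper's.
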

\begin{proof}
Let $p \in L$, and let $\mathcal{O}$ be the $G$-orbit of $p$. By Lemma \ref{lem:KerImgMoment}, $\text{Ker}(d\mu_p) = [J(T_p\mathcal{O})]^\perp$. Therefore, $T_pL \leq \mathrm{Ker}(d\mu_p) = [J(T_p\mathcal{O})]^\perp$, and hence since $L$ is Lagrangian, $T_p\mathcal{O} \leq T_pL$.  

Now, letting $g \in G$ be arbitrary, we aim to show that $g \cdot p \in L$. By the surjectivity of the exponential map, there exists $X \in \mathfrak{g}$ such that $\text{exp}(X) = g$. The flow of $\rho(-X)$ on $\overline M$ exists for all time, and indeed is given explicitly by $\Phi_t(z) = \text{exp}(tX)\cdot z$. Since the vector field $\rho(-X)$ is tangent to the submanifold $L$ by the above, it follows by the closedness of $L$ that the flow $\Phi$ preserves $L$. Therefore, $g \cdot p = \text{exp}(X)\cdot p = \Phi_1(p) \in L$, as required.
\end{proof}

\begin{corollary}
Let $L \subset \overline{M}$ be a connected embedded closed Lagrangian submanifold, and fix an isotropy type $(H)$ of the $G$-action.  Then $L$ is $G$-invariant of type $(H)$ if and only if $L \subset M_\xi = \mu^{-1}(\xi) \cap \overline{M}_{(H)}$ for some central value $\xi \in \mathfrak{g}^*$.
\end{corollary}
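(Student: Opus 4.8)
The plan is to deduce the corollary from the two propositions immediately preceding it, namely Proposition \ref{prop:constrain} and Proposition \ref{prop-lagissymmetric}, together with the basic properties of isotropy strata. Let $L \subset \overline{M}$ be a connected embedded closed Lagrangian submanifold, and fix an isotropy type $(H)$.

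First I would prove the forward direction. Suppose $L$ is $G$-invariant of type $(H)$; by definition of ``type $(H)$'' this means $L \subset \overline{M}_{(H)}$. Applying Proposition \ref{prop:constrain}, there is a central value $\xi \in \mathfrak{g}^*$ with $L \subset \mu^{-1}(\xi)$. Combining the two containments gives $L \subset \mu^{-1}(\xi) \cap \overline{M}_{(H)} = M_\xi$, as desired.

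For the reverse direction, suppose $L \subset M_\xi = \mu^{-1}(\xi) \cap \overline{M}_{(H)}$ for some central value $\xi$. In particular $L \subset \mu^{-1}(\xi)$, so Proposition \ref{prop-lagissymmetric} (whose hypotheses---$\xi$ central, $L$ closed embedded Lagrangian in $\mu^{-1}(\xi)$---are exactly met here) gives that $L$ is $G$-invariant. Moreover $L \subset \overline{M}_{(H)}$ says precisely that every point of $L$ has stabiliser conjugate to $H$, i.e.\ $L$ is $G$-invariant of type $(H)$. This completes the equivalence.

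I do not anticipate a genuine obstacle: the corollary is essentially the conjunction of the two previously established propositions, once one unwinds the definition of ``$G$-invariant of type $(H)$'' as the containment $L \subset \overline{M}_{(H)}$. The only point requiring a word of care is that the hypotheses of the two propositions match up correctly---Proposition \ref{prop:constrain} needs only that $L$ be connected $G$-invariant immersed Lagrangian (weaker than our hypotheses), while Proposition \ref{prop-lagissymmetric} needs $\xi$ central and $L$ closed embedded Lagrangian in $\mu^{-1}(\xi)$, all of which are supplied---so no additional argument is needed.
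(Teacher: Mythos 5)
Your proof is correct and is exactly the argument the paper intends: the corollary is stated without proof as the immediate conjunction of Proposition \ref{prop:constrain} (forward direction, combined with the definition of type $(H)$ as $L \subset \overline{M}_{(H)}$) and Proposition \ref{prop-lagissymmetric} (reverse direction). Your check that the hypotheses of the two propositions are satisfied is the only content needed, and it is carried out correctly.
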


This result implies that, in the case where the connected component $M$ of $\mu^{-1}(\xi)$ contains a Lagrangian submanifold, the complex vector spaces $\mathcal{E}_p$ and $\mathcal{H}_p$ appearing in the decompositions $T_p\overline{M} = \mathcal{E}_p \oplus T_p\mathcal{O} \oplus J(T_p\mathcal{O})$ and $T_pM = \mathcal{H}_p \oplus T_p\mathcal{O}$ of equations (\ref{eq-orthdecompositon}) and (\ref{eq:OrthDecomp2}), respectively, are equal at $p \in M$:

\begin{corollary} \label{cor:H-equals-E}
    Let $\xi \in \mathfrak{g}^*$ be a central value, let $(H)$ be an isotropy type of the $G$-action, and let $M$ be a connected component of $M_\xi = \mu^{-1}(\xi) \cap \overline{M}_{(H)}$.  Let $n-k$ denote the dimension of the $G$-orbits of type $(H)$.  If $M$ contains an embedded Lagrangian submanifold, then the complex vector bundle $\mathcal{H}$ has complex rank $k$, and the bundles $\mathcal{H}$ and $\mathcal{E}$ are equal on $M$.
\end{corollary}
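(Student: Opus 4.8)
The plan is to unwind the definitions and combine two facts already proven: the dimension bound $\dim(M) \leq n+k$ coming from Corollary to Proposition \ref{prop:constrain} together with equation (\ref{eq-orthdecompositon2}), and the observation that a Lagrangian submanifold forces $M$ to attain its maximal possible dimension. First I would fix $z \in M$ lying on the embedded Lagrangian $L$. By Proposition \ref{prop:constrain} and its converse, $L \subset \mu^{-1}(\xi) \cap \overline{M}_{(H)} = M_\xi$, so $L$ is an $n$-dimensional submanifold sitting inside the connected component $M$; in particular $T_zL \subseteq T_zM$ and hence $\dim(M) \geq n$.

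Next I would recall the two orthogonal decompositions at $z$: from Lemma \ref{lem:IsotropicDecomp} and Lemma \ref{lem:KerImgMoment} applied to the isotropic orbit $\mathcal{O}_z$ of dimension $n-k$, we have $T_z\overline{M} = \mathcal{E}_z \oplus T_z\mathcal{O}_z \oplus J(T_z\mathcal{O}_z)$ with $\dim_\C \mathcal{E}_z = k$, and $\Ker(d\mu_z) = \mathcal{E}_z \oplus T_z\mathcal{O}_z$. On the other hand, Theorem \ref{thm:KahlerReduction} gives $T_zM = \mathcal{H}_z \oplus T_z\mathcal{O}_z$ with $\mathcal{H}_z \subseteq \mathcal{E}_z$ a complex subspace (this is exactly the content of the Remark following that theorem). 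So $\dim(M) = \dim \mathcal{H}_z + (n-k) \leq \dim \mathcal{E}_z + (n-k) = 2k + (n-k) = n+k$. Combining with $\dim(M) \geq n$ forces $\dim \mathcal{H}_z \geq k = \dim_\C \mathcal{E}_z$, i.e. $\dim_\R \mathcal{H}_z \geq 2k$, hence $\mathcal{H}_z$ is a real $2k$-dimensional complex subspace of the complex $k$-dimensional space $\mathcal{E}_z$, so $\mathcal{H}_z = \mathcal{E}_z$. Since $z \in M$ was arbitrary (any point of $M$ is $G$-translated from a point of $M_H$, and the $G$-action carries $\mathcal{H}$ and $\mathcal{E}$ to themselves), this gives $\mathcal{H} = \mathcal{E}$ as complex bundles on $M$, and $\mathcal{H}$ has complex rank $k$.

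The only subtlety — and the step I would be most careful about — is verifying $\dim(M) \geq n$ at a point $z \in L \subseteq M$ where $L$ may only be immersed rather than embedded, and confirming that $M$, being a single connected component of $M_\xi$, actually contains the image of (a piece of) $L$. For the immersed case one invokes Remark \ref{rmk-immersedunionofembedded} to reduce to an embedded piece $F(G \cdot U_\alpha) \subseteq \overline{M}_{(H)}$, which by Proposition \ref{prop-lagissymmetric}-type reasoning lies in $\mu^{-1}(\xi)$, hence in $M_\xi$, hence in a single connected component; shrinking $U_\alpha$ ensures this component is $M$. Everything else is linear algebra already packaged in the cited lemmas, so I expect no real obstacle beyond this bookkeeping.
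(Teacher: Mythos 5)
There is a genuine gap in your dimension count. From $L \subset M$ and $\dim L = n$ you correctly get $\dim_\R(M) \geq n$, and hence $\dim_\R(\mathcal{H}_z) \geq k$ from the splitting $T_zM = \mathcal{H}_z \oplus T_z\mathcal{O}_z$. But your next step, ``$\dim \mathcal{H}_z \geq k = \dim_\C \mathcal{E}_z$, i.e.\ $\dim_\R \mathcal{H}_z \geq 2k$'', silently converts a real-dimension inequality into a complex-dimension one. What you actually have is $\dim_\C(\mathcal{H}_z) \geq k/2$, and for $k \geq 2$ this does not force $\mathcal{H}_z = \mathcal{E}_z$: for $k=2$, say, a complex line inside the complex $2$-plane $\mathcal{E}_z$ is consistent with everything you have used. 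The parity of $\dim_\R\mathcal{H}_z$ only closes the gap when $k=1$. The underlying issue is that your argument uses the Lagrangian $L$ only through $\dim L = n$; the isotropy condition $J(T_zL) = (T_zL)^\perp$ is doing real work in this corollary and must enter the proof.

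The paper's argument supplies exactly the missing factor of two. Since $T_z\mathcal{O} \leq T_zL$ (by the proof of Proposition \ref{prop-lagissymmetric}), the orthogonal complement $F_z$ of $T_z\mathcal{O}$ in $T_zL$ has real dimension $k$ and lies in $\mathcal{H}_z$ (it sits in $T_zM$ and is orthogonal to $T_z\mathcal{O}$). Because $L$ is Lagrangian, $JF_z \perp T_zL \supseteq F_z$, so $F_z \oplus JF_z$ is an honest orthogonal direct sum of real dimension $2k$ contained in the complex subspace $\mathcal{H}_z \leq \mathcal{E}_z$; since $\dim_\C \mathcal{E}_z = k$, all three spaces coincide. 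Your reduction to a single point (constancy of rank plus $G$-equivariance of $\mathcal{H}$ and $\mathcal{E}$) is fine, and the bookkeeping about immersed pieces at the end is unnecessary, since the hypothesis of the corollary already assumes $L$ is embedded.
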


\begin{proof} Let $L \subset M$ be an embedded Lagrangian submanifold.  Since the vector bundles $\mathcal{H}$ and $\mathcal{E}$ have constant rank on $M$, it suffices to prove that for some $p \in L$, we have $\mathcal{H}_p = \mathcal{E}_p$. \\
\indent By the proof of Proposition \ref{prop-lagissymmetric}, $T_p\mathcal{O} \leq T_p L$.  Let $F_p$ be the orthogonal complement of $T_p\mathcal{O}$ in $T_p L$, so that $\dim(F_p) = k$.  Since $L$ is Lagrangian, $J F_p \perp F_p$, and so $F_p \oplus J F_p$ is a complex vector space of complex dimension $k$.

By (\ref{eq-orthdecompositon}) and (\ref{eq:OrthDecomp2}), $F_p \leq \mathcal{H}_p \leq \mathcal{E}_p$, and since $\mathcal{H}_p$ is complex, $F_p \oplus J F_p \leq \mathcal{H}_p \leq \mathcal{E}_p$. Since $\mathcal{E}_p$ has complex dimension $k$, it follows that all vector spaces in this inclusion have the same dimension, and therefore are equal.
\end{proof}

Since $G$-invariant Lagrangians of type ${(H)}$ must lie in the smooth manifold $M_\xi$, we may quotient by the $G$-action to obtain a Lagrangian $l$ in the K\"ahler quotient $Q_\xi$. We therefore have the following bijection:

\begin{proposition}[$G$-Invariant Lagrangians in $M$ correspond to Lagrangians in $Q$]\label{prop-bijection1} Let $\xi \in \mathfrak{g}^*$ be a central value, let $(H)$ be an isotropy type of the $G$-action, and let $M$ be a connected component of $M_\xi$.  Recall the K\"{a}hler quotient map $\pi \colon M \to Q := M/G$.  Then there is a bijection: 
\begin{align*}
    \{G\text{-invariant immersed Lagrangians } L
    \subset M\} & \longleftrightarrow \{\text{immersed Lagrangians } l\subset Q\} \\
    L & \longmapsto L/G \\
    \pi^{-1}(l) & \longmapsfrom l
\end{align*}
\end{proposition}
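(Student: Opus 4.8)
The plan is to establish the bijection by verifying that the two prescribed maps are well-defined and mutually inverse, leaning on Proposition \ref{prop-equivariantquotient} (quotients of equivariant immersions) for the forward direction and on the submersion properties of $\pi$ from Theorem \ref{thm:KahlerReduction} for the reverse direction.

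First I would handle the forward map $L \mapsto L/G$. Given a $G$-invariant immersed Lagrangian $F \colon \hat L \to M$, the $G$-action on $M = M_\xi$ has constant isotropy type $(H)$ by construction, so Proposition \ref{prop-equivariantquotient} applies verbatim: the quotient $\hat l := \hat L/G$ is a smooth manifold and there is a smooth immersion $f \colon \hat l \to Q$ with $f \circ \pi_{\hat L} = \pi \circ F$. It remains to check $f$ is \emph{Lagrangian}, i.e.\ $f^*\omega_Q = 0$. At a point $y = \pi_{\hat L}(x)$, the differential $f_*$ on $T_y\hat l$ is computed from $F_*(T_xL)$ via $\pi_*$; using $\pi^*\omega_Q = \omega|_M$ from Theorem \ref{thm:KahlerReduction}(b)(iii) together with $F^*\omega = 0$, we get $f^*\omega_Q = 0$ at every point, so $f$ is an immersed Lagrangian in $Q$. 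Thus the forward map is well-defined.

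Next I would handle the reverse map $l \mapsto \pi^{-1}(l)$. Since $\pi \colon M \to Q$ is a smooth submersion (indeed a fiber bundle with fiber $G/H$ by the Principal Orbit Theorem and Theorem \ref{thm:KahlerReduction}), the preimage of an immersed Lagrangian $l \subset Q$ is a $G$-invariant immersed submanifold $L = \pi^{-1}(l) \subset M$ of dimension $\dim l + \dim(G/H) = 1 + (n-1) = n$ — here I use that $Q$ is $2$-dimensional, so a Lagrangian in $Q$ is a curve, and $\dim(G/H) = n-1$. To see $L$ is Lagrangian it suffices (by the dimension count) to check $\omega|_L = 0$; but $T_zL = \pi_*^{-1}(T_{\pi(z)}l)$, and using the orthogonal splitting $T_zM = \mathcal{H}_z \oplus T_z\mathcal{O}_z$ of \eqref{eq:OrthDecomp2}, a tangent vector to $L$ decomposes into a horizontal part mapping into $T l$ and an orbit-tangent part. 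On orbit-tangent vectors $\omega$ vanishes because the orbit is isotropic (Lemma \ref{lem-isotropiclevelset}); on horizontal vectors $\omega|_{\mathcal{H}} = \pi^*\omega_Q$ pulls back $\omega_Q|_{l} = 0$; and the cross terms $\omega(\rho(X), Y)$ for $Y \in \mathcal{H}$ vanish exactly as in equation \eqref{eq-rhocalc1}. Hence $\omega|_L = 0$ and $L$ is a $G$-invariant immersed Lagrangian in $M$.

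Finally I would verify the two maps are mutually inverse. One composition: starting from $l \subset Q$, form $L = \pi^{-1}(l)$; then $L/G = \pi(L) = l$ since $\pi$ is surjective onto its image and $\pi(\pi^{-1}(l)) = l$. The other composition: starting from a $G$-invariant Lagrangian $L \subset M$, we have $\pi^{-1}(L/G) = \pi^{-1}(\pi(L)) = L$ precisely because $L$ is $G$-invariant (so it is saturated with respect to $\pi$, whose fibers are the $G$-orbits). The only subtlety worth a sentence is the immersed-versus-embedded bookkeeping: one should phrase these identities at the level of the equivariant immersions $\hat L \to M$ and their quotients $\hat l \to Q$, invoking Remark \ref{rmk-immersedunionofembedded} to reduce to the embedded case where set-theoretic preimage arguments are unambiguous. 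The main obstacle is really just this care with immersed submanifolds — the geometric content (Lagrangian $\leftrightarrow$ Lagrangian) is immediate from $\pi^*\omega_Q = \omega|_M$ and the isotropy of the orbits, both already in hand.
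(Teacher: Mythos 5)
Your proposal is correct and follows essentially the same route as the paper: the forward direction is identical (Proposition \ref{prop-equivariantquotient} plus $\pi^* f^*\omega_Q = F^*\omega|_M = 0$), and the reverse direction rests on the same fact that $\pi \colon M \to Q$ is a $G/H$-bundle. Two small remarks. First, your dimension count in the reverse direction ($\dim l = 1$, $\dim(G/H) = n-1$) silently assumes the cohomogeneity-one case, whereas the proposition is stated for a general isotropy type with $(n-k)$-dimensional orbits; the fix is immediate ($k + (n-k) = n$), but as written your argument is narrower than the statement. Second, for an immersed, non-embedded $l$ the set-theoretic preimage $\pi^{-1}(l)$ does not automatically carry an immersed-submanifold structure; the paper handles this by explicitly forming the pullback bundle $\hat L := f^*M$ and the induced $G$-equivariant map $F \colon \hat L \to M$, which is the clean way to make your "preimage of an immersed submanifold is an immersed submanifold" step precise. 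Your three-case verification of $\omega|_{L} = 0$ is valid but can be shortened: since Theorem \ref{thm:KahlerReduction}(b)(iii) gives $\pi^*\omega_Q = \omega|_M$ on \emph{all} of $TM$ (not just on $\mathcal{H}$), one gets $\omega|_L = (\pi|_L)^*(\omega_Q|_l) = 0$ in one line.
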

\begin{proof}
We first show that the correspondences $\Phi_1 \colon L \mapsto L/G$ and $\Phi_2 \colon l \mapsto \pi^{-1}(l)$ in the statement of the proposition are well-defined with the stated domain and codomain. Let $L \subset M$ be a $G$-invariant immersed Lagrangian, and choose a $G$-equivariant immersion $F \colon \hat L \rightarrow M$ with image $L$. Proposition \ref{prop-equivariantquotient} gives an immersion $f \colon \hat l = \hat L/G \rightarrow Q$ with image $L/G$, such that $f \circ \pi = \pi \circ F$.  It follows by the K\"{a}hler Reduction Theorem \ref{thm:KahlerReduction} that
\begin{align*} \pi^*f^* \omega_Q &= F^* \pi^* \omega_Q = F^* \omega|_{M} = 0 
\end{align*}
so that $f^*\omega_Q = 0$, and hence $f$ is a Lagrangian immersion.

In the other direction, consider a Lagrangian immersion $f \colon \hat l \to Q_\xi$ with image $l$. By the Principal Orbit Theorem (Theorem \ref{thm:PrincipalOrbit}), $\pi \colon M \to Q$ is a $G/H$-bundle, and we may define $\hat L := f^*M$ to be the pullback of this bundle by $f$.  Then $\hat L$ has a fibrewise $G$-action, and there is a natural $G$-equivariant map $F\colon \hat L \rightarrow M$ which is the inclusion when restricted to each fiber.  It is easy to check that $F$ is a Lagrangian immersion, and the image of $F$ is $L := \pi^{-1}(l)$.

Finally, it is clear that the maps $\Phi_1, \Phi_2$ are inverses of each other, and therefore are bijections.
\end{proof}

\subsection{$G$-Invariant Lagrangians of Calabi-Yau Manifolds}\label{sec-3.3}

\indent \indent We now suppose that our connected K\"{a}hler manifold $(\overline{M}^{2n}, \langle \cdot, \cdot \rangle, J, \omega)$ is a Calabi-Yau manifold, equipped with a holomorphic volume form $\Omega$. The following lemma shows how the $G$-action interacts with the Calabi-Yau structure $\Omega$:

\begin{lemma}[\cite{Konno2017}] \label{lem:KahlerActionCY} There exists a unique element $a \in \mathfrak{g}^*$ such that 
$$(L_{\exp(X)})^*\Omega = e^{ia(X)}\Omega$$
for all $X \in \mathfrak{g}$.
\end{lemma}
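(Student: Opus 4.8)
The plan is to exploit the fact that $G$ is compact and connected, so that every element lies in the image of the exponential map, and to reduce the statement to a cohomological/Lie-theoretic one about how $G$ acts on the line spanned by the parallel form $\Omega$.

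\textbf{Step 1: Define the character on one-parameter subgroups.} Since the $G$-action is Kähler and $\Omega$ is parallel with $|\Omega|$ constant, for each $g \in G$ the pullback $L_g^*\Omega$ is again a parallel $(n,0)$-form of the same pointwise norm, hence $L_g^*\Omega = \chi(g)\,\Omega$ for a unique $\chi(g) \in U(1)$ (here I use connectedness of $\overline M$ to know the ratio is a global constant, and the fact that the space of parallel $(n,0)$-forms on a connected Calabi-Yau is one-complex-dimensional). The assignment $g \mapsto \chi(g)$ is a homomorphism $\chi \colon G \to U(1)$, and it is smooth because the action is smooth. Then $t \mapsto \chi(\exp(tX))$ is a smooth homomorphism $\R \to U(1)$, hence of the form $e^{i a(X) t}$ for a unique real number, which I will call $a(X)$; setting $t=1$ gives $L_{\exp(X)}^*\Omega = e^{ia(X)}\Omega$.

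\textbf{Step 2: Show $a \colon \mathfrak{g} \to \R$ is linear.} The map $X \mapsto a(X)$ is the derivative at the identity of the Lie group homomorphism $\chi$ composed with the identification $\mathrm{Lie}(U(1)) \cong \R$; more concretely, $a(X) = \frac{d}{dt}\big|_{t=0}\big(\text{-}i\log\chi(\exp(tX))\big)$, which is $-i\,d\chi_e(X)$. Since $d\chi_e \colon \mathfrak{g} \to \mathrm{Lie}(U(1)) = i\R$ is $\R$-linear, so is $a$. Therefore $a \in \mathfrak{g}^* = \Hom(\mathfrak{g};\R)$, as claimed. (Alternatively, one can verify linearity directly: $a(X+Y)$ and $a(X)+a(Y)$ both compute the infinitesimal rotation of $\Omega$ under the flow, using $\mathscr{L}_{\rho(X)}\Omega = -ia(X)\Omega$ and bilinearity of the bracket/Lie derivative, but invoking $d\chi_e$ is cleanest.)

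\textbf{Step 3: Uniqueness.} If $a' \in \mathfrak{g}^*$ also satisfies $L_{\exp(X)}^*\Omega = e^{ia'(X)}\Omega$ for all $X$, then $e^{i(a(X)-a'(X))} = 1$ for all $X$, so $a(X)-a'(X) \in 2\pi\Z$ for every $X \in \mathfrak{g}$; replacing $X$ by $tX$ and letting $t \to 0$ forces $a(X) = a'(X)$. Hence $a$ is unique.

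\textbf{Main obstacle.} The only genuinely non-formal point is Step 1: justifying that $L_g^*\Omega$ is a constant multiple of $\Omega$. This needs that parallel $(n,0)$-forms on a connected Calabi–Yau manifold form a $1$-dimensional complex vector space — equivalently $\Hol \subseteq \SU(n)$ acts trivially on $\Lambda^{n,0}$ — together with the observation that $L_g$ is a holomorphic isometry, so $L_g^*\Omega$ is parallel of type $(n,0)$. Everything after that is the standard fact that a smooth homomorphism from a Lie group to $U(1)$ differentiates to a linear functional on the Lie algebra. If one prefers to avoid invoking the structure of parallel forms, an alternative for Step 1 is: $L_g^*\Omega / \Omega$ is a holomorphic function of constant modulus on a connected manifold, hence locally constant, hence constant; but one still needs $\Omega$ nowhere zero, which is part of the Calabi–Yau hypothesis.
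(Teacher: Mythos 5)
Your proof is correct. The paper itself gives no argument for this lemma — it is quoted from Konno \cite{Konno2017} — so there is nothing to compare against, but your write-up is the standard and complete proof: the only non-formal point, as you identify, is that $L_g^*\Omega$ is a \emph{constant} multiple of $\Omega$, and you justify this correctly (a parallel $(n,0)$-form on a connected manifold is determined by its value at one point, and the fibre $\Lambda^{n,0}$ is one-complex-dimensional), with $|\chi(g)|=1$ then forced by pulling back the normalisation $\tfrac{1}{n!}\omega^n = (-1)^{n(n-1)/2}\left(\tfrac{i}{2}\right)^n\,\Omega\wedge\overline\Omega$ along the $\omega$-preserving map $L_g$. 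The remaining steps — differentiating the character $\chi\colon G\to \U(1)$ to get linearity of $a$, and the $t\to 0$ argument for uniqueness — are sound.
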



\begin{remark} The above lemma does not require the K\"{a}hler $G$-action to be Hamiltonian.
\end{remark}

\begin{definition} A K\"{a}hler $G$-action on $\overline{M}$ is called \emph{Calabi-Yau} if it preserves $\Omega$ (equivalently, if and only if $a = 0$ in Lemma \ref{lem:KahlerActionCY}).
\end{definition}





If $L \subset \overline M$ is a connected $G$-invariant Lagrangian submanifold, then Proposition \ref{prop:constrain} tells us that there exists a central value $\xi \in \mathfrak{g}^*$ such that $L \subset \mu^{-1}(\xi)$. Using the extra structure of the Lagrangian angle, we may explicitly describe how a mean curvature flow of $G$-invariant Lagrangians moves through the level sets of $\mu$. Most importantly, in the case of a Calabi-Yau action, the flow remains in a single level set.

\begin{proposition}\label{prop-flowlevelset} Let $\overline{M}$ be a Calabi-Yau manifold equipped with a Hamiltonian K\"{a}hler $G$-action, and let $a \in \mathfrak{g}^*$ be the Lie coalgebra value of Lemma \ref{lem:KahlerActionCY}.  Let $L_0 \subset \overline{M}$ be a $G$-invariant immersed graded Lagrangian submanifold with Lagrangian angle $\theta$ and mean curvature vector $\vec{H}$, so that $L_0 \subset \mu^{-1}(\xi)$ for some central $\xi \in \mathfrak{g}^*$.  Then
$$d\mu(\vec H) = -a.$$
	Therefore, if $F_t: L \rightarrow \overline M$ is a mean curvature flow starting at $L_0$, i.e.\ 
		\begin{align*}
			\left(\frac{\partial F}{\partial t} \right)^\perp &= \vec H &\text{for } t \in [0,T),\\
			F_0(L) &= L_0 &\text{at } t=0,
		\end{align*} 
		then $L_t$ is also a $G$-invariant Lagrangian, and $L_t \subset \mu^{-1}(\xi - at)$.
		In particular, if the $G$-action on $\overline M$ is Hamiltonian and Calabi-Yau, then $d\mu(\vec{H}) = 0$ and $L_t \subset \mu^{-1}(\xi)$.
\end{proposition}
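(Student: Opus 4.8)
The plan is to compute $d\mu(\vec H)$ directly using the identification \eqref{eq-jgradtheta} of the mean curvature vector with $J\nabla\theta$, where $\theta$ is the Lagrangian angle. The starting point is the formula from the proof of Lemma \ref{lem:KerImgMoment}: for any $V \in T_p\overline M$ and $X \in \mathfrak g$,
\[
\langle d\mu_p(V), X\rangle = \omega(V, \rho(X)).
\]
Applying this with $V = \vec H = J\nabla\theta$, we get $\langle d\mu(\vec H), X\rangle = \omega(J\nabla\theta, \rho(X)) = -\langle \nabla\theta, \rho(X)\rangle = -\rho(X)(\theta) = d\theta(-\rho(X))$, using that $\omega(JU, W) = -\langle U, W\rangle$ and that $\nabla\theta$ is the Riemannian gradient. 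So the claim $d\mu(\vec H) = -a$ reduces to showing that the derivative of the Lagrangian angle along the infinitesimal action direction, $\rho(X)(\theta)$, equals $a(X)$ — i.e. that $\theta$ changes at constant rate $a(X)$ under the flow generated by $\rho(-X)$, or equivalently along the $G$-orbit.

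The key step is therefore to relate $\mathscr L_{\rho(X)}\theta$ to the coalgebra element $a$ from Lemma \ref{lem:KahlerActionCY}. Since $L$ is $G$-invariant and graded with $F^*\Omega = e^{i\theta}\,\mathrm{vol}_L$, I would differentiate the identity $(L_{\exp(tX)})^*\Omega = e^{ita(X)}\Omega$ and pull back to $L$. Because the orbit $\mathcal O_z \subset L$, the vector field $\rho(X)$ restricts to a tangent vector field on $L$, whose flow $\Phi_t = L_{\exp(-tX)}$ preserves $L$. Comparing $\Phi_t^*(F^*\Omega)$ computed two ways — via the transformation law for $\Omega$, which contributes the factor $e^{-ita(X)}$, and via $\Phi_t^*(e^{i\theta}\mathrm{vol}_L)$, whose modulus-one part picks up $e^{i(\theta\circ\Phi_t)}$ while the volume form picks up only a real positive Jacobian (as $\rho(-X)$ is a Killing field for the induced metric on $L$, in fact its flow is an isometry of $L$, so $\Phi_t^*\mathrm{vol}_L = \mathrm{vol}_L$) — forces $\theta\circ\Phi_t = \theta - ta(X)$, hence $\mathscr L_{\rho(-X)}\theta = -a(X)$, i.e. $\rho(X)(\theta) = a(X)$. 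This yields $\langle d\mu(\vec H), X\rangle = -a(X)$ for all $X$, so $d\mu(\vec H) = -a$.

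For the evolution statement: $L_t$ remains Lagrangian by Smoczyk's theorem, and remains $G$-invariant because the $G$-action consists of ambient isometries preserving the Calabi-Yau structure, so it maps MCF solutions to MCF solutions and hence commutes with the flow; by uniqueness of MCF the $G$-invariance of $L_0$ is preserved. Then $\frac{d}{dt}\mu(F_t(p)) = d\mu\!\left(\frac{\partial F}{\partial t}\right) = d\mu\!\left(\left(\frac{\partial F}{\partial t}\right)^\perp\right) = d\mu(\vec H) = -a$, where the middle equality uses that the tangential part of $\partial_t F$ lies in $T L_t \subset \Ker(d\mu)$ (Proposition \ref{prop:constrain}, or directly Lemma \ref{lem:KerImgMoment}). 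Integrating, $\mu(L_t) = \xi - at$, so $L_t \subset \mu^{-1}(\xi - at)$. In the Calabi-Yau case $a = 0$ by definition, giving $d\mu(\vec H) = 0$ and $L_t \subset \mu^{-1}(\xi)$.

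The main obstacle I anticipate is the careful bookkeeping in the second step: making sure the flow $\Phi_t$ genuinely preserves $L$ (this needs completeness of $\rho(X)$, which holds since $G$ is compact, plus $G$-invariance of $L$), that the induced flow on $L$ acts by isometries so that $\Phi_t^*\mathrm{vol}_L = \mathrm{vol}_L$ exactly, and that the argument goes through for merely immersed (not embedded) $L$ — for which I would, as elsewhere in the paper, reduce to the embedded case via Remark \ref{rmk-immersedunionofembedded}. Everything else is a short computation with the already-established formulas.
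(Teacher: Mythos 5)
Your proposal is correct and follows essentially the same route as the paper: both derive $d\theta(\rho(X)) = a(X)$ by comparing $\Omega\vert_L = e^{i\theta}\mathrm{vol}_L$ along the $G$-orbit using the transformation law of Lemma \ref{lem:KahlerActionCY}, then compute $\langle d\mu(\vec H), X\rangle = \omega(J\nabla\theta,\rho(X)) = -d\theta(\rho(X)) = -a(X)$ and integrate along the flow. Your added remarks (that the tangential part of $\partial_t F$ lies in $\Ker(d\mu)$, and that $G$-invariance is preserved by uniqueness of MCF) are correct points of care that the paper leaves implicit.
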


\begin{proof}
	For $z \in L$ and $V \in \mathfrak{g}$, using Lemma \ref{lem:KahlerActionCY},
		\begin{align*}
			e^{i\theta(z)}\text{vol}_L = \Omega\big|_{T_zL}= e^{-ia(tV)}\left( L^*_{\exp{tV}}\Omega\big|_{T_{\exp{tV}\cdot z}L}\right)= e^{-ia(tV)}e^{i\theta(\exp{tV}\cdot z)}\text{vol}_L,
		\end{align*}
and hence
$$e^{i\theta(z)} =e^{i(-a(tV) +\theta(\exp{tV}\cdot z))}.$$
Differentiating with respect to $t$ and setting $t=0$,
\begin{align}
    e^{i\theta(z)}\left(d\theta_z(\rho_z(V)) - a(V)\right) &= 0 \\
    d\theta_z(\rho_z(V)) & = a(V). \label{eq-dtheta}
\end{align}
Since $\vec H = J\nabla \theta$, it follows that
\begin{align*}
    \langle d\mu(\vec{H}), V\rangle &= \omega(J\nabla\theta, \rho(V)) \,=\, -\langle \nabla \theta, \rho(V)\rangle	\,=\, -d\theta(\rho(V)) \,=\, -a(V).
\end{align*}
Finally, for an MCF $F_t:L \rightarrow M$, it follows that 
\begin{align*}
    \frac{d \mu}{dt} &= d\mu(\vec{H}) = -a
\end{align*}    
so $L_t \subset \mu^{-1}(\xi - at).$
\end{proof}

\section{$G$-Invariant Lagrangian Submanifolds of $\C^n$}
\indent \indent We now specialise to our primary objects of interest: $G$-invariant Lagrangians in $\C^n$, where $\mathbb{C}^n$ is equipped with the flat Calabi-Yau structure $(\langle \cdot, \cdot \rangle, J, \omega, \Omega)$ and endowed with a K\"{a}hler $G$-action. We suppose that $G$ is compact and connected and the $G$-action is faithful and linear, so we may embed $G \leq \U(n)$ and view the $G$-action on $\C^n$ as a restriction of the standard $\text{U}(n)$-action on $\C^n$. The standard Liouville form $\lambda$ of equation (\ref{eq-liouvilleform}) gives a canonical moment map $\mu \colon \C^n \to \mathfrak{g}^*$ for the group action, which we describe in $\S$\ref{sec-4.1}.

In light of the work of $\S$\ref{sect:HamKahler}, we specialise to connected Lagrangians with a constant isotropy type $(H)$.  By Proposition \ref{prop:constrain}, given such a Lagrangian $L \subset \C^n$ there exists a connected component $M$ of $M_\xi := \mu^{-1}(\xi)\cap \mathbb{C}^n_{(H)}$ for which $L \subset M$. We therefore restrict attention to a connected component $M \subset M_\xi$, which we assume contains at least one Lagrangian. By (\ref{eq-orthdecompositon}), (\ref{eq:OrthDecomp2}), and Corollary \ref{cor:H-equals-E}, at $z \in M$ there are orthogonal decompositions
\begin{align} \label{eq:OrthDecSec4}
    T_z\C^n & = \mathcal{H}_z \oplus T_z\mathcal{O} \oplus J(T_z\mathcal{O}) \\
    T_zM & = \mathcal{H}_z \oplus T_z\mathcal{O} \nonumber
\end{align}
where the complex subspaces $\mathcal{H}_z = \text{Ker}(d\pi_z)^\perp \subset T_z\C^n$ form the horizontal distribution of the projection $\pi \colon M \to Q := M/G$.  
Moreover, if the $G$-action on $M$ has $(n-k)$-dimensional orbits, then $M$ and its K\"{a}hler quotient $Q$ have dimensions
\begin{align*}
    \dim(M) & = n+k & \dim(Q) & = 2k.
\end{align*}

\indent Aside from working in $\mathbb{C}^n$, we introduce two further restrictions in this chapter. Firstly, in $\S$\ref{sec-4.2} we narrow our focus to the particular level set $\mu^{-1}(0) \subset \C^n$. The key reason for this is given by Proposition \ref{prop-zerosymmetry}; if $z \in M \subset \mu^{-1}(0)$, then the complex line $P_z$ through $z$ is contained in $M \cup \{0\}$. Secondly, in $\S$\ref{sec-4.3} we restrict to the case of cohomogeneity-one Lagrangians (i.e.\ , $k = 1$), so that the bijection of Proposition \ref{prop-bijection1} is with 1-dimensional curves in the K\"{a}hler quotient. This reduces the study of mean curvature flow of such Lagrangians to a modified curve-shortening flow. In $\S$\ref{sec-4.4} we explore the ramifications of making both assumptions --- working with cohomogeneity-one Lagrangians in $\mu^{-1}(0)$. Proposition \ref{prop-profilebijection} sets up a bijection of Lagrangians $L \subset M$ and curves $l \subset P_z$, obtained by intersection with $P_z$, providing an alternative to Proposition \ref{prop-bijection1}. Via this bijection, mean curvature flow of Lagrangians corresponds to a flow of curves in $P_z$ which is independent of the choices of $G$ and $M$ (Corollary \ref{cor-equivariantlmcf}). This surprising and powerful observation enables the arguments of the subsequent chapters.

\subsection{Hamiltonian K\"{a}hler Actions on $\mathbb{C}^n$}\label{sec-4.1}
\indent \indent Our first observation is that the K\"{a}hler $G$-action on $\C^n$ is Hamiltonian, and its moment map admits a well-known explicit formula.

To derive it, we equip $\mathfrak{u}(n)$ with the standard $\Ad$-invariant inner product given by $\langle X,Y \rangle_{\mathfrak{u}(n)} := -\text{tr}(XY)$, orthogonally decompose $\mathfrak{u}(n) = \mathfrak{g} \oplus \mathfrak{g}^\perp$ with respect to $\langle \cdot, \cdot \rangle_{\mathfrak{u}(n)}$, and let $\pi_{\mathfrak{g}} \colon \mathfrak{u}(n) \to \mathfrak{g}$ denote the orthogonal projection. We also note that our $G$-action on $\C^n$ has infinitesimal action $\rho_z \colon \mathfrak{g} \to T_z\C^n$ given by $\rho_z(X) = -Xz$.
	
\begin{proposition}\label{prop-cnmomentmap}
	Let $G \leq \U(n)$. Then the induced $G$-action on $\mathbb{C}^n$ is a Hamiltonian K\"ahler action, with moment map
	\begin{align*}
	    \mu^* & := \lambda \circ \rho \colon \mathfrak{g} \to C^\infty(\C^n; \R) \\
	    \mu(z)^\sharp &:= -\frac{1}{2}\pi_\mathfrak{g}(iz\overline z^T) ,
	\end{align*} 
	where $\lambda$ is the standard Liouville form given by (\ref{eq-liouvilleform}). Moreover, if $G \leq \SU(n)$, then the $G$-action is Calabi-Yau.
\end{proposition}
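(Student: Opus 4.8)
The statement bundles three claims — that the induced $G$-action is Kähler, that it is Hamiltonian with the displayed moment map, and (when $G \leq \SU(n)$) that it is Calabi-Yau — and the plan is to dispatch them in that order. The Kähler property is immediate: $G \leq \U(n)$ acts through the restriction of the standard $\U(n)$-action, and $\U(n)$ is by definition the group of $\C$-linear isometries of $(\C^n, \langle\cdot,\cdot\rangle, J)$, so each $L_g$ preserves $\langle\cdot,\cdot\rangle$ and $J$, hence also $\omega(\cdot,\cdot) = \langle J\cdot,\cdot\rangle$.

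For the Hamiltonian property I would invoke Lemma \ref{lem-momentliouville} with $\lambda$ the standard Liouville form, so that it suffices to check $d\lambda = \omega$ and $\mathscr{L}_{\rho(X)}\lambda = 0$ for every $X \in \mathfrak{g}$. The identity $d\lambda = \omega$ is the one-line computation from (\ref{eq-liouvilleform}). For the infinitesimal invariance, the cleanest route is to note $\lambda = \tfrac12\,\iota_{\mathcal{E}}\omega$, where $\mathcal{E}_z = z$ is the radial (Euler) vector field: since $\mathscr{L}_{\rho(X)}\omega = 0$ (the action is symplectic) and the bracket of the linear vector fields $z\mapsto -Xz$ and $z\mapsto z$ vanishes (the identity matrix commutes with $X$), Cartan's formula $\mathscr{L}_{\rho(X)}\iota_{\mathcal{E}} = \iota_{\mathcal{E}}\mathscr{L}_{\rho(X)} + \iota_{[\rho(X),\mathcal{E}]}$ yields $\mathscr{L}_{\rho(X)}\lambda = 0$. (Alternatively, $\lambda$ is visibly $\U(n)$-invariant, which also gives the equivariance of $\mu$ directly.) Lemma \ref{lem-momentliouville} then identifies $\mu^* = \lambda\circ\rho$ as a moment map.

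To extract the explicit formula I would compute $\mu^X(z) = \lambda_z(\rho_z(X)) = \lambda_z(-Xz)$ using $\rho_z(X) = -Xz$ and $\lambda_z(v) = \tfrac12\langle Jz, v\rangle$. Passing to complex notation via $\langle v,w\rangle = \mathrm{Re}(\overline{v}^{\,T}w)$, and using that $\overline{z}^{\,T}Xz$ is purely imaginary for $X \in \mathfrak{u}(n)$ (because $X^* = -X$), this reduces to $\mu^X(z) = \tfrac{i}{2}\,\overline{z}^{\,T}Xz = \tfrac{i}{2}\,\mathrm{tr}(z\overline{z}^{\,T}X)$. Dualizing with respect to $\langle A,B\rangle_{\mathfrak{u}(n)} = -\mathrm{tr}(AB)$, and using that a matrix paired against all of $\mathfrak{g}$ only detects its $\pi_{\mathfrak{g}}$-component, one reads off $\mu(z)^\sharp = \pi_{\mathfrak{g}}\!\big(-\tfrac{i}{2}z\overline{z}^{\,T}\big) = -\tfrac12\,\pi_{\mathfrak{g}}(iz\overline{z}^{\,T})$, as claimed; note that $z\overline{z}^{\,T}$ is Hermitian, so $iz\overline{z}^{\,T} \in \mathfrak{u}(n)$ and the projection is meaningful.

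Finally, if $G \leq \SU(n)$, then for each $g \in G$ one has $L_g^*(dz_1\wedge\cdots\wedge dz_n) = (\det g)\,dz_1\wedge\cdots\wedge dz_n = dz_1\wedge\cdots\wedge dz_n$, so $L_g^*\Omega = \Omega$ for the holomorphic volume form $\Omega$ (a fixed constant multiple of $dz_1\wedge\cdots\wedge dz_n$ normalized by (\ref{eq-complexvolumeform})); hence the coalgebra element $a$ of Lemma \ref{lem:KahlerActionCY} is zero and the action is Calabi-Yau. None of this needs a new idea: the only step demanding care is the explicit computation in the third paragraph, where the conventions for the Hermitian inner product, the $\Ad$-invariant pairing $-\mathrm{tr}(XY)$, and the orthogonal projection $\pi_{\mathfrak{g}}$ must be tracked consistently in order to arrive at exactly the stated sign and normalization.
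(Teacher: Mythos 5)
Your proposal is correct and follows essentially the same route as the paper: verify the hypotheses of Lemma \ref{lem-momentliouville} for the standard Liouville form and then compute $\mu^X(z) = \lambda_z(-Xz)$ explicitly, dualizing via $-\mathrm{tr}(XY)$ and $\pi_{\mathfrak{g}}$. The paper's own proof merely cites the calculation, whereas you carry it out (together with the Cartan-formula check of $\mathscr{L}_{\rho(X)}\lambda = 0$ and the $\det g = 1$ argument for the Calabi--Yau claim), and all of these details check out against the paper's sign conventions.
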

	
\begin{proof}

	Note that the Liouville form $\lambda$ satisfies $d\lambda = \omega$ and $\mathscr{L}_X\lambda = 0$ for $X \in \mathfrak{g}$. Therefore, by Lemma \ref{lem-momentliouville} a moment map for the action is given by $\langle \mu(z),X\rangle := \lambda \circ \rho(X)$. The explicit form above is then given by a calculation (see for example \cite{DaSilva2006}).
\end{proof}


The complex cone structure of $\mathbb{C}^n \setminus \{0\}$ yields a natural $\mathbb{C}^*$-action given by scalar multiplication: that is, the $\C^*$-orbit of a point $z \in \mathbb{C}^n \setminus \{0\}$ is a complex line with the origin removed. We note that the $\C^*$-action and $\U(n)$-action commute with one another. Two notable sub-actions are the \textit{Hopf $\U(1)$-action}, which is also a sub-action of $\U(n)$:
\begin{equation} \label{eq:HopfAction}
e^{i\alpha} \cdot (z_1, \ldots, z_n) = (e^{i\alpha}z_1, \ldots, e^{i\alpha}z_n),
\end{equation}
and the $\mathbb{R}^+$-action: $$ r \cdot (z_1,\ldots,z_n) = (rz_1,\ldots,rz_n).$$
For emphasis, we will occasionally write $\U(1)_\Delta \leq \U(n)$ to denote the particular $\U(1)$-subgroup defined by the Hopf action (\ref{eq:HopfAction}).  These two actions give rise to two distinguished vector fields on $\mathbb{C}^n$:
\begin{align*}
    \frac{\partial}{\partial \alpha} & = \sum_{j=1}^n -y_j \frac{\partial}{\partial x_j} + x_j \frac{\partial}{\partial y_j} & r\cdot \frac{\partial}{\partial r} & = \sum_{j=1}^n x_j \frac{\partial}{\partial x_j} + y_j \frac{\partial}{\partial y_j},
\end{align*}
satisfying $J\!\left(r\cdot\tfrac{\partial}{\partial r}\right) = \tfrac{\partial}{\partial \alpha}$, where here $(z_1, \ldots, z_n) = (x_1 + iy_1, \ldots, x_n + iy_n)$.

We now observe that the level sets of $\mu \colon \C^n \to \mathfrak{g}^*$ inherit the $\U(1)$ symmetry:


\begin{proposition}[$\U(1)$-invariance of $\mu^{-1}(\xi)$]\label{prop-hopfsymmetry} Let $G \leq \U(n)$, let $(H)$ be an isotropy type of the $G$-action on $\C^n$, and fix a central value $\xi \in \mathfrak{g}^*$. \\
 \indent (a) The subsets $\mu^{-1}(\xi)$, $\C^n_{(H)}$ and $M_\xi = \mu^{-1}(\xi) \cap \C^n_{(H)} \subset \C^n$ are all $\U(1)$-invariant.  Moreover, the $\U(1)$-action on $M_\xi$ preserves horizontal vectors. \\
\indent (b) For a connected component $M \subset M_\xi$, the $\U(1)$-action on $M$ descends to a $\U(1)$-action on $Q := M/G$.  That is, there is a $\U(1)$-action on $Q$ such that $L_{e^{i\alpha}} \circ \pi = \pi \circ L_{e^{i\alpha}}$.
\end{proposition}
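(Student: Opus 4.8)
The plan is to deduce everything from two elementary observations: the Hopf $\U(1)$-action (\ref{eq:HopfAction}) commutes with the linear $G$-action, and it leaves the moment map $\mu$ invariant.

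For part (a), the first observation is immediate: scalar multiplication commutes with every linear endomorphism of $\C^n$, so $e^{i\alpha}\cdot(g\cdot z) = g\cdot(e^{i\alpha}\cdot z)$ for all $g\in G$; in particular $\mathrm{Stab}(e^{i\alpha}z) = \mathrm{Stab}(z)$, which shows $\C^n_{(H)}$ is $\U(1)$-invariant. For the invariance of $\mu^{-1}(\xi)$ I would use the explicit formula $\mu(z)^\sharp = -\tfrac12\pi_\mathfrak{g}(iz\overline z^T)$ of Proposition \ref{prop-cnmomentmap}: replacing $z$ by $e^{i\alpha}z$ conjugates $z\overline z^T$ by the scalar $e^{i\alpha}$ and hence leaves it unchanged, so $\mu(e^{i\alpha}z) = \mu(z)$. (Equivalently: the Hopf circle is central in $\U(n)$, so acts trivially under the coadjoint action, and equivariance of the standard $\U(n)$-moment map forces Hopf-invariance.) Therefore $\mu^{-1}(\xi)$ is $\U(1)$-invariant, and so is $M_\xi = \mu^{-1}(\xi)\cap\C^n_{(H)}$. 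That horizontal vectors are preserved then follows because $L_{e^{i\alpha}}\in\U(n)$ is a Euclidean isometry which restricts to an isometry of $M_\xi$; since it commutes with $G$ it sends $G$-orbits to $G$-orbits, so its differential preserves the vertical distribution $T\mathcal{O}=\Ker(d\pi)$, and being an isometry it must also preserve the orthogonal complement $\mathcal{H}=\Ker(d\pi)^\perp$; thus $(L_{e^{i\alpha}})_*\mathcal{H}_z = \mathcal{H}_{e^{i\alpha}z}$.

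For part (b), I would fix a connected component $M\subset M_\xi$ — each Hopf orbit is connected, hence lies in one component, so the action preserves $M$ — and define the descended action by $e^{i\alpha}\cdot\pi(z):=\pi(e^{i\alpha}z)$. Well-definedness is exactly the first observation: if $\pi(z)=\pi(z')$ then $z'=g\cdot z$ for some $g\in G$, so $e^{i\alpha}z'=g\cdot(e^{i\alpha}z)$ and hence $\pi(e^{i\alpha}z')=\pi(e^{i\alpha}z)$. The group-action axioms transfer directly from those on $M$, the identity $L_{e^{i\alpha}}\circ\pi=\pi\circ L_{e^{i\alpha}}$ holds by construction, and smoothness follows from $\pi\colon M\to Q$ being a smooth submersion with local sections (compose a local section with $L_{e^{i\alpha}}$ on $M$ and then $\pi$).

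I do not expect a genuine obstacle here: the proposition packages routine facts, and the only line that warrants care — the invariance of $\mu$ — is a one-step computation once the formula of Proposition \ref{prop-cnmomentmap} is available.
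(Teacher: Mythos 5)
Your proposal is correct and follows essentially the same route as the paper: the invariance of $\mu^{-1}(\xi)$ comes from the $\Ad$-equivariance of the $\U(n)$-moment map together with the centrality of the Hopf circle (your explicit computation with $z\overline z^T$ is the same fact written out), and everything else is deduced from the commuting of the $\U(1)_\Delta$- and $G$-actions. The paper leaves the horizontal-vector claim and part (b) as immediate consequences of that commuting; your isometry/vertical-distribution argument and the well-definedness check on the quotient are exactly the details it omits.
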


\begin{proof} (a) By Proposition \ref{prop-cnmomentmap}, a moment map for the standard $\U(n)$-action on $\mathbb{C}^n$ is given by $\mu^\sharp_{\mathfrak{u}(n)}(z) := -\frac{1}{2}iz\overline z^T$, and we note that $\mu^\sharp = \pi_{\mathfrak{g}} \circ \mu^\sharp_{\mathfrak{u}(n)}$. Therefore, for any $\alpha \in (0,2\pi]$ and $z \in \C^n$, the equivariance of the moment map gives:
	\begin{align*}
	    \mu^\sharp(e^{i\alpha}\cdot z) = \pi_\mathfrak{g} [ \mu^\sharp_{\mathfrak{u}(n)}(e^{i\alpha}\cdot z)]  = \pi_\mathfrak{g}[ \text{Ad}_{e^{i\alpha}}\mu^\sharp_{\mathfrak{u}(n)}(z)] = (\pi_\mathfrak{g} \circ \mu^\sharp_{\mathfrak{u}(n)})(z) = \mu^\sharp(z).
	\end{align*}
	This shows that $\mu^{-1}(\xi)$ is $\U(1)_\Delta$-invariant.  Since the $\U(1)_\Delta$-action commutes with the $G$-action on $\C^n$, the remaining claims in part (a) follow directly.

\indent (b) This follows from the commuting of the $\U(1)_\Delta$- and $G$-actions.
\end{proof}

\subsection{The Zero Level Set; Profile Planes}\label{sec-4.2}

\indent \indent The level set $\mu^{-1}(0) \subset \C^n$ is particularly special as it is invariant under the full $\mathbb{C}^*$-action. This additional symmetry allows us to work with \textit{profile planes} instead of the K\"ahler quotient, which will greatly simplify the analysis.

\begin{definition} The \emph{profile plane} at $z \in \C^n \setminus \{0\}$ is the complex line $P_z = \mathrm{span}_{\C}(z)$ containing both $z$ and the origin.  The reason for this terminology will become clear in Proposition \ref{prop-profilebijection}.
\end{definition}

\begin{proposition}[$\mathbb{C}^*$-invariance of $\mu^{-1}(0)$]\label{prop-zerosymmetry} Let $G \leq \U(n)$, and let $(H)$ be an isotropy type of the $G$-action on $\C^n$.  Let $M_0 := \mu^{-1}(0) \cap \mathbb{C}_{(H)}^n$. \\
    \indent (a) The subsets $\mu^{-1}(0)$, $\mathbb{C}^n_{(H)}$ and $M_0$ are $\mathbb{C}^*$-invariant. \\
    \indent (b) The set $\mu^{-1}(0)$ is a complex cone.  That is, if $z \in \mu^{-1}(0) \setminus \{0\}$, then $P_z \subset \mu^{-1}(0)$. \\
    \indent (c) For a connected component $M \subset M_0$, the $\mathbb{C}^*$-action on $M$ descends to a $\C^*$-action on $Q:=M/G$. That is, there is a $\mathbb{C}^*$-action on $Q$ such that 
    \[ L_{re^{i\alpha}} \circ \pi = \pi \circ L_{re^{i\alpha}}.\]
    \indent (d) The complex line $P_z$ is orthogonal to $T_z\mathcal{O}_z \oplus J \,T_z\mathcal{O}_z$.  Therefore, by (\ref{eq:OrthDecSec4}), $P_z \leq \mathcal{H}_z$.
\end{proposition}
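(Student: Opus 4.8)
The plan is to treat (a)--(c) as essentially formal consequences of the explicit moment map and the commutativity of the $\C^*$- and $\U(n)$-actions, reserving the geometric content for (d). For (a), the explicit formula $\mu(z)^\sharp = -\tfrac12\pi_\mathfrak{g}(iz\overline z^T)$ from Proposition~\ref{prop-cnmomentmap} gives $\mu(\lambda z)^\sharp = |\lambda|^2\,\mu(z)^\sharp$ for $\lambda \in \C^*$, so $\mu^{-1}(0)$ is $\C^*$-invariant (this subsumes the $\U(1)$-invariance of Proposition~\ref{prop-hopfsymmetry}). Since $G \le \U(n)$, the $\C^*$-action commutes with the $G$-action, so $\mathrm{Stab}(\lambda z) = \mathrm{Stab}(z)$ and hence $\C^n_{(H)}$ is $\C^*$-invariant; $M_0$ is then the intersection of the two. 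Part (b) is immediate: $0 \in \mu^{-1}(0)$ and $P_z \setminus \{0\} = \C^* \!\cdot z \subset \mu^{-1}(0)$ by (a), so $P_z \subset \mu^{-1}(0)$.

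For (c) I would first observe that a connected component $M \subset M_0$ is itself $\C^*$-invariant: the multiplication map $\C^* \times M \to M_0$ is continuous, carries $\{1\} \times M$ into the open-and-closed subset $M \subset M_0$, and $\C^* \times M$ is connected, so its whole image lies in $M$. One then sets $\lambda \cdot \pi(z) := \pi(\lambda z)$; this is well defined since $z' = g \cdot z$ forces $\lambda z' = g \cdot (\lambda z)$, it is a group action because the $\C^*$-action on $M$ is one, and $L_{re^{i\alpha}} \circ \pi = \pi \circ L_{re^{i\alpha}}$ holds by construction.

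The substance is (d). Since $P_z$ is a complex line it is $J$-invariant, so it suffices to prove $P_z \perp T_z\mathcal{O}_z$: applying the isometry $J$ then also gives $P_z \perp J(T_z\mathcal{O}_z)$, and comparing with the orthogonal decomposition (\ref{eq:OrthDecSec4}) yields $P_z \le \mathcal{H}_z = \big(T_z\mathcal{O}_z \oplus J(T_z\mathcal{O}_z)\big)^{\perp}$. To verify $P_z \perp T_z\mathcal{O}_z$ it is enough to pair the real basis $\{z, iz\}$ of $P_z$ against an arbitrary $\rho_z(X) = -Xz$, $X \in \mathfrak{g}$. The pairing $\langle z, Xz \rangle$ vanishes because $X$ is skew-Hermitian (equivalently, the Hermitian inner product of $Xz$ with $z$ is purely imaginary). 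For the pairing with $iz$, I would use that the Liouville form satisfies $\lambda = \tfrac12\,\iota_E\omega$, where $E$ is the radial vector field, so that $\langle \mu(z), X \rangle = \lambda_z(\rho_z(X)) = -\tfrac12\,\omega(z, Xz) = -\tfrac12\langle iz, Xz \rangle$; hence $\mu(z) = 0$ says precisely that $\langle iz, Xz \rangle = 0$ for all $X \in \mathfrak{g}$, i.e.\ $iz \perp T_z\mathcal{O}_z$.

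The difficulty is bookkeeping rather than conceptual. In (d), the orthogonality of $P_z$ to the \emph{real} orbit tangent space $T_z\mathcal{O}_z$ splits into a part holding for free ($z \perp Xz$, from skew-Hermiticity) and a part that genuinely encodes $\mu(z) = 0$ ($iz \perp Xz$), and one must be careful with the real-versus-Hermitian inner-product conventions and with the normalisation of $\lambda$ so that these two halves are matched correctly. The only other point requiring a little care is the claim in (c) that $\C^*$ preserves the connected component $M$, not merely the stratum $M_0$.
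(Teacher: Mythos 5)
Your proof is correct. Parts (a)--(c) follow the paper's route (explicit scaling of the moment map, commutativity of the $\C^*$- and $G$-actions), and your extra observation that $\C^*$ preserves the connected component $M$ itself --- via connectedness of $\C^* \times M$ and clopenness of $M$ in $M_0$ --- is a detail the paper leaves implicit, so it is a welcome addition rather than a deviation. Part (d) is where you genuinely diverge: the paper argues softly that, since $M_0$ is $\C^*$-invariant, the infinitesimal $\C^*$-vector fields are tangent to $M_0$, so $P_z \subset T_zM_0 = \mathcal{H}_z \oplus T_z\mathcal{O}_z$, and then uses pure linear algebra (a complex line sitting inside the sum of a complex subspace and an isotropic subspace orthogonal to its own $J$-image must lie in the complex subspace) to conclude $P_z \le \mathcal{H}_z$. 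You instead verify the orthogonality $P_z \perp T_z\mathcal{O}_z$ head-on, pairing $z$ and $iz$ against $\rho_z(X) = -Xz$: the vanishing of $\langle z, Xz\rangle$ is free from skew-Hermiticity of $X \in \mathfrak{u}(n)$, while $\langle iz, Xz\rangle = -2\mu^X(z) = 0$ is exactly the condition $z \in \mu^{-1}(0)$, and $J$-invariance of $P_z$ then gives $P_z \perp J(T_z\mathcal{O}_z)$ and hence $P_z \le \mathcal{H}_z$ by (\ref{eq:OrthDecSec4}). Both arguments are sound; the paper's is shorter and reuses the decomposition already established, while yours is more self-contained and isolates precisely which half of the orthogonality encodes $\mu(z)=0$, which is a useful clarification.
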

\begin{proof}
    (a) From Proposition \ref{prop-hopfsymmetry}, each of these subsets is $\U(1)$-invariant. Noting that for $r \in \mathbb{R}^+$, we have $\mu(rz) = -\frac{1}{2}\pi_\mathfrak{g}(i rz\overline{rz}^T) = r^2 \mu(z)$, 
    it follows that $\mu(rz) = 0$ if and only if $\mu(z) = 0$, and so $\mu^{-1}(0)$ is also $\mathbb{R}^+$-invariant, hence $\C^*$-invariant.  Since the $\C^*$-action preserves the stabiliser under the $G$-action, the $\C^*$-invariance of $\C^n_{(H)}$ and $M_0$ follows. \\

    (b) This follows immediately from (a) and the fact that $0 \in \mu^{-1}(0)$. \\

    (c) This follows from the commuting of the $\mathbb{C}^*$- and $G$-actions.\\
    
    (d) Since $M_0$ is $\C^*$-invariant, the infinitesimal $\C^*$-action vector fields lie in $TM_0 = \mathcal{H} \oplus T\mathcal{O}$.  In particular, the complex line $P = \text{span}_{\R}(\frac{\partial}{\partial r}, J(\frac{\partial}{\partial r})) \subset \mathcal{H} \oplus T\mathcal{O}$.  Since $P$ is a complex line contained in $\mathcal{H} \oplus T\mathcal{O}$, where $\mathcal{H}$ is complex and $T\mathcal{O}$ is isotropic, it follows that $P \subset \mathcal{H} = (T\mathcal{O} \oplus J(T\mathcal{O}))^\perp$.
    
    
    
\end{proof}

\begin{remark} The preceding proposition is trivial when $\U(1)_{\Delta} \leq G$, since then $\mu^{-1}(0) = \{0\}$. It is therefore primarily of interest in the case $G \leq \SU(n)$.
\end{remark}

\indent Since $G \leq \U(n) \leq \GL_n(\C)$, the $G$-action on $\C^n$ induces a natural $G$-action on $\CP^{n-1} = \{P_z \colon z \in \C^n \setminus 0\}$ via $g \cdot P_z := P_{gz}$.  It is easy to check that this action is well-defined, independent of representative $z \in \C^n \setminus 0$.  As we now explain, this $G$-action in turn yields a natural action of a finite cyclic group on each profile plane, which will be of vital importance for our study of $G$-invariant Lagrangian submanifolds.  


\begin{proposition}[Symmetry of the profile planes]\label{prop-profilesymmetry}
Let $G \leq \SU(n)$, $z \in \mu^{-1}(0)\setminus \{0\}$, and $P_z$ be the complex line through $z$. Let $\widetilde{H} := \{g \in G: \, g\cdot P_z = P_z\}$ and $H := G_z$ denote the stabilisers of $P_z \in \CP^{n-1}$ and $z \in \C^n$, respectively. Then:\\
\indent (a) $H$ is a normal subgroup of $\widetilde H$, and \[\widetilde H / H \cong C_m = \{1,e^{\tfrac{2\pi i}{m}},\ldots,e^{\tfrac{(m-1)2\pi i}{m}}\} \leq \U(1)_\Delta\]
for some positive $m \in \Z$. \\
\indent (b) There is a natural action of $\widetilde H / H \cong C_m$ on $P_z \subset \C^n$ given both by $[\,\widetilde h\,] \cdot w :=  \widetilde h \cdot w$ and by the standard inclusion $C_m \hookrightarrow \U(1)_\Delta$.
\end{proposition}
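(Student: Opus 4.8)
The plan is to encode the $\widetilde H$-action on the complex line $P_z$ as a single unitary character $\chi \colon \widetilde H \to \U(1)$, to identify $\ker\chi$ with $H$, and then to use the hypothesis $z \in \mu^{-1}(0)$ to force $\Img(\chi)$ to be finite.

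First, observe that $\widetilde H$ and $H$ are closed---hence compact Lie---subgroups of $G$ with $H \leq \widetilde H$: indeed $\widetilde H$ is the preimage of the point $P_z \in \CP^{n-1}$ under the continuous orbit map $g \mapsto g \cdot P_z$, the subgroup $H = G_z$ is the preimage of $z$ under $g \mapsto g \cdot z$, and any $g$ fixing $z$ fixes the line $P_z = \C z$. Every $g \in \widetilde H$ restricts to a $\C$-linear isometry of $P_z$, hence acts there by multiplication by a scalar $\chi(g) \in \U(1)$ determined by $g z = \chi(g)\, z$; this defines a smooth homomorphism $\chi \colon \widetilde H \to \U(1)$. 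Since $z \in P_z$ is nonzero, one has $g \in \ker\chi \iff g z = z \iff g \in H$, so $H = \ker\chi$. In particular $H \trianglelefteq \widetilde H$ and $\widetilde H / H \cong \Img(\chi)$, a closed subgroup of $\U(1)$, and the $\widetilde H$-action on $P_z$ by scalars descends to an action of $\widetilde H/H$ on $P_z$.

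The crux of the argument---and the only place the hypothesis $z \in \mu^{-1}(0)$ is used---is to show that $\Img(\chi)$ is finite. Suppose instead that $\Img(\chi)$ is infinite. Then $d\chi_e \neq 0$: otherwise $\chi$ would be constant (equal to $1$) on the identity component $\widetilde H^{0}$, so $\Img(\chi)$ would be a quotient of the finite group $\widetilde H/\widetilde H^{0}$, hence finite. Thus there exist $X \in \mathrm{Lie}(\widetilde H) \subseteq \mathfrak g$ and a real $c \neq 0$ with $\chi(\exp tX) = e^{ict}$. Then $\exp(tX)\cdot z = e^{ict}z$, and differentiating at $t = 0$ gives $\rho_z(X) = -Xz = -icz$. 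Invoking the moment-map formula $\langle \mu(z), X\rangle = \lambda_z(\rho_z(X))$ of Proposition \ref{prop-cnmomentmap}, together with the direct computation $\lambda_z(-iz) = -\tfrac12 |z|^2$ from (\ref{eq-liouvilleform}), we get $0 = \langle \mu(z), X\rangle = -\tfrac c2 |z|^2 \neq 0$, a contradiction. Hence $\Img(\chi)$ is a finite subgroup of $\U(1)$, and every such subgroup is the group $C_m$ of $m$-th roots of unity---which sits inside $\U(1)_\Delta$ by the very definition of the Hopf action---for a unique $m \geq 1$. This establishes (a).

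Part (b) is then immediate: the $\widetilde H$-action on $P_z$ is $g\cdot w = \chi(g)w$, which, since $H = \ker\chi$, descends to the well-defined action $[\widetilde h]\cdot w = \chi(\widetilde h)w$ of $\widetilde H/H$ on $P_z$; transporting this along the isomorphism $\widetilde H/H \cong C_m$ from (a) identifies it with the restriction to $C_m$ of the scalar $\U(1)_\Delta$-action on $P_z \subseteq \C^n$. The one genuinely substantive step is the contradiction in the previous paragraph---checking that a one-parameter subgroup of $\widetilde H$ rotating $P_z$ nontrivially would produce a point of $\mu^{-1}(0)$ at which some Hamiltonian $\mu^X$ fails to vanish; everything else is formal bookkeeping, so that is where I expect the (mild) difficulty to lie.
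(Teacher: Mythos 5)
Your proof is correct, and its overall architecture matches the paper's: both encode the $\widetilde H$-action on $P_z$ as a unitary character into $\U(1)_\Delta$, identify $H$ with its kernel (which gives normality for free in your version; the paper checks normality by a direct conjugation computation), and reduce everything to showing the image is finite. Where you genuinely diverge is in the crux. The paper rules out a positive-dimensional image by a Lie-algebra argument: a complement $\mathfrak{k}$ of $\mathfrak{h}$ in $\widetilde{\mathfrak{h}}$ would produce a nonzero vector $\rho(X)|_z$ lying in both $P_z$ and $T_z\mathcal{O}_z$, contradicting the orthogonality $P_z \perp T_z\mathcal{O}_z \oplus J(T_z\mathcal{O}_z)$ of Proposition \ref{prop-zerosymmetry}(d) (which is itself where the $\C^*$-invariance of $\mu^{-1}(0)$ enters). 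You instead take a one-parameter subgroup rotating $P_z$ with $\rho_z(X) = -icz$, $c \neq 0$, and evaluate the moment map directly via $\mu^X(z) = \lambda_z(\rho_z(X)) = -\tfrac{c}{2}|z|^2 \neq 0$, contradicting $z \in \mu^{-1}(0)$. The two arguments exploit the same underlying fact (a nonzero multiple of $iz$ tangent to the orbit pairs nontrivially with the Liouville form), but yours is more self-contained -- it bypasses Proposition \ref{prop-zerosymmetry}(d) entirely at the cost of an explicit coordinate computation -- while the paper's version isolates the geometric statement $P_z \leq \mathcal{H}_z$, which it reuses elsewhere. Your sign bookkeeping ($\rho_z(X) = -Xz$, $\lambda_z(-iz) = -\tfrac12|z|^2$) is consistent with the paper's conventions, and part (b) is handled identically in both.
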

\begin{proof} Let $h \in H$ and $\widetilde{h} \in \widetilde{H}$.  By definition, this means $h \cdot z = z$ and $\widetilde{h} \cdot z = \lambda z$ for some $\lambda \in \C^*$.  Therefore,
\[ \widetilde h^{-1} h \widetilde h \cdot z \, = \, \widetilde h^{-1}  h \cdot \lambda z \, = \, \widetilde h^{-1} \cdot \lambda z \, = \, z \ \implies \ \widetilde h^{-1} h \widetilde h \in H,\]
and so $H \leq \widetilde H$ is a normal subgroup.\\
\indent Now, the natural $\widetilde{H}$-action on $P_z \subset \C^n$ is unitary, so gives rise to a Lie group homomorphism $\widetilde{H} \to \U(1)_\Delta$.  Since $\widetilde{H} \leq G$ is compact, its image $K \leq \U(1)_\Delta$ is compact.  On the other hand, the $\widetilde{H}$-action on $P_z$ descends to a unitary $(\widetilde{H}/H)$-action on $P_z$, which is faithful since
\[ \widetilde {h_1} \cdot z = \widetilde h_2 \cdot z \, \iff \, \, \widetilde {h_1}^{-1}\widetilde {h_2} \in H \,\iff\, \left[\,\widetilde {h_1}\,\right] = \left[\,\widetilde {h_2}\,\right]. \]
Hence, there is an injective group homomorphism $\widetilde{H}/H \hookrightarrow \U(P_z) \cong \U(1)_\Delta$ whose image is $K$. 
Consequently, $\widetilde{H}/H \cong K \leq \U(1)_\Delta$ is a compact subgroup of $\U(1)_\Delta$. \\ 
\indent  Let $\widetilde{\mathfrak{h}}$, $\mathfrak{h}$ be the Lie algebras of $\widetilde{H}$, $H$, respectively.  Choose a complement $\mathfrak{k}$ of $\mathfrak{h} \subset \widetilde{\mathfrak{h}}$, so that $\widetilde{\mathfrak{h}} = \mathfrak{h} \oplus \mathfrak{k}$.  Let $X \in \mathfrak{k}$, and consider the vector $\rho(X)|_z \in T_z\C^n$.  Observe that $\rho(X)|_z \in P_z$ and $\rho(X)|_z \in T_z\mathcal{O}_z$, so Proposition \ref{prop-zerosymmetry}(d) gives $\rho(X)|_z = 0$.  Since the $G$-action is faithful, we have $X = 0$, and hence $\mathfrak{k} = 0$.  We conclude that the compact subgroup $K \leq \U(1)_\Delta$ is $0$-dimensional, and hence $K \cong C_m$ is a finite cyclic group.
\end{proof}

\begin{example} \label{ex-CyclicGroup}
\indent (a) Take $G = \SO(n)$, and recall the diagonal $\SO(n)$-action on $\C^n$ discussed in Example \ref{ex-SO(n)}.  For the isotropy type $(H) = (\SO(n-1))$, one can compute that $\widetilde{H} = \mathrm{O}(n-1)$, and hence $\widetilde{H}/H \cong C_2$. \\
\indent (b) Take $G = T^{n-1}$, and recall the $T^{n-1}$-action on $\C^n$ discussed in Example \ref{ex-Torus}.  For the isotropy type $(H) = (\{\mathrm{Id}\})$, one has $\widetilde{H}/H \cong C_n$.
\end{example}

\subsection{Cohomogeneity-One Lagrangian Submanifolds of $\C^n$}\label{sec-4.3}

\indent \indent We now restrict our attention to \textit{cohomogeneity-one} Lagrangian submanifolds of $\C^n$. That is, we study $G$-invariant, immersed Lagrangian submanifolds $L \subset \C^n$ of a fixed isotropy type $(H)$ for which the orbits $G/H$ are $(n-1)$-dimensional. By Proposition \ref{prop:constrain}, there exists a central value $\xi \in \mathfrak{g}^*$ and a connected component $M \subset M_\xi$ such that $L \subset M$. Our first result gives a formula for the Lagrangian angle in terms of the quotient curve $l := L/G$.

\begin{lemma}\label{lem-laganglegeneral}
    Let $U \subset Q$ be an open subset equipped with a global unit vector field $e_1$. Then there exists a smooth function $f:U \rightarrow S^1$ depending on $e_1$ such that the following is true.
   
    Let $L \subset M$ be a connected, immersed, cohomogeneity-one graded Lagrangian submanifold with Lagrangian angle $\theta$, and let $l := L/G$ be the quotient curve in $Q$. Then for any $z \in L\cap \pi^{-1}(U)$, 
    \[ \theta(z) \equiv \arg\left( f\left(\pi(z)\right)\right) + \arg\left(l'(\pi(z))\right)\quad (\emph{mod } \pi),\]
    where the argument of $l'$ is defined relative to $e_1$.
\end{lemma}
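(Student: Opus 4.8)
The plan is to compute $\Omega|_{T_zL}$ directly using the orthogonal decomposition $T_z\C^n = \mathcal{H}_z \oplus T_z\mathcal{O}_z \oplus J(T_z\mathcal{O}_z)$ of (\ref{eq:OrthDecSec4}), together with the fact (Corollary \ref{cor:H-equals-E}, with $k=1$) that $\mathcal{H}_z = \mathcal{E}_z$ is a \emph{complex line}. Since $L$ is cohomogeneity-one, $T_zL = T_z\mathcal{O}_z \oplus F_z$, where $F_z$ is a real line inside $\mathcal{H}_z$. First I would fix an oriented orthonormal frame $(v_1,\ldots,v_{n-1})$ for $T_z\mathcal{O}_z$; then $\mathrm{vol}_L = e^*\wedge v_1^*\wedge\cdots\wedge v_{n-1}^*$ where $e$ is the appropriately oriented unit vector spanning $F_z$. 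Because $\Omega$ is an $(n,0)$-form on $\C^n$, evaluating $\Omega$ on a frame adapted to the splitting factorizes: contracting $\Omega$ with the isotropic block $T_z\mathcal{O}_z$ produces an $(n-1)$-form's worth of data that depends only on the orbit, and the remaining slot is filled by the single horizontal direction $e$. Concretely, one writes $\Omega|_{T_zL} = \iota_{v_{n-1}}\cdots\iota_{v_1}\Omega$ evaluated on $e$, and this is a unit-modulus complex number times $\mathrm{vol}_L$.

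The key point is to separate this complex number into a factor depending only on $\pi(z)$ and a factor depending only on the tangent direction $l'(\pi(z))$ of the quotient curve. For this I would use the $\C^*$-equivariance machinery of \S\ref{sec-3.3}: the form $\beta_z := \iota_{v_{n-1}}\cdots\iota_{v_1}\Omega$ restricted to $\mathcal{H}_z$ is a $(1,0)$-form on the complex line $\mathcal{H}_z$, hence equals $g(z)\,\zeta_z^*$ for some unit complex-valued $g(z)$, where $\zeta_z$ is a chosen unit vector in $\mathcal{H}_z$; its argument records the ``orbit contribution.'' Now the K\"ahler reduction Theorem \ref{thm:KahlerReduction}(c) says $\pi_*\colon\mathcal{H}_z\to T_{\pi(z)}Q$ is a Hermitian isomorphism, and the given unit vector field $e_1$ on $U\subset Q$ pulls back (via this isomorphism, along a local section) to a unit horizontal vector field, which I take as $\zeta_z$. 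Defining $f(\pi(z)) := g(z)$ (independence of $z$ in the fiber follows from $G$-invariance of $\Omega$ and of the orbit volume, since $\rho(X)$ are Killing and the orbits are isotropic — this is exactly the content of (\ref{eq-dtheta}) with $a=0$), and noting that the unit vector $e$ spanning $F_z$ pushes forward to a unit tangent vector of $l$ at $\pi(z)$ making angle $\arg(l'(\pi(z)))$ with $e_1$, we get $\Omega|_{T_zL} = f(\pi(z))\cdot e^{i\arg(l'(\pi(z)))}\,\mathrm{vol}_L$, which is the claim modulo $\pi$ (the mod $\pi$ ambiguity absorbs orientation choices for $e$ and the frame).

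The main obstacle I anticipate is verifying that $f$ is genuinely well-defined on $U$ — i.e. that $g(z)$ really is constant along each $G$-orbit and varies smoothly over $U$ — and that its construction depends only on $e_1$ and not on the auxiliary choices (the local section $U\to M$, the orbit frame $(v_i)$). For the first, the argument is that $L_g^*\Omega = \Omega$ (Calabi--Yau action, as $G\le\SU(n)$, Proposition \ref{prop-cnmomentmap}) while $(L_g)_*$ maps an oriented orthonormal orbit frame at $z$ to one at $g\cdot z$ with the same induced orbit volume, so the contraction $\iota_{v_{n-1}}\cdots\iota_{v_1}\Omega$ transforms equivariantly and its modulus-one coefficient against the $G$-invariant $\zeta$ is constant on fibers; smoothness is clear since all ingredients are smooth. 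For independence of the orbit frame: changing $(v_i)$ by an element of $\SO(n-1)$ leaves $v_1^*\wedge\cdots\wedge v_{n-1}^*$ invariant, hence leaves $\beta_z$ unchanged, so the only residual ambiguity is the overall orientation, accounting for the $\pi$-ambiguity. The remaining bookkeeping — choosing the section, checking $\pi_*(e)$ has the stated argument relative to $e_1$ — is routine given Theorem \ref{thm:KahlerReduction}(c).
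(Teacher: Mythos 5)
Your proposal is correct and follows essentially the same route as the paper: both contract $\Omega$ against the unit $(n-1)$-vector of the orbit tangent space to obtain a complex-linear unit $1$-form that descends to $Q$, define $f$ by evaluating it on $e_1$, and read off $\theta$ modulo $\pi$ (the sign/orientation ambiguity being handled by the paper's ``$2$-valued'' form $\pm\nu$ and by your mod-$\pi$ bookkeeping). The only cosmetic difference is that the paper verifies descent to $Q$ via $d\Omega=0$ and a Cartan-formula computation, whereas you use $G$-invariance of $\Omega$ together with equivariance of the orbit frame --- equivalent in substance.
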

\begin{proof}
Let $\pm \nu_z$ denote the pair of unit elements of $\Lambda^{n-1}(T_z\mathcal{O})$, for $z \in M$. Then we may define the 2-valued complex-linear unit 1-form $\widehat \beta \colon T\C^n|_M \to \C$,
\[ \widehat \beta := \Omega(\pm \nu, \cdot ).\]
Using $d\Omega = 0$, it follows from a calculation that for $X \in T\mathcal{O}$, $\iota_X \widehat\beta = \iota_X d\widehat\beta = 0$. Therefore, $\widehat{\beta}$ descends to a smooth $2$-valued complex-linear unit $1$-form $\beta \in \Omega^1(Q; \C)$, in the sense that $\widehat\beta(X) = \beta(\pi_* X)$ for $X \in TQ$.  Note that each $\beta_p(X)$ is a pair of complex numbers of opposite sign. 

Now, define the function $f \colon U \to S^1$ by $f(p) = \beta_p(e_1)$. Denoting by $T$ a global unit tangent vector of $l$ and by $\widehat T$ the horizontal lift of $T$, it follows by the definition of $\theta$ that
\begin{align*}
    \pm e^{i\theta(z)} \, &=\, \pm \Omega(\nu_z, \widehat T)\, = \, \beta_{\pi(z)}(T) \, = \, \pm f(\pi(z)) e^{i \arg(l'(\pi(z)))},
\end{align*}
which implies the result.
\end{proof}

We next show that if such a Lagrangian submanifold is almost-calibrated, then it must be embedded and diffeomorphic to $\mathbb{R}\times G/H$.

\begin{proposition} \label{prop-embedded}
Let $L \subset M$ be a connected, immersed, cohomogeneity-one $G$-invariant Lagrangian submanifold of type $(H)$. Then:
\begin{itemize}
 \item[(a)] If $L$ is zero-Maslov and embedded, then $L$ is diffeomorphic to $\mathbb{R} \times G/H$. Moreover, if $L$ is complete, then the ends of $L$ are unbounded.
\item[(b)] If $L$ is almost-calibrated, then $L$ is embedded.
\end{itemize}
	\end{proposition}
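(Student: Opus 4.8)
The plan is to analyze $L$ via its projection to the K\"ahler quotient $Q$ and, more usefully, via its profile curve, exploiting the one-dimensionality of the quotient. For part (a), I would start with the quotient curve $l := L/G$, which by Proposition \ref{prop-bijection1} (or rather its cohomogeneity-one specialization) is a connected immersed curve in the $2$-dimensional manifold $Q$. The first task is to show $l$ is embedded. Here the zero-Maslov hypothesis enters: if $l$ had a transverse self-intersection or a tangential self-touching at a point $p = \pi(z)$, then $L$ would fail to be embedded near $\mathcal{O}_z$ unless the two local sheets coincide, contradicting that $L$ is an embedded submanifold — so $l$ is an embedded $1$-manifold, hence diffeomorphic to $\mathbb{R}$ or $S^1$. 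To rule out $S^1$, I would use Lemma \ref{lem-laganglegeneral}: along a closed loop $l \cong S^1$, the Lagrangian angle $\theta$ would have to return to its starting value, forcing $\arg(l')$ (measured against $e_1$, corrected by $\arg f$) to have total variation an integer multiple of $2\pi$ — but combined with the zero-Maslov condition (so $\theta \colon L \to \mathbb{R}$ is globally defined, not just mod $\pi$) and the fact that a closed immersed curve in a surface has nonzero turning, one derives a contradiction. A cleaner route: a zero-Maslov closed Lagrangian would be an exact-type obstruction — indeed in $\C^n$ a compact Lagrangian cannot be zero-Maslov / exact, and since $G/H$ is compact, $L \cong S^1 \times G/H$ would be compact; this contradicts noncompactness of graded embedded Lagrangian submanifolds (as noted in the excerpt after the Neves proposition, following \cite[Lem.\ 3.26]{Joyce2015}, or simply by Stokes applied to $\lambda$). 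Hence $l \cong \mathbb{R}$ and $L \cong \mathbb{R} \times G/H$. For the completeness claim, if $L$ is complete then $l$ is a complete immersed curve in $Q$; I would argue that its ends cannot accumulate to a compact region — a bounded end would limit onto a closed curve or a point, and the latter is excluded since $M$ (hence $Q$) has no such limiting behavior compatible with the $\C^*$-geometry (when $\xi = 0$) and completeness of the induced metric; more carefully, a bounded complete end would force infinite length in finite region, which by standard arguments (e.g. monotonicity, or the clearing-out lemma for stationary/graded Lagrangians) contradicts the area bounds or the embeddedness, so the ends are unbounded.

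For part (b), the goal is to upgrade almost-calibrated to embedded. The key observation is that the almost-calibrated condition gives $\theta \in (\overline\theta - \tfrac{\pi}{2} + \varepsilon,\, \overline\theta + \tfrac{\pi}{2} - \varepsilon)$, so in particular $\theta$ is confined to an open interval of length less than $\pi$; this already implies $L$ is zero-Maslov (the $S^1$-valued phase $\phi = e^{i\theta}$ lifts to a real-valued $\theta$), so part (a) will apply once we know $L$ is embedded. The strategy is to show the quotient curve $l \subset Q$ — equivalently the profile curve $l \subset P_z$ in the $\xi=0$ case via Proposition \ref{prop-profilebijection} — is embedded, and that the map $l \mapsto L = \pi^{-1}(l)$ (or the $G$-orbit of the profile curve) produces an embedded submanifold. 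Suppose for contradiction $L$ is not embedded; by Remark \ref{rmk-immersedunionofembedded}, $L = \bigcup F(G \cdot U_\alpha)$ is a union of embedded pieces, so non-embeddedness means two distinct points $z_1 \neq z_2 \in L$ with $\pi(z_1)$ arbitrarily close to (or equal to) $\pi(z_2)$ but the orbits distinct or the sheets crossing. Projecting to $Q$, the immersed curve $l$ then has a self-intersection: there are parameters $s_1 \neq s_2$ with $l(s_1) = l(s_2)$. I would then examine the Lagrangian angle at such a crossing point using Lemma \ref{lem-laganglegeneral}: at $p = l(s_1) = l(s_2)$, we get $\theta \equiv \arg f(p) + \arg(l'(s_j)) \pmod{\pi}$ for $j = 1,2$, so $\arg(l'(s_1)) \equiv \arg(l'(s_2)) \pmod \pi$, i.e. the two branches of $l$ are tangent at the crossing. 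Since $l$ is an immersed curve in a surface, a tangential self-intersection that is not a simple reparametrization must be analyzed locally; the branches being tangent with the same unoriented tangent line means — after passing to the double cover or using the real-valued $\theta$ from almost-calibratedness — that the full angles $\theta(z_1), \theta(z_2)$ either agree or differ by an odd multiple of $\pi$, but the latter is impossible since $\theta$ takes values in an interval of length $< \pi$. Hence $\theta(z_1) = \theta(z_2)$ and the branches agree to first order with matching phase; a continuation/unique-continuation argument (two graded Lagrangian sheets through a point, tangent, with equal Lagrangian angle, must coincide — this is where one invokes that near such a point $L$ is a graph over its tangent plane and the graphs must agree) then forces $z_1 = z_2$, a contradiction. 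Therefore $L$ is embedded.

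The main obstacle I anticipate is the final step of part (b): turning the first-order agreement of sheets (tangency plus equal Lagrangian angle) into genuine coincidence. In the hypersurface case one would invoke the strong maximum principle or a unique-continuation theorem for the graph PDE; here $L$ is a hypersurface inside the coisotropic $(n+1)$-manifold $M_\xi$, and after quotienting by $G$ it is literally a curve in a surface, so the obstruction reduces to: why can't two immersed curves in $Q$ be tangent at a point, carry the same induced "angle function" $\theta$, yet be distinct? Resolving this cleanly requires either (i) showing that $\theta$ together with the position determines $l'$ up to sign — which is essentially built into Lemma \ref{lem-laganglegeneral} — and then that $l$ is determined by an ODE (so equal initial position and velocity forces coincidence), or (ii) a direct topological argument that an embedded-pieces union with a tangential self-touching and matching grading would violate the global graded structure or the area ratio bound. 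I would pursue route (i): the cohomogeneity-one reduction (to be developed in \S\ref{sec-4.4}, e.g. equation (\ref{eq-introflowequation}) and Proposition \ref{prop-curvature}) shows that being special/graded Lagrangian pins down the curve by a first-order ODE relating $l'$ to $\theta$ and position, so uniqueness of solutions closes the argument. A secondary subtlety is ensuring the almost-calibrated bound $\varepsilon > 0$ is genuinely used (it is: it kills the "$\theta$ differs by $\pi$" branch), which matches the remark in the introduction that the almost-calibrated hypothesis is necessary.
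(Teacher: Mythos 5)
Part (a) of your proposal is essentially the paper's argument: non-compactness from \cite[Lem.\ 3.26]{Joyce2015} forces $l := L/G \cong \mathbb{R}$, hence $L \cong \mathbb{R} \times G/H$. (For the unbounded-ends claim the paper uses only point-set topology --- a bounded end of a closed embedded $L$ would yield a convergent subsequence $F(k_j,p)$ whose preimages $(k_j,p)$ would have to converge in $\mathbb{R}\times G/H$ --- whereas your appeal to monotonicity, clearing-out, or area bounds invokes hypotheses that are not present; that part should be replaced.)

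Part (b) has a genuine gap, in two places. First, your deduction that a self-intersection of $l$ must be tangential is incorrect: Lemma \ref{lem-laganglegeneral} at $p = l(s_1) = l(s_2)$ gives $\theta(z_j) \equiv \arg f(p) + \arg(l'(s_j)) \pmod{\pi}$, so $\arg(l'(s_1)) \equiv \arg(l'(s_2)) \pmod{\pi}$ only if $\theta(z_1) \equiv \theta(z_2) \pmod{\pi}$ --- but $\theta$ is merely confined to an interval of length $< \pi$, not constant, so transverse crossings at any angle $\delta < \pi - 2\varepsilon$ are perfectly consistent with the local data. Second, even at a genuinely tangential crossing with $\theta(z_1) = \theta(z_2)$, there is no unique-continuation principle forcing the two sheets to coincide: for a general graded (non-special) Lagrangian the profile/quotient curve is an arbitrary immersed curve and $\theta$ is \emph{defined by} the curve via Lemma \ref{lem-laganglegeneral}, not a prescribed function constraining it, so there is no first-order ODE whose uniqueness you can invoke. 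Two distinct curves tangent at a point, each graded with matching angle there, give a counterexample to your route (i). The correct mechanism is global, not local: given $s_k \in \mathbb{R}\setminus U$ with $F(s_k) \to F(s)$ (note this covers failure of properness as well as actual self-intersections, a case your reduction omits), one closes the arc $F((s,s_k))$ by a short cap $\gamma$ near the touching point. The almost-calibrated condition bounds the variation of $\theta$ along the arc by $\pi - 2\varepsilon_\theta$ and the cap contributes at most $\pi + \epsilon$, so the total winding of the phase around the loop is an integer multiple of $2\pi$ of absolute value $< 2\pi$, hence zero; the loop therefore bounds a compact, embedded, zero-Maslov cohomogeneity-one Lagrangian, contradicting part (a). This is the step your proposal is missing.
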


\begin{proof} (a) If $L$ is a Lagrangian satisfying the conditions of (a), then \cite[Lem.\ 3.26]{Joyce2015} implies that $L$ is non-compact.  Now, note that $L$ is a $G/H$-bundle over the $1$-manifold $l := L/G$, and that $G/H$ is compact.  Therefore, since $L$ is non-compact and connected, it follows that $l$ is a non-compact connected $1$-manifold, and hence is diffeomorphic to $\R$.  Thus, $L$ is a $G/H$-bundle over $\R$, and any embedding $f:\mathbb{R} \rightarrow M / G$ lifts to a $G$-equivariant embedding $F \colon \mathbb{R} \times G/H \rightarrow L$. \\
\indent Finally, suppose $L$ is complete, so that $L \subset \C^n$ is a closed subset.  For all $p \in G/H$, the sequences $F(k,p) \in L$ and $F(-k,p) \in L$ are unbounded as $k \to \infty$.  Indeed, if $\{F(k,p)\}$ (say) were bounded, then it would admit a convergent subsequence in $L$, and hence (since $F$ is an embedding) the sequence $(k,p) \in \R \times G/H$ would admit a convergent subsequence, which is absurd. \\
	
(b) If $L$ satisfies the conditions of (b), then by Proposition \ref{prop-equivariantquotient} the quotient $l := L/G$ is a connected immersed 1-manifold in $Q := M/G$, and so may be parametrised with one real parameter. Assume for a contradiction that $L$ is not embedded, from which it follows that $l$ is not embedded. We choose an immersion $F\colon \mathbb{R}\rightarrow Q$ whose image is $l$. \\
\indent Since $F$ is a continuous map that is not a homeomorphism onto its image, it follows that either $F$ is not injective or that there exists an open set $U \subset \mathbb{R}$ for which $F(\mathbb{R} \setminus U) \subset l$ is not a closed subset of $l$.  In either case, there exists an open set $U \subset \mathbb{R}$, a sequence $s_k \in \mathbb{R} \setminus U$, and a real number $s \in U$, for which $F(s_k) \to F(s)$. Without loss of generality, we may assume that $s < s_k$. \\
\indent Now, the distance between $F(s)$ and $F(s_k)$ can be made arbitrarily small.  Therefore, for any $\epsilon > 0$, using Lemma \ref{lem-laganglegeneral} and choosing $k$ sufficiently large, we may complete the curve segment $F((s,s_k))$ to a smooth embedded loop $\widetilde{\gamma} = F((s,s_k)) \cup \gamma$ in $Q$ in such a way that the change in Lagrangian angle across $\gamma$ is at most $\pi + \epsilon$.  By choosing $\epsilon > 0$ sufficiently small and using the fact that $L$ is almost-calibrated, we can make the loop $\widetilde{\gamma}$ in $Q$ correspond to a cohomogeneity-one Lagrangian in $M$ that is almost-calibrated, embedded, and compact, which contradicts (a).
\end{proof}




Finally, we prove that a cohomogeneity-one Lagrangian $L$ is exact if and only if $L \subset \mu^{-1}(0)$.

\begin{proposition} \label{prop-exact} Let $L \subset \C^n$ be a connected, almost-calibrated, cohomogeneity-one $G$-invariant Lagrangian submanifold of type $(H)$, so that $L \subset \mu^{-1}(\xi)$ for some central value $\xi \in \mathfrak{g}^*$.  Then $L$ is exact if and only if $\xi = 0$.
\end{proposition}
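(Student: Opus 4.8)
We sketch the argument. By definition $L$ is exact exactly when the closed $1$-form $F^*\lambda$ (closed since $d\lambda = \omega$ and $L$ is Lagrangian) is exact, i.e.\ when $\int_\gamma \lambda = 0$ for every loop $\gamma$ in $L$. Since $L$ is almost-calibrated, it is zero-Maslov and, by Proposition \ref{prop-embedded}, embedded with $L \cong \mathbb{R}\times G/H$; hence $H_1(L,\mathbb{Z}) \cong H_1(G/H,\mathbb{Z})$ is generated by loops lying in a single $G$-orbit $\mathcal{O}_{z_0} = G\cdot z_0 \subset L$, and because $F^*\lambda$ is closed it suffices to compute $\int_\gamma \lambda$ for such orbit-loops. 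The whole computation rests on the identity $\mu^X = \lambda\circ\rho(X)$ of Proposition \ref{prop-cnmomentmap} together with the fact that $\mu \equiv \xi$ on $L$: for $z_0 \in L$ and $Y \in \mathfrak{g}$, the orbit-tangent vector $\rho_{z_0}(Y)$ satisfies $\lambda_{z_0}(\rho_{z_0}(Y)) = \mu^Y(z_0) = \langle \xi, Y\rangle$.

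From this, the implication ``$\xi = 0 \implies L$ exact'' is immediate: if $\xi = 0$ then $F^*\lambda$ vanishes identically on the tangent space of every $G$-orbit, so $\int_\gamma \lambda = 0$ for every orbit-loop $\gamma$, and hence all periods of $F^*\lambda$ vanish and $F^*\lambda$ is exact.

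For the converse, suppose $\xi \neq 0$, so $\xi^\sharp \in \mathfrak{z}(\mathfrak{g})\setminus\{0\}$ and $\langle \xi, \xi^\sharp\rangle = |\xi^\sharp|_{\mathfrak{g}}^2 > 0$. Let $T' := \overline{\{\exp(t\xi^\sharp) : t \in \mathbb{R}\}} \leq G$; this is a torus, with Lie algebra $\mathfrak{t}' \ni \xi^\sharp$, and $\Lambda' := \ker(\exp|_{\mathfrak{t}'})$ is a full-rank lattice in $\mathfrak{t}'$. The linear functional $Y \mapsto \langle \xi, Y\rangle$ is nonzero on $\mathfrak{t}'$ (it is positive on $\xi^\sharp$), so it cannot vanish on the full-rank lattice $\Lambda'$; pick $v \in \Lambda'$ with $\langle \xi, v\rangle \neq 0$. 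Then $\gamma(t) := \exp(tv)\cdot z_0$, $t \in [0,1]$, is a genuine loop in $\mathcal{O}_{z_0}\subset L$ (since $\exp(v) = e$), with velocity $\gamma'(t) = \rho_{\gamma(t)}(-v)$ because the left logarithmic derivative of a one-parameter subgroup is the constant $v$. Hence $\int_\gamma \lambda = \int_0^1 \mu^{-v}(\gamma(t))\,dt = -\langle \xi, v\rangle \neq 0$, using once more $\gamma(t) \in L \subset \mu^{-1}(\xi)$. So $F^*\lambda$ has a nonzero period, is not exact, and $L$ is not exact. Combining the two implications proves the proposition.

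The one genuine obstacle is the converse direction. The observation that $\lambda$ fails to annihilate orbit-tangent vectors when $\xi\neq 0$ is not enough on its own: one needs an actual \emph{closed} curve inside an orbit realizing a nonzero period, and the naive candidate $t \mapsto \exp(t\xi^\sharp)\cdot z_0$ need not be periodic. Passing to the torus $T'$ generated by $\xi^\sharp$ and using an element of its integer lattice $\Lambda'$ is what makes the loop close up while keeping the period nonzero; establishing this requires only the elementary facts that the closure of a one-parameter subgroup of a compact group is a torus containing $\xi^\sharp$ in its Lie algebra, plus bookkeeping of the sign conventions in $\rho_z(X) = -Xz$ and $\mu^X = \lambda\circ\rho(X)$. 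The forward direction is routine once Proposition \ref{prop-embedded} is invoked to guarantee that $H_1(L,\mathbb{Z})$ is carried by orbit-loops.
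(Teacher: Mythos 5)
Your proof is correct. The forward direction ($\xi = 0$ implies exact) is essentially the paper's argument: reduce to orbit-loops via Proposition \ref{prop-embedded} and use $\lambda(\rho(Y)) = \mu^Y = \langle \xi, Y\rangle$. The converse, however, takes a genuinely different route. The paper explicitly flags the difficulty you identify --- that $t \mapsto \exp(tX)\cdot z$ need not close up --- but resolves it analytically rather than algebraically: by compactness of the orbit it finds $t_0 < t_1$ with $t_1 - t_0 > 1$ and $\exp(t_0X)\cdot z$, $\exp(t_1X)\cdot z$ within $\varepsilon$ of each other, caps the arc off with a short connecting segment of length at most $2\varepsilon$, and estimates $0 = |\int_{\overline\gamma}\lambda| \geq |\mu^X(z)| - 2\varepsilon \max_{\mathcal{O}_z}|z|$, then lets $\varepsilon \to 0$. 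Your version instead produces an honest closed loop by passing to the torus $T' = \overline{\{\exp(t\xi^\sharp)\}}$ and choosing $v$ in its unit lattice with $\langle \xi, v\rangle \neq 0$ (possible since the lattice spans $\mathfrak{t}'$ and $\langle \xi, \xi^\sharp\rangle = |\xi^\sharp|^2 > 0$); this yields an exact evaluation $\int_\gamma \lambda = -\langle \xi, v\rangle$ with no limiting argument, at the cost of invoking the structure theory of compact abelian subgroups. Your sign bookkeeping with $\rho_p(X) = \frac{d}{dt}\big|_{t=0}\exp(-tX)\cdot p$ is consistent, and note that your chosen $v$ automatically lies outside $\mathfrak{g}_{z_0}$ since $\xi$ annihilates the stabiliser algebra, so the loop is nonconstant. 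Both arguments are sound; yours is cleaner and proves the contrapositive directly, while the paper's is more elementary in that it uses only compactness of the orbit.
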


\begin{proof}
First, assume that $\xi = 0$. Let $\gamma:S^1 \rightarrow L$ be a loop, and choose $z \in L$. By Proposition \ref{prop-embedded}, $L$ is homeomorphic to $\mathbb{R} \times G/H$, and so $\gamma$ is homotopic to a loop $\overline\gamma:[0,2\pi) \rightarrow \mathcal{O}_z$ within $L$. Then there exists a smooth path $X_s \in \mathfrak{g}$ such that 
        \[ \frac{d\overline\gamma}{ds} = \rho(X_s). \]
        Therefore, by Proposition \ref{prop-cnmomentmap}, and using the fact that $d\lambda|_L = \omega|_L = 0$:
        \begin{align*}
            [\lambda][\gamma] &= \int_\gamma \lambda
            = \int_{\overline\gamma} \lambda 
            = \int_{0}^{2\pi} \lambda(\rho(X_s))\, ds
            = \int_0^{2\pi} \mu^*(X_s) = 0,
        \end{align*}
        and so $L$ is exact.
        
        Now assume $L$ is exact, and choose $z \in L$. We wish to show that $\mu(z) = 0$, for which it is sufficient to show that $\mu^X(z) = 0$ for all $X \in \mathfrak{g}$ such that $|\rho_z(X)|=1$. To do this, we would like to use a test loop obtained by exponentiating $X$. However, we cannot be sure that $\{\exp(tX)\cdot z: t\in\mathbb{R}\}$ contains a closed loop.
        
        Instead, note that since the orbit $\mathcal{O}_z$ is compact, for any $\varepsilon > 0$ there must exist $t_0$, $t_1 \in \mathbb{R}$ with $t_1 - t_0 > 1$ and such that
        \begin{align*}
            |\exp(t_0 X)\cdot z - \exp(t_1 X)\cdot z| < \varepsilon\\
            \implies |z - \exp((t_1 - t_0)X)\cdot z| < \varepsilon.
        \end{align*}
        Fix $\varepsilon > 0$ and $t_0, t_1 \in \R$ satisfying the above. Choosing $\varepsilon < \widetilde \varepsilon < 2\varepsilon$, we may complete the curve $\gamma(s):= \exp(sX)\cdot z$ to a smooth closed curve with constant speed $|\rho(X)| = 1$,
        \begin{align*}
            &\overline\gamma:[0,t_1 - t_0 + \widetilde \varepsilon] \rightarrow \mathcal{O}_z,\\
            &\overline\gamma(0) = \overline\gamma(t_1-t_0+\widetilde\varepsilon),\\
            &\overline\gamma(s) = \gamma(s) \quad \text{ for } s \in [0, t_1-t_0].
        \end{align*}
        It then follows that
        \begin{align*}
            0 = \left| \int_{\overline\gamma}\lambda \right| \, =\, \left| -\int_0^{t_1-t_0} \lambda(\rho(X))\, ds \, + \, \int_{t_1-t_0}^{t_1-t_0 + \widetilde \varepsilon} \lambda \left(\frac{d\overline\gamma}{ds}\right)\,ds \right| \, \geq \, |\mu^X(z)| - \max_{z \in \mathcal{O}_z}|z| \cdot 2\varepsilon.
        \end{align*}
        Since $\varepsilon > 0$ was arbitrary, this implies that $\mu^X(z) = 0$, as required.
\end{proof}

\subsection{Cohomogeneity One Lagrangians in the Zero Level Set}\label{sec-4.4}

\indent \indent We now study cohomogeneity-one $G$-invariant Lagrangians in the zero level set, $\mu^{-1}(0)$. For this purpose, we fix an isotropy type $(H)$ of the $G$-action on $\C^n$ with $(n-1)$-dimensional orbits $G/H$, and a connected component $M$ of $M_0$.  At any point $z \in M$, the inclusion $P_z \subset \mathcal{H}_z$ of Proposition \ref{prop-zerosymmetry}(c) is equality for dimension reasons, therefore $\text{dim}(M) = n+1$ and $\text{dim}(Q) = 2$. The orthogonal decomposition (\ref{eq:OrthDecSec4}) may be written as:
\begin{align}
T_z\mathbb{C}^n & = P_z \oplus T_z\mathcal{O}_z \oplus J(T_z\mathcal{O}_z) \label{eq-orthdecompositoncohom1} \\
&= P_z \oplus \rho_z(\mathfrak{g}) \oplus J\rho_z(\mathfrak{g}).\notag
\end{align}
In fact, we can say more:




\begin{proposition}\label{prop-mu0cohom1}
Let $M$ be a connected component of $M_0$ and $z \in M$. Then:\\
\indent(a) $M = G\cdot P_z = (G \times \mathbb{C}^*)\cdot \{z\}$. Therefore, $M$ is a smooth cone, and is closed. \\ 
\indent(b) For $w \in M$, there exists $g \in G$ such that $P_w = g \cdot P_z$.\\
\indent(c) The cyclic group $C_m$ acting on $P_z$ has order $m\,|\,2n$. 
\end{proposition}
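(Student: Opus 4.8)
The plan is to take the three parts in turn; parts (a) and (b) are short consequences of the $(G\times\C^*)$-invariance of $M_0$ established in Proposition \ref{prop-zerosymmetry}, while part (c) is where the real content lies. For (a): since $M_0=\mu^{-1}(0)\cap\C^n_{(H)}$ is invariant under the connected group $G\times\C^*$ by Proposition \ref{prop-zerosymmetry}(a), its connected component $M$ is itself $(G\times\C^*)$-invariant, so $(G\times\C^*)\cdot\{z\}\subseteq M$. For any $w\in M$ the $(G\times\C^*)$-orbit of $w$ has tangent space $\rho_w(\g)+P_w=T_w\mathcal{O}_w+P_w$ at $w$, and by Proposition \ref{prop-zerosymmetry}(d) this is an orthogonal direct sum of real dimension $(n-1)+2=n+1=\dim M$. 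Hence every $(G\times\C^*)$-orbit in $M$ is open, and since $M$ is connected there is exactly one, so $M=(G\times\C^*)\cdot\{z\}=G\cdot(P_z\setminus\{0\})$, the last step because $\C^*\cdot z=P_z\setminus\{0\}$ (note $0\notin M$, since the origin has isotropy $G$, which for $n\geq 2$ is not conjugate to $H$). In particular $M$ is $\R^+$-invariant, hence a smooth cone; closedness follows from a compactness argument: if $w_k\in M$ with $w_k\to w\neq 0$, then writing $w_k=g_k\cdot\lambda_k z$ one has $|\lambda_k|=|w_k|/|z|$ bounded away from $0$ and $\infty$, so after passing to a subsequence $g_k\to g\in G$, $\lambda_k\to\lambda\in\C^*$, and $w=g\cdot\lambda z\in M$. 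For (b): by (a), $w=g\cdot\lambda z=\lambda(g\cdot z)$ for some $g\in G$, $\lambda\in\C^*$, using that $g$ acts $\C$-linearly, so $P_w=\C w=\C(g\cdot z)=g\cdot P_z$.

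For (c), let $\widetilde h\in\widetilde H$ be a generator of $\widetilde H/H\cong C_m$ (notation as in Proposition \ref{prop-profilesymmetry}), so that $\widetilde h$ acts on $P_z$ as multiplication by the primitive $m$-th root of unity $\zeta:=e^{2\pi i/m}$; it suffices to prove $\zeta^{2n}=1$, as this forces $m\mid 2n$. The idea is that the point $\zeta z$ is reached from $z$ in two different ways — as $\widetilde h\cdot z$ with $\widetilde h\in G\leq\SU(n)$, and as the Hopf rotation $e^{2\pi i/m}\cdot z$ — and we compare how the $2$-valued complex-linear unit $1$-form $\widehat\beta=\Omega(\pm\nu,\cdot)$ of the proof of Lemma \ref{lem-laganglegeneral} transforms under each. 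Since $\widetilde h^*\Omega=\Omega$ (the $G$-action is Calabi--Yau as $G\leq\SU(n)$) and $\widetilde h_*$ carries the unit $(n-1)$-vector $\nu_z$ of $\Lambda^{n-1}T_z\mathcal{O}_z$ to $\pm\nu_{\zeta z}$, one gets $\widehat\beta_{\zeta z}=\widehat\beta_z\circ(\widetilde h_*)^{-1}$; restricting to $P_z$, where $\widetilde h_*$ is multiplication by $\zeta$, this reads $\widehat\beta_{\zeta z}|_{P_z}=\zeta^{-1}\widehat\beta_z|_{P_z}$. On the other hand, under the Hopf rotation $R_\psi(w)=e^{i\psi}w$ the vector $\nu_w$ is carried to $(e^{i\psi}e_1)\wedge\cdots\wedge(e^{i\psi}e_{n-1})$ for an orthonormal basis $e_1,\dots,e_{n-1}$ of $T_w\mathcal{O}_w$, and since $\Omega$ is of type $(n,0)$ this contributes the factor $e^{i(n-1)\psi}$, i.e.\ $\widehat\beta_{e^{i\psi}w}=e^{i(n-1)\psi}\widehat\beta_w$; taking $\psi=2\pi/m$ gives $\widehat\beta_{\zeta z}|_{P_z}=\zeta^{n-1}\widehat\beta_z|_{P_z}$. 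Evaluating both expressions on $X=z$ — here $\widehat\beta_z(z)\neq 0$ because $P_z\perp T_z\mathcal{O}_z$ by Proposition \ref{prop-zerosymmetry}(d) forces $\Omega(\nu_z,z)\neq 0$ — and recalling that $\widehat\beta$ is only defined up to sign, we obtain $\zeta^{n-1}=\pm\zeta^{-1}$, hence $\zeta^{2n}=1$.

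The only substantive step is (c), and within it the delicate part is the bookkeeping in the two transformation laws for $\widehat\beta$: one must check that the Hopf rotation genuinely produces the factor $e^{i(n-1)\psi}$ — this uses that $\Omega$ has type $(n,0)$, which is exactly where the $n$ in "$2n$" enters — and that $\SU(n)$-invariance of $\Omega$ leaves the $\widetilde h$-law with only the $\zeta^{-1}$ coming from $\C$-linearity of $\widehat\beta$. Carrying the unavoidable $\pm$ ambiguity of $\pm\nu$ through the comparison is precisely what upgrades the naive conclusion $\zeta^{n}=1$ to the sharp $\zeta^{2n}=1$; dropping it would give the wrong bound $m\mid n$, which fails already for $G=\SO(n)$.
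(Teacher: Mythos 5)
Your proof is correct and follows essentially the same route as the paper's: part (a) is the same open-and-closed connectedness argument (you just supply more detail on openness of the $(G\times\C^*)$-orbit and on closedness), part (b) is identical, and part (c) is the paper's comparison of the two transformation laws for the $2$-valued form $\widehat\beta=\Omega(\pm\nu,\cdot)$ --- one via $G$-invariance of $\Omega$, one via the $(n,0)$-type under the Hopf rotation --- with the sign ambiguity of $\pm\nu$ correctly retained so as to land on $m\mid 2n$ rather than the false $m\mid n$. No gaps.
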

\begin{proof} (a) Note that $G\cdot P_z$ is a closed submanifold without boundary of the same dimension as $M$.  Since $M$ is connected, it follows that $G \cdot P_z = M$.\\

\indent (b) By part (a), we have $w = g \cdot \lambda z$ for some $g \in G$ and $\lambda \in \mathbb{C}^*$, so $P_w = P_{g \cdot \lambda z} = P_{g \cdot z} = g \cdot P_z$. \\ 


\indent (c) As in the proof of Lemma \ref{lem-laganglegeneral}, let $\pm \nu_z$ denote the pair of unit elements of $\Lambda^{n-1}(T_z\mathcal{O})$, for $z \in M$, and define the 2-valued complex-linear unit 1-form $\widehat \beta_w := \Omega(\pm \nu_w, \cdot)$. Recall that for each $X \in T_w M$, $\widehat \beta_w(X)$ is a pair of complex numbers of opposite sign.

Now fix an element $g \in \widetilde{H}$ such that $[g] \in {\widetilde H}/{H}$ corresponds to $e^{\frac{2\pi i}{m}} \in C_m$. It follows that for $w \in P_z$, $X \in T_w P_z$,
\[ g \cdot w = e^{\frac{2\pi i}{m}} w, \quad (L_g)_* X = \left(L_{e^{\frac{2\pi i}{m}}}\right)_* X.\]
We may then calculate the value $(L_g)^*\widehat\beta_w(X)$ in two different ways. Denoting $\overline\alpha := \frac{2\pi}{m}$ for convenience, and using the fact that $L_{e^{i\phi}}^*\Omega = e^{in\phi}\Omega$,
\begin{align*}
(L_g)^*\widehat\beta_w(X) \, &= \, \Omega_{g\cdot w}(\pm \nu_{g\cdot w}, (L_g)_*X)\,=\, (L_{e^{i\overline\alpha}})^*\Omega_w(\pm \nu_w, X)\,=\, e^{in\overline\alpha}\widehat\beta_w(X).
\end{align*}
On the other hand, by the $G$-invariance of $\Omega$,
\begin{align*}
(L_g)^*\widehat\beta_w(X) \, &= \, \Omega_{g\cdot w}(\pm \nu_{g\cdot w}, (L_g)_*X)\,= \, (L_g)^*\Omega_w(\pm \nu_w, X)\, = \, \Omega_w(\pm \nu_w, X)\,= \, \widehat\beta_w(X).
\end{align*}
It follows that $e^{\frac{i 2n\pi}{m}} = \pm 1$, and so $m \mid 2n$.

\end{proof}

When working in the level set $\mu^{-1}(0)$, cohomogeneity-one $G$-invariant Lagrangians are characterised by their intersection with a complex line $P_z$. This gives the following bijection:

\begin{proposition}[Bijection between $G$-invariant Lagrangians and profile curves]\label{prop-profilebijection} Let $M$ be a connected component of $M_0 := \mu^{-1}(0) \cap \C^n_{(H)}$ and let $z \in M$. Define $P_z$ and $C_m$ as in Proposition \ref{prop-profilesymmetry}. Then there are bijections:
\begin{align*}
    \{G\text{-invariant immersed Lagrangians in }M\} & \longleftrightarrow \{C_m\text{-invariant immersed curves in }P_{z}\}\\
    \{G\text{-invariant embedded Lagrangians of }M\} & \longleftrightarrow \{C_m\text{-invariant embedded curves in }P_{z}\}\\
    L &\longmapsto L \cap P_z\\
    G\cdot l &\longmapsfrom l
\end{align*}
\end{proposition}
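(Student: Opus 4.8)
The plan is to build the correspondence by descending and lifting along the quotient map $\pi \colon M \to Q$, together with the structural facts about profile planes already established. First I would set up the two maps carefully. Given a $G$-invariant immersed Lagrangian $L \subset M$, define $\Psi_1(L) := L \cap P_z$. To see this is well-defined with the stated codomain, recall from Proposition \ref{prop-mu0cohom1}(a) that $M = G \cdot P_z$, so $\pi$ restricted to $P_z$ is surjective onto $Q$; moreover the fibres of $\pi|_{P_z}$ are exactly the $C_m$-orbits, since if $w, w' \in P_z$ lie in the same $G$-orbit then $w' = g \cdot w$ for some $g$ with $g \cdot P_z = P_z$, i.e.\ $g \in \widetilde H$, so that $[g] \in \widetilde H / H \cong C_m$ acts on $P_z$ taking $w$ to $w'$. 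Thus $\pi|_{P_z} \colon P_z \to Q$ is exactly the quotient by the $C_m$-action, and $L \cap P_z = (\pi|_{P_z})^{-1}(l)$ where $l = L/G$ is the quotient curve of Proposition \ref{prop-bijection1}. Since $l \subset Q$ is an immersed curve and $\pi|_{P_z}$ is a (branched at worst over the origin) covering away from $0$, the preimage $L \cap P_z$ is a $C_m$-invariant immersed curve in $P_z$. Conversely, given a $C_m$-invariant immersed curve $l \subset P_z$, the image $\pi(l) \subset Q$ is an immersed curve (again using that $\pi|_{P_z}$ is the $C_m$-quotient), and we set $\Psi_2(l) := G \cdot l = \pi^{-1}(\pi(l))$, which by Proposition \ref{prop-bijection1} is a $G$-invariant immersed Lagrangian in $M$.

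Second, I would verify that $\Psi_1$ and $\Psi_2$ are mutually inverse. In one direction, for $L$ a $G$-invariant Lagrangian, $\Psi_2(\Psi_1(L)) = G \cdot (L \cap P_z)$; since $L$ is $G$-invariant and every point of $M = G\cdot P_z$ is $G$-equivalent to a point of $P_z$, we get $G \cdot (L \cap P_z) = L$. In the other direction, for $l$ a $C_m$-invariant curve in $P_z$, $\Psi_1(\Psi_2(l)) = (G \cdot l) \cap P_z$; this equals $l$ precisely because the $G$-orbit of a point $w \in P_z$ meets $P_z$ in exactly the $C_m$-orbit of $w$ (established above), and $l$ is $C_m$-invariant. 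This gives the first bijection. For the embedded version, I would observe that embeddedness is preserved in both directions: $L \cap P_z$ is a transverse-type intersection of the embedded $L$ with the embedded $P_z$ inside $M$ — more carefully, $L \cap P_z = (\pi|_{P_z})^{-1}(l)$ with $l$ embedded, and away from the origin $\pi|_{P_z}$ is a local diffeomorphism (an $m$-fold covering), so the preimage of an embedded curve is an embedded curve; conversely if $l \subset P_z$ is $C_m$-invariant and embedded then $\pi(l)$ is embedded in $Q$ (as $\pi|_{P_z}$ is $C_m$-equivariant and $l$ is $C_m$-invariant, distinct $G$-orbits in $l$ map to distinct points), and then $\pi^{-1}(\pi(l)) = G \cdot l$ is embedded by the fibre-bundle structure of Theorem \ref{thm:PrincipalOrbit}.

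The main subtlety I expect — and the step to handle with care rather than wave past — is the behaviour at the origin $0 \in P_z$, where $\pi|_{P_z}$ is genuinely branched (it is $w \mapsto w^m$ up to the identification $P_z \cong \C$, since $C_m \leq \U(1)_\Delta$ acts by $m$-th roots of unity). Away from $0$ everything is clean, but a curve $l$ passing through the origin, or a Lagrangian $L$ whose quotient curve $l = L/G$ touches $\pi(0)$, requires checking that $C_m$-invariance of the profile curve is exactly the condition ensuring the $G$-orbit $G\cdot l$ is a smooth immersed (resp.\ embedded) submanifold near that point, and conversely. This is essentially the observation that a $C_m$-invariant immersed curve in $\C$ through $0$ descends to a smooth immersed curve under $w \mapsto w^m$ and lifts back, which is where the divisibility $m \mid 2n$ from Proposition \ref{prop-mu0cohom1}(c) and the unitary normalisation of the $C_m$-action from Proposition \ref{prop-profilesymmetry}(b) are used. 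Apart from this point, the proof is a formal consequence of Propositions \ref{prop-bijection1}, \ref{prop-profilesymmetry}, and \ref{prop-mu0cohom1}.
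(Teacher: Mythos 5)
Your argument is correct, but it takes a genuinely different route from the paper's. You factor everything through the K\"ahler quotient: you identify $\pi|_{P_z \setminus 0} \colon P_z \setminus 0 \to Q$ as an $m$-fold covering whose fibres are exactly the $C_m$-orbits (your verification that a $g \in G$ carrying one point of $P_z\setminus 0$ to another must lie in $\widetilde H$ is the right one), and then compose the bijection of Proposition \ref{prop-bijection1} with the covering-space correspondence $l_Q \leftrightarrow (\pi|_{P_z})^{-1}(l_Q)$. The paper instead argues directly in $M$: for $L \mapsto L\cap P_z$ it uses the orthogonal decomposition $T_zM = P_z \oplus T_z\mathcal{O}_z$ to show $L$ meets $P_z$ transversally, and for $l \mapsto G\cdot l$ it builds the parametrising manifold explicitly as $\widehat L = (G/H \times \widehat l)/C_m$ and checks by hand that the resulting map is a (Lagrangian, proper, injective-when-$f$-is) immersion. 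Your route is shorter and more conceptual, but note two points of reliance: you need that every immersed curve in the $2$-dimensional $Q$ is automatically Lagrangian (true, but worth saying, since Proposition \ref{prop-bijection1} is stated for Lagrangians in $Q$), and for the embedded version you implicitly use an ``embedded'' refinement of Proposition \ref{prop-bijection1} ($L$ embedded iff $L/G$ embedded) that the paper does not state, though it follows readily from the fibre-bundle structure of $\pi$. The paper's explicit construction also delivers the diffeomorphism type of $\widehat L$ as a $G/H$-bundle over $\widehat l$, which is reused later (e.g.\ in Proposition \ref{prop-embedded}); your approach would have to recover that separately.

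One correction: the ``main subtlety'' you single out --- branching of $\pi|_{P_z}$ at the origin --- does not actually arise. The origin has isotropy type $(G)$, not $(H)$, so $0 \notin M$ and $P_z \cap M = P_z \setminus \{0\}$; every profile curve of a Lagrangian in $M$ avoids the origin, and $\pi|_{P_z\setminus 0}$ is an honest unbranched covering. This is fortunate, because the resolution you sketch for that case is false as stated: a $C_2$-invariant line through $0$ in $\C$ pushes forward under $w \mapsto w^2$ to a curve with vanishing velocity at the origin, hence not an immersed curve. Since that configuration is excluded by the hypothesis $L \subset M$, your proof stands, but the paragraph devoted to it should be replaced by the observation that the origin is simply not in play.
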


\begin{proof} 
Consider first a $G$-equivariant immersion $F:\widehat L \to M$. We will demonstrate that the map $F|_{F^{-1}(P_z)}: F^{-1}(P_z) \rightarrow P_z$ is a $C_m$-equivariant immersion.

Take $p \in F^{-1}(P_z)$, and choose an open neighbourhood $U \ni p$ in $\widehat L$ such that $F|_U$ is an embedding; the image $F(U)$ is therefore an embedded Lagrangian. By the decomposition (\ref{eq-orthdecompositoncohom1}), we have $T_pF(U) + T_p P_z = T_p M$, so the intersection of $F(U)$ and $P_z$ is transverse. It follows that $F(U) \cap P_z$ is a smooth 1-manifold. Therefore, $U \cap F^{-1}(P_z)$ is a 1-manifold and $F|_{U \cap F^{-1}(P_z)}$ is an embedding, which shows that $F|_{F^{-1}(P_z)}$ is an immersion. The $C_m$-equivariance simply follows from the $G$-equivariance of $F$, and if $F$ is an embedding then clearly $F|_{F^{-1}(P_z)}$ also is.

In the other direction, consider a $C_m$-equivariant immersed curve $f: \widehat l \rightarrow P_z$, and denote the stabiliser group $H := G_z$. We note that $G/H \times \widehat l$ is a smooth compact $n$-manifold, with a natural $\widetilde H / H \cong C_m$-action:
\begin{equation*}
    [\alpha]\cdot([g],p) := ([g\alpha^{-1}],\alpha\cdot p),
\end{equation*}
and a natural $G$-action:
\begin{equation*}
    \overline g\cdot([g],p) := ([\overline g g],p),
\end{equation*}
which commute with each other. Therefore, the $G$-action descends to the smooth $n$-dimensional compact quotient $\widehat L := ( G/H \times \widehat l\,\, ) / C_m$. Now, define the $G$-equivariant map
\begin{align*}
    F&: \widehat L \rightarrow M,\quad
    F([[g],p]) := g \cdot f(p).
\end{align*}
This map is well-defined, in the sense that it is independent of choice of representatives, and may be seen to be a proper immersion. Moreover, if $f$ is injective, then $F$ is also injective, since:
\begin{align*}
    F([[g],p]) = F([\widetilde g],p]) &\implies g\cdot f(p) = \widetilde g \cdot f(\widetilde p)\\
    &\implies \widetilde g^{-1}g \cdot p = \widetilde p\\
    &\implies [\widetilde g^{-1}g] \cdot ([g],p) = ([gg^{-1}\widetilde g],\widetilde g g p) = ([\widetilde g],\widetilde p)\\
    &\implies [[g],p] = [[\widetilde g],p].
\end{align*}
It follows that if $f$ is an embedding, then $F$ is also an embedding.

To see that $F$ is a Lagrangian immersion, note that for $p \in \widehat L$ and $w := F(p)$, $T_w\mathcal{O}_w \subset F_*(T_p\widehat L) \subset T_wM = P_w \oplus T_w\mathcal{O}_w$, recalling the decomposition (\ref{eq-orthdecompositoncohom1}).  Since $T_w\mathcal{O}_w$ is an $(n-1)$-dimensional isotropic subspace, $\widehat L$ is $n$-dimensional, and $P_w$ is a complex line, it follows that $F_*(T_pL)$ is isotropic, and hence is Lagrangian for dimension reasons.

The above two maps correspond to $L \mapsto L \cap P_z$ and $l \mapsto G \cdot l$ on the level of subsets, for a $G$-invariant immersed Lagrangian $L \subset M$ and a $C_m$-invariant immersed curve $l \subset P_z$ respectively. These maps may be seen to be inverses of each other, and therefore the bijection is complete.

\end{proof}

It is a remarkable fact that the mean curvature of a cohomogeneity-one Lagrangian submanifold of $\C^n$ that lies in $\mu^{-1}(0)$ may be expressed entirely in terms of the profile curve $l:= L\cap P_z$, independent of the choice of subgroup $G \leq \SU(n)$, and the same is almost true of the second fundamental form.  The study of mean curvature flow of cohomogeneity-one Lagrangians in $\mu^{-1}(0)$ therefore reduces to the study of one particular geometric flow of curves in the plane.

\begin{proposition}[Curvature of a $G$-invariant Lagrangian]\label{prop-curvature}
Let $M$ be a connected component of $M_0 := \mu^{-1}(0) \cap \C^n_{(H)}$. Let $L \subset M$ be a connected, cohomogeneity-one $G$-invariant embedded Lagrangian of type $(H)$. Let $z \in M$, let $l := P_z \cap L$ be the profile curve of $L$, and let $\vec{k}$ be the curvature of $l$. Then:

\indent (a) The mean curvature $\vec{H}$ of $L$ at $w\in l$ is given by $$ \vec{H} = \vec{k} - (n-1)\frac{w^\perp}{|w|^2},$$ 
where $w^\perp$ is the projection of the position vector $w$ to the normal space of $l \subset P_z$. \\
\indent (b) Recalling the definitions of $h \in \Gamma(\Sym^3(T^*L))$ and $\alpha \in \Omega^1(L)$ from (\ref{def-h}) and (\ref{def-alpha}), define
\begin{align}
k & := |\vec k|, &
p(w) & :=  \tfrac{|w^\perp|}{|w|^2}, &
r(w) & := |w|, &
a_{ij} & := g^{kl}\alpha_l h_{ijk} = \langle \vec{H}, A(e_i,e_j)\rangle.\label{eq-meancurvature}
\end{align}
Then there exists a constant $C = C(n,G,M)$ (where $M \subset \mu^{-1}(0)\setminus \{0\}$ is the connected component containing $L$) such that at $w \in l$,
\begin{align*}
|\vec{H}|^2 & = (k -(n-1)p)^2, & \big| A\big|^2 & \leq k^2 + Cr^{-2}, & |a|^2 & = (k^2 + (n-1)p^2)|\vec{H}|^2.
\end{align*}
\end{proposition}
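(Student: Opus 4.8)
The plan is to exploit the orthogonal decomposition (\ref{eq-orthdecompositoncohom1}) of $T_w\mathbb{C}^n$ at a point $w \in l$ into $P_w \oplus T_w\mathcal{O}_w \oplus J(T_w\mathcal{O}_w)$, and to compute the second fundamental form $h \in \Gamma(\Sym^3(T^*L))$ in an adapted orthonormal frame. Choose a unit frame $\{e_0, e_1, \dots, e_{n-1}\}$ for $T_wL$ where $e_0$ is tangent to the profile curve $l \subset P_w$ and $e_1, \dots, e_{n-1}$ span $T_w\mathcal{O}_w$ (these are orthonormal since the $G$-action is isometric and the orbit directions are orthogonal to $P_w$ by Proposition \ref{prop-zerosymmetry}(d)). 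The strategy is then: (i) identify which components $h_{ijk}$ are ``profile-curve components'' (involving $e_0$ only) and which are ``orbit components'', (ii) observe that the orbit part of $L$ at $w$ is the $G$-orbit of $w$ scaled to radius $|w|$, so its contribution to $h$ is governed by the second fundamental form of the orbit $\mathcal{O}_w \subset \mathbb{C}^n$, which in turn is controlled by $r(w)^{-1}$ times a bounded geometric quantity depending only on $n, G, M$, and (iii) assemble the three claimed identities/inequality by taking the appropriate traces.

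For part (a), the mean curvature formula $\vec H = \vec k - (n-1)\,w^\perp/|w|^2$ should come out of the trace computation: the $e_0 e_0$-direction contributes the curvature $\vec k$ of $l$, while each of the $n-1$ orbit directions $e_i$ contributes the same vector, namely $A(e_i, e_i)$, which equals the radial projection $-w^\perp/|w|^2$ — this is because the orbit $\mathcal{O}_w$, being an $(n-1)$-dimensional isotropic submanifold of the round sphere of radius $|w|$ (after rescaling), has mean curvature in the radial direction, and indeed for any submanifold of a round sphere the orbit-direction second fundamental form, when projected into the $P_w$-plane, is purely radial with the stated magnitude. Making this precise uses the fact that $L$ is $G$-invariant so that $A(e_i, e_j)$ for $i,j \geq 1$ is determined by differentiating the Killing fields $\rho(X)$; since $\mathcal{O}_w$ lies in $\mu^{-1}(0)$ which is a cone, the component of $\overline\nabla_{e_i} e_i$ along $P_w$ is exactly $-w/|w|^2$ projected to the normal bundle of $l$.

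For part (b), the three statements follow by bookkeeping on the frame. The identity $|\vec H|^2 = (k - (n-1)p)^2$ is immediate from (a) once one notes that $\vec k$ and $w^\perp/|w|^2$ are parallel vectors in $JP_w \subset T^\perp l$ — both are normal to $l$ inside $P_w$, and $P_w$ is 2-dimensional — so their norms simply add with signs, giving $|\vec H| = |k - (n-1)p|$. For $|A|^2 \leq k^2 + Cr^{-2}$: decompose $|h|^2 = \sum h_{ijk}^2$ into the term $h_{000}^2 = k^2$ plus all remaining terms; each remaining term involves at least one orbit index, hence is a component of the orbit's second fundamental form, which scales like $r^{-1}$ (since $\mathcal{O}_w = |w| \cdot \mathcal{O}_{z/|z|}$ and second fundamental form scales inversely with dilation), so their sum is bounded by $C(n,G,M) r^{-2}$ — here $C$ depends on the fixed connected component $M$ because the shape of the unit orbit $\mathcal{O}_{z/|z|} \subset \mathbb{S}^{2n-1}$ depends on $M$. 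For $|a|^2 = (k^2 + (n-1)p^2)|\vec H|^2$: since $\vec H$ lies in the 1-dimensional space spanned by $J e_0$ (the normal direction to $l$ in $P_w$), write $\vec H = H \cdot Je_0$ with $|H| = |\vec k| - (n-1)p$ appropriately signed; then $a_{ij} = \langle \vec H, A(e_i,e_j)\rangle = H \langle Je_0, A(e_i,e_j)\rangle = H\, h_{ij0}$, so $|a|^2 = H^2 \sum_{i,j} h_{ij0}^2$; one computes $h_{000} = k$ and $h_{ii0} = -p$ for $i = 1,\dots,n-1$ (the orbit directions feeling the radial curvature), with all other $h_{ij0}$ vanishing, giving $\sum h_{ij0}^2 = k^2 + (n-1)p^2$.

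The main obstacle I expect is step (ii): carefully justifying that the mixed and pure-orbit components of $h$ are exactly the second fundamental form components of the scaled orbit $\mathcal{O}_w$, and hence (a) scale precisely as $r^{-1}$ with a constant depending only on $n,G,M$, and (b) have the specific values $h_{ii0} = -p$, $h_{ij0} = 0$ for $i \neq j$, $i,j \geq 1$, and $h_{ijk} \perp e_0$ in a controlled way for $i,j,k \geq 1$. This requires unwinding the definition $h(X,Y,Z) = \langle \overline\nabla_X Y, JZ\rangle$ using that $e_1, \dots, e_{n-1}$ extend to the Killing fields $\rho(X_\alpha)/|\rho(X_\alpha)|$ and that $J(T\mathcal{O}) \perp T\mathcal{O}, P_w$; the cone property of $\mu^{-1}(0)$ (Proposition \ref{prop-zerosymmetry}(b)) and the fact that $P_w \perp T\mathcal{O} \oplus JT\mathcal{O}$ are the two key geometric inputs that make everything collapse to the profile-curve data plus a radial correction.
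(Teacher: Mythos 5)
Your frame, decomposition, and bookkeeping are essentially the paper's: an adapted orthonormal basis $\{e_0,\rho_w(X_1),\dots,\rho_w(X_{n-1})\}$, the splitting $T_w\C^n = P_w\oplus T_w\mathcal O\oplus J(T_w\mathcal O)$, the identification of the pure-orbit components of $h$ with the (rescaled) second fundamental form of $\mathcal O_w$, and the trace computations for $\vec H$, $|A|^2$ and $|a|^2$ all match. However, there is one genuine gap in your argument for (a). You assert that for an orbit direction the \emph{full} vector $A(e_i,e_i)$ equals $-w^\perp/|w|^2$, justified by $\mathcal O_w$ sitting in a round sphere. The sphere containment (together with $\langle e_i, Je_i\rangle=0$) only controls the $P_w$-component of $\overline\nabla_{e_i}e_i$; it says nothing about the components along $J(T_w\mathcal O)\subset T_w^\perp L$, which are the generically nonzero quantities $h_{iik}$, $k\geq 1$. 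So your claim that each orbit direction "contributes $-w^\perp/|w|^2$" to the trace is false termwise, and the conclusion that $\vec H$ lies in $\mathrm{span}(N)\subset P_w$ requires a separate input: either $d\mu(\vec H)=0$ (Proposition \ref{prop-flowlevelset}), giving $\vec H\in T_wM\cap T_w^\perp L=\mathrm{span}(Je_0)$, or equivalently the $G$-invariance of the Lagrangian angle, $d\theta(\rho(V))=a(V)=0$, so $\vec H=J\nabla\theta\parallel Je_0$. The paper invokes exactly this, and without it your formula for $\vec H$ — and hence the identity $|\vec H|^2=(k-(n-1)p)^2$ — does not follow from what you have written.

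A second, smaller issue of the same flavour: in the bound $|A|^2\leq k^2+Cr^{-2}$ you claim every term of $|h|^2$ other than $h_{000}^2$ "involves an orbit index, hence is a component of the orbit's second fundamental form, which scales like $r^{-1}$." This is not true of $h_{00i}=\langle\gamma'',J\rho(X_i)\rangle$ ($i\geq 1$): it is not an orbit quantity and is a priori of size $k$, not $r^{-1}$. It vanishes identically because $\gamma''\in P_w\perp J(T_w\mathcal O)$, and this vanishing is what keeps the coefficient of $k^2$ in the final inequality equal to $1$; you need to say this explicitly rather than subsume it under the $r^{-1}$-scaling heuristic. The remaining pieces — the Koszul-formula computation giving $h_{ijn}=-\delta_{ij}p$, the dilation argument $|A^{\mathcal O_w}|=r^{-1}|A^{\mathcal O_{w/|w|}}|$ producing $C(n,G,M)$, and the reduction $a_{ij}=\alpha_n h_{ijn}$ — are exactly as in the paper and your sketch of them is sound.
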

\begin{proof}
Choose an arclength parametrisation $\gamma: (-\varepsilon,\varepsilon) \rightarrow l$ of $l$ with $\gamma(0) = w$. By identifying $P_z \cong \mathbb{C}$, we may find functions $r(s), \alpha(s)$ such that $\gamma(s) = r(s)e^{i \alpha(s)}$. Then we may define the unit tangent and normal vectors to $l$ via
\begin{align*}
    T := \frac{d \gamma}{d s} &= \frac{r'(s)}{r(s)}r(s) \frac{\partial}{\partial r} + \alpha'(s) \frac{\partial}{\partial \alpha}\\
    N := J\frac{d \gamma}{d s} &= -\alpha'(s)r(s) \frac{\partial}{\partial r} + \frac{r'(s)}{r(s)} \frac{\partial}{\partial \alpha}
\end{align*}
which when rotated by the $G$-action produce global unit vector fields on $L$.  Choose $X_1,\ldots,X_{n-1} \in \mathfrak{g}$ such that $\{\rho_w(X_1), \ldots \rho_w(X_{n-1}), T\} =: \{e_1,\ldots,e_n\}$ is an orthonormal basis of $T_w L$. Keeping in mind the orthogonal decomposition (\ref{eq-orthdecompositoncohom1}), and denoting by $A^{\mathcal{O}_w}$ the second fundamental form of the orbit $\mathcal{O}_w$, we calculate the components of the second fundamental form $h$ of $L \subset \C^n$ in this basis.  Using the index ranges $1 \leq i,j,k \leq n-1$ throughout the proof, we compute:
\begin{align*}
    h_{nnn}\big|_w &= \langle \overline \nabla_T T, JT\rangle\big|_w = \left\langle \frac{\partial^2 \gamma}{\partial s^2}, J\frac{\partial \gamma}{\partial s} \right\rangle\bigg|_{s=0} = k(0),\\
    h_{inn}\big|_w  &= \left\langle \frac{\partial^2 \gamma}{\partial s^2}, J\rho(X_i) \right\rangle\bigg|_{s=0} = 0,\\
    h_{ijn}\big|_w &= \left\langle \overline\nabla_{\rho(X_i)}\rho(X_j), JT\right\rangle\big|_w
    \,=\, -\alpha'(0) \left\langle \overline\nabla_{\rho(X_i)}\rho(X_j), r\frac{\partial}{\partial r}\right\rangle\bigg|_{w} + \frac{r'(0)}{r(0)}\left\langle \overline\nabla_{\rho(X_i)}\rho(X_j), \frac{\partial}{\partial \alpha}\right\rangle\bigg|_{w},\\
    h_{ijk}\big|_w &= \left\langle A^{\mathcal{O}_w}\left(\rho(X_i),\rho(X_j)\right), \rho(X_k)  \right\rangle\big|_w \leq |A^{\mathcal{O}_w}|.
\end{align*}

To calculate $h_{ijn}$: Note that if $Y = r\frac{\partial}{\partial r}$ or $\frac{\partial}{\partial \alpha}$, then $\langle \rho(X_i),Y \rangle = 0$, $\langle \rho(X_j),Y \rangle = 0$, and $\langle [\rho(X_i),\rho(X_j)],Y\rangle \rangle = \langle \rho[X_i,X_j],Y\rangle = 0$. Therefore by the Koszul formula, 
\begin{align*}
\langle \overline \nabla_{\rho(X_i)} {\rho(X_j)}, Y\rangle &= \frac{1}{2} \Big(\rho(X_i)\langle \rho(X_j),Y\rangle + \rho(X_j)\langle \rho(X_i),Y\rangle - Y\langle \rho(X_i),\rho(X_j)\rangle\\
&\quad\quad- \langle Y, [\rho(X_j),\rho(X_i)]\rangle - \langle \rho(X_j), [\rho(X_i),Y]\rangle - \langle \rho(X_i), [\rho(X_j),Y]\rangle\Big) \\
&= \frac{1}{2}  \Big(-Y\langle \rho(X_i),\rho(X_j) \rangle+ \langle \rho(X_i), [Y,\rho(X_j)]\rangle + \langle \rho(X_j), [Y,\rho(X_i)]\rangle\Big).
\end{align*}
Considering the group action $G \times \mathbb{C}^* \circlearrowright \mathbb{C}^n$ with Lie algebra $\mathfrak{g}\oplus\text{Lie}(\mathbb{C}^*)$ and infinitesimal action $\rho$, we note there exist $R,A \in \text{Lie}(\mathbb{C}^*)$ such that $\rho(R) = r\frac{\partial}{\partial r}$, $\rho(A) = \frac{\partial}{\partial \alpha}$. Since $\rho$ is a Lie algebra homomorphism, and $R,A$ are in the centre of the Lie algebra $\mathfrak{g}\oplus\text{Lie}(\mathbb{C}^*)$:
\begin{align*}
\left[\rho(X_i),r\frac{\partial}{\partial r}\right] &= [\rho(X_i),\rho(R)] = \rho([X_i,R]) \,=\, 0, \quad
\left[\rho(X_i),\frac{\partial}{\partial \alpha}\right] \,=\, 0.
\end{align*}
Furthermore, a calculation yields:
\begin{equation}
    Y \langle \rho(X_i), \rho(X_j) \rangle\big|_w = 
    \begin{cases}
        \displaystyle 2\delta_{ij} & \mbox{ if } Y = r\frac{\partial}{\partial r}\\
        0 & \mbox{ if } Y = \frac{\partial}{\partial \alpha}.
    \end{cases}
\end{equation}
Putting this together, 
\[ h_{ijn}|_w = \delta_{ij}\alpha'(0) = -\delta_{ij}\frac{\langle \gamma(0), N(0)\rangle}{|r(0)|^2} =  -\delta_{ij}p(0).\]

To estimate $h_{ijk}$: Choose $q \in M\cap S^{2n-1}$, and define $C':= |A^{\mathcal{O}_q}_q|$ (note that by symmetry, this does not depend on the choice of $q$). By Proposition \ref{prop-mu0cohom1}, there exists $g \in G$, $\beta \in [0,2\pi)$ such that
\begin{align*}
    r(0)e^{i\beta}\cdot g\cdot q = w &\implies |A^{\mathcal{O}_w}_w| = \frac{1}{r(0)^2} |A^{\mathcal{O}_q}_q| = \frac{C'}{r(0)^2}\\
    &\implies h_{ijk}\big|_w \leq \frac{C'}{r(0)^2}.
\end{align*}

By Proposition \ref{prop-flowlevelset}, $\vec{H}_w \in T_wM \cap T^\perp_wL$, a 1-dimensional space spanned by $N = Je_n$. Therefore, the mean curvature of $L$ at $w = r_0e^{i\alpha_0}$ is given by:
\begin{align*}
    \vec{H}\big|_w \,&=\, g^{ij}h_{ijk}Je_n\big|_w \,=\, \sum_{i=1}^{n-1} h_{iin}Je_n\big|_w \,=\,  \vec{k}(0) - (n-1)\frac{\gamma(0)^\perp}{|\gamma(0)|^2},\\
    |\vec{H}|^2\big|_w &= (k(0) + (n-1)p(0))^2.
\end{align*}
Using the inequality $p(0) \leq \frac{1}{r(0)}$, the norm of the second fundamental form is given by
\begin{align*}
    |A|^2\big|_w &= g^{ia}g^{jb}g^{kc}h_{ijk}h_{abc}\big|_w\,=\, \sum_{i,j,k=1}^n h_{ijk}h_{ijk} \\
    &= h_{nnn}h_{nnn} + 3\sum_{i=1}^{n-1} h_{iin}h_{iin} + \sum_{i,j,k = 1}^{n-1} h_{ijk}h_{ijk}\\
    &\leq k(0)^2 + 3(n-1)p(0)^2 + C(n,G,M)r(0)^{-2}.
\end{align*}
Finally,
\begin{align*}
    |a|^2\big|_w &= g^{ij}g^{kl}a_{ik}a_{jl}\big|_w \, = \, \sum_{i,j = 1}^{n-1}a_{ij}^2 + 2\sum_{i=1}^{n-1}a_{in}^2 + a_{nn}^2 = \sum_{i,j=1}^{n-1} (\alpha_n h_{ijn})^2 + (\alpha_n h_{nnn})^2\\
   &= \left( \sum_{i=1}^{n-1}(h_{iin})^2 + (h_{nnn})^2\right)|\vec H|^2\big|_w = \left(k(0)^2 + (n-1)p(0)^2\right)|\vec H|^2\big|_w.
\end{align*}
\end{proof}

\begin{corollary}[Equivariant LMCF Equation]\label{cor-equivariantlmcf} Let $M$ be a connected component of $M_0 := \mu^{-1}(0) \cap \C^n_{(H)}$ and $z \in M$. Let $F_t:\widehat L\rightarrow M$ be a family of connected cohomogeneity-one $G$-invariant immersed Lagrangian submanifolds, and define the profile curves $f_t:\widehat l\rightarrow P_z$ as in Proposition \ref{prop-profilebijection}.

Then $F_t$ is a mean curvature flow if and only if $f_t$ is a solution to the flow of curves in $P_{\overline z}$ given by
\begin{align}
 \frac{\partial f_t}{\partial t}^\perp & = \vec{k} - (n-1)\frac{f_t^\perp}{|f_t|^2}.\label{eqn-equivariantLMCF}
\end{align}
\end{corollary}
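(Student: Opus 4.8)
The plan is to read off the equivalence from the mean-curvature formula of Proposition~\ref{prop-curvature}(a) together with the correspondence of Proposition~\ref{prop-profilebijection}, the only real content being a comparison of normal spaces. First I would observe that both conditions in the statement depend only on the evolving submanifolds $L_t := F_t(\widehat L)$ and $l_t := L_t \cap P_z$, not on the chosen parametrisations: replacing $F_t$ by $F_t \circ \phi_t$ for a $t$-family of ($G$-equivariant) diffeomorphisms of $\widehat L$ changes $\partial_t F_t$ only by a vector tangent to $L_t$, hence does not affect whether $(\partial_t F_t)^\perp = \vec H$, and likewise (\ref{eqn-equivariantLMCF}) is unaffected by reparametrisation of $f_t$. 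It is therefore harmless to replace $F_t$ by the canonical parametrisation from the proof of Proposition~\ref{prop-profilebijection}: identifying $\widehat L \cong (G/H \times \widehat l)/C_m$, set $F_t([[g],p]) := g \cdot f_t(p)$, which again parametrises $L_t$. Since $G \leq \U(n)$ acts linearly on $\C^n$, differentiating in $t$ gives $\partial_t F_t([[g],p]) = g \cdot \partial_t f_t(p)$, so over the identity coset $\partial_t F_t([[e],p]) = \partial_t f_t(p) \in T_{f_t(p)}P_z = P_z$.

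The key step is to identify the two normal projections at a point $w = f_t(p) \in l_t$. From $T_w L_t = \rho_w(\mathfrak{g}) \oplus T_w l_t$ (an orthogonal sum, by Proposition~\ref{prop-zerosymmetry}(d)) and the Lagrangian condition, we get $(T_w L_t)^\perp = J\rho_w(\mathfrak{g}) \oplus N_w$, where $N_w := J(T_w l_t) \subset P_z$ is the normal line of $l_t$ inside the $J$-invariant plane $P_z$. By Proposition~\ref{prop-zerosymmetry}(d), $P_z$ is orthogonal to $\rho_w(\mathfrak{g}) \oplus J\rho_w(\mathfrak{g})$; hence for any $v \in P_z$ the orthogonal projection of $v$ onto $(T_w L_t)^\perp$ lies in $N_w$ and coincides with the projection of $v$ onto $N_w$ taken inside $P_z$. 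Applying this to $v = \partial_t F_t([[e],p]) = \partial_t f_t(p)$ yields $(\partial_t F_t)^\perp = (\partial_t f_t)^\perp$ along $l_t$, the left-hand projection being in $\C^n$ and the right-hand one in $P_z$.

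Now Proposition~\ref{prop-curvature}(a) gives $\vec H = \vec k - (n-1)\,\frac{w^\perp}{|w|^2} = \vec k - (n-1)\,\frac{f_t^\perp}{|f_t|^2}$ at $w \in l_t$, a vector already lying in $N_w$. Combining with the previous paragraph, $(\partial_t F_t)^\perp = \vec H$ holds along $l_t$ if and only if $(\partial_t f_t)^\perp = \vec k - (n-1)\,\frac{f_t^\perp}{|f_t|^2}$, i.e.\ (\ref{eqn-equivariantLMCF}) holds; this settles the forward implication at once. For the converse one must also know that the mean curvature flow equation at points of $l_t$ forces it along all of $L_t = G \cdot l_t$, which is immediate since $G$ acts by isometries preserving $L_t$, its normal bundle and its mean curvature vector, and $\partial_t F_t([[g],p]) = g \cdot \partial_t F_t([[e],p])$.

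I expect the main (and only minor) obstacle to be bookkeeping: a priori $F_t^{-1}(P_z) \subset \widehat L$ varies with $t$, so one must check that this variation can be absorbed into the parametrisation, so that $f_t$ is a genuine smooth family of immersions of a fixed $1$-manifold $\widehat l$ --- routine, given that the $L_t$ evolve by a geometric flow. Everything else reduces to the elementary linear algebra above.
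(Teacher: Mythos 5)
Your proof is correct and follows the route the paper intends: the corollary is stated without proof as an immediate consequence of Proposition \ref{prop-curvature}(a) and the bijection of Proposition \ref{prop-profilebijection}, and your argument supplies exactly the missing details (reparametrisation invariance, the canonical equivariant parametrisation $F_t([[g],p]) = g\cdot f_t(p)$, and the identification of the normal projection onto $(T_wL_t)^\perp$ with the projection onto $J(T_wl_t)$ inside $P_z$ via Proposition \ref{prop-zerosymmetry}(d)). No gaps.
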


Finally, we prove the following important formula for the Lagrangian angle of a $G$-invariant Lagrangian submanifold.

\begin{lemma}[Lagrangian Angle of a $G$-Invariant Lagrangian in $\mu^{-1}(0)$]\label{lem-laganglelemma}
 Let $M$ be a connected component of $M_0 := \mu^{-1}(0) \cap \C^n_{(H)}$, and let $z \in M$. There exists a unitary isomorphism $\Phi_z \colon P_z \rightarrow \mathbb{C}$ such that the following is true.
 
 Let $L \subset M$ be a connected, immersed, cohomogeneity-one graded Lagrangian submanifold with Lagrangian angle $\theta$, and let $\gamma \colon \mathbb{R} \rightarrow l := L \cap P_z$ be a unit-speed parametrised component of the profile curve. Then 
\[ \theta \equiv \emph{arg}\!\left((\Phi_z\circ \gamma)'\right) + (n-1)\emph{arg}(\Phi_z \circ \gamma) \,\quad(\emph{mod } \pi). \]
\end{lemma}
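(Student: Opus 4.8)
Building on the proof of Lemma~\ref{lem-laganglegeneral}, the plan is to reuse the $2$-valued $\C$-linear $1$-form $\widehat\beta$ introduced there, but to exploit the full $\C^*$-symmetry of $\mu^{-1}(0)$ to transfer the angle formula from the K\"ahler quotient to the profile plane $P_z$. First I would fix the complex-linear unitary isomorphism $\Phi_z \colon P_z \to \C$ given by $\Phi_z(\lambda z) := \lambda|z|$ (to be composed with a constant rotation at the very end). As in the proof of Lemma~\ref{lem-laganglegeneral}, for $w \in M$ let $\pm\nu_w$ denote the two unit elements of $\Lambda^{n-1}(T_w\mathcal{O}_w)$ and set $\widehat\beta_w := \Omega(\pm\nu_w,\cdot)$. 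For $w = \gamma(s) \in l$, Proposition~\ref{prop-zerosymmetry}(d) shows that $T_wL = T_w\mathcal{O}_w \oplus T_wl$ is an orthogonal splitting, so appending the unit tangent $\gamma'(s) \in T_wl \subset P_z$ to an orthonormal frame of $T_w\mathcal{O}_w$ representing $\nu_w$ yields an orthonormal frame of $T_wL$; by the definition of the Lagrangian angle,
\[ e^{i\theta(\gamma(s))} = \pm\,\Omega(\nu_{\gamma(s)},\gamma'(s)) = \pm\,\widehat\beta_{\gamma(s)}(\gamma'(s)). \]
Since $\widehat\beta_w$ and $\Phi_z$ are $\C$-linear, and $(\Phi_z\circ\gamma)'(s) = \Phi_z(\gamma'(s))$ under the canonical identification $T_wP_z = P_z$, the right-hand side equals $\pm\,(\Phi_z\circ\gamma)'(s)\,G\big(\Phi_z(\gamma(s))\big)$, where $G$ is the $2$-valued function $G(\zeta) := \widehat\beta_{\Phi_z^{-1}(\zeta)}\big(\Phi_z^{-1}(1)\big)$ on $\C\setminus\{0\}$. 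Everything thus reduces to computing $G$.

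The crux is to show $G(\zeta) = \pm\,(\zeta/|\zeta|)^{n-1}\,G(1)$, which I would deduce from two equivariance properties of $\widehat\beta$ along $P_z$. Since $\rho_{rw}(X) = r\,\rho_w(X)$ for $r \in \R^+$, the subspaces $T_{rw}\mathcal{O}_{rw}$ and $T_w\mathcal{O}_w$ coincide, hence $\nu_{rw} = \pm\nu_w$ and $\widehat\beta_{rw} = \pm\widehat\beta_w$, giving $G(r\zeta) = \pm G(\zeta)$. Next, for $e^{i\phi} \in \U(1)_\Delta$ one has $\rho_{e^{i\phi}w}(X) = L_{e^{i\phi}}\rho_w(X)$ because $X \in \mathfrak{u}(n)$ commutes with the scalar $e^{i\phi}$, so $\nu_{e^{i\phi}w} = \pm(L_{e^{i\phi}})_*\nu_w$; combining this with $L_{e^{i\phi}}^*\Omega = e^{in\phi}\Omega$ and $L_{e^{i\phi}}\Phi_z^{-1}(1) = \Phi_z^{-1}(e^{i\phi})$ yields $G(e^{i\phi}\zeta) = \pm\,e^{i(n-1)\phi}\,G(\zeta)$. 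Writing $\zeta = re^{i\phi}$ and chaining the two relations gives the claimed formula. Moreover $G(1) \ne 0$ — indeed $|G(1)| = 1$, because $\widehat\beta$ restricts to a norm-preserving map on horizontal vectors (from the proof of Lemma~\ref{lem-laganglegeneral}) and $\Phi_z^{-1}(1) \in P_z \subset \mathcal{H}$ is a unit horizontal vector by Proposition~\ref{prop-zerosymmetry}(d).

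Assembling the pieces,
\[ e^{i\theta(\gamma(s))} = \pm\,G(1)\,(\Phi_z\circ\gamma)'(s)\left(\frac{(\Phi_z\circ\gamma)(s)}{|(\Phi_z\circ\gamma)(s)|}\right)^{n-1}. \]
Since $0 \notin M$ the curve $\Phi_z\circ\gamma$ never meets the origin and admits a continuous argument on $\R$, and $|(\Phi_z\circ\gamma)'| = 1$ by unit speed; taking continuous arguments modulo $\pi$ therefore gives $\theta \equiv \arg\!\big((\Phi_z\circ\gamma)'\big) + (n-1)\arg(\Phi_z\circ\gamma) + \arg(G(1)) \pmod{\pi}$. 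Finally, I would note that the unitary isomorphism $P_z\to\C$ is unique up to post-composition with a rotation $\zeta\mapsto e^{i\psi}\zeta$, and that this rotation multiplies $G(1)$ by $\pm e^{-in\psi}$ (by the same computation as above applied to the new isomorphism); choosing $\psi := \tfrac1n\arg(G(1))$ makes $G(1) = \pm 1$, killing the constant term and producing the stated identity.

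I expect the main obstacle to be bookkeeping rather than substance: consistently absorbing the $2$-valuedness of $\nu_w$ and the orientation ambiguity of the chosen orthonormal frame into the ``modulo $\pi$'', and verifying that the two covariance relations for $\widehat\beta$ along $P_z$ really do compose into the single clean transformation law for $G$. In particular one must get the exponent $n-1$ (not $n$) correct: it arises because the $e^{i\phi}$ factor produced by $\C$-linearity of $\widehat\beta$ cancels exactly one power of the $e^{in\phi}$ coming from $L_{e^{i\phi}}^*\Omega = e^{in\phi}\Omega$.
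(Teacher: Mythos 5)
Your proposal is correct and follows essentially the same route as the paper's proof: both work with the two-valued form $\widehat\beta = \Omega(\pm\nu,\cdot)$, peel off $\arg(\gamma')$ by $\C$-linearity, and extract the $(n-1)\arg(\gamma)$ term from $L_{e^{i\phi}}^*\Omega = e^{in\phi}\Omega$ together with the $\R^+$- and $\U(1)_\Delta$-equivariance of the orbits along $P_z$. The only cosmetic difference is that the paper fixes the normalising unit vector $V$ with $\Omega_V(\pm\nu,V)=\pm 1$ at the outset, whereas you absorb the constant $\arg(G(1))$ by a final rotation of $\Phi_z$ — these are the same normalisation.
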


\begin{proof}
Take $\pm \nu$, $\widehat \beta$ as in the proofs of Lemma \ref{lem-laganglegeneral} and Proposition \ref{prop-mu0cohom1}(c). Since $L_{e^{i\phi}}^* \Omega = e^{in\phi}\Omega$, it is possible to choose a unit vector $V \in P_z$ such that $\Omega_{V}(\pm\nu, V) = \pm 1.$ Define $\Phi_z \colon P_z \to \C$ to be the unique unitary isomorphism with $\Phi_z(V) = 1$. Then, taking arguments relative to $V$ and writing $\gamma(s) = r(s)e^{i \alpha(s)} V$,
\begin{align*}
    \pm e^{i\theta(s)} \, &= \, \Omega_{\gamma(s)}(\pm \nu, \gamma'(s)) \, = \, \beta_{\gamma(s)}(V) \cdot e^{i\,\text{arg}(\gamma'(s))} \\
    &= \Omega_{r(s)e^{i\alpha(s)}\cdot V} \left( \pm \nu_{r(s)e^{i\alpha(s)}\cdot V}, V\right)\cdot e^{i\,\text{arg}(\gamma'(s))}\\
    &= (L_{e^{i\alpha(s)}})^*\Omega_{r(s)\cdot V} \left(\pm \nu_{r(s)\cdot V}, e^{-i\alpha(s)}V\right)\cdot e^{i\,\text{arg}(\gamma'(s))}\\
    &= \pm e^{(n-1)\alpha(s)}\cdot e^{i\,\text{arg}(\gamma'(s))},
    \end{align*}
    so
    \begin{align*}
\theta(s) &\equiv \text{arg}(\gamma'(s)) + (n-1)\text{arg}(\gamma(s)) \quad (\text{mod } \pi)\\
    &\equiv \text{arg}\!\left(\Phi_z( \gamma(s))'\right) + (n-1)\text{arg}\left(\Phi_z(\gamma(s))\right) \,\quad(\text{mod } \pi).
\end{align*}
\end{proof}

In the almost-calibrated case, this formula for the Lagrangian angle implies that each connected component of the profile curve $l := L \cap P_z$ lies in a wedge.

\begin{lemma}[Wedge Lemma]\label{lem-wedge}
Let $M$ be a connected component of $M_0 := \mu^{-1}(0) \cap \C^n_{(H)}$. Let $L \subset M$ be a connected, almost-calibrated, cohomogeneity-one $G$-invariant Lagrangian submanifold of type $(H)$. Let $z \in L$, and let $l := L \cap P_z$ be the profile curve.

Then $l$ consists of $m$ connected components $\{l_1,\ldots,l_m\}$, such that $l = C_m \cdot l_i$ for each $i \in \{1,\ldots,m\}$. Moreover, each connected component $l_i$ is contained in a wedge $W \subset P$ bounded by two half-lines spanning an angle $\alpha < \frac{2\pi}{n}$.
\end{lemma}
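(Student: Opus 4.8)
The plan is to prove the two halves of the statement — the component structure of $l$, and the wedge containment of each $l_i$ — using the structural results of \S\ref{sec-4.3}--\S\ref{sec-4.4} together with Lemma \ref{lem-laganglelemma}.

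For the component structure, I would first note that since $L$ is almost-calibrated it is embedded by Proposition \ref{prop-embedded}(b), so $l := L\cap P_z$ is an embedded $C_m$-invariant curve, and since it is also zero-Maslov, Proposition \ref{prop-embedded}(a) shows $L\cong\mathbb{R}\times G/H$ and hence $L/G\cong\mathbb{R}$. Because the profile plane is orthogonal to the orbit (Proposition \ref{prop-zerosymmetry}(d)), the restriction $\pi|_l\colon l\to L/G$ is a local diffeomorphism, and by Propositions \ref{prop-profilesymmetry} and \ref{prop-profilebijection} it descends to a bijection $l/C_m\to L/G$; thus $l\to l/C_m\cong L/G\cong\mathbb{R}$ is a $C_m$-covering. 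Since the origin of $P_z$ has full $G$-isotropy it does not lie in $\mathbb{C}^n_{(H)}\supseteq M$, so $0\notin l$ and the rotation action of $C_m\leq\U(1)_\Delta$ on $l$ is free; a free $C_m$-cover of the simply connected base $\mathbb{R}$ is trivial, so $l\cong C_m\times\mathbb{R}$. This gives exactly $m$ components $l_1,\dots,l_m$, each $\cong\mathbb{R}$, permuted by $C_m$, and since $C_m$ is cyclic of order $m$ acting freely on the $m$ sheets the permutation is transitive, i.e.\ $C_m\cdot l_i=l$ for all $i$.

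For the wedge bound I would fix a component $l_i$, identify $P_z$ with $\mathbb{C}$ via the unitary $\Phi_z$ of Lemma \ref{lem-laganglelemma}, parametrise $l_i$ by unit speed $\gamma$, and set $w := \Phi_z\circ\gamma$ and $r := |w|>0$. Since $w$ and $w'$ are continuous and nonvanishing, their arguments have continuous lifts; the continuous function $(n-1)\arg w+\arg w'=\arg(w^{n-1}w')$ is congruent to $\theta$ mod $\pi$ by Lemma \ref{lem-laganglelemma}, hence differs from it by a fixed $c\in\pi\mathbb{Z}$. Writing $\Theta := \theta+c$ and $\zeta := w^n$, one gets
\[
\zeta' \;=\; n\,w^{n-1}w' \;=\; n\,r^{n-1}e^{i\Theta}, \qquad \operatorname{osc}_{l_i}\Theta \;=\; \operatorname{osc}_{l_i}\theta \;\le\; \operatorname{osc}_{L}\theta \;\le\; \pi-2\varepsilon,
\]
the last inequality being the almost-calibrated hypothesis. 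As the continuous lift of $\arg\zeta$ is $n$ times that of $\arg w$, it then suffices to prove $\operatorname{osc}(\arg\zeta)<2\pi$: this places $\Phi_z(l_i)$ in a wedge of $\mathbb{C}$ of angle $\tfrac1n\operatorname{osc}(\arg\zeta)<\tfrac{2\pi}{n}$, hence $l_i$ in the corresponding wedge of $P_z$.

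The oscillation bound is the main obstacle, and the only place where the positive gap $\varepsilon$ is used essentially. I would argue as follows. Rotating coordinates so that $\Theta$ is centred at $0$, we have $|\Theta|\le\tfrac\pi2-\varepsilon$, so $\operatorname{Re}\zeta'=n r^{n-1}\cos\Theta>0$ and $|\operatorname{Im}\zeta'|\le(\cot\varepsilon)\operatorname{Re}\zeta'$; hence $\operatorname{Re}\zeta$ is strictly increasing and $\zeta$ is the graph of a $(\cot\varepsilon)$-Lipschitz function $Y$ over the real axis, with $\zeta\ne0$ throughout (as $\zeta=w^n$, $w\ne0$). If $\operatorname{Re}\zeta$ never vanishes, $\zeta$ stays in a half-plane and $\operatorname{osc}(\arg\zeta)<\pi$. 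If $\operatorname{Re}\zeta$ does vanish, then after a complex conjugation (which reverses $\arg$ and leaves the rest intact) we may assume $Y(0)>0$, and the Lipschitz bound $|Y(x)-Y(0)|\le(\cot\varepsilon)|x|$ forces $Y(x)/x>-\cot\varepsilon$ for $x>0$ and $Y(x)/x<\cot\varepsilon$ for $x<0$; normalising the lift of $\arg\zeta$ to be $\tfrac\pi2$ at the point where $\operatorname{Re}\zeta=0$, these inequalities confine $\arg\zeta$ to $(-(\tfrac\pi2-\varepsilon),\tfrac\pi2]$ where $\operatorname{Re}\zeta\ge0$ and to $(\tfrac\pi2,\tfrac{3\pi}2-\varepsilon)$ where $\operatorname{Re}\zeta<0$, hence to an interval of length $2\pi-2\varepsilon$. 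Either way $\operatorname{osc}(\arg\zeta)\le 2\pi-2\varepsilon<2\pi$, completing the proof. An alternative to the case analysis: were $\operatorname{osc}(\arg\zeta)\ge2\pi$, one could pick $s_a,s_m,s_b$ with $s_m$ between $s_a$ and $s_b$ at which $\arg\zeta$ takes the values $\chi,\chi+\pi,\chi+2\pi$; then $\zeta(s_a),\zeta(s_b)$ lie on one ray from the origin and $\zeta(s_m)$ on the opposite ray, while the secant vectors $\zeta(s_m)-\zeta(s_a)$ and $\zeta(s_b)-\zeta(s_m)$ both lie in the fixed open half-plane that contains every value of $\zeta'$ — and comparing these forces $\cos(\chi-\Theta_0)$ to be simultaneously positive and negative.
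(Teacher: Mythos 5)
Your proof is correct, but it is organised differently from the paper's. For the wedge containment the paper simply observes that the angle formula of Lemma \ref{lem-laganglelemma} is independent of $G$, $H$ and $M$ and then cites \cite[Lem.\ 4.6]{Wood2019}; your oscillation argument for $\zeta = w^n$ (tangent $\zeta' = nr^{n-1}e^{i\Theta}$ confined to an open half-plane by the almost-calibrated gap, hence $\operatorname{osc}(\arg\zeta) < 2\pi$ via the graphical/Lipschitz case analysis, or via the neater antipodal-secants contradiction) is essentially a self-contained reconstruction of that cited lemma, which is a genuine improvement in completeness. The more substantive difference is in the component count: the paper first establishes the wedge bound for the single lift $l_1$ of $L/G$ and then argues that the $m$ rotated wedges $W_k$ are pairwise disjoint, forcing the $l_k$ to be distinct components; you instead observe that $l \to l/C_m \cong L/G \cong \R$ is a free $C_m$-cover of a simply connected base (using Proposition \ref{prop-embedded} for $L/G \cong \R$, Proposition \ref{prop-zerosymmetry}(d) for horizontality of $P_z$, and Proposition \ref{prop-profilesymmetry} for $\mathcal{O}_w \cap P_z = C_m\cdot w$), hence trivial, giving $m$ components permuted simply transitively. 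Your route decouples the two assertions — the component count no longer depends on the wedge geometry at all — and in particular sidesteps the delicate point that a wedge of angle $\alpha < \tfrac{2\pi}{n}$ rotated through multiples of $\tfrac{2\pi}{m}$ (with $m \mid 2n$, possibly $m = 2n$) is not obviously disjoint from its rotates; this makes your argument somewhat more robust than the one printed in the paper.
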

\begin{proof}
   The formula for the Lagrangian angle given by Lemma \ref{lem-laganglelemma} is independent of $G, H$ and $M$. Therefore, the proof of \cite[Lem. 4.6]{Wood2019} is valid for the general cohomogeneity-one case, and it follows as in that Lemma that each connected component $l_i$ of $l$ is contained in a wedge spanning an angle $\alpha < \frac{2\pi}{n}$.
   
   To show that there are $m$ connected components, first note that the composition of the maps $\iota: P_z \to M$, $\pi:M \rightarrow Q := M/G$ is a smooth $m$-fold covering map. The profile curve may be expressed as $l = (\pi\circ\iota)^{-1}(L/G)$. Taking a point $q \in L/G \subset Q$, there are $m$ lifts $\{p_1,\ldots,p_m\} \subset P_z$ of $q$, related by $e^{\frac{2\pi i}{m}} \cdot p_1 = p_k$. Denoting by $l_1$ the unique lift of $L/G$ to $P_z$, there exists a wedge $W_1$ spanning an angle $\alpha < \frac{2\pi}{n}$ such that $l_1 \subset W_1$. Then, defining $W_k := e^{{2\pi i k}{n}} \cdot W_1$, $l_k := e^{{2\pi i k}{n}} \cdot l_1$, it follows that $l_k$ is the unique lift of $L/G$ to $P_z$ containing $p_k$, and $l = \cup_{k=1}^n l_k$. Finally, since $\alpha < \frac{2\pi}{n}$, the wedges $W_k$ are pairwise disjoint, and so $l_k$ are distinct connected components of $l$.
\end{proof}


\section{Soliton Solutions}\label{sec-5}

\indent \indent We now classify the cohomogeneity-one soliton solutions of LMCF, in particular special Lagrangians, shrinking and expanding solitons, and translating solitons. 

As before, we fix a compact connected Lie group $G \leq \SU(n)$ and an isotropy type $(H)$ with $(n-1)$-dimensional orbits $G/H$ in $\C^n$.  We will typically consider Lagrangians in $\mu^{-1}(0) \subset \C^n$, for which we fix a connected component $M$ of the $(n+1)$-dimensional coisotropic submanifold $M_0 := \mu^{-1}(0) \cap \mathbb{C}^n_{(H)}$ and $z \in M$. We denote by $P_z$ the complex line through $z$; by the isomorphism of Lemma \ref{lem-laganglelemma} we will often conflate the spaces $P_z$ and $\mathbb{C}$ for notational convenience.  Finally, $m \mid 2n$ will denote the order of the cyclic group $C_m \cong \widetilde{H}/H$ of Propositions \ref{prop-profilesymmetry} and \ref{prop-mu0cohom1}.


\subsection{Special Lagrangians}

\indent \indent The local existence of $G$-invariant special Lagrangians was demonstrated by Joyce \cite[Thm. 4.5]{Joyce2001b} who showed that given any isotropic $G$-orbit $\mathcal{O}$, there exists a $G$-invariant special Lagrangian submanifold containing $\mathcal{O}$. In general, these special Lagrangians do not admit a simple closed form. However, in the case where $L$ lies in $M \subset \mu^{-1}(0)$, we are able to explicitly classify all $G$-invariant special Lagrangians, utilising the $G$-independent formulation of the mean curvature of Proposition \ref{prop-curvature}.

\begin{theorem}[Uniqueness of cohomogeneity-one special Lagrangian cones]\label{thm-splagcones} For $k \in \mathbb{Z}$, define the parametrised curve
\[ \widetilde c_{k,\overline\theta}: (0,\infty)\rightarrow \mathbb{C}, \quad \widetilde c_{k,\overline\theta}(r) = re^{i\left(\tfrac{\overline\theta}{n} + \tfrac{k\pi}{n}\right)}.\]
Then for each $\overline\theta \in \mathbb{R}$, the correspondence of Proposition \ref{prop-profilebijection} provides a bijection between:
\begin{itemize}
    \item $G$-invariant connected special Lagrangian cones $\widetilde C\subset M$ with Lagrangian angle $\overline{\theta}$ and connected link; and
    \item $C_m$-orbits of curves $\widetilde c_{k,\overline\theta}$ for $k \in \{1,\ldots, \tfrac{2n}{m}\}$. 
\end{itemize}
\end{theorem}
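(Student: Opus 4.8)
The plan is to identify special Lagrangian cones with profile curves that are themselves cones (rays through the origin), and then use the Lagrangian angle formula of Lemma~\ref{lem-laganglelemma} to pin down which rays occur. First I would observe that a $G$-invariant Lagrangian $\widetilde{C} \subset M$ is a cone if and only if its profile curve $l = \widetilde{C} \cap P_z$ is a cone in $P_z$; by Proposition~\ref{prop-mu0cohom1}, $M$ is a smooth cone and the $\mathbb{C}^*$-action descends correctly, so the intersection of a cone with the $\mathbb{C}^*$-invariant plane $P_z$ is a union of rays, and conversely a union of rays generates a $G$-invariant cone. Since we want $\widetilde{C}$ connected with connected link, the profile curve $l$ must be a \emph{single} ray $\{re^{i\psi_0} : r \in (0,\infty)\}$ for some $\psi_0 \in \mathbb{R}$ --- equivalently $l$ is the image of $\widetilde{c}_{k,\overline\theta}$ for appropriate parameters, after the identification $P_z \cong \mathbb{C}$ of Lemma~\ref{lem-laganglelemma}.

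Next I would impose the special Lagrangian condition via the angle formula. Writing $\gamma(s) = r(s)e^{i\alpha_0}$ with $\alpha_0$ constant (a ray, reparametrised to unit speed so $r(s) = s$ up to translation), Lemma~\ref{lem-laganglelemma} gives
\[ \theta(s) \equiv \arg(\gamma'(s)) + (n-1)\arg(\gamma(s)) \equiv \alpha_0 + (n-1)\alpha_0 = n\alpha_0 \quad (\text{mod } \pi), \]
using that $\gamma'(s) = e^{i\alpha_0}$ is a positive real multiple of $e^{i\alpha_0}$. Hence $\theta$ is automatically constant along any ray, so \emph{every} ray gives a special Lagrangian cone, and its Lagrangian angle $\overline\theta$ satisfies $\overline\theta \equiv n\alpha_0 \pmod{\pi}$. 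Conversely, prescribing $\overline\theta$ forces $\alpha_0 \equiv \frac{\overline\theta}{n} + \frac{k\pi}{n} \pmod{\frac{\pi}{?}}$; more precisely the solutions are exactly $\alpha_0 = \frac{\overline\theta}{n} + \frac{k\pi}{n}$ for $k \in \mathbb{Z}$, which are precisely the directions of the curves $\widetilde{c}_{k,\overline\theta}$. This establishes that, before quotienting, $G$-invariant special Lagrangian cones of angle $\overline\theta$ with connected link correspond bijectively to the rays $\widetilde{c}_{k,\overline\theta}$, $k \in \mathbb{Z}$.

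Finally I would account for the $C_m$-symmetry. By Proposition~\ref{prop-profilebijection}, the bijection is between $G$-invariant Lagrangians in $M$ and $C_m$-invariant curves in $P_z$; a connected special Lagrangian with connected link corresponds not to a single ray but to the $C_m$-orbit of a single ray, i.e.\ to $m$ rays $\{e^{2\pi i j/m} \cdot \widetilde{c}_{k,\overline\theta} : j = 0,\dots,m-1\}$. Since $e^{2\pi i/m}$ acts on $P_z$ (by Proposition~\ref{prop-profilesymmetry}(b) via the standard inclusion $C_m \hookrightarrow \U(1)_\Delta$) by rotation through $\frac{2\pi}{m}$, and since $m \mid 2n$, rotating $\widetilde{c}_{k,\overline\theta}$ by $\frac{2\pi}{m} = \frac{2n}{m}\cdot\frac{\pi}{n}$ sends it to $\widetilde{c}_{k + 2n/m,\,\overline\theta}$. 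Therefore the $C_m$-orbit of $\widetilde{c}_{k,\overline\theta}$ is $\{\widetilde{c}_{k},\widetilde{c}_{k + 2n/m}, \widetilde{c}_{k+4n/m},\dots\}$, and these orbits are indexed precisely by $k \in \{1,\dots,\frac{2n}{m}\}$ --- giving the claimed bijection. (One should also check that distinct values of $k$ in this range give genuinely distinct $C_m$-orbits, hence distinct cones $\widetilde{C}$, which follows since the $m$ directions in one orbit are spaced by $\frac{2\pi}{m}$ and $k$ ranges over a full period.)

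The main obstacle I anticipate is bookkeeping modulo $\pi$ versus modulo $2\pi$: the Lagrangian angle is only defined mod $\pi$ for an unoriented Lagrangian, while the direction $\alpha_0$ of a ray is naturally defined mod $2\pi$, and the $C_m$-action mixes these. The careful statement --- that $k$ runs over $\{1,\dots,\frac{2n}{m}\}$ rather than some other range --- hinges on reconciling these, using $L_{e^{i\phi}}^*\Omega = e^{in\phi}\Omega$ and $m \mid 2n$ exactly as in Proposition~\ref{prop-mu0cohom1}(c). Everything else is a direct unwinding of the two bijections (Proposition~\ref{prop-profilebijection}) and the angle formula (Lemma~\ref{lem-laganglelemma}).
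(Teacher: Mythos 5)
Your proposal is correct and follows essentially the same route as the paper's proof: reduce to the profile curve via Proposition \ref{prop-profilebijection}, use connectedness and $\mathbb{R}^+$-invariance to see that the profile curve is the $C_m$-orbit of a single ray, and apply Lemma \ref{lem-laganglelemma} to get $\overline\theta \equiv n\overline\alpha \pmod{\pi}$, with the range $k \in \{1,\dots,\tfrac{2n}{m}\}$ coming from quotienting the solutions by the $C_m$-rotation through $\tfrac{2\pi}{m}$. The paper compresses your final bookkeeping step by simply restricting $\overline\alpha$ to a fundamental domain $(\tfrac{\overline\theta}{n}, \tfrac{\overline\theta}{n}+\tfrac{2\pi}{m}]$ at the outset, but the content is identical.
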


\begin{proof}
Let $L \subset M$ be a connected special Lagrangian cone of angle $\overline\theta$. By Proposition \ref{prop-profilebijection}, $L$ corresponds to a $C_m$-invariant curve $l \subset P_z$. Since $L$ is connected and $\mathbb{R}^+$-invariant, it must be the $C_m$-orbit of a single ray, $\widetilde c(r) := re^{i\overline\alpha}$ for some $\overline\alpha \in (\frac{\overline\theta}{n},\frac{\overline\theta}{n}+\frac{2\pi}{m}]$. By Lemma \ref{lem-laganglelemma},
\begin{align*}
    \overline\theta \, \equiv\, \arg(\widetilde c') + (n-1)\arg(\widetilde c) \, \equiv \, n\overline\alpha \,\, (\text{mod } \pi),
\end{align*}
and therefore $\overline\alpha = \frac{\overline\theta+k\pi}{n}$ for some unique $k \in \{1,\ldots, \frac{2n}{m}\}$.
\end{proof}

\begin{theorem}[Uniqueness of cohomogeneity-one special Lagrangians]\label{thm-splags} For $k \in \mathbb{Z}$, $B > 0$, define the parametrised curve
\begin{align*}
\widetilde l_{B,k,\overline{\theta}}&: \left[-\tfrac{\pi}{2n},\tfrac{\pi}{2n}\right]\rightarrow \mathbb{C},\quad
    \widetilde l_{B,k,\overline{\theta}}(\alpha) := \frac{B}{\sqrt[n]{\cos\left( n\alpha\right)}}e^{i\left(\alpha+\tfrac{\overline\theta}{n} - \tfrac{\pi}{2n} + \tfrac{k\pi}{n}\right)}.
\end{align*}
Then for each $\overline\theta \in \mathbb{R}$, the correspondence of Proposition \ref{prop-profilebijection} provides a bijection between:
\begin{itemize}
    \item $G$-invariant complete connected special Lagrangians $\widetilde L\subset \mathbb{C}^n$ contained in $M$, with Lagrangian angle $\overline{\theta}$; and
    \item $C_m$-orbits of curves $\widetilde l_{B,k,\overline\theta}$ for $k \in \{1,\ldots, \tfrac{2n}{m}\}$ and $B>0$. 
\end{itemize}
\end{theorem}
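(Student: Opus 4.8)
The plan is to use the bijection of Proposition \ref{prop-profilebijection} to reduce the classification to that of $C_m$-invariant profile curves with constant Lagrangian angle, and then to integrate the resulting first-order ODE explicitly, its solutions being exactly the curves $\widetilde l_{B,k,\overline\theta}$ together with their $C_m$-rotates.

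For the forward direction, let $\widetilde L \subset M$ be a $G$-invariant complete connected special Lagrangian of angle $\overline\theta$ and put $l := \widetilde L \cap P_z$, which by Proposition \ref{prop-profilebijection} is $C_m$-invariant and, being almost-calibrated, by Lemma \ref{lem-wedge} has exactly $m$ connected components forming one $C_m$-orbit. Fix a unit-speed parametrisation $\gamma$ of one component and, via the unitary isomorphism $\Phi_z \colon P_z \to \C$ of Lemma \ref{lem-laganglelemma}, write $\gamma(s) = r(s)e^{i\alpha(s)}$ with continuous lifts $r > 0$, $\alpha$. Constancy of $\theta$ together with Lemma \ref{lem-laganglelemma} and the identity $\gamma' = (r' + ir\alpha')e^{i\alpha}$ gives $\arg(r' + ir\alpha') \equiv \overline\theta - n\alpha \pmod{\pi}$, and combined with the unit-speed relation $(r')^2 + r^2(\alpha')^2 = 1$ this forces $(r',\, r\alpha') = \pm\bigl(\cos(\overline\theta - n\alpha),\, \sin(\overline\theta - n\alpha)\bigr)$. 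A short ODE-uniqueness argument shows that if $\overline\theta - n\alpha$ ever lies in $\pi\Z$ then $\alpha$ is constant and $\gamma$ is a radial ray, whose $G$-orbit is a cone accumulating at $0 \notin M$, contradicting completeness of $\widetilde L$; hence $\overline\theta - n\alpha$ stays in one open interval $(j\pi,(j+1)\pi)$, on which $\alpha$ may serve as parameter, yielding the separable equation $\frac{dr}{d\alpha} = r\cot(\overline\theta - n\alpha)$ with general solution $r = B\,\lvert\sin(\overline\theta - n\alpha)\rvert^{-1/n}$, $B > 0$. Absorbing a shift of $\tfrac{\pi}{2n} - \tfrac{k\pi}{n}$ in the angular variable rewrites this as $\widetilde l_{B,k,\overline\theta}$, with $\alpha$ ranging over an interval of length $\tfrac{\pi}{n}$ as in the stated domain.

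It then remains to check that the $\alpha$-interval is maximal and corresponds precisely to completeness. Near an endpoint $\sin(\overline\theta - n\alpha) \to 0$, so $r \to \infty$ and the arclength element $ds = B\,\lvert\sin(\overline\theta - n\alpha)\rvert^{-(n+1)/n}\,d\alpha$ has a non-integrable singularity since $\tfrac{n+1}{n} > 1$; hence a maximal solution is parametrised by all of $\R$, keeps $\lvert\gamma\rvert \geq B$, and is a proper embedding into $\C$. Conversely, if $l$ were a proper sub-arc of such a curve its closure in $P_z$ would be strictly larger, contradicting that $l = \widetilde L \cap P_z$ is closed in $P_z$ (as $\widetilde L$ is closed in $\C^n$ and $P_z$ is a closed linear subspace). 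For the converse direction of the statement, a direct differentiation shows each $\widetilde l_{B,k,\overline\theta}$ satisfies the ODE above, so by Lemma \ref{lem-laganglelemma} it has locally, hence (being connected) globally, constant Lagrangian angle $\overline\theta \pmod{\pi}$; since $G$ is connected so is $G/H$, whence the $G$-invariant Lagrangian attached by Proposition \ref{prop-profilebijection} to the $C_m$-orbit $C_m\cdot\widetilde l_{B,k,\overline\theta}$ is connected, complete (the curve $C_m\cdot\widetilde l_{B,k,\overline\theta}$ is closed in $\C^n$ and $G$ is compact), and special of angle $\overline\theta$. The $m$ rotated copies constituting this $C_m$-orbit are pairwise disjoint because each sits in a wedge of angle $\tfrac{\pi}{n} \le \tfrac{2\pi}{m}$, using $m \mid 2n$ from Proposition \ref{prop-mu0cohom1}. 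Finally, distinctness of the resulting special Lagrangians over $B > 0$ and $k \in \{1,\dots,\tfrac{2n}{m}\}$ follows since $B = \min_{w \in l}\lvert w\rvert$ is intrinsic to $l$ and the generator $e^{2\pi i/m}$ of $C_m$ sends $\widetilde l_{B,k,\overline\theta}$ to $\widetilde l_{B,\,k+2n/m,\,\overline\theta}$, exactly as for cones in Theorem \ref{thm-splagcones}.

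The step requiring the most care is the passage between the analytic notion of completeness of $\widetilde L$ in $\C^n$ and the ODE picture: excluding radial-ray profile curves, and pinning the $\alpha$-interval to have length exactly $\tfrac{\pi}{n}$ with arclength blowing up at its ends. Both hinge on the facts that $0 \notin M$ and that $\tfrac{n+1}{n} > 1$. The explicit integration of the ODE and the bookkeeping of $k$ modulo $\tfrac{2n}{m}$ are routine, the latter essentially the same argument already used for the special Lagrangian cones.
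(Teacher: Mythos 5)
Your proposal is correct and follows essentially the same route as the paper: reduce to the profile curve via Proposition \ref{prop-profilebijection}, convert constancy of the Lagrangian angle into a first-order ODE in polar coordinates using Lemma \ref{lem-laganglelemma}, and integrate to obtain $\widetilde l_{B,k,\overline\theta}$, with $k$ pinned down modulo $\tfrac{2n}{m}$ exactly as for the cones. The only organisational difference is that the paper anchors the computation at a point of $\gamma$ closest to the origin to read off $k$ immediately, whereas you determine $k$ from the branch of the ODE; your additional checks (excluding radial profile curves via incompleteness at $0\notin M$, the non-integrable arclength at the wedge boundary, and the converse direction) are details the paper leaves implicit.
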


\begin{proof}
Let $L$ be a complete connected smooth special Lagrangian of angle $\overline\theta$ contained in $M$, with profile curve $l$. By the $C_m$-invariance of $l$, we may choose a connected component $\gamma \subset l$ and a point $w = Be^{i\overline\alpha} \in \gamma$ minimising the distance to the origin such that $\overline\alpha \in (\frac{\overline\theta}{n} - \frac{\pi}{2n},\frac{\overline\theta}{n} - \frac{\pi}{2n} + \frac{2\pi}{m}]$. By Lemma \ref{lem-laganglelemma}, at the point $w$,
\begin{align*}
    \overline\theta \,\equiv\, \arg(\gamma') + (n-1)\arg(\gamma)  \,\,(\text{mod } \pi),
\end{align*}
and therefore $\overline\alpha \,= \, \frac{\overline\theta}{n} - \frac{\pi}{2n} + \frac{k\pi}{n}$ for some unique $k \in \{ 1, \ldots, \frac{2n}{m}\}$.

Locally near $w$ we may parametrise $\gamma$ as $\gamma(\alpha) = r(\alpha)e^{i(\alpha + \overline\alpha)}$, so that in this region (using Lemma \ref{lem-laganglelemma}),
\begin{align*}
    \gamma'(\alpha) = (r'(\alpha) + ir(\alpha))e^{i(\alpha+\overline\alpha)} &\implies \overline\theta \equiv \tan^{-1}\left(\tfrac{r(\alpha)}{r'(\alpha)}\right) + n(\alpha+\overline\alpha) \,\, (\text{mod } \pi)\\
    &\implies \frac{r'(\alpha)}{r(\alpha)} \, = \, \tan(n\alpha).
\end{align*}
Integrating this equation gives the unique complete solution given in the statement.
\end{proof}

\begin{remark}
The asymptotes of $\widetilde l_{B,k,\overline\theta}$ are given by $\widetilde c_{k-1,\overline \theta} \cup \widetilde c_{k,\overline \theta}$.  In particular, the special Lagrangians of Theorem \ref{thm-splags} are all asymptotically conical.
\end{remark}

\begin{remark} Fix a wedge $W \subset P_z$ spanning an angle $\alpha < \frac{2\pi}{n}$ and fix Lagrangian angle $\overline \theta$.  Then, up to scaling, there is at most one $G$-invariant connected smooth special Lagrangian $\widetilde L \subset M$ with profile curve $\widetilde l$ contained in $C_m \cdot W$ and Lagrangian angle $\overline \theta$. Further, there are at most two $G$-invariant special Lagrangian cones $\widetilde C \subset M$ with connected link, profile curve $\widetilde c$ contained in $C_m \cdot W$, and Lagrangian angle $\overline\theta$.
\end{remark}


\begin{remark} \label{remark-LawlorNeck} For the diagonal $\SO(n)$-action on $\C^n$ of Examples \ref{ex-SO(n)} and \ref{ex-CyclicGroup}(a), the cohomo-geneity-one special Lagrangians of Theorem \ref{thm-splags} were first constructed by Harvey and Lawson \cite[Thm.\ 3.5]{Harvey1982}, who took $\overline{\theta} = 0$ and $k = 0$.  In fact, these belong to the larger class of \emph{Lawlor necks}, discovered by Lawlor \cite{Lawlor1989} and extended by Harvey \cite[pg.\ 149-150]{Harvey1990} and Joyce \cite{Joyce2001}.  The corresponding $\SO(n)$-invariant special Lagrangian cones of Theorem \ref{thm-splagcones} are simply $n$-planes.
\end{remark}

\begin{remark} \label{remark-TorusCone}
For the $T^{n-1}$-action on $\C^n$ of Examples \ref{ex-Torus} and \ref{ex-CyclicGroup}(b), the cohomogeneity-one special Lagrangians of Theorem \ref{thm-splags} are examples of a larger family first discovered by Harvey and Lawson \cite[Thm.\ 3.1]{Harvey1982}.  The corresponding $T^{n-1}$-invariant special Lagrangian cones of Theorem \ref{thm-splagcones} are known as the \emph{Harvey-Lawson $T^{n-1}$ cones}.
\end{remark}

\subsection{Shrinkers and Expanders}

\indent \indent We now classify the cohomogeneity-one shrinkers and expanders of Lagrangian mean curvature flow --- i.e.\ , Lagrangian immersions $F\colon L\to \mathbb{C}^n$ satisfying the elliptic equation (\ref{eq-shrinker}), where the case of $\lambda > 0$ corresponds to a self-similarly shrinking solution, and the case of $\lambda < 0$ corresponds to a self-similarly expanding solution. Observing that the corresponding mean curvature flows sweep out an $\mathbb{R}^+$-invariant set, and that the only level set of $\mu$ containing an $\mathbb{R}^+$-invariant set is $\mu^{-1}(0)$, it follows that any solution to (\ref{eq-shrinker}) must lie in $\mu^{-1}(0)$, and we may therefore restrict to this case. \\
\indent Using Proposition \ref{prop-curvature} and the decomposition (\ref{eq-orthdecompositoncohom1}), we see that for a $G$-invariant Lagrangian $L \subset \C^n$ satisfying (\ref{eq-shrinker}), its profile curve $l := L \cap P_z$ satisfies the equation
\begin{equation} \vec{k} + \left( \lambda - \frac{n-1}{|l|^2}\right)l^\perp = 0. \label{eq-shrinkerprofile} \end{equation}
Conversely, any curve in $\mathbb{C}$ satisfying (\ref{eq-shrinkerprofile}) corresponds by the bijection of Proposition \ref{prop-profilebijection} to a solution of (\ref{eq-shrinker}).

In the particular case of $G = \SO(n)$ acting diagonally on $\C^n = \R^n \oplus \R^n$ (recall Example \ref{ex-SO(n)}), solutions of (\ref{eq-shrinker},\ref{eq-shrinkerprofile}) were classified in a collection of papers by Anciaux, Castro and Romon \cite{Anciaux2006,AnciauxCastroRomon2006,AnciauxRomon2009}; these solutions are now referred to as the Anciaux shrinkers and expanders. The following theorem summarises the results:

\begin{theorem}[Anciaux Shrinkers and Expanders in $\mathbb{C}^n$] \label{thm-anciaux} Consider the diagonal $\SO(n)$-action on $\mathbb{C}^n$, let $M := \mu^{-1}(0) \setminus \{0\}$ and fix $w \in M$. \\
    \indent (a) For $\lambda > 0$, there is a countable family of $\SO(n)$-invariant shrinking solutions of (\ref{eq-shrinker}): \[\left\{L^{(p,q)}\, : \, p,q \text{ relatively prime, } \tfrac{p}{q} \in \left(\tfrac{1}{2n}, \tfrac{1}{\sqrt{2n}}\right)\right\},\]  where $p$ is the winding number of the profile curve $l^{(p,q)} := L^{(p,q)} \cap P_w$, and $q$ is the number of maxima of its curvature. In the case of $n=1$, these curves are the Abresch-Langer solutions of self-similarly shrinking curve shortening flow \emph{\cite{Abresch1986}}.\\
    \indent (b) For $\lambda < 0$, there is a one-parameter family of expanding solutions of (\ref{eq-shrinker}): \[ \left\{L^\alpha \,:\, \alpha\in (0,\tfrac{\pi}{n})\right\},\] with the ends of the profile curve $l^\alpha := L^{\alpha} \cap P_w$ asymptotic to two lines spanning an angle $\alpha$.\\
    \indent (c) Up to the $
    \U(1)_{\Delta}$-action on $\mathbb{C}^n$, these are the only complete $\SO(n)$-invariant solutions to (\ref{eq-shrinker}) in $M \subset \mathbb{C}^n$.
\end{theorem}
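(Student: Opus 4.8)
This theorem is essentially a restatement of the classification obtained by Anciaux, Castro and Romon in \cite{Anciaux2006, AnciauxCastroRomon2006, AnciauxRomon2009}, so one option is simply to quote it; what follows is the self-contained route I would take, based on a phase-plane analysis of the profile ODE (\ref{eq-shrinkerprofile}). Parametrise the profile curve by arclength, writing $\gamma(s) = r(s)e^{i\phi(s)}$, with unit tangent $T = \gamma'$, unit normal $N = JT$ and signed curvature $\kappa$, and introduce the support functions $\tau := \langle \gamma, T\rangle$ and $\eta := \langle \gamma, N\rangle$, so that $r^2 = \tau^2 + \eta^2$ and $r^2|\phi'| = |\eta|$. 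In these terms (\ref{eq-shrinkerprofile}) reads $\kappa = \bigl(\tfrac{n-1}{r^2} - \lambda\bigr)\eta$, while the Frenet equations give $(r^2)' = 2\tau$ and $\eta' = -\kappa\tau$. Eliminating $s$ between the last two yields $\frac{d\eta}{d(r^2)} = -\tfrac12\bigl(\tfrac{n-1}{r^2} - \lambda\bigr)\eta$, which integrates to the conserved quantity
\begin{equation*}
\eta\, r^{\,n-1}\, e^{-\lambda r^2/2} = C
\end{equation*}
for a real constant $C$ depending only on the solution.

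With $u := r^2$ this gives $\eta = \eta_C(u) := C\,u^{-(n-1)/2}e^{\lambda u/2}$ and $u' = 2\tau = \pm 2\sqrt{P_C(u)}$, where $P_C(u) := u - \eta_C(u)^2$; hence $u$ undergoes a one-dimensional motion confined to $\{P_C \ge 0\}$, with turning points at the zeros of $P_C$, and over one period of this motion the angle $\phi$ changes by $\Phi_C := \int \frac{|\eta_C(u)|}{u\,\sqrt{P_C(u)}}\,du$. For $\lambda > 0$ the function $P_C$ is negative near $u = 0$ and near $u = +\infty$, so each non-circular orbit oscillates between two turning points $0 < u_- < u_+$; this orbit closes up --- equivalently, the corresponding $\SO(n)$-invariant Lagrangian is compact --- exactly when $q\,\Phi_C = 2\pi p$ for coprime integers $p,q$, in which case $q$ counts the curvature maxima and $p$ the winding number, producing $L^{(p,q)}$ (and the Abresch--Langer curves \cite{Abresch1986} when $n = 1$). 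For $\lambda < 0$, $P_C(u) \to +\infty$ as $u \to \infty$ while $P_C$ is negative near $u = 0$, so there is a single turning point $u_-$, the orbit runs monotonically from $u_-$ to $\infty$, and the resulting curve is complete and noncompact with two ends asymptotic to rays enclosing the angle $\Phi_C$, producing $L^\alpha$. Two further degenerate orbits must be identified: the circular orbit $u \equiv u_0$ (where $P_C$ has a double zero), giving the round Lagrangian sphere / Clifford-type torus, and the radial orbit $\eta \equiv 0$ (the case $C = 0$), giving a special Lagrangian plane.

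The hard part will be pinning down the parameter ranges: one must show that $C \mapsto \Phi_C$ is strictly monotone with the asserted limiting values, so that for $\lambda > 0$ the ratio $p/q = \Phi_C/2\pi$ sweeps out precisely the rationals in $\bigl(\tfrac{1}{2n}, \tfrac{1}{\sqrt{2n}}\bigr)$, each attained once, and for $\lambda < 0$ the angle $\Phi_C$ is a bijection onto $\bigl(0, \tfrac{\pi}{n}\bigr)$. This monotonicity-and-asymptotics analysis of the rotation integral $\Phi_C$ --- with the endpoints $\tfrac{1}{2n}, \tfrac{1}{\sqrt{2n}}$ (resp.\ $0, \tfrac{\pi}{n}$) emerging from the behaviour of $P_C$ as the orbit degenerates to the circle and to the ray --- is the technical heart, and is exactly what occupies \cite{Anciaux2006, AnciauxCastroRomon2006, AnciauxRomon2009}. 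Granting it, part (c) is immediate: by Proposition \ref{prop-profilebijection} a complete $\SO(n)$-invariant solution of (\ref{eq-shrinker}) in $M$ corresponds to a complete immersed profile curve solving (\ref{eq-shrinkerprofile}), every such curve has now been enumerated by its constant $C$ together with its oscillation type, and the only residual freedom --- the arclength origin and orientation, and the overall rotational phase --- is precisely the $\U(1)_\Delta$-action on $\mathbb{C}^n$.
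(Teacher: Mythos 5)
The paper does not actually prove this theorem: it is stated explicitly as a summary of the classification of Anciaux, Castro and Romon, and the proof lives entirely in the cited references. Your reconstruction therefore goes beyond what the paper does, and it follows the correct (indeed essentially the original) route. The first integral $\eta\, r^{n-1}e^{-\lambda r^2/2}=C$ is exactly the conserved quantity underlying Anciaux's analysis (for $n=1$ it reduces, via $\kappa=-\lambda\eta$, to the Abresch--Langer integral), your derivation of it from the Frenet identities $(r^2)'=2\tau$, $\eta'=-\kappa\tau$ is correct, and the qualitative dichotomy is right: for $\lambda>0$ the region $\{P_C\ge 0\}$ is a compact interval, giving bounded oscillation and a rotation-number closing condition $q\Phi_C=2\pi p$ with one curvature maximum per period; for $\lambda<0$ there is a single turning point, $\eta\to 0$ along the orbit, and the two ends are asymptotically radial with total turning $\Phi_C$. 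The step you defer --- strict monotonicity of $C\mapsto\Phi_C$ together with the limiting values $\tfrac{\pi}{n}$ and $\tfrac{2\pi}{\sqrt{2n}}$ (resp.\ $0$ and $\tfrac{\pi}{n}$) as the orbit degenerates to the radial ray and to the circle --- is genuinely the technical heart, and attributing it to the cited papers is consistent with how the paper itself handles the entire theorem.

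Two cautions. First, for the diagonal $\SO(n)$-action the profile curve $L\cap P_w$ is the $C_2$-orbit of the arc you analyse (Propositions \ref{prop-profilesymmetry} and \ref{prop-mu0cohom1}), so the passage from a single solution arc of (\ref{eq-shrinkerprofile}) back to the Lagrangian should be routed through Proposition \ref{prop-shrinkerbijection} rather than asserted directly; relatedly, note the sign/normalisation discrepancy between (\ref{eq-shrinker}) and (\ref{eq-shrinkerprofile}) (a factor of $2$ in $\lambda$, and the paper's labelling of shrinkers versus expanders by the sign of $\lambda$ is inconsistent between \S 2.3 and the theorem), so one should fix a convention once and use it throughout. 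Second, in part (c) your enumeration ``by $C$ and oscillation type'' also produces, for $\lambda>0$, the orbits with irrational rotation number; these are complete immersed non-closed solutions, so they are not literally excluded by the completeness hypothesis and must either be ruled out by an additional properness or closedness convention or acknowledged separately. This is a gap inherited from the theorem statement (and from the compact framing of Anciaux's classification) rather than one you introduced, but your wrap-up of (c) currently glosses over it.
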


\indent In fact, these examples may be used to describe all connected cohomogeneity-one Lagrangian shrinkers and expanders. Indeed, since equation (\ref{eq-shrinkerprofile}) is independent of the subgroup $G \leq \SU(n)$, we may use the bijection of Proposition \ref{prop-profilebijection} to identify $G$-invariant solutions in $M$ with $\SO(n)$-invariant solutions in $\mathbb{C}^n\setminus\{0\}$, and thereby upgrade Theorem \ref{thm-anciaux} to the general cohomogeneity-one case.  More precisely:

\begin{proposition}\label{prop-shrinkerbijection}
    There is a bijection between the following three classes:
    \begin{itemize}
        \item[1.] Connected immersed curves $\gamma \colon l \to \mathbb{C}\setminus\{0\}$ satisfying $\vec k + \left(\lambda - \frac{n-1}{|\gamma|^2}\right)\gamma^\perp = 0$, up to $\U(1)$-rotation;
        \item[2.] $\SO(n)$-invariant immersed Lagrangians $F \colon L\to\mathbb{C}^n \setminus \{0\}$ satisfying $\vec{H} + \lambda F^\perp = 0$, up to $\U(1)_\Delta$-rotation;
        \item[3.] $G$-invariant immersed Lagrangians $F \colon L\to M$ satisfying $\vec{H} + \lambda F^\perp = 0$, up to $\U(1)_\Delta$-rotation.
    \end{itemize}
\end{proposition}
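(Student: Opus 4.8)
The plan is to derive the statement from Propositions~\ref{prop-curvature} and \ref{prop-profilebijection}, which reduce cohomogeneity-one geometry in $M$ to planar curve geometry, and then to track how connectedness of $L$ and the Hopf $\U(1)_\Delta$-symmetry interact with the finite cyclic group $C_m \cong \widetilde H/H$. The first step, already indicated just before the statement, is to show that a connected $G$-invariant immersed Lagrangian $L \subset M$ solves the shrinker equation~(\ref{eq-shrinker}) if and only if its profile curve $l := L \cap P_z$ solves~(\ref{eq-shrinkerprofile}). Indeed, Proposition~\ref{prop-curvature}(a), applied on embedded patches (Remark~\ref{rmk-immersedunionofembedded}), gives $\vec{H} = \vec{k} - (n-1)\,w^\perp/|w|^2$ at $w \in l$, where $w^\perp$ is the projection of the position vector onto the normal line of $l$ inside $P_z$; and by the decomposition~(\ref{eq-orthdecompositoncohom1}) together with $P_z \perp (T_w\mathcal{O} \oplus J T_w\mathcal{O})$ (Proposition~\ref{prop-zerosymmetry}(d)), the normal component $F^\perp$ of the $\C^n$-position vector along $L$ equals this same $w^\perp$. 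Substituting converts~(\ref{eq-shrinker}) into~(\ref{eq-shrinkerprofile}).

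Next I would identify $P_z \cong \C$ unitarily via the map $\Phi_z$ of Lemma~\ref{lem-laganglelemma}, under which $\U(1)_\Delta|_{P_z}$ becomes the standard rotation action of $\U(1)$ on $\C$ and equation~(\ref{eq-shrinkerprofile}) is manifestly $\U(1)$-invariant, hence $C_m$-invariant. Since $\U(1)_\Delta$ preserves $M$ and each complex line $P_z$, Proposition~\ref{prop-profilebijection} restricts to a $\U(1)_\Delta$-equivariant bijection between $G$-invariant immersed Lagrangians $L \subset M$ solving~(\ref{eq-shrinker}) and $C_m$-invariant immersed curves in $\C \setminus \{0\}$ solving~(\ref{eq-shrinkerprofile}). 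To cut this down to connected curves, note that the $C_m$-orbit of a connected solution $\gamma$ of~(\ref{eq-shrinkerprofile}) is such a $C_m$-invariant curve, and the associated Lagrangian $G \cdot \gamma$ is connected (being the continuous image of $G \times (\text{domain of }\gamma)$\,); conversely, if $L$ is connected then so is $L/G$, and since $\pi \circ \iota \colon P_z \to Q$ is an $m$-fold covering map (as in the proof of Lemma~\ref{lem-wedge}) the profile curve $l = (\pi\iota)^{-1}(L/G)$ is the preimage of a connected set, hence equals $C_m \cdot \gamma$ for any one of its connected components $\gamma$.

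Consequently, $[L] \mapsto [\text{a connected component of }L \cap P_z]$ and $[\gamma] \mapsto [G \cdot \gamma]$ are mutually inverse maps between class~(3) modulo $\U(1)_\Delta$ and class~(1) modulo $\U(1)$, once well-definedness is checked: the components of $l$ differ from one another by elements of $C_m \subset \U(1)$; replacing $L$ by $e^{i\alpha}\cdot L$ or $\gamma$ by $e^{i\alpha}\gamma$ changes the other side by an ambient rotation, since the two circle actions commute; and changing the basepoint $z$ or the unitary identification $\Phi_z$ alters the resulting curve only by a $\U(1)$-rotation, using Proposition~\ref{prop-mu0cohom1}(b) to relate different $P_z$'s. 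This settles the bijection between classes~(1) and~(3).

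Finally, class~(2) is the special case $G = \SO(n)$ of class~(3): by semisimplicity and Proposition~\ref{prop:constrain}, every $\SO(n)$-invariant immersed Lagrangian in $\C^n \setminus \{0\}$ lies in $\mu^{-1}(0) \setminus \{0\} = \C^n_{(\SO(n-1))}$ (Example~\ref{ex-SO(n)}), which has constant isotropy type with $(n-1)$-dimensional orbits, so one may take $M = \mu^{-1}(0)\setminus\{0\}$ and $m = 2$; composing the bijections of the previous two paragraphs for $\SO(n)$ and for the given $G$ yields the bijection between classes~(2) and~(3). I expect the main obstacle to be purely bookkeeping, namely the content of the third paragraph: verifying that connectedness of $L$ forces the profile curve to be a single $C_m$-orbit, and that passing to $\U(1)_\Delta$-equivalence classes on the Lagrangian side corresponds cleanly to $\U(1)$-equivalence classes of connected curves independently of the non-canonical choices of $z$ and $\Phi_z$. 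Everything else is a direct application of the results already established.
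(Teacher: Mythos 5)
Your proposal is correct and follows the same route as the paper's (much terser) proof: both reduce to the bijection of Proposition \ref{prop-profilebijection} restricted to solutions of (\ref{eq-shrinkerprofile}), select a single connected component of the profile curve, check compatibility of the $\U(1)_\Delta$- and $\U(1)$-actions, and obtain class 2 as the special case $G = \SO(n)$ with $M = \mu^{-1}(0)\setminus\{0\}$. The additional details you supply (the covering-space argument showing the profile curve of a connected $L$ is a single $C_m$-orbit, and the well-definedness checks for the choices of $z$ and $\Phi_z$) are exactly the steps the paper leaves implicit.
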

\begin{proof}
    The bijection between classes $1$ and $3$ is given by Proposition \ref{prop-profilebijection}, albeit choosing a single connected component of the profile curve. Note that two connected immersed curves are related by a $\U(1)$-rotation if and only if the corresponding $G$-invariant Lagrangian is related by a $\U(1)_\Delta$-rotation.
    
    The bijection between classes $2$ and $3$ is given in the same way, choosing $G = \SO(n)$ and $M$ to be the unique connected component of $M_0 := \mu^{-1}(0) \cap \mathbb{C}^n_{(\SO(n-1))}$, and noting that any $\SO(n)$-invariant Lagrangian $L \subset \mathbb{C}^n\setminus\{0\}$ must lie in $M$.
\end{proof}

\begin{theorem}[Classification of cohomogeneity-one expanding and shrinking solitons] \label{thm-shrinkers}
Let $G \leq \SU(n)$ be compact and connected, and $H \leq G$ a subgroup such that $\dim(G / H) = n-1$. Any complete connected $G$-invariant type $(H)$ immersed Lagrangian expanding or shrinking soliton in $\mathbb{C}^n$ corresponds to one of the examples of Theorem \ref{thm-anciaux} via the bijection of Proposition \ref{prop-shrinkerbijection}.
\end{theorem}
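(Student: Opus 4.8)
The plan is to reduce this statement to the already-established $\SO(n)$-equivariant classification of Theorem~\ref{thm-anciaux}, running the bijection of Proposition~\ref{prop-shrinkerbijection} in reverse. First I would record that any such soliton lies in $\mu^{-1}(0)$: a complete connected $G$-invariant Lagrangian solving (\ref{eq-shrinker}) generates a self-similar mean curvature flow whose trace is $\R^+$-invariant, and $\mu^{-1}(0)$ is the only $\mu$-level set containing an $\R^+$-invariant subset — this is the reduction already made in the paragraph preceding Theorem~\ref{thm-anciaux}. Since $L$ is of type $(H)$ with $\dim(G/H) = n-1$, the origin (whose isotropy type is $(G) \neq (H)$ when $H \subsetneq G$) does not lie on $L$ for $n \geq 2$, so $L$ is contained in a connected component $M$ of $M_0 = \mu^{-1}(0) \cap \C^n_{(H)}$ and avoids $0$ (the case $n=1$, where $G = H$ is trivial and $L$ is just a complete planar soliton curve, is the classical Abresch--Langer classification). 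Invoking Proposition~\ref{prop-curvature}(a) together with the orthogonal splitting (\ref{eq-orthdecompositoncohom1}), the soliton equation (\ref{eq-shrinker}) for $L$ becomes the profile-curve equation (\ref{eq-shrinkerprofile}) for $l := L \cap P_z \subset P_z \setminus \{0\}$.

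Next I would pass to a connected component of the profile curve and quote the Anciaux classification. Using the construction in the proof of Proposition~\ref{prop-profilebijection}, under which $L$ corresponds to $\widehat{L} \cong (G/H \times \widehat{l})/C_m$ with $G/H$ connected, connectedness of $L$ forces $C_m$ to permute the connected components of $\widehat{l}$ transitively, so $\widehat{l} = C_m \cdot \widehat\gamma$ for a single component $\widehat\gamma$. This $\widehat\gamma$ is a complete immersed curve in $P_z \setminus \{0\} \cong \C \setminus \{0\}$ solving (\ref{eq-shrinkerprofile}). By the equivalence of classes 1 and 2 in Proposition~\ref{prop-shrinkerbijection}, $\widehat\gamma$ corresponds, up to $\U(1)$-rotation, to a complete $\SO(n)$-invariant immersed Lagrangian in $\C^n \setminus \{0\}$ solving (\ref{eq-shrinker}), and Theorem~\ref{thm-anciaux}(c) identifies this — up to the $\U(1)_\Delta$-action — as some $L^{(p,q)}$ when $\lambda > 0$ or some $L^\alpha$ when $\lambda < 0$. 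Reading this identification back through Proposition~\ref{prop-shrinkerbijection} (classes 1 and 3) exhibits $L$ itself as the corresponding Anciaux example, which is the claim.

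The main obstacle is not analytic but is the transfer of the \emph{global} hypotheses — completeness and connectedness — through Proposition~\ref{prop-shrinkerbijection}, which is phrased purely as a correspondence of solution sets. Concretely, one must verify that a connected component of the profile curve of a complete connected $L$ is itself a complete curve in $\C \setminus \{0\}$, so that the completeness hypothesis of Theorem~\ref{thm-anciaux}(c) genuinely applies, and conversely that the $C_m$-orbit of a single Anciaux profile curve reassembles to exactly one connected $G$-invariant Lagrangian. Both should follow from the Riemannian submersion / orbit-bundle structure of $\pi \colon M \to Q$ and the compactness of the orbits $G/H$ (a divergent Cauchy sequence in $\widehat\gamma$ produces one in $\widehat{L}$, since the fibre metrics stay bounded away from a collapse to $0 \notin M$, and vice versa), but these are precisely the facts that promote Proposition~\ref{prop-shrinkerbijection} to an honest bijection between the complete connected objects. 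Everything else is a direct assembly of Propositions~\ref{prop-curvature} and~\ref{prop-shrinkerbijection} with Theorem~\ref{thm-anciaux}.
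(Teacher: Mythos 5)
Your proposal is correct and follows essentially the same route as the paper, which treats this theorem as an immediate consequence of the reduction to $\mu^{-1}(0)$, the $G$-independent profile-curve equation (\ref{eq-shrinkerprofile}) from Proposition \ref{prop-curvature}, the bijection of Proposition \ref{prop-shrinkerbijection}, and the Anciaux classification of Theorem \ref{thm-anciaux}; the paper gives no separate proof beyond this assembly. Your additional attention to how completeness and connectedness transfer through the bijection addresses a point the paper leaves implicit, and your sketch of why it holds (compact orbits, no collapse to $0 \notin M$) is the right justification.
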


Note that although the profile curves of these solutions are the same as those of Theorem \ref{thm-anciaux}, the corresponding Lagrangian submanifolds satisfying (\ref{eq-shrinker}) will be different for different choices of $G \leq \SU(n)$, and so are in general distinct from the Anciaux examples.

\subsection{Translators}

\indent \indent Finally, we consider translating solitons of Lagrangian mean curvature flow --- i.e.\  solutions of (\ref{eq-translator}). In the case of curve-shortening flow ($n=1$), the grim reaper translating soliton $\gamma(x) := \left(x,\log(\cos(x))\right)$ is the unique translating soliton up to scaling and rigid motions, and may be considered a trivial example of a cohomogeneity-one Lagrangian MCF with $G = \{e\}$. For $n >1$, however, there are no cohomogeneity-one examples:

\begin{theorem}\label{thm-translators}
Suppose $n > 1$.  For any compact connected $G \leq \SU(n)$, there are no cohomogeneity-one $G$-invariant immersed translating solitons in $\C^n$.
\end{theorem}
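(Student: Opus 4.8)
The plan is to show that any cohomogeneity‑one $G$‑invariant Lagrangian translator $L \subset \C^n$, say with $\vec H = V^\perp$ for a constant vector $V$, must secretly be a Riemannian product $\mathcal{O}' \times S$ of a $G$‑orbit with a planar curve, and then to derive a contradiction from the non‑existence of compact minimal submanifolds of Euclidean space. Throughout, ``translator'' will mean a genuine (non‑static) one, i.e.\ $\vec H \not\equiv 0$; the case $\vec H \equiv 0$ is just the special Lagrangians of Theorems \ref{thm-splagcones}--\ref{thm-splags}, which do not translate.

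First I would exploit the translating structure together with Proposition \ref{prop-flowlevelset}: since $G \leq \SU(n)$ the $G$‑action is Calabi--Yau, so the mean curvature flow $L_t = L + tV$ starting at $L$ consists of $G$‑invariant Lagrangians. From $g\cdot(L+tV) = L+tV$ for all $g\in G$ we get $L + t(gV - V) = L$; that is, $L$ is invariant under translation by every vector of the subspace $\mathcal{W} := \mathrm{span}_\R\{gV-V : g \in G\} \subset \C^n$. Decompose $\C^n = (\C^n)^G \oplus W$, where $W = ((\C^n)^G)^\perp$ carries no trivial $G$‑summand, and write $V = V_0 + V_1$ accordingly; then $gV - V = gV_1 - V_1 \in W$, and averaging over $G$ gives $V_1 = -\int_G (gV_1 - V_1)\,dg \in \mathcal{W}$ because $V_1$ has no $G$‑fixed component. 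Hence $L$ is invariant under translation along $V_1$, so $V_1$ is everywhere tangent to $L$, and therefore $\vec H = V^\perp = V_0^\perp$. If $V_0 = 0$ then $\vec H \equiv 0$, contradicting that $L$ is a genuine translator; so we may assume $V := V_0 \neq 0$ is $G$‑fixed.

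Next, since $G$ fixes $V$ it fixes the complex line $\C V$ pointwise, yielding a $G$‑invariant orthogonal splitting $\C^n = \C^{n-1} \oplus \C V$ with $G \leq \SU(n-1)$ acting on the first factor only. The moment map splits as $\mu(z',z'') = \mu'(z')$, so by Proposition \ref{prop:constrain} the $\C^{n-1}$‑component of $L$ lies in a single level set $\mu'^{-1}(\xi)$ with $\xi$ central. Each orbit of $L$ is $(n-1)$‑dimensional, hence projects to an $(n-1)$‑dimensional isotropic --- and thus Lagrangian --- $G$‑orbit in $\C^{n-1}$; a dimension count from Lemma \ref{lem:KerImgMoment} (for a Lagrangian orbit $\ker d\mu'$ equals its tangent space) shows $\mu'$ has constant rank $n-1$ on $\C^{n-1}_{(H)}$, so $\mu'^{-1}(\xi) \cap \C^{n-1}_{(H)}$ is itself $(n-1)$‑dimensional, hence a disjoint union of such Lagrangian orbits. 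Connectedness of $L$ then forces $L \subset \mathcal{O}' \times \C V$ for one fixed Lagrangian $G$‑orbit $\mathcal{O}' \subset \C^{n-1}$, and $G$‑invariance of $L$ upgrades this to $L = \mathcal{O}' \times S$ for a smooth curve $S \subset \C V$.

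Finally, for the product $L = \mathcal{O}' \times S$ the second fundamental form is block‑diagonal, so $\vec H_L = \vec H_{\mathcal{O}'} \oplus \vec k_S$; since $V \in \C V$ is orthogonal to $\C^{n-1} \supseteq T\mathcal{O}'$, the equation $\vec H_L = V^\perp$ forces $\vec H_{\mathcal{O}'} = 0$. Thus $\mathcal{O}'$ would be a compact minimal submanifold of $\C^{n-1} \cong \R^{2n-2}$ of positive dimension $n-1 \geq 1$, which is impossible by the maximum principle (a linear coordinate restricts to a non‑constant harmonic function on a closed manifold). The step requiring the most care, and the real heart of the argument, is establishing the product structure $L = \mathcal{O}' \times S$: this is where splitting off the $G$‑fixed translation direction and the rigidity of Lagrangian orbits (via Lemma \ref{lem:KerImgMoment}) are essential, and one must verify that the projections and level‑set intersections behave as claimed so that connectedness of $L$ genuinely pins down a single orbit.
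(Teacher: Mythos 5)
Your proof is correct, but it reaches the contradiction by a genuinely different route from the paper, and it is worth noting both the difference and one place where your argument is actually more careful. The paper's proof asserts directly that ``$G$ stabilises $V$'' and then, having placed each orbit $\mathcal{O}_z$ in an affine hyperplane $\pi_E(z)+E^\perp\cong\C^{n-1}$, observes that $\mathcal{O}_z$ is a \emph{homogeneous} Lagrangian of $\C^{n-1}$ under $G\leq\SU(n-1)$; homogeneity forces the Lagrangian angle to be constant, so $\mathcal{O}_z$ is a compact special Lagrangian in $\C^{n-1}$, which is impossible. This gets minimality of the orbit purely from homogeneity and never needs the product structure $L=\mathcal{O}'\times S$ or the translator equation again. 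Your first step is a refinement of the paper's: since the translation vector of a translator is not unique (one may add any constant vector everywhere tangent to $L$ --- e.g.\ the grim reaper times a line), the bald claim ``$G$ stabilises $V$'' is not literally true for an arbitrary choice of $V$, and your averaging argument showing that one may \emph{choose} $V$ to be $G$-fixed (after discarding the tangential isotypic component $V_1\in\mathcal{W}$) is the honest version of that step. Your ending is longer: you establish $L=\mathcal{O}'\times S$ via the constant-rank/level-set analysis (which is sound --- Lemma \ref{lem:KerImgMoment} gives $\mathrm{rank}(d\mu')=\dim(G/H)=n-1$ on the stratum, so the reduced level set is $(n-1)$-dimensional and each component is a single compact orbit, and connectedness of $L$ pins down one orbit), and then read off $\vec H_{\mathcal{O}'}=0$ from the block splitting of the second fundamental form and $V\perp\C^{n-1}$. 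What this buys is a completely elementary final contradiction (no compact minimal submanifold of $\R^{2n-2}$, by the maximum principle) that avoids the Lagrangian angle and the special Lagrangian machinery entirely; what it costs is the extra work of the product decomposition, which the paper's homogeneity argument renders unnecessary. Both arguments are valid; yours could be shortened by noting that once $G\leq\SU(n-1)$ acts transitively on the compact Lagrangian $\mathcal{O}'_{z'}\subset\C^{n-1}$, minimality is automatic.
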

\begin{proof}
Assume for a contradiction that $L_t$ is a solution to (\ref{eq-translator}) with translation vector $V$. Since the $G$-action preserves $L_t$ for all $t$, it follows that $G$ stabilises $V$, and hence $G$ stabilises the complex line $E := \text{span}_{\R}(V, JV)$. Therefore,
\[ G \leq \SU(E^\perp) \cong \SU(n-1).\]
It follows that for any $z \in L_t$, the $G$-orbit $\mathcal{O}_z$ lies in the affine complex hyperplane $\pi_{E}(z) + E^{\perp} \cong \mathbb{C}^{n-1}$. By these identifications, the $(n-1)$-dimensional submanifold $\mathcal{O}_z$ may be viewed as a homogeneous Lagrangian submanifold of $\mathbb{C}^{n-1}$.  Since $G \leq \SU(n-1)$, the Lagrangian angle of $\mathcal{O}_z$ is preserved by the $G$-action, so $\mathcal{O}_z \subset \C^{n-1}$ is a special Lagrangian.  However, $\mathcal{O}_z$ is compact, and there are no compact special Lagrangian submanifolds in $\mathbb{C}^{n-1}$ for $n > 1$.
\end{proof}

Although Theorem \ref{thm-translators} rules out cohomogeneity-one translators for compact $G \leq \SU(n)$, there exist other examples if we allow $G \leq \SU(n) \ltimes \C^n$ to be noncompact.  For example, the product of a grim reaper curve with a real line yields a cohomogeneity-one translator in $\mathbb{C}^2$.

\section{Analysis of Singularities}\label{sec-6}

\indent \indent In this section, we study singularity formation for cohomogeneity-one LMCF in the zero level set. In particular, we prove that any singular point of the flow must be the origin (Theorem \ref{thm-locationofsingularities}), classify and prove uniqueness of Type I and Type II blowups (Theorems \ref{thm-typeiblowup} and \ref{thm-typeiiblowup}), and prove that every cohomogeneity-one special Lagrangian in $\mu^{-1}(0)$ occurs as the blowup of a mean curvature flow (Theorem \ref{thm-existenceofsingularities}). The theorems of \S 6 were established for the diagonal action $\SO(n)\circlearrowright \mathbb{C}^n$ of Example \ref{ex-SO(n)} in \cite{Wood2019}.

\indent We maintain the setup of the previous section.  That is, we fix a compact connected Lie group $G \leq \SU(n)$, which acts linearly on $\mathbb{C}^n$, and an isotropy type $(H)$ for which the $G$-orbits are $(n-1)$-dimensional. 
We also fix a connected component $M$ of the $(n+1)$-dimensional coisotropic $M_0 = \mu^{-1}(0) \cap \mathbb{C}^n_{(H)}$, and a point  $z_0 \in M$, letting $P := P_{z_0}$ denote the complex line through $z_0$.  We often identify $P$ with $\mathbb{C}$ via the unitary isomorphism $\Phi \colon P \to \mathbb{C}$ of Lemma \ref{lem-laganglelemma}. For any $G$-invariant Lagrangian $L \subset M$, we denote its profile curve by $l := L \cap P$. Note that $L \subset M$ implies that $L$ is of type $(H)$. By a slight abuse of notation, we will view $l$ as a curve in $\mathbb{C}$ by the above identification, so that quantities like $\text{arg}(l)$ make sense.

\subsection{Curvature Estimates}\label{sec-6.1}

\indent\indent In order to simplify the proofs of this section, we first derive curvature bounds for the equivariant flow. The following argument is due to Lambert, and was conveyed to the authors by private correspondence.

\begin{theorem}[Curvature Estimates]\label{thm-equicurvestimates}
	Let $F: L \times [0,T) \rightarrow M$ be a connected $G$-invariant Lagrangian MCF that is almost-calibrated --- i.e.\ , assume that the Lagrangian angle $\theta$ satisfies $\theta \in (\overline\theta-\frac{\pi}{2} +\varepsilon_\theta, \overline\theta + \frac{\pi}{2} - \varepsilon_\theta)$ for some $\varepsilon_\theta > 0$, $\overline\theta>0$. Assume that the mean curvature $\vec H$ is bounded on $L_0$. Denote by $r:\mathbb{C}^n\rightarrow \mathbb{R}$ the distance from the origin.
	
	Then there exists $C = C(n,G,M,\varepsilon_\theta, L_0)$ (independent of $t$) such that
	\[ \big|\vec H\big|^2, |A|^2 \, \leq \, C\left( 1 + \frac{1}{r^2} \right). \]
\end{theorem}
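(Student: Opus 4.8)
The plan is to combine a maximum-principle argument for $|\vec H|^2$ (equivalently $|\alpha|^2$, the squared norm of the mean curvature $1$-form) with the structure provided by Proposition \ref{prop-curvature}. The first step is to recall the standard evolution equations along a Lagrangian MCF. Since $\theta$ satisfies the heat equation $\partial_t\theta=\Delta\theta$ and $\alpha=d\theta$, the mean curvature $1$-form evolves by $\partial_t\alpha=\Delta\alpha+(\text{curvature terms})$; taking norms, one obtains the familiar inequality $(\partial_t-\Delta)|\alpha|^2\le -2|\nabla\alpha|^2+c|A|^2|\alpha|^2$ for a dimensional constant $c$. The crucial point is that Proposition \ref{prop-curvature}(b) gives $|A|^2\le k^2+Cr^{-2}$ while $|\vec H|^2=(k-(n-1)p)^2$ and $|a|^2=(k^2+(n-1)p^2)|\vec H|^2$, with $p\le 1/r$. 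Thus, away from the origin, $|A|^2$ is controlled by $k^2$ plus a term that is harmless wherever $r$ is bounded below, and on the region of large curvature $|A|^2\approx k^2\approx |\vec H|^2$ (up to the $r^{-2}$ error), which is precisely the regime where the reaction term $|A|^2|\alpha|^2$ can be absorbed.

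The second step is to exploit the almost-calibrated hypothesis to get a good auxiliary function. Since $\theta\in(\overline\theta-\tfrac\pi2+\varepsilon_\theta,\overline\theta+\tfrac\pi2-\varepsilon_\theta)$, the function $\cos(\theta-\overline\theta)$ is bounded below by a positive constant $\sin\varepsilon_\theta$, and a direct computation (as in White's / Neves' work on almost-calibrated LMCF) gives $(\partial_t-\Delta)\cos(\theta-\overline\theta)=|\nabla\theta|^2\cos(\theta-\overline\theta)=|\alpha|^2\cos(\theta-\overline\theta)\ge 0$. Therefore the quantity
\begin{equation*}
\Psi:=\frac{|\alpha|^2}{\cos^2(\theta-\overline\theta)}
\end{equation*}
is a natural candidate: one computes $(\partial_t-\Delta)\Psi\le -(\text{good gradient terms})+(\text{reaction})$, where the reaction term, after using $|A|^2\le k^2+Cr^{-2}\le C'|\alpha|^2+Cr^{-2}$ (valid because $|\alpha|^2=|a|^2/|\vec H|^2\ge k^2$ wherever $\vec H\neq0$, and the degenerate locus can be handled by the structure of the profile curve), becomes at most $C'\Psi^2\cos^2(\theta-\overline\theta)+Cr^{-2}\Psi\le C''\Psi^2+Cr^{-2}\Psi$ near the large-curvature set. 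To convert this into a bound one needs a cutoff handling the two scales: near the origin (where $r^{-2}$ blows up) and at infinity. Near infinity one uses the bounded-area / asymptotically-conical structure together with a White-type local regularity or a multiplication by a suitable power of $r$; away from the origin and infinity the $r^{-2}$ term is bounded and one runs a parabolic maximum principle on $\Psi$ (or on $r^2\Psi$) to get $\Psi\le C$, using the initial bound on $\vec H$. Feeding back through Proposition \ref{prop-curvature}(b) then yields $|A|^2\le C(1+r^{-2})$.

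The third step is to make the cutoff near the origin rigorous. Because $M_0$ is a cone and (by Proposition \ref{prop-mu0cohom1}) $M$ is a smooth cone, scaling invariance of the flow equation (\ref{eqn-equivariantLMCF}) — which is not strictly scale-invariant due to the $(n-1)f^\perp/|f|^2$ term, but is invariant under parabolic rescaling centered at $0$ — means that estimates near the origin are equivalent to estimates near infinity for the rescaled flow. Thus one can argue: the bad term $Cr^{-2}\Psi$ can be absorbed by considering instead $\widetilde\Psi:=(1+r^{-2})^{-1}\Psi$ or $r^2\Psi$ and checking that the extra terms coming from $(\partial_t-\Delta)(r^2)$ — which for this flow satisfy $(\partial_t-\Delta)|F|^2=2n-2\vec H\cdot\nabla|F|^2/\dots$ of controlled size since $|F|^2=r^2$ on the profile and the orbit directions contribute a further bounded amount — are dominated. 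Running the maximum principle for $r^2\Psi$ (respectively $\widetilde\Psi$) on $L\times[0,T)$ then gives the claimed estimate with $C=C(n,G,M,\varepsilon_\theta,L_0)$.

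\textbf{Main obstacle.} I expect the genuinely delicate point to be the interaction of the two inhomogeneities: the reaction term in the evolution of $\Psi$ is roughly $|A|^2\Psi$, and Proposition \ref{prop-curvature}(b) only gives $|A|^2\le k^2+Cr^{-2}$ rather than $|A|^2\le |\vec H|^2+Cr^{-2}$, so one must argue that $k^2$ is controlled by $|\alpha|^2$ on the region where the maximum of $\Psi$ is attained. Since $|\alpha|^2=(k^2+(n-1)p^2)(k-(n-1)p)^2/(k-(n-1)p)^2=k^2+(n-1)p^2$ wherever $\vec H\neq0$, in fact $|\alpha|^2\ge k^2$ everywhere away from the minimal locus, so this inequality does hold — but one must be careful at points where $\vec H$ (hence $\alpha$) vanishes, where $\Psi=0$ and so the maximum is not attained there anyway. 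Reconciling this degeneracy cleanly, together with the borderline nature of the $r^{-2}$ term at the cone point (which forces the precise form $C(1+r^{-2})$ rather than a uniform bound), is where the real work lies; everything else is a standard, if lengthy, parabolic maximum-principle computation.
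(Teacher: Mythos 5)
Your overall architecture is the same as the paper's: run a parabolic maximum principle on $|\vec H|^2$ weighted by a bounded function of $\theta$ (available thanks to the almost-calibrated condition) and by a spatial cutoff of the form $r^2\rho_R(r)$, use Proposition \ref{prop-curvature} to turn $|a|^2$ into $(k^2+(n-1)p^2)|\vec H|^2$ and to deduce the $|A|^2$ bound from the $|\vec H|^2$ bound, and use the initial bound on $\vec H$ to handle the case where the maximum occurs at $t=0$. However, two steps in your sketch are genuinely wrong as written. First, your resolution of the ``main obstacle'' rests on the identity $|\alpha|^2 = k^2+(n-1)p^2$, hence $k^2\le|\alpha|^2$. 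This confuses the mean curvature one-form $\alpha$ (for which $|\alpha|=|\vec H|=|k-(n-1)p|$ by (\ref{def-alpha}) and Proposition \ref{prop-curvature}) with the two-tensor $a_{ij}=\langle\vec H,A(e_i,e_j)\rangle$, for which $|a|^2/|\vec H|^2=k^2+(n-1)p^2$. The inequality $k^2\le C|\alpha|^2$ is false: at a point where $k=(n-1)p\neq 0$ one has $\vec H=0$ but $k\neq 0$. The correct absorption is $k\le|\vec H|+(n-1)p$ together with $p\le 1/r$, which gives $k^2\le 2|\vec H|^2+C r^{-2}$; the $r^{-2}$ error is unavoidable and is exactly why the final estimate has the form $C(1+r^{-2})$ rather than $C|\vec H|^2$.

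Second, your weight $\Psi=|\alpha|^2/\cos^2(\theta-\overline\theta)$ is precisely the borderline choice for which the maximum principle yields nothing. Writing $f=\cos^{-2}(\theta-\overline\theta)$, the coefficient of $|\vec H|^4$ in the differential inequality for $\log(|\vec H|^2 f\psi)$ at an increasing maximum works out to be exactly $0$ (the good term $-f''/f+(f')^2/f^2$ from the angle weight exactly cancels the reaction term $2|\vec H|^4$ coming from $2|a|^2\le 2|\vec H|^4+Cr^{-2}|\vec H|^2$, and the cross term with $\nabla\psi$ makes matters slightly worse after Cauchy--Schwarz). With a zero coefficient you cannot solve for $\psi|\vec H|^2$ at the maximum, so no bound follows. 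The paper fixes this by perturbing the weight to $f(\theta)=\cos^{-2/(1+\varepsilon)}\bigl((\theta-\overline\theta)\sqrt{1+2\varepsilon}\bigr)$, chosen so that the coefficient is strictly negative, $-2\varepsilon/(1+\varepsilon)$, which is what allows one to conclude $\psi|\vec H|^2\le C(n,\varepsilon_\theta)$ at an increasing maximum. Relatedly, your appeal to ``White-type local regularity'' and asymptotic conicality at infinity is unnecessary: the compactly supported cutoff $r^2\rho_R$ together with the hypothesis that $\vec H$ is bounded on $L_0$ (giving $\Gamma\le C(L_0)R^2$ when the supremum is attained at $t=0$) already closes the argument.
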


\begin{proof}
	Throughout, we allow the constant $C$ to change from line to line.
	We aim to estimate the function $\Gamma \colon L \times [0,T) \rightarrow \mathbb{R}^+$, $\Gamma := \big|\vec H\big|^2f(\theta)\psi$, where $f \colon (-\frac{\pi}{2} +\varepsilon_\theta, \frac{\pi}{2} - \varepsilon_\theta) \rightarrow \mathbb{R}^+$ is bounded, and $\psi$ will be a cutoff function defined on the ambient space. Using this estimate, we will prove that there exists $ C(n,G,M,\varepsilon_\theta, L_0)$ such that 
	\begin{equation} \label{eqn-gamma} 
	\big|\vec H\big|^2 \leq C\left( 1 + \frac{1}{r^2} \right).
	\end{equation}
	Then, defining $\vec p := \frac{l^\perp}{r^2}$, $p = |\vec p|$, $k = |\vec k|$ and using Proposition \ref{prop-curvature}:
	\begin{align*} 
	p &\leq \frac{1}{r},\quad	k^2 \leq \left( \big|\vec H\big| + \frac{(n-1)}{r}\right)^2,
\end{align*}
so
\begin{align*}
	|A|^2 \, &\leq \, k^2 + C(n,G,M)r^{-2}\\
	&\leq \left(\big|\vec H\big| + \frac{C(n)}{r}\right)^2 + \frac{C(n,G,M)}{r^2}\\
	&\leq 2\big|\vec H\big|^2 + \frac{C(n,G,M)}{r^2},
	\end{align*}
    therefore (\ref{eqn-gamma}) implies the theorem. \\
    
    \indent To begin, let $\Delta = \Delta^{L_t}$ denote the Laplacian on $L$ given by the metric induced from $F_t$, and define the symmetric $2$-tensor $a_{ij} := g^{kl}\alpha_lh_{ijk} = \langle \vec H, A(e_i,e_j)\rangle$. Using \cite{Smoczyk2000} Proposition 1.8.6 and Proposition \ref{prop-curvature}, we calculate the following evolution equations:
	\begin{align*}
	\left(\frac{\partial}{\partial t} - \Delta\right)\big|\vec H\big|^2 \, &= \, 2|a|^2 - 2|\nabla H|^2 \\
	&\leq 2(k^2 + (n-1)p^2)\big|\vec H\big|^2 - 2|\nabla\big|\vec H\big||^2\\
	&\leq 2\big|\vec H\big|^4 + \frac{C(n)}{r^2}\big|\vec H\big|^2 - 2|\nabla\big|\vec H\big||^2,\\
	\left(\frac{\partial}{\partial t} - \Delta\right)\theta \, &= \, 0,\\
	\left(\frac{\partial}{\partial t} - \Delta \right)\psi \, &= \,  - \Delta^{\mathbb{R}^n}\psi + \text{tr}^\perp(\mbox{Hess}(\psi)),
	\end{align*}
	and the following Laplacians:
	\begin{align*}
	\log(\Gamma) &= \log(\big|\vec H\big|^2) + \log(f) + \log(\psi)\\
	\Delta\log(\big|\vec H\big|^2) \, &= \, \frac{\Delta\big|\vec H\big|^2}{\big|\vec H\big|^2} - \frac{|\nabla \big|\vec H\big|^2|^2}{\big|\vec H\big|^4}\\
	\Delta\log(f) \,&=\, \frac{f'\Delta\theta}{f} + \left(\frac{f''}{f} - \frac{(f')^2}{f^2}\right)|\nabla \theta|^2.
	\end{align*}
	
	We first bound $\Gamma$ at an increasing maximum --- i.e.\  a spacetime point $(y,s)\in L\times [0,T)$ such that  $\Gamma(y,s) = \max_{x \in L}\Gamma(x,s)$ and $\frac{\partial \Gamma}{\partial t}(y,s) \geq 0$. At such a point, $\nabla\Gamma = 0$ and $( \frac{\partial}{\partial t} - \Delta)\Gamma \geq 0$. Noting that such a point is also an increasing maximum of $\log \Gamma$, it follows that
	\begin{align*}
	&\nabla\log(\big|\vec H\big|^2) + \nabla\log f + \nabla\log\psi = 0,
	\end{align*}
whence for any $\varepsilon >0$,
	\begin{align*}
	\left|\nabla\log(\big|\vec H\big|^2)\right|^2 &= \left| \frac{f'}{f}\nabla\theta + \frac{\nabla\psi}{\psi} \right|^2 \, \leq \, (1+ \varepsilon)\frac{(f')^2}{f^2}\big|\vec H\big|^2 + (1 + \tfrac{1}{\varepsilon})\frac{|\nabla \psi|^2}{\psi^2}.
	\end{align*} 
	It follows that:
	\begin{align}
	0 &\leq \left(\frac{\partial}{\partial t} - \Delta\right)\log(\Gamma) \notag\\
	&= \, \frac{1}{\big|\vec H\big|^2}\left(\frac{\partial}{\partial t} - \Delta\right)\big|\vec H\big|^2 + \frac{f'}{f}\left(\frac{\partial}{\partial t} - \Delta\right)\theta +\frac{1}{\psi}\left(\frac{\partial}{\partial t} - \Delta\right)\psi \notag\\
	& \quad \quad \quad + |\nabla \log(\big|\vec H\big|^2)|^2 - \left(\frac{f''}{f} - \frac{(f')^2}{f^2}\right)\big|\vec H\big|^2 + \frac{|\nabla \psi|^2}{\psi^2} \notag\\
	&\leq \left( 2 + \frac{f''}{f} - \frac{(f')^2}{f^2} \right) \big|\vec H\big|^2 + \frac{C(n)}{r^2} + \frac{1}{2}|\nabla\log(\big|\vec H\big|^2)|^2 \notag\\
	 & \quad \quad \quad - \frac{\Delta^{\mathbb{R}^n} \psi - \mbox{tr}^\perp \mbox{Hess}(\psi)}{\psi} + \frac{|\nabla \psi|^2}{\psi^2} \notag\\
	&\leq \left[ 2+(1+ \tfrac{1}{2}(1+\varepsilon))\frac{(f')^2}{f^2} - \frac{f''}{f}\right]\big|\vec H\big|^2 \notag\\
	& \quad \quad \quad + C(n,\varepsilon) \left( -\frac{\Delta^{\mathbb{R}^n} \psi + \mbox{tr}^\perp \mbox{Hess}(\psi)}{\psi} + \frac{|\nabla \psi|^2}{\psi^2} + \frac{1}{r^2} \right).\label{ineq-halfway}
	\end{align}
	
	We now choose our $f$ to simplify this inequality. Writing $\Psi := f^{-\frac{1}{2}(1+\varepsilon)}$, the square bracket is equal to
	\[2 + \frac{\Psi''}{b\Psi},\]
	where $b = \frac{1}{2}(1+\varepsilon)$.  Then, solving
	\[2 + \frac{\Psi''}{b\Psi} = -\frac{\varepsilon}{b}, \]
	we find that a suitable function $\Psi$ is 
	\begin{align*}
	\Psi(\theta) = \cos((\theta -\overline\theta)\sqrt{1+2\varepsilon})\\
	\implies f(\theta) = \cos^{-\frac{2}{1+\varepsilon}}((\theta -\overline\theta)\sqrt{1+2\varepsilon}),
	\end{align*}
	where we now fix $\varepsilon$ sufficiently small so that $f$ is bounded on $(\overline\theta-\frac{\pi}{2} +\varepsilon_\theta, \overline\theta +\frac{\pi}{2} - \varepsilon_\theta)$. Therefore, the square bracket in (\ref{ineq-halfway}) is now equal to $\frac{-\varepsilon}{\frac{1}{2}(1+\varepsilon)}$, so
\begin{equation}
    0 \leq \left[\frac{-\varepsilon}{\frac{1}{2}(1+\varepsilon)} \right]\big|\vec H\big|^2 + C(n,\varepsilon) \left( -\frac{\Delta^{\mathbb{R}^n} \psi + \mbox{tr}^\perp \mbox{Hess}(\psi)}{\psi} + \frac{|\nabla \psi|^2}{\psi^2} + \frac{1}{r^2} \right).\label{ineq-halfway2}
\end{equation}

	
	We now continue estimating from (\ref{ineq-halfway2}). Define $\psi_R = r^2 \rho_R(r)$, where $\rho_R \colon \R \to [0,1]$ is a smooth cutoff function satisfying:
    \begin{align*}
    \rho_R(r) = \begin{cases} 1 \quad \text{ for } r \leq R,\\ 0 \quad \text{ for } r \geq 2R, \end{cases} \quad
        |\rho_R| \leq 1, \quad
        \left| \frac{\partial \rho_R}{\partial x_i}\right| \leq \frac C R \sqrt{\rho_R}, \quad
        \left| \frac{\partial^2 \rho_R}{\partial x_i\partial x_j}\right| \leq \frac C {R^2}
    \end{align*}
    (for example $\rho_R(r) := \left( g(2 - \frac{r}{R})(g(2-\frac{r}{R}) + g(\frac{r}{R} - 1))^{-1}\right)^2$, where $g(x) := e^{-\tfrac{1}{x}}$).  Note that if we choose $\psi = \psi_R$, then the increasing maximum $(y,s)$ of $\Gamma$ must be inside $B_{2R}$. At $(y,s)$, we may therefore estimate using (\ref{ineq-halfway2}):
	\begin{align*}
	-\Delta^{\mathbb{R}^n} \psi + \mbox{tr}^\perp \mbox{Hess}(\psi) \, \leq \,C(n)&, \quad \quad \frac{|\nabla \psi|^2}{\psi} \, \leq \, C(n), \notag
	\end{align*}
	so
	\begin{align}
	\psi \left[\frac{\varepsilon}{\tfrac{1}{2}(1 + \varepsilon)}\right]\big|\vec H\big|^2 &\leq C(n) \notag\\
	\implies \Gamma \, &\leq  C(n, \varepsilon_{\theta}). \label{eq-gammaineq}
	\end{align}
	We may now estimate $\Gamma$ at an arbitrary point $(x,t) \in L \times [0,T)$. Choose $R = r(F_t(x))$. Since $\Gamma$ has compact support it follows that the supremum $\sup_{y\in L, \, s \in [0,t]} \Gamma$ is attained at a space-time point $(\overline y, \overline s)$. If $\overline s = 0$, then since $L_0$ has bounded mean curvature, there exists a constant $C(L_0)$ such that \[\Gamma(x,t) \leq \Gamma(\overline y,0) \leq C(L_0)R^2.\] If $\overline s \neq 0$, then $(\overline y,\overline s)$ is an increasing maximum, and so by (\ref{eq-gammaineq}), \[\Gamma(x,t) \leq \Gamma(\overline y, \overline s) \leq C(n, \varepsilon_\theta)\] It follows that in general,
	\begin{align*}
	    \Gamma(x,t) \leq C(n,\varepsilon_\theta, L_0)(1+R^2),
	 \end{align*}
and hence
\begin{align*}
|\vec{H}(x,t)|^2 \leq C(n,\varepsilon_\theta,L_0)\left( 1 + \frac{1}{r(x,t)^2}\right).
	\end{align*}
\end{proof}
The location of singularities theorem follows as a corollary of these estimates.

\begin{theorem}[Location of Singularities Theorem]\label{thm-locationofsingularities}
Let $L_t \subset M$ be a connected $G$-invariant almost-calibrated Lagrangian MCF, for $t\in (0,T)$. Then a singularity occurs at time $T$ if and only if $(O,T)$ is the unique singular space-time point for $L_t$.
\end{theorem}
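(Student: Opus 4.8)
The plan is to deduce the statement directly from the curvature estimates of Theorem \ref{thm-equicurvestimates}, which do all the real work by confining curvature concentration to an arbitrarily small ball about the origin. One direction is immediate: if $(O,T)$ is a singular space-time point of $L_t$, then by definition $L_t$ has a singularity at time $T$. So the substance is the converse, and here the strategy is to show that the \emph{only} possible singular space-time point of $L_t$ is $(O,T)$; then, since a singularity at $T$ supplies at least one singular space-time point, it must be $(O,T)$, uniquely. Fix an arbitrary $t_0 \in (0,T)$ and apply Theorem \ref{thm-equicurvestimates} to the flow on $[t_0,T)$ (its hypotheses being met, since $L_t$ is a connected $G$-invariant almost-calibrated LMCF and $L_{t_0}$ has bounded mean curvature, the flow being smooth for positive times). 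This yields a constant $C = C(n,G,M,\varepsilon_\theta,L_{t_0})$, independent of $t$, with
\[ |A|^2 \ \leq\ C\!\left(1 + \frac{1}{r^2}\right) \qquad \text{on } L_t,\ \ t \in [t_0,T), \]
where $r \colon \mathbb{C}^n \to \mathbb{R}$ denotes the distance to the origin.

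Granted this estimate, the deduction runs in two steps. First I would rule out singular space-time points away from the origin: let $(X,T)$ be any singular space-time point, witnessed by a sequence $(p_i,t_i)$ with $F_{t_i}(p_i) \to X$, $t_i \to T$, and $|A(p_i,t_i)| \to \infty$. For $i$ large, $t_i \geq t_0$, so the displayed estimate gives $|A(p_i,t_i)|^2 \leq C\big(1 + r(F_{t_i}(p_i))^{-2}\big)$; since the left-hand side diverges, $r(F_{t_i}(p_i)) \to 0$, and continuity of $r$ forces $r(X)=0$, i.e.\ $X = O$. Second I would rule out singular space-time points at earlier times $t' < T$: for such $t'$ the flow $F \colon L \times [t_0,T) \to M$ is smooth in a neighbourhood of $t'$, and by Proposition \ref{prop-embedded}(a) each $L_t$ is embedded with unbounded ends, so $F_t$ is a proper embedding; hence any sequence $(p_i,t_i)$ with $t_i \to t'$ and $F_{t_i}(p_i)$ convergent in $M$ has a subsequence with $p_i \to p$ in $L$, whence $|A(p_i,t_i)| \to |A(p,t')| < \infty$ by smoothness, and no singular space-time point occurs at time $t'$. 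Combining the two steps, the only candidate singular space-time point of $L_t$ is $(O,T)$, completing the argument. (Alternatively, the reverse direction of the equivalence could be packaged together with this: if some singularity occurs at $T$, the above shows it must be $(O,T)$.)

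The main obstacle lies upstream of this deduction and is already resolved in the excerpt: it is the curvature estimate Theorem \ref{thm-equicurvestimates} itself, which is the technically substantial input. The only genuine subtlety in the present corollary is verifying that we may legitimately restart the flow at $t_0 \in (0,T)$ with bounded mean curvature on $L_{t_0}$ --- a point that follows from the smoothness of the equivariant curve flow at positive times together with the estimate $|\vec{H}| \leq |\vec{k}| + (n-1)/r$ of Proposition \ref{prop-curvature} --- and that properness of $F_t$ (needed to pass from convergence of $F_{t_i}(p_i)$ in $\mathbb{C}^n$ to convergence of $p_i$ in $L$) is supplied by Proposition \ref{prop-embedded}(a). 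Finally, note that the almost-calibrated hypothesis enters precisely through Theorem \ref{thm-equicurvestimates}; without it the estimate, and with it the conclusion, can fail, as the examples of Neves \cite{Neves2007} show.
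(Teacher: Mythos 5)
Your proposal is correct and follows exactly the route the paper takes: the paper derives Theorem \ref{thm-locationofsingularities} as an immediate corollary of the curvature estimate of Theorem \ref{thm-equicurvestimates}, with the key point being that $|A|^2 \leq C(1+r^{-2})$ forces any sequence witnessing curvature blowup to have $r \to 0$, hence $X = O$. Your second step (ruling out earlier singular times) is harmless but redundant, since the paper's definition of a singular space-time point already fixes the time coordinate at $T$; and your flagged subtlety about restarting at $t_0$ with bounded mean curvature is the same implicit assumption the paper makes, so it does not constitute a divergence from the paper's argument.
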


\subsection{Convergence Theorems}

\indent \indent We now prove two key propositions regarding convergence of blowup sequences. Proposition \ref{prop-convergence1} allows us to pass from convergence of the $G$-invariant Lagrangian $L^i$ to convergence of their profile curves $l^i$ when considering the Type I blowup. Proposition \ref{prop-convergence2} meanwhile will be used to rule out certain bad behaviour for Type II blowups.

\begin{proposition}[Convergence of $G$-invariant Lagrangians]\label{prop-convergence1}
Let $L^i\subset M$ be a sequence of connected, $G$-invariant, almost-calibrated Lagrangian submanifolds with Lagrangian angles $\theta_i$, let $L^\infty_j \subset M \cup\{0\}$ be connected smooth Lagrangians or Lagrangian cones with connected link which are pairwise disjoint for $1 \leq j \leq N$, with Lagrangian angles $\overline\theta_j$, and let $m_j$ be integer multiplicities. Assume that for any $\phi \in C^\infty_c(\mathbb{C}^n)$, $f \in C^2(\mathbb{R})$, 
\[ \int_{L^i} f(\theta_i)\phi \,d\mathcal{H}^n \longrightarrow \sum_{j=1}^N m_j \int_{L^\infty_j} f(\overline\theta_j)\phi \,d\mathcal{H}^n\, \text{ as } i \rightarrow \infty. \]
Then:\\
(a) $L^\infty_j \cap M$ are connected, $G$-invariant Lagrangians,\\
(b) For any $\psi \in C^\infty_c(\mathbb{C})$, $f \in C^2(\mathbb{R})$,
\[ \int_{l^i} f(\theta_i) \psi \,d\mathcal{H}^1 \longrightarrow \sum_{j=1}^N m_j \int_{l^\infty_j} f(\overline \theta_j)\psi \, d\mathcal{H}^1 \, \text{ as } i \rightarrow \infty.\]
\end{proposition}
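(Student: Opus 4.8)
The plan is to deduce statement (a) from the structure of $\mu^{-1}(0)$ and the hypothesis that each $L^\infty_j$ arises as a limit (in the sense of Radon measures weighted by $f(\theta)$) of $G$-invariant Lagrangians, and then to deduce statement (b) by "integrating out" the orbit directions, i.e.\ relating an integral over a $G$-invariant Lagrangian $L \subset M$ to an integral over its profile curve $l = L \cap P$ via a coarea/Fubini argument along the $(n-1)$-dimensional orbits.

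\emph{Step 1 (Statement (a)).} First I would argue that each limit $L^\infty_j$ is $G$-invariant. The measure-theoretic convergence hypothesis, applied with $f \equiv 1$, says that the varifolds $L^i$ converge to $\sum_j m_j L^\infty_j$ as Radon measures. Since each $L^i$ is $G$-invariant and $G$ acts by isometries fixing the origin, the limit measure is $G$-invariant; as the $L^\infty_j$ are pairwise disjoint smooth Lagrangians (or cones) with connected link, the $G$-action must permute them, and by connectedness of $G$ it fixes each one, so each $L^\infty_j$ is $G$-invariant. Intersecting with $M = \mu^{-1}(0)_{(H)}$ and using that smooth points of $L^\infty_j$ have $\mu = 0$ (being limits of points in $\mu^{-1}(0)$), we get that $L^\infty_j \cap M$ is a $G$-invariant Lagrangian of type $(H)$; connectedness follows from the connectedness of the link together with the cone/asymptotically conical structure. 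This is largely formal once one invokes Proposition~\ref{prop-lagissymmetric} and Corollary~\ref{cor:H-equals-E}.

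\emph{Step 2 (Statement (b): reduction to the profile curve).} The key point is a formula expressing $\int_L f(\theta)\,\phi\, d\mathcal{H}^n$ in terms of $\int_l f(\theta)\,(\cdot)\, d\mathcal{H}^1$ for $G$-invariant $L$. Using the orthogonal decomposition $T_w\mathbb{C}^n = P_w \oplus T_w\mathcal{O}_w \oplus J(T_w\mathcal{O}_w)$ from (\ref{eq-orthdecompositoncohom1}) and the coarea formula along the fibration $L \to l$ (fibers $\cong G/H$, or rather the $C_m$-quotient picture of Proposition~\ref{prop-profilebijection}), one has $d\mathcal{H}^n|_L = V(w)\, d\mathcal{H}^{n-1}_{\mathcal{O}_w}\, d\mathcal{H}^1_l$ where $V(w)$ is the volume density of the orbit $\mathcal{O}_w$ through $w \in l$. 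Since $M$ is a cone (Proposition~\ref{prop-mu0cohom1}(a)) and all orbits through a fixed $P_z$ are related by the $G$-action and scaling, $V(w)$ depends only on $|w| = r$; in fact $V(w) = V_0\, r^{\,n-1}$ for a constant $V_0 = V_0(n,G,M)$. Moreover $\theta$ is $G$-invariant, so $f(\theta)$ descends to the profile curve. Hence, given $\phi \in C^\infty_c(\mathbb{C}^n)$, averaging $\phi$ over orbits produces a function on $l$, and choosing test functions $\psi$ on $\mathbb{C} \cong P$ of the form $\psi(w) = \psi_0(w)\, r(w)^{n-1}$ we can match the two sides. Concretely, I would prove the identity
\[
\int_{L} f(\theta)\,\psi(w)\, r(w)^{1-n}\, d\mathcal{H}^n \;=\; V_0 \int_{l} f(\theta)\,\psi\, d\mathcal{H}^1
\]
(interpreting $l$ with multiplicity $m$ if one uses $L \cap P_z$ literally, cf.\ Lemma~\ref{lem-wedge}), valid for any $G$-invariant almost-calibrated $L \subset M$ and any $\psi \in C^\infty_c(\mathbb{C}\setminus\{0\})$, and similarly for a Lagrangian cone $L^\infty_j$. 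Applying the hypothesis with $\phi$ chosen so that its orbit average is $\psi\, r^{1-n}$ (which is possible since $\psi$ has compact support in $\mathbb{C}\setminus\{0\}$, keeping us away from $r=0$ where $r^{1-n}$ blows up), and dividing by $V_0$, yields exactly the claimed convergence for such $\psi$; a standard density/cutoff argument then extends it to all $\psi \in C^\infty_c(\mathbb{C})$, once we check the contribution near the origin is controlled — but near $r=0$ the profile curves are themselves close to the cone $l^\infty_j$, which is a union of rays through $0$ having locally finite $\mathcal{H}^1$-measure, so the near-origin contributions converge and are uniformly small as the cutoff radius shrinks.

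\emph{Main obstacle.} The delicate point is the behaviour near the origin: $r^{1-n}$ is singular there, so to pass from test functions on $\mathbb{C}^n$ to arbitrary test functions on $\mathbb{C}$ one must control $\int_{l^i} f(\theta_i)\psi\, d\mathcal{H}^1$ over a small ball $B_\delta(0) \subset \mathbb{C}$ uniformly in $i$. This is where the almost-calibrated hypothesis and the curvature estimates of Theorem~\ref{thm-equicurvestimates} enter: they give uniform $\mathcal{H}^1$-density bounds on $l^i$ near the origin (equivalently, uniform area ratios for $L^i$), so that $\int_{l^i \cap B_\delta(0)} |\psi|\, d\mathcal{H}^1 \leq C\delta \to 0$ uniformly. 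The rest is bookkeeping: the orbit-volume computation $V(w) = V_0 r^{n-1}$, the construction of suitable $\phi$ with prescribed orbit average, and the density argument, are all routine given the cone structure of $M$ and $G$-equivariance.
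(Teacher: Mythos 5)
Your proposal follows the same strategy as the paper's proof. Part (a) is argued identically: the limit measure inherits $G$-invariance from the measures $\mathcal{H}^n \mres f(\theta_i)\chi_{L^i}$, hence so does its support, hence each component $L^\infty_j$. For part (b), the paper likewise uses the coarea formula along the orbit fibration, the scaling identity $\mathcal{H}^{n-1}(\mathcal{O}_w) = |w|^{n-1}\mathcal{H}^{n-1}(\mathcal{O}_1)$, and the extension of a $C_m$-symmetrised $\psi \in C^\infty_c(B_\delta(x))$, $x \neq 0$, to a $G$-invariant $\phi \in C^\infty_c(\mathbb{C}^n)$. Where you differ is that you absorb the Jacobian $r^{n-1}$ into the test function, obtaining an exact identity of the form $\int_L f(\theta)\phi\,d\mathcal{H}^n = m\,\mathcal{H}^{n-1}(\mathcal{O}_1)\int_l f(\theta)\psi\,d\mathcal{H}^1$ with $\phi$ the $G$-invariant extension of $r^{1-n}\psi$; the paper instead keeps $\phi$ as the extension of $\psi$, squeezes the orbit volumes between $(|x|\pm\delta)^{n-1}$, and then runs a Radon-measure compactness and density-comparison argument to identify the limit. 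Your version is a genuine streamlining: it dispenses with the compactness step entirely, at the (shared) cost of requiring $\psi$ to be supported away from the origin.

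The one step I would not accept as written is your treatment of the origin. You assert that the almost-calibrated condition and Theorem \ref{thm-equicurvestimates} give uniform $\mathcal{H}^1$-density bounds on $l^i$ near $0$, "equivalently, uniform area ratios for $L^i$". These are not equivalent: by the coarea formula, area bounds for $L^i$ in $B_\rho(0)$ only control the weighted length $\int_{l^i \cap B_\rho(0)} r^{n-1}\,d\mathcal{H}^1$, and since the weight $r^{n-1}$ vanishes at the origin this says nothing about $\mathcal{H}^1(l^i \cap B_\rho(0))$; moreover the curvature estimates degenerate like $r^{-2}$ there, so they do not supply a length bound either. To be fair, the paper's own proof also only genuinely establishes the convergence for test functions supported away from the origin (its identification of the limit measure is a density comparison at points $x \neq 0$, and a priori mass could concentrate at $0$), and that is all that is used later in Lemma \ref{lem-keylemma} and Theorem \ref{thm-typeiblowup}. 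So your argument reaches the same place as the paper's, but the specific justification you give for the near-origin control is incorrect and should either be dropped (restricting attention to $\psi$ vanishing near $0$) or replaced by an actual length estimate for almost-calibrated profile curves near the origin.
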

\begin{proof}
We work with the underlying Radon measures. Denoting the indicator function of a subset $A\subset \mathbb{C}^n$ by $\chi_{A}$, we make the definitions
\begin{align*}
\Theta_i:\mathbb{C}^n\rightarrow \mathbb{R}, \,\, \Theta_i &:= f(\theta_i) \chi_{L^i}, \quad \quad \quad \Theta_\infty:\mathbb{C}^n\rightarrow \mathbb{R},\,\, \Theta_i := \sum_{j=1}^N m_j\,f(\overline\theta_j)\chi_{L^\infty_j},\\
\mu_i &:= \mathcal{H}^n \mres \Theta_i, \quad \quad \quad \mu_\infty := \mathcal{H}^n \mres \Theta_\infty.
\end{align*}
Then, for $\phi \in C^\infty_c(\mathbb{C}^n)$, and denoting by $L_g\colon \C^n \to \C^n$ the action of $g\in G$ on $\mathbb{C}^n$, the assumptions of the proposition imply that
\[ \mu_i(\phi) \rightarrow \, \mu_\infty(\phi) \implies \mu_\infty(\phi \circ L_g) = \lim_{i\rightarrow\infty} \mu_i(\phi \circ L_g) = \lim_{i\rightarrow \infty}\mu_i(\phi) = \mu_\infty(\phi).\]
Note that since $L^\infty_j$ is a smooth Lagrangian, this measure-theoretic $G$-invariance of $\mu_\infty$ implies $G$-invariance of the supporting set $L^\infty := \bigcup_{j=1}^N L^\infty_j$, and therefore of the connected components $L^\infty_j$. This proves (a). Defining in the same way the profile curve measures,
\begin{align*}
    \overline\Theta_i:\mathbb{C}\rightarrow \mathbb{R}, \,\, \overline\Theta_i &:= f(\theta_i) \chi_{l^i}, \quad \quad \quad \overline\Theta_\infty:\mathbb{C}^n\rightarrow \mathbb{R},\,\, \overline\Theta_i := \sum_{j=1}^N m_j\,f(\overline\theta_j)\chi_{l^\infty_j},\\
\overline\mu_i &:= \mathcal{H}^1 \mres \overline\Theta_i, \quad \quad \quad \overline\mu_\infty := \mathcal{H}^1 \mres \overline\Theta_\infty,
\end{align*}
our aim is to prove that for all $\psi\in C^\infty_c(\mathbb{C})$, we have $\overline\mu_i(\psi) \rightarrow \overline\mu_\infty(\psi)$.

Consider $\psi \in C^\infty_c(\mathbb{C})$ supported in $B_\delta(x)$ for some $x \in \mathbb{C}\setminus\{0\}$ and $\delta< |x|$. For each $y \in \mathbb{C}^n$, by Proposition \ref{prop-mu0cohom1} there exists $h\in G$ such that $h\cdot y \in P_z$. Picking such an $h$ for each $y$ we may define the $G$-invariant function $\phi:M\rightarrow \mathbb{R}$,  $\phi(y) := \sum_{\alpha \in C_m} \psi(\alpha\cdot h\cdot y)$ (this definition is independent of the choices of $h$). We may then extend $\phi$ to a smooth, $G$-invariant compactly supported function $\phi \in C^\infty_c(\mathbb{C}^n)$. 

Now, by $G$-invariance, the co-area formula, and noting that for $x \in P_z$, $\mathcal{H}^{n-1}(\mathcal{O}_x) = |x|^{n-1}\mathcal{H}^{n-1}(\mathcal{O}_1)$, we calculate the following upper and lower bounds:
\begin{align}
    \mu_\infty(\phi)\, = \, \lim_{i\rightarrow \infty} \int_{L^i} f(\theta_i)\phi \,d\mathcal{H}^n \, &= \, \lim_{i\rightarrow \infty} \int_{l^i}\left( \int_{\mathcal{O}_{l^i(s)}}f(\theta_i) \phi \, d\mathcal{H}^{n-1} \right) \,d\mathcal{H}^1\notag\\
    &= \, \mathcal{H}^{n-1}(\mathcal{O}_1) \cdot m\cdot \lim_{i\rightarrow \infty} \int_{l^i} |l^i(s)|^{n-1} f(\theta_i) \psi \, \,d\mathcal{H}^1\notag\\
    &\geq  \left||x|-\delta\right|^{n-1} \cdot \mathcal{H}^{n-1}(\mathcal{O}_1)\cdot m \cdot \lim_{i\rightarrow \infty}\overline\mu_i(\psi), \label{eq-measurebound1} \\
    \mu_\infty(\phi) = \sum_{j=1}^N m_j\,\int_{L^\infty_j} f(\overline\theta_j)\,\phi\, d\mathcal{H}^n \,&\geq\, ||x| - \delta|^{n-1} \cdot \mathcal{H}^{n-1}(\mathcal{O}_1)\cdot m \cdot \overline\mu_\infty(\psi) \label{eq-measurebound3},    
\end{align}
and
\begin{align}
    \mu_\infty(\phi)\, \,&\leq\, \left| |x| + \delta\right|^{n-1} \cdot \mathcal{H}^{n-1}(\mathcal{O}_1) \cdot m \cdot \lim_{i\rightarrow \infty} \overline\mu_i(\psi), \label{eq-measurebound2} \\
    \mu_\infty(\phi) &\leq ||x|+\delta|^{n-1} \cdot \mathcal{H}^{n-1}(\mathcal{O}_1)\cdot m \cdot \overline\mu_\infty(\psi) \label{eq-measurebound4}.    
\end{align}
The inequality (\ref{eq-measurebound1}) implies that the Radon measures $\overline\mu_i$ are uniformly bounded on compact sets.  Thus, by compactness for Radon measures, after passing to a subsequence there exists a Radon measure $\overline\mu$ such that for all $\psi \in C^\infty_c(\mathbb{C})$, \[ \lim_{i\rightarrow \infty}\overline\mu_i(\psi) = \overline\mu(\psi). \]
\indent We now show $\overline \mu = \overline \mu_\infty$. By considering $\psi$ with $\text{supp}(\psi) \subset B_\delta(x) \setminus(l^\infty \cap \{0\})$, the bound (\ref{eq-measurebound1}) implies that $\text{supp}(\overline\mu) \subset l^\infty$. Therefore, $\overline\mu$ is a $1$-rectifiable Radon measure supported in $l^\infty$, and so there exists $\overline\Theta:\mathbb{C} \rightarrow \mathbb{R}$ supported on $l^\infty$ such that 
\[ \overline\mu(\psi) = \int_{l^\infty} \overline\Theta \, \psi\, d\mathcal{H}^1, \]
where for $\mathcal{H}^1$-almost-every $y\in \text{supp}(\overline\mu)$, $\overline\Theta(y) = \lim_{r\rightarrow 0} \frac{\overline\mu(B(y,r))}{\omega_n r^n}$ (see for example \cite[\S 1.3]{Ilmanen1994}). Similarly, for $\mathcal{H}^1$-almost-every $y\in \text{supp}(\overline\mu_\infty)$, $\overline\Theta_\infty(y) = \lim_{r\rightarrow 0} \frac{\overline\mu_\infty(B(y,r))}{\omega_n r^n}$. By (\ref{eq-measurebound1} - \ref{eq-measurebound4}), 
\begin{align*}
    \frac{||x|-\delta|^{n-1}}{||x|+\delta|^{n-1}} \overline\mu_\infty(\psi) \geq \overline\mu(\psi) \geq \frac{||x|+\delta|^{n-1}}{||x|-\delta|^{n-1}} \overline\mu_\infty(\psi),
\end{align*}
which implies that $\overline\Theta = \overline\Theta_\infty$ on $\mathcal{H}^1$-almost-every $x \in \mathbb{C}$, and consequently that $\overline\mu = \overline\mu_\infty$. This proves that $\lim_{i\rightarrow \infty} \overline\mu_i(\phi) =\overline\mu_\infty(\phi)$ as required.

\end{proof}

\begin{proposition}[Convergence of Translated $G$-invariant Lagrangians]\label{prop-convergence2}
Let $L^i \subset \mathbb{C}^n$ be a sequence of connected, almost-calibrated Lagrangian submanifolds with $O \in L^i$, and let $L^\infty\subset \mathbb{C}^n$ be a connected Lagrangian submanifold, $\nu \in S^{2n-1}$ and $y_i \in M$ such that 
\begin{itemize}
    \item $L^i + y_i$ is a $G$-invariant Lagrangian in $M$,
    \item $y_i \rightarrow \infty$ and $\frac{y_i}{|y_i|} \rightarrow \nu$,
    \item $L^i$ converges to $L^\infty$ in $C^\infty_\emph{loc}$.
\end{itemize}
Then $\nu \in M$, and $T_\nu\mathcal{O}_\nu \subset L^\infty$. Since $\mathcal{O}_\nu$ is isotropic, it follows that $L^\infty$ contains an isotropic $(n-1)$-plane.
\end{proposition}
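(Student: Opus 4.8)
The plan is to exploit the $G$-invariance of the translated Lagrangians $\widetilde L^i := L^i + y_i$ to produce, for each $X \in \mathfrak{g}$, an explicit family of curves through $O$ lying in $L^i$ whose rescaled limit is the entire line $\mathbb{R}\cdot\rho_\nu(X)$; passing to the $C^\infty_{\mathrm{loc}}$ limit then forces the whole plane $T_\nu\mathcal{O}_\nu$ into $L^\infty$.

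First I would verify $\nu \in M$. Since $O \in L^i$, we have $y_i = O + y_i \in \widetilde L^i \subset M$, and $M$ is a cone by Proposition~\ref{prop-mu0cohom1}, so $y_i/|y_i| \in M \cap S^{2n-1}$. Moreover $M \cap S^{2n-1}$ is compact: by Proposition~\ref{prop-mu0cohom1} it is the image of the compact group $G \times \U(1)_\Delta$ under the continuous map $(g, e^{i\alpha}) \mapsto g\cdot\bigl(e^{i\alpha} z_0/|z_0|\bigr)$. Hence $\nu = \lim_i y_i/|y_i| \in M \cap S^{2n-1} \subset M$. In particular $\mathcal{O}_\nu$ has dimension $n-1$, and since $\nu \in \mu^{-1}(0)$, Lemma~\ref{lem-isotropiclevelset} shows $\mathcal{O}_\nu$ is isotropic; thus $T_\nu\mathcal{O}_\nu = \rho_\nu(\mathfrak{g})$ is an isotropic $(n-1)$-plane through the origin.

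Next, fix $X \in \mathfrak{g}$ and $s \in \mathbb{R}$. Since $\widetilde L^i$ is $G$-invariant and contains $y_i$, the orbit $\mathcal{O}_{y_i} = G\cdot y_i$ lies in $\widetilde L^i$, so $t \mapsto \exp(-tX)\cdot y_i - y_i$ is a curve in $L^i$ passing through $O$ at $t = 0$. Writing $R_i := |y_i| \to \infty$ and $\nu_i := y_i/R_i \to \nu$, evaluation at $t = s/R_i$ gives
\[
\sigma_i(s) := \exp\!\bigl(-\tfrac{s}{R_i}X\bigr)\cdot y_i - y_i = R_i\bigl(\exp\!\bigl(-\tfrac{s}{R_i}X\bigr)\nu_i - \nu_i\bigr) \in L^i .
\]
Taylor-expanding $\exp$ and using $|\nu_i| = 1$ yields $\sigma_i(s) = -sX\nu_i + O(R_i^{-1}) \to -sX\nu = s\,\rho_\nu(X)$. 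Because $L^i \to L^\infty$ in $C^\infty_{\mathrm{loc}}$ and the points $\sigma_i(s)$ remain in a fixed bounded region, the limit $s\,\rho_\nu(X)$ lies in $L^\infty$. Since $X \in \mathfrak{g}$ and $s \in \mathbb{R}$ were arbitrary and $\{s\,\rho_\nu(X) : s \in \mathbb{R},\, X \in \mathfrak{g}\} = \rho_\nu(\mathfrak{g}) = T_\nu\mathcal{O}_\nu$, this gives $T_\nu\mathcal{O}_\nu \subset L^\infty$; together with the first step, $L^\infty$ contains the isotropic $(n-1)$-plane $T_\nu\mathcal{O}_\nu$.

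The main obstacle --- really the only delicate point --- is the rescaling estimate $\sigma_i(s) \to s\,\rho_\nu(X)$, where the divergence $R_i \to \infty$ must be balanced against the parameter $s/R_i$ shrinking inside the exponential; this balance is exactly the scaling encoded in the hypotheses $y_i \to \infty$, $y_i/|y_i| \to \nu$. One also uses the standard fact that if $p_i \in L^i$ with $p_i \to p$ and $L^i \to L^\infty$ in $C^\infty_{\mathrm{loc}}$ (with $L^\infty$ embedded, as in the applications of the proposition), then $p \in L^\infty$.
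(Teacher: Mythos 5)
Your proof is correct, and its core mechanism is the same as the paper's: the translated orbit $\mathcal{O}_{y_i}-y_i$ sits inside $L^i$, and as $|y_i|\to\infty$ it flattens onto the plane $T_\nu\mathcal{O}_\nu$, which the $C^\infty_{\mathrm{loc}}$ convergence then forces into $L^\infty$. The implementation differs in a small but pleasant way. The paper first uses Proposition \ref{prop-mu0cohom1} to choose group elements $g_i$ with $g_i\hat y_i\in P_\nu$, passes to a subsequence $g_i\to\overline g$, and then invokes the general fact that $R(\Sigma-p)\to T_p\Sigma$ locally smoothly for the fixed orbit $\Sigma=\mathcal{O}_\nu$, obtaining $g_i^{-1}\overline g\alpha_i\cdot|y_i|(\mathcal{O}_\nu-\nu)\subset L^i$ with the prefactor tending to the identity. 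You instead stay on the varying orbit $\mathcal{O}_{y_i}$ and produce each point of $T_\nu\mathcal{O}_\nu$ directly as a limit of the points $\exp(-\tfrac{s}{|y_i|}X)y_i-y_i\in L^i$ via the Taylor expansion of the matrix exponential; this avoids the subsequence extraction for $g_i$ and $\alpha_i$ entirely, at the price of only establishing pointwise (rather than locally smooth) convergence of the rescaled orbits --- which is all the set containment $T_\nu\mathcal{O}_\nu\subset L^\infty$ requires. Both arguments rely on the same standard fact that limits of points $p_i\in L^i$ lie in $L^\infty$ under $C^\infty_{\mathrm{loc}}$ convergence (with $L^\infty$ properly embedded, as in the intended application), and your compactness argument for $M\cap S^{2n-1}$ is an acceptable substitute for the paper's closedness argument.
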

\begin{proof}
Recall $z_0 \in M$, so that $\frac{z_0}{|z_0|} \in M \cap S^{2n-1}$. By Proposition \ref{prop-mu0cohom1}, $M \cap S^{2n-1} = (\U(1)\times G)\cdot \frac{z_0}{|z_0|}$, and so $M \cap S^{2n-1}$ is closed.  Therefore, since $y_i\in M$, we have $\hat y_i := \frac{y_i}{|y_i|} \in M\cap S^{2n-1}$, and hence $\nu \in M$. \\
\indent Now by Proposition \ref{prop-mu0cohom1}, there exist $g_i \in G$ such that $g_i \hat y_i \in P_\nu$. Since $G$ is compact, by passing to a subsequence we may assume that $g_i \rightarrow \overline g$ for $\overline g \in G$ such that $\overline g \nu \in P_\nu$, from which it follows that $\overline g^{-1} g_i\hat y_i \in P_\nu$, $\overline g^{-1} g_i \rightarrow \text{Id}$, and $\overline g^{-1} g_i \hat y_i \rightarrow \nu$. Furthermore, defining $\alpha_i \in \mathbb{C}^*$ such that $\overline g^{-1} g_i\hat y_i = \alpha_i\cdot \nu$, it follows that $\alpha_i \to 1$. Then, since $L^i + y_i$ is $G$-invariant, 
\begin{align} \mathcal{O}_{y_i} \subset L^i + y_i &\implies |y_i|\left( \mathcal{O}_{\hat y_i} - \hat y_i \right) \subset L^i\notag\\
    &\implies |y_i|g_i^{-1}\overline g\left(\mathcal{O}_{\overline g^{-1} g_i\hat y_i} - \overline g^{-1} g_i\hat y_i \right) \subset L^i\notag\\
    &\implies  g_i^{-1}\overline g\alpha_i \cdot |y_i|\left(\mathcal{O}_\nu - \nu \right) \subset L^i. \label{eq-orbitcontainment}
\end{align}
Since $\mathcal{O}_\nu$ is a smooth $(n-1)$-dimensional submanifold and $|y_i|\rightarrow \infty$, it follows that $|y_i|\left(\mathcal{O}_\nu - \nu\right)$ converges locally smoothly to $T_\nu \mathcal{O}_\nu$. Since $g_i^{-1} \overline g \alpha_i \rightarrow \text{Id}$, it follows that the left hand side of the inclusion of (\ref{eq-orbitcontainment}) smoothly converges to $T_\nu \mathcal{O}_\nu$, which along with the smooth convergence of $L^i$ to $L^\infty$ implies that $T_\nu \mathcal{O}_\nu \subset L^\infty$, as required.
\end{proof}


\begin{remark}
In fact, more is true: given the assumptions of Proposition \ref{prop-convergence2}, $L^\infty$ is translation invariant with respect to the additive action of the abelian group $T_\nu\mathcal{O}_\nu$. However, we will not require the stronger result in this work.
\end{remark}

\subsection{The Type I Blowup}

\indent \indent In this subsection, we prove Theorem \ref{thm-typeiblowup} characterising Type I blowups of cohomogeneity-one almost-calibrated Lagrangian mean curvature flow, which we show to be unit-density pairs of Theorem \ref{thm-splagcones}'s special Lagrangian cones. A sketch of the proof is as follows. Applying Theorem A of \cite{Neves2007} and Proposition \ref{prop-convergence1}, a Type I blowup must be a union of $G$-invariant special Lagrangian cones, and the Lagrangian angle converges in an integral sense. In Lemma \ref{lem-keylemma}, we use this convergence along with the curvature bounds of Theorem \ref{thm-equicurvestimates} to rule out higher-multiplicity cones in the blowup, demonstrating unit-density. By Lemma \ref{lem-wedge} and Theorem \ref{thm-splagcones}, for each Lagrangian angle $\overline\theta$ there are only two possible cones that may occur in the blowup, and we may argue that the blowup is their union.

\begin{lemma}[Multiplicity-One Lemma]\label{lem-keylemma}
    Let $l^i \subset \mathbb{C}\setminus \{0\}$ be a sequence of complete connected embedded smooth curves. Let $l^\infty \subset \mathbb{C}$ be a half-line from the origin, $\overline x \in l^\infty$, $0 < d < |\overline x|$, and define $\theta^i = \emph{arg}\!\left((l^i)'\right) + (n-1)\emph{arg}\!\left(l^i\right)$. Assume further that:
    \begin{itemize}
        \item[(a)] There exist $m \in \mathbb{N}$, $\overline\theta \equiv \emph{arg}(l^\infty)\,\, (\emph{mod }\pi)$ such that for all $\psi \in C^\infty_c(B_d(\overline x))$ and $f\in C^2(\mathbb{R})$,
        \[ \int_{l^i} f(\theta^i)\psi \, d\mathcal{H}^1 \longrightarrow m \cdot \int_{l^\infty} f(\overline\theta) \psi \, d\mathcal{H}^1,\]
        \item[(b)] There exists $c$ such that $|\vec{k}(l^i)| \leq c$ in $B_{d}(\overline x)$ for all $i$.
    \end{itemize}
    Then $m$ = 1.
\end{lemma}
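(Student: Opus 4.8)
The plan is to convert the analytic hypotheses into the statement that, near $\overline x$, each $l^i$ is a disjoint union of finitely many \emph{oriented} graphs over a common segment, and then to rule out two or more such graphs by a planar separation argument. After a rotation we may assume $l^\infty$ is the positive real axis and $\overline x=(x_0,0)$ with $x_0>d>0$, so that $B_d(\overline x)$ misses the origin. Taking $f\equiv 1$ in hypothesis (a) shows that, in $B_d(\overline x)$, the measures $\mathcal H^1$ restricted to $l^i$ converge to $m$ times $\mathcal H^1$ restricted to $l^\infty$. Together with the one-sided curvature bound (b), a routine compactness argument — Arzel\`a--Ascoli for the arclength parametrizations, whose unit tangents are equicontinuous by the curvature bound, combined with the mass bound — produces $0<d'<d$ so that for all large $i$ the set $l^i\cap B_{d'}(\overline x)$ is a disjoint union of exactly $m$ graphs $\Sigma^i_1,\dots,\Sigma^i_m$ over the real axis, each converging in $C^{1,\alpha}_{\mathrm{loc}}$ to the chord $l^\infty\cap B_{d'}(\overline x)$. (Choosing $d'$ small relative to $c^{-1}$ forces each connected component of $l^i\cap B_{d'}(\overline x)$ to be a graph running all the way across the ball, so there are no folds or short caps, and the mass count fixes their number to be $m$.) Since the $\Sigma^i_k$ are disjoint graphs over an interval they are pointwise ordered; label them with $\Sigma^i_1$ lowest and $\Sigma^i_m$ highest.

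\emph{Matching the orientations.} On $\Sigma^i_k$ the continuous branches of $\arg(l^i)$ and $\arg((l^i)')$ are within $o(1)$ of constants $c_k\in\arg(\overline x)+2\pi\mathbb Z$ and $\tau_k$, where $\tau_k\in 2\pi\mathbb Z$ if $l^i$ runs along $\Sigma^i_k$ from left to right and $\tau_k\in\pi+2\pi\mathbb Z$ otherwise; hence $\theta^i\to\overline\theta_k:=\tau_k+(n-1)c_k$ uniformly on $\Sigma^i_k$. Substituting into (a) with general $f\in C^2(\mathbb R)$ and $\psi\in C^\infty_c(B_{d'}(\overline x))$ gives $\sum_{k=1}^m f(\overline\theta_k)=m\,f(\overline\theta)$ for all such $f$, which forces $\overline\theta_k=\overline\theta$ for every $k$. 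As $\overline\theta_k-\overline\theta_l=(\tau_k-\tau_l)-(n-1)(c_k-c_l)$ and $(n-1)(c_k-c_l)\in 2\pi\mathbb Z$, we conclude $\tau_k\equiv\tau_l\pmod{2\pi}$: all sheets are traversed in the same direction, which — reversing every $l^i$ if necessary — we take to be left-to-right.

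\emph{The topological contradiction.} Suppose $m\ge 2$. Each $l^i$ is complete, connected and embedded; by Proposition \ref{prop-embedded} it is diffeomorphic to $\mathbb R$ with unbounded ends (or else is a closed curve), hence a Jordan curve in $S^2=\mathbb C\cup\{\infty\}$, so $\mathbb C\setminus l^i$ has exactly two components, consistently co-oriented by $l^i$: with $N:=J(l^i)'/|(l^i)'|$ the left unit normal, one component $\mathbb C^+$ is the one into which $N$ always points and the other is $\mathbb C^-$. Along $\Sigma^i_k$ the normal $N$ points upward, so a small region just above $\Sigma^i_k$ lies in $\mathbb C^+$; likewise a small region just below $\Sigma^i_{k+1}$ lies in $\mathbb C^-$. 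But the open strip $S^i$ trapped between the graphs $\Sigma^i_k$ and $\Sigma^i_{k+1}$ over a slightly shorter interval is connected and disjoint from $l^i$ — it meets no sheet, and in $B_{d'}(\overline x)$ the curve $l^i$ is exactly the union of the sheets — so it lies in a single component of $\mathbb C\setminus l^i$; yet it meets $\mathbb C^+$ near $\Sigma^i_k$ and $\mathbb C^-$ near $\Sigma^i_{k+1}$. This contradiction gives $m=1$.

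\emph{Main obstacle.} The conceptual heart is the last step: recognizing that ``an embedded curve cannot run twice over a segment with matching orientation'' is precisely the global consistency of the two sides of $l^i$, which is where completeness and connectedness are genuinely used (and where, in the intended application, almost-calibratedness is what allows Neves's theorem, via Proposition \ref{prop-convergence1}, to furnish hypothesis (a) with a genuine integral limit in the first place). The main technical point is the first step — extracting from measure convergence plus a one-sided curvature bound that $l^i\cap B_{d'}(\overline x)$ is precisely $m$ cross-cutting graphs, with the right count and no stray pieces; this is standard but needs the careful choice of $d'$ and a short argument that every component crosses the ball. The whole argument is the cohomogeneity-one analogue of the corresponding step in \cite{Wood2019}.
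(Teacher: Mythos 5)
Your proof is correct and rests on the same three pillars as the paper's: hypothesis (a) with general $f$ pins the tangent direction of every sheet modulo $2\pi$ (not merely mod $\pi$), embeddedness plus connectedness then forbids two crossings of the ball in the same direction, and the mass count converts ``one crossing'' into $m=1$. The organisation differs enough to be worth recording. The paper never extracts $m$ graphical sheets: it first upgrades (a) to \emph{uniform} convergence $\theta^i\to\overline\theta$ on $l^i\cap B_\delta$ (using that $|(\theta^i)'|\le|\vec k|+(n-1)/r$ is bounded), then shows directly that $l^i\cap B_\delta$ has a \emph{single} connected component, and finishes by playing the length bound $\mathcal H^1(l^i\cap B_\delta)\le 3\delta$ for one nearly-horizontal arc against $\mathcal H^1(l^i\cap B_\delta)\to 2\delta m$. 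Your route --- ordering the sheets as graphs and examining the strip between two adjacent ones --- makes explicit the step the paper only asserts (``this behaviour is topologically impossible''): the co-orientation/Jordan-separation argument is precisely the missing justification, and deducing equal orientations from $\sum_k f(\overline\theta_k)=m\,f(\overline\theta)$ is a clean substitute for the paper's uniform-convergence claim. Two caveats, neither fatal: (i) ``$l^i\cap B_{d'}(\overline x)$ is exactly $m$ graphs'' is slightly too strong, since mass convergence controls only total length and cannot exclude short components clipping $\partial B_{d'}(\overline x)$ near the axis; but your strip argument only needs two adjacent crossing sheets over a slightly shorter interval, and these are supplied when $m\ge2$ by the mass lower bound $2d'm$ together with the length bound $2d'(1+o(1))$ for any single nearly-horizontal crossing arc. (ii) The separation argument needs $l^i$ to be properly embedded in $\mathbb C$ (ends diverging to infinity, or a closed curve), which the hypotheses in $\mathbb C\setminus\{0\}$ do not literally force; the paper makes the identical tacit assumption, and in the intended application it is justified exactly as you say, via Proposition \ref{prop-embedded}.
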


\begin{proof} Without loss of generality (for example, by applying a rotation and a scaling) we may assume that $l^\infty$ is the positive $x$-axis, $\overline x = 1$, and $\overline\theta \equiv 0\,\,(\text{mod } \pi)$. We will make the assumption that $\overline\theta \equiv 0 \,\,(\text{mod } 2\pi)$, since the other case is identical. Throughout the proof, all balls are centred at $\overline x$. For a curve $\eta \subset \mathbb{C}\setminus \{0\}$, define $T_{\varepsilon,\delta}(\eta) := \{ x \in \eta\cap B_\delta \, : \, |\theta(x) -\overline\theta| > \varepsilon \}$.  We make three claims:\\

\indent \emph{Claim 1.} \emph{(Uniform convergence of angle)}\\  For all $\delta < \tfrac{d}{2}$ and $\varepsilon > 0$, there exists $N = N(\varepsilon,\delta)$ such that for all $i > N$, $T_{\varepsilon,\delta}(l^i) = \O$. \\

\indent \emph{Claim 2.} \emph{(Single connected component)}\\ There exists $\overline\delta$ with $d > \overline\delta > 0$ such that if $\delta,\varepsilon < \overline\delta$ satisfy $l^i \cap B_{{\delta}/{2}} \neq \O$ and $T_{\varepsilon,2\delta}(l^i) = \O$, then there is only one connected component of $l^i \cap B_\delta$.\\

\indent \emph{Claim 3.} \emph{(Convergence of Length)}\\ For any $\delta \leq d$, $\mathcal{H}^1(l^i \cap B_\delta) \rightarrow 2\delta m.$\\

\indent From these claims, we may deduce the result as follows.  Choose $\overline{\delta} > 0$ as in Claim 2, and let $0 < \delta, \varepsilon < \overline{\delta}$ be small enough that $\cos(\varepsilon + (n-1)\delta) \geq \frac{2}{3}$ and $2\delta < \frac{d}{2}$.  By Claim 1, there exists $N = N(\varepsilon, 2\delta)$ such that $i > N$ implies $T_{\varepsilon, 2\delta}(l^i) = \O$.  Choosing $N$ larger if necessary, assumption (a) implies that for $i > N$ we have $l^i \cap B_{\delta/2} \neq \O$.  Therefore, by Claim 2, there is only one connected component of $l^i \cap B_\delta$, which we label $\eta^i$ (note that therefore $l^i \cap B_\delta = \eta^i)$.  In particular, we have $T_{\varepsilon, \delta}(\eta^i) = \O$, so that every $x \in \eta^i$ satisfies $|\theta(x) - \overline{\theta}| \leq \varepsilon$. \\
\indent Now, using that $\eta \subset B_\delta$, and then invoking Lemma \ref{lem-laganglelemma}, suppressing the index $i$ for notational clarity, we have bounds: 
\begin{align*}
    \arg(\eta) &\in [-\sin^{-1}(\delta)\,,\, \sin^{-1}(\delta)] \subset [-\delta, \delta],\\
    \implies\arg(\eta') &\in \left[\,\overline\theta - \varepsilon - (n-1)\delta\,,\,\overline\theta + \varepsilon + (n-1)\delta\right].
\end{align*}
Therefore, parametrising $\eta$ by unit speed (remembering that $\overline\theta = 2k\pi$), we have that $\eta'(s) = e^{i(\lambda(s) + \overline\theta)} = e^{i\lambda(s)}$ for some $\lambda(s) \in \left[ -\varepsilon -(n-1)\delta\,,\, \varepsilon + (n-1)\delta \right]$, and hence
\begin{align*}
    2\delta \geq \left| \int_{\eta} \eta' \,ds \right| \geq \left| \int_\eta \cos(\lambda(s)) \,ds\right|  \geq \mathcal{H}^1 (\eta) \cos(\varepsilon + (n-1)\delta) \geq \mathcal{H}^1(\eta) \cdot \frac{2}{3}
\end{align*}
from which we obtain
\begin{align*}
    \mathcal{H}^1(l^i \cap B_\delta) = \mathcal{H}^1(\eta^i) \leq 2\delta \cdot \frac{3}{2}. 
\end{align*}
Finally, by Claim 3, it follows that $m=1$, as required. It therefore remains to prove the three claims. \\

\emph{Proof of Claim 1:} Parametrise $l^i$ by arclength, say $l^i(s) = r(s) e^{i\alpha(s)}$.  A calculation shows that within $B_d$, by assumption (b):
\begin{align}
    \theta & = \tan^{-1}(\tfrac{r\alpha'}{r'}) + n\alpha \notag\\
    \implies \theta' & = \langle l'', il'\rangle + (n-1)\alpha'\notag\\
    \implies |\theta'| \,&\leq\, |\vec k| + \frac{n-1}{r} \,\leq\, c + \frac{n-1}{1-d} \,=:\, \widetilde c. \label{eq-thetaprimebound}
\end{align}

Now choose $\delta < \tfrac{d}{2}$ and $\varepsilon\leq \overline\varepsilon_\delta$, where $\overline\varepsilon_\delta := \frac{1}{4}(d-2\delta)(1+\widetilde c)$. Furthermore, define $\varepsilon' = \frac{\varepsilon}{1+\widetilde c}$ and choose smooth functions $\psi:\mathbb{C} \to [0,1]$, $f:\mathbb{R}\to [0,1]$ so that $\psi = 1$ on $B_{\tfrac{d}{2}}$ and $\psi = 0$ outside of $B_{d}$, and $f = 0$ on $(\overline\theta - \frac{\varepsilon'}{2},\overline\theta + \frac{\varepsilon'}{2})$ and $f=1$ outside of $(\overline\theta - \varepsilon',\overline\theta + \varepsilon')$. By assumption (a), we have
\[ \mathcal{H}^1\!\left(T_{\varepsilon', \delta + \varepsilon'}(l^i)\right) \leq \int_{l^i} f(\theta) \psi \,d\mathcal{H}^1 \rightarrow m \cdot \int_{l^\infty} f(\overline\theta) \psi \,d\mathcal{H}^1 = 0, \]
and therefore there exists $N = N(\varepsilon, \delta)$ such that for all $i > N$, $\mathcal{H}^1\!\left(T_{\varepsilon', \delta + \varepsilon'}(l^i)\right) \leq \varepsilon'$.

Now, since $\mathcal{H}^1\left(T_{\varepsilon', \delta + \varepsilon'}(l^i)\right) \leq \varepsilon'$, for $i > N$ any connected component of $l^i \cap B_{\delta + \varepsilon'}$ intersecting $B_\delta$ must include a point $x$ such that $\theta(x) \in (\overline\theta - \varepsilon', \overline\theta + \varepsilon')$. Therefore by (\ref{eq-thetaprimebound}), on $B_\delta$, $\theta \in (\overline\theta - \varepsilon'- \widetilde c \varepsilon', \overline\theta + \varepsilon' + \widetilde c\varepsilon') = (\overline\theta - \varepsilon, \overline\theta + \varepsilon)$ as required.

Finally, since $\varepsilon < \tilde \varepsilon$ implies $T_{\delta,\varepsilon}(l^i) \supset T_{\delta,\tilde \varepsilon}(l^i)$, we note that given $\delta<\tfrac{d}{2}$ and $\varepsilon > 0$, for all $i > N(\min(\varepsilon,\overline \varepsilon_\delta), \delta)$ it follows that $T_{\varepsilon, \delta}(l^i) \subset T_{\min(\overline\varepsilon_\delta,\varepsilon), \delta}(l^i) = \O$.\\

\emph{Proof of Claim 2:} Consider $\delta, \varepsilon < \overline\delta$, where $\overline\delta < \frac{\pi}{4n}$ will be chosen during the course of the proof. Choose $x \in l^i\cap B_{\frac{\delta}{2}}$, let $\eta^i$ be the connected component of $l^i \cap B_{\delta}$ containing $x$, and let $\overline\eta^i$ be the connected component of $l^i \cap B_{2\delta}$ containing $x$. Since $T_{\varepsilon, 2\delta}(l^i) = \O$, within $B_{2\delta}$ we have the following bound by Lemma \ref{lem-laganglelemma} (suppressing the index $i$ for notational clarity):
\begin{align*}
    \text{arg}(l') &\in \left[\,\overline\theta - \varepsilon - (n-1)\delta\,,\, \overline\theta + \varepsilon + (n-1)\delta\right]\\
    &\equiv \left[ -\varepsilon - (n-1)\delta\,,\, \varepsilon + (n-1)\delta\right] \,\,(\text{mod }2\pi)\\
    &\subset \left[-n\overline\delta\,,\, n\overline\delta\right]\,\,(\text{mod }2\pi).
\end{align*}
Note that since $n\overline\delta < \frac{\pi}{4}$, this bound along with the fact that $l^i \cap B_{\frac{\delta}{2}} \neq \O$ implies that $\overline\eta$ must enter $B_{2\delta}$ through the left semicircle of $B_{2\delta}$ and leave $B_{2\delta}$ through the right semicircle.  Moreover, since $\eta$ intersects $B_{\frac{\delta}{2}}$, if $\overline\delta$ is taken sufficiently small, then it also implies that $\overline\eta \cap B_\delta = \eta$, so that $\overline \eta$ contains only one connected component of $l \cap B_\delta$. We fix $\overline\delta$ such that both of these facts hold.

Now, for a contradiction, assume that there is a second connected component $\xi \subset l \cap B_\delta$, contained in a connected component $\overline\xi \subset l \cap B_{2\delta}$. Then by the same argument, $\overline\xi$ enters on the left of $B_{2\delta}$ and leaves on the right. By the embeddedness assumption, $\overline\xi$ and $\overline\eta$ are disjoint. However, this behaviour is topologically impossible, since $\overline\eta$ and $\overline\xi$ are connected components of a complete connected embedded curve $l$, the ends of which must diverge to infinity.\\

\emph{Proof of Claim 3:} For $0 < a < d-\delta$, we define smooth compactly supported test functions $\check \psi, \hat\psi \colon \mathbb{C} \to [0,1]$ satisfying
\begin{align*}
    \check\psi := \begin{cases}  1 \quad \text{ on } B_{\delta - a}\\ 0 \quad \text{ outside } B_{\delta},\end{cases} \quad \hat\psi := \begin{cases} 1 \quad \text{ on } B_{\delta}\\ 0 \quad \text{ outside } B_{\delta + a}.\end{cases}
\end{align*}
Then
\[ \int_{l^i} \check\psi \,d\mathcal{H}^1 \,\leq\, \mathcal{H}^1(l^i \cap B_\delta) \, \leq\, \int_{l^i} \hat\psi\, d\mathcal{H}^1, \]
and as $i \rightarrow \infty$, by assumption (a) (choosing $f \equiv 1$),
\begin{align*}
    \int_{l^i} \check \psi\,d\mathcal{H}^1 \rightarrow m\cdot\int_{l^\infty} \check \psi \,d\mathcal{H}^1 \, \geq 2(\delta - a)m, \quad
    \int_{l^i} \hat\psi\,d\mathcal{H}^1 \rightarrow m\cdot \int_{l^\infty} \hat \psi \,d\mathcal{H}^1 \, \leq 2(\delta + a)m.
\end{align*}
Since $a > 0$ may be chosen arbitrarily small, the result follows.



\end{proof}

Having established this, we now characterise the Type I blowups of cohomogeneity-one $G$-equivariant Lagrangian mean curvature flow, using the notation of Theorem \ref{thm-splagcones}.

\begin{theorem}[Structure of Type I Blowups]\label{thm-typeiblowup}
    Let $L_t \subset M$ be a connected $G$-invariant almost-calibrated Lagrangian MCF for $t \in [0,T)$, with $T$ the singular time.
    
    Then there exist $\overline\theta$ and $k \in \{1,\ldots, \frac{2n}{m}\}$ such that any Type I blowup at time $T$ is the special Lagrangian cone $L^\infty$ with profile curve $l^\infty = C_m \cdot (\tilde c_{k-1,\overline\theta} \cup \tilde c_{k,\overline\theta})$.
\end{theorem}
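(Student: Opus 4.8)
The plan is to combine the structural result of Neves (Theorem \ref{thm-nevesab}), the convergence machinery of \S 6.2, and the multiplicity-one lemma just proved. First, fix a Type I blowup $L^\infty$ of $L_t$ at the singular time $T$, arising from a sequence of Type I rescalings $F^i_s$. By Theorem \ref{thm-locationofsingularities}, the singularity is at the origin, so the rescalings are centred at $O$ and the rescaled flows $L^i_s$ remain $G$-invariant cohomogeneity-one almost-calibrated Lagrangian MCFs in $M$ (since $M$ is a cone by Proposition \ref{prop-mu0cohom1}(a)). Fix a time slice $s < 0$. By Theorem \ref{thm-nevesab}(A), after passing to a subsequence, the rescaled time slices $L^i_s$ converge in the sense of Radon measures (weighted by $f(\theta_i)$) to a finite union $\sum_j m_j f(\overline\theta_j)\mu_j$, where the $L_j$ are integral special Lagrangian cones with angles $\overline\theta_j$, and the set of angles is independent of the rescaling sequence. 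By Proposition \ref{prop-convergence1}(a), each $L_j \cap M$ is a connected $G$-invariant Lagrangian cone; since $L^\infty$ is a smooth shrinking soliton (hence a minimal cone) that is connected and cohomogeneity-one, Theorem \ref{thm-splagcones} forces each component to have connected link and to be one of the cones $\widetilde C_{k,\overline\theta}$, i.e.\ its profile curve is a single $C_m$-orbit of a ray $\widetilde c_{k,\overline\theta}$.

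Next I would pass to profile curves. By Proposition \ref{prop-convergence1}(b), the profile curves $l^i := L^i_s \cap P$ converge (weighted by $f(\theta_i)$) to $\sum_j m_j f(\overline\theta_j)$ times the sum of the profile rays. Now invoke the curvature estimate of Theorem \ref{thm-equicurvestimates}: since almost-calibratedness and the bound on $\vec H$ on $L_0$ are preserved under Type I rescaling (with constants uniform after rescaling, as the rescaled initial data has curvature tending to zero away from $O$), we get $|\vec k(l^i)| \leq |\vec H| + (n-1)/r \leq c$ on any ball $B_d(\overline x)$ with $\overline x \neq 0$ and $d < |\overline x|$, uniformly in $i$. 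This is exactly hypothesis (b) of the Multiplicity-One Lemma \ref{lem-keylemma}. Hypothesis (a) holds by the measure convergence restricted to a small ball around a point of one of the limiting rays, using Theorem \ref{thm-nevesab}(B) (the almost-calibrated, rational case) to isolate a single ray with a single angle. Applying Lemma \ref{lem-keylemma}, every $m_j = 1$: the blowup is unit-density.

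Finally I would pin down which rays appear. By the Wedge Lemma \ref{lem-wedge}, each connected component of the profile curve of $L^i$ (hence of $L^\infty$) lies in a wedge of angular width $< 2\pi/n$, and there are exactly $m$ components permuted by $C_m$. A shrinking-soliton profile curve in a single wedge, by the analysis underlying Theorem \ref{thm-anciaux}(a) (equivalently the ODE (\ref{eq-shrinkerprofile}) with $l^\infty$ forced to be a union of rays through $O$), must limit to the two bounding rays of its wedge as $r \to 0$ and $r \to \infty$; these two rays, by the Lagrangian angle formula Lemma \ref{lem-laganglelemma}, have arguments $\frac{\overline\theta + (k-1)\pi}{n}$ and $\frac{\overline\theta + k\pi}{n}$ for a single $\overline\theta$ (the common limiting Lagrangian angle, which exists and is rescaling-independent by Theorem \ref{thm-nevesab}(A)) and a single $k \in \{1,\dots,\frac{2n}{m}\}$. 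Hence the limiting profile curve is $C_m \cdot (\widetilde c_{k-1,\overline\theta} \cup \widetilde c_{k,\overline\theta})$, and since each ray appears with multiplicity one and the $L_j$ exhaust the limit, $L^\infty$ is precisely the $G$-orbit of this curve. Independence of the rescaling sequence follows from the stated rescaling-independence of the angle set in Theorem \ref{thm-nevesab}(A) together with the fact that the pair of cones is determined by $\overline\theta$ and $k$.

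The main obstacle is the uniformity of the curvature estimate across the blowup sequence: one must check that the constant $C(n,G,M,\varepsilon_\theta,L_0)$ in Theorem \ref{thm-equicurvestimates}, when applied to the rescaled flows $L^i_s$, does not degenerate — this uses scale-invariance of the quantities $|\vec H|^2 r^2$, $|A|^2 r^2$ and the fact that the rescaled initial slices have uniformly bounded $|\vec H|$ on compact subsets of $\mathbb{C}^n \setminus \{0\}$ (indeed $|\vec H| \to 0$ there), so that hypothesis (b) of Lemma \ref{lem-keylemma} genuinely holds with a constant independent of $i$. A secondary point requiring care is ruling out that distinct components $L_j$ of the blowup share a common ray (which would reintroduce higher density on that ray); this is handled by the embeddedness input to Claim 2 of Lemma \ref{lem-keylemma} and the disjointness assumed in Proposition \ref{prop-convergence1}.
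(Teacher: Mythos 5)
Your proposal follows the paper's strategy for the bulk of the argument: Neves's Theorem \ref{thm-nevesab}(A) gives convergence to a weighted union of special Lagrangian cones, Proposition \ref{prop-convergence1} transfers this to $G$-invariance of the limit and to convergence of profile curves, Theorem \ref{thm-splagcones} identifies each component as a $C_m$-orbit of a ray, and the Multiplicity-One Lemma \ref{lem-keylemma} (with the curvature bound of Theorem \ref{thm-equicurvestimates}, which is indeed essentially scale-invariant as you note) gives $m_j=1$. Up to that point you are in step with the paper.

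The gap is in the final step, where you must show that the blowup consists of \emph{exactly two} rays, with a \emph{common} angle $\overline\theta$ and \emph{consecutive} values of $k$. Your argument there treats the blowup as "a shrinking-soliton profile curve in a single wedge" which "must limit to the two bounding rays of its wedge as $r\to 0$ and $r\to\infty$." This does not work: every singularity of almost-calibrated LMCF is Type II, so the Type I blowup is only a Brakke/varifold limit, not a smooth shrinking soliton, and by Neves's theorem the limit is already a static union of cones --- there is no ODE trajectory interpolating between two bounding rays, and the wedge $W$ of Lemma \ref{lem-wedge} is merely a container for the profile curve, not something whose boundary rays must appear in the limit. A priori, Theorem \ref{thm-nevesab}(A) allows $N$ cones with $N$ distinct angles $\overline\theta_1,\dots,\overline\theta_N$, and nothing you have written rules out $N=1$, $N\ge 3$, or two rays with unrelated angles. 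The paper closes this as follows: apply Theorem \ref{thm-nevesab}(B) to a convergent sequence of \emph{connected components} $\Sigma^i$ of $L^i_s\cap B_1(O)$ that intersect $B_{1/4}(O)$; part (B) forces the limit of a single connected component to be a single cone with a single angle $\overline\theta$; the length lower bound $\mathcal{H}^1(\sigma^\infty\cap B_1)\ge 3/2>1$ (a component reaching into $B_{1/4}$ must enter and exit $B_1$) forces at least two rays in that limit, while Theorem \ref{thm-splagcones} plus the wedge constraint allows at most two rays of angle $\overline\theta$ per fundamental domain --- hence exactly the pair $\widetilde c_{k-1,\overline\theta}\cup\widetilde c_{k,\overline\theta}$. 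Finally $N>2$ is excluded because a second family of connected components converging to a different pair (necessarily with $\hat\theta\ne\overline\theta$, since all multiplicities are one) would force the profile curves $l^i$ to self-intersect for large $i$, contradicting the embeddedness from Proposition \ref{prop-embedded}. You should replace your last paragraph with an argument of this kind; the independence of the blowup sequence then does follow, as you say, from the rescaling-independence of the angle set in Theorem \ref{thm-nevesab}(A).
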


\begin{proof}
Without loss of generality, assume $T=0$. By Theorem \ref{thm-locationofsingularities}, $(O,0)$ is the unique singular space-time point of the flow. Let $W \subset \C$ be the wedge given by Lemma \ref{lem-wedge}, and note that by Theorem \ref{thm-splagcones}, for each $\overline\theta$ there exist at most two $G$-invariant special Lagrangian cones $\widetilde C \subset M$ with connected link, profile curve $\widetilde c$ contained in $C_m \cdot W$ and Lagrangian angle $\overline\theta$, which have consecutive values of the parameter $k$.

Consider a sequence $L_s^i$ of Type I rescalings, defined by $ L_s^i := \lambda_i L_{\lambda_i^{-2}s}$ with $\lambda_i\to \infty$. By Theorem \ref{thm-nevesab} (A), there exist $N$ special Lagrangian cones $L^\infty_j$ with connected link, Lagrangian angles $\overline\theta_j$ and multiplicities $m_j \in \mathbb{Z}^+$ such that for $\phi \in C^\infty_c(\mathbb{C}^n)$, $f \in C^2(\mathbb{R})$, $s <0$,
\[ \lim_{i\rightarrow \infty}\int_{L_s^i} f(\theta_{i,s})\phi \,d\mathcal{H}^n \, = \, \sum_{j=1}^N m_j f(\overline\theta_j)\int_{L^\infty_j}\phi \,d\mathcal{H}^n.\]
In other words, the Type I blowup corresponding to the sequence $L^i_s$ is $L^\infty := \sum_{j=1}^N m_j \cdot L^\infty_j$. We will show that each $m_j = 1$ and that $N = 2$. \\
\indent By the almost-calibrated condition, the special Lagrangian cones $L^\infty_j$ are distinct from one another. By Proposition \ref{prop-convergence1}, $L^\infty_j$ is $G$-invariant for each $1\leq j\leq N$, and by Theorem \ref{thm-splagcones} the profile curves $l^\infty_j$ are $C_m$-orbits of half-lines from the origin. Moreover, for all $s<0$ the sequence $l^i_s$ converges in the sense of Proposition \ref{prop-convergence1} to the profile curves $l^\infty_j$ with multiplicities $m_j$. Choosing $\overline x_j \in l^\infty_j$ and $d > 0$ such that $B_d(\overline x_j) \cap l^\infty_k = \O$ for each $k \neq j$, we note that the conditions of Lemma \ref{lem-keylemma} are satisfied for each $j$ (in particular, the curvature bound follows from Theorem \ref{thm-equicurvestimates}), and we may therefore conclude that $m_j = 1$ for all $j$.

We now show that $N = 2$. Choose $s<0$ such that (perhaps on passing to a subsequence) the conclusion of Theorem \ref{thm-nevesab} (B) holds for the sequence $(L^i_s)$ and $R=1$. Therefore, choosing a convergent sequence $\Sigma^i$ of connected components of $L^i_s \cap B_1(O)$ intersecting $B_{\frac{1}{4}}(O)$, there exists a special Lagrangian cone $\Sigma^\infty \subset \bigcup_{i=1}^N L^\infty_j$ with Lagrangian angle $\overline \theta$ such that for all $\phi \in C^\infty_c(\mathbb{C}^n)$, $f \in C^2(\mathbb{R})$,
\[ \lim_{i\rightarrow \infty}\int_{L_s^i} f(\theta_{i,s})\phi \,d\mathcal{H}^n \, = \, f(\overline\theta_j)\int_{\Sigma^\infty}\phi \,d\mathcal{H}^n.\]
By the same argument as above, $\Sigma^\infty$ is $G$-invariant, and the profile curve $\sigma^\infty$ is a union of $C_m$-orbits of half-lines $\bigcup_{k=1}^M \sigma^\infty_k$, for $1 \leq M \leq N$. Note that since $\Sigma^i\cap B_{\frac{1}{4}}(O) \neq \O$, it follows that $\mathcal{H}^1\left(\sigma^\infty \cap B_1(O)\right) \geq \frac{3}{2}$, so $M \geq 2$, and since there are only two $G$-invariant special Lagrangian cones of angle $\overline\theta$ with profile curve in $W$, we have $M=2$, and hence $N \geq 2$.  Explicitly, there exist $k, \overline\theta$ such that $\sigma^\infty = C_m \cdot (\tilde c_{k-1,\overline\theta} \cup \tilde c_{k,\overline\theta})$. \\
\indent Finally, $N>2$ would imply that there exists a second sequence of connected components $\widehat\Sigma^i$ of $L^i \cap B_1(O)$ intersecting $B_{\frac{1}{4}}(O)$, converging to $\widehat \Sigma^\infty$ with profile curve $\widehat \sigma^\infty = C_m \cdot (\widetilde c_{\hat k-1, \hat\theta} \cup \widetilde c_{\hat k, \hat\theta})$. Since $m_j = 1$ for all $j$, it follows that $\hat \theta \neq \overline\theta$. However, the convergence of $\sigma^i \to \sigma^\infty$ and $\widehat \sigma^i \to \widehat \sigma^\infty$ are only possible if $\sigma$ and $\widehat \sigma$ intersect for sufficiently large $i$, contradicting embeddedness of $l^i$. Therefore, $N=2$ and $l^\infty = \sigma^\infty = C_m \cdot (\widetilde c_{k-1, \overline\theta} \cup \widetilde c_{ k, \overline\theta})$.

Finally, by Theorem \ref{thm-nevesab} (A), the angle $\overline\theta$ does not depend on the choice of blowup sequence.
\end{proof}

\subsection{The Type II Blowup}

\indent \indent In this subsection, we prove Theorem \ref{thm-typeiiblowup} characterising Type II blowups of cohomogeneity-one Lagrangian mean curvature flow, which we show to be the unique special Lagrangians of Theorem \ref{thm-splags}. Our first step is to note that if the Type I blowups at a singular space-time point all have Lagrangian angle $\overline \theta$, then the Type II blowups are special Lagrangians of the same angle. This follows as a corollary of the following theorem, proven in \cite{Wood2019}.

\begin{theorem}[Convergence of Lagrangian Angle for the Type II Rescalings]\label{prop-typeIIangle}
	Let $L_t$ be an almost-calibrated Lagrangian MCF in $\mathbb{C}^n$ with Lagrangian angle $\theta_t$, which forms a singularity at time $T$ with singular space-time point $(O,0)$. Assume that any convergent sequence of Type I rescalings $L^{\sigma_i}_s$ converge to the same special Lagrangian cone $C$, with angle $\overline{\theta}$. 
	
	Let $X_i=(x_i,t_i)$ be a sequence of space-time points such that $(x_i,t_i)\rightarrow (O,0)$, let $\lambda_i \in \mathbb{R}$ satisfy $-\lambda_i^2 t_i \rightarrow \infty$, and define the rescalings
	\[ L^{X_i,\lambda_i}_\tau \, := \, \lambda_i\left( L_{t_i+\lambda_i^2 \tau} - x_i \right) \]
	with Lagrangian angle $\theta_\tau^i$.
	
	Then for any bounded parabolic region $\Omega \times I \subset \mathbb{C}^n \times \mathbb{R}$, $\theta^i_\tau \rightarrow \overline\theta$ uniformly in $\Omega\times I$. In particular, if the singularity is Type II and the rescalings are Type II rescalings, then the Type II blowup is a special Lagrangian with angle $\overline\theta$.
\end{theorem}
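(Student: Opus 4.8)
\emph{Proof proposal.} The plan is to run the Neves-style Gaussian density monotonicity argument, but tracking the \emph{angle-weighted} measure $f(\theta_t)\,d\mathcal{H}^n$ for a suitable convex $f\ge 0$ in place of the area measure: the hypothesis on the Type I blowups forces this weighted density to collapse onto the single value $\overline\theta$ at the singular point, and upper semicontinuity then spreads this over a full parabolic neighbourhood, which is precisely what one sees in the Type II rescalings. Normalise so that $T=0$ and the singular space-time point is $(O,0)$, and recall that almost-calibratedness makes $\theta_t$ uniformly bounded and that $\theta$ solves $\partial_t\theta=\Delta\theta$ along the flow. Fix $\varphi(\theta):=(\theta-\overline\theta)^2$ — convex, nonnegative, vanishing exactly at $\overline\theta$ — and let $\rho_{(y_0,t_0)}(y,t)=(4\pi(t_0-t))^{-n/2}\exp(-|y-y_0|^2/(4(t_0-t)))$ be the backward heat kernel. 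The first step is to invoke Neves's weighted monotonicity (applicable since $L_t$ is zero-Maslov with bounded area ratios): $\Theta_\varphi((y_0,t_0),t):=\int_{L_t}\varphi(\theta_t)\,\rho_{(y_0,t_0)}(\cdot,t)\,d\mathcal{H}^n$ is non-increasing in $t<t_0$, so its limit $\Theta_\varphi(y_0,t_0)$ as $t\to t_0^-$ exists; the same holds with $\varphi\equiv1$, giving the usual Gaussian density $\Theta(y_0,t_0)$, and both $\Theta_\varphi$ and $\Theta$ are upper semicontinuous in the space-time centre.

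Next I would record the elementary but crucial identity that at a \emph{smooth} space-time point $(z,s)$ of the flow, $\Theta_\varphi(z,s)=\varphi(\theta_s(z))$ and $\Theta(z,s)=1$: as $t\to s^-$ the flow near $z$ converges to the multiplicity-one plane $T_zL_s$ and $\theta_t\to\theta_s(z)$ locally uniformly, so the defining integral passes to the limit. This is the bridge from measure-theoretic to pointwise information, and since both $\theta$ and the Gaussian density are invariant under parabolic rescaling and translation, it holds verbatim for each rescaled flow $L^{X_i,\lambda_i}$ at its own smooth points. The heart of the argument is then to compute $\Theta_\varphi(O,0)$. Taking Type I rescalings $L^{\sigma_j}_s=\sigma_jL_{\sigma_j^{-2}s}$ with $\sigma_j\to\infty$, the hypothesis together with Theorem \ref{thm-nevesab}(A) gives $\int_{L^{\sigma_j}_s}f(\theta_s)\phi\,d\mathcal{H}^n\to f(\overline\theta)\,\mu_C(\phi)$ for compactly supported $\phi$, where $\mu_C$ is the Radon measure of the common blowup cone and all angles equal $\overline\theta$; approximating $\rho_{(O,0)}(\cdot,s)$ by cutoffs and absorbing the Gaussian tails into the area bound upgrades this to $\Theta_\varphi((O,0),\sigma_j^{-2}s)\to\varphi(\overline\theta)\,\Theta(O,0)$. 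Since $\sigma_j^{-2}s\to 0^-$ and $\Theta_\varphi((O,0),\cdot)$ is monotone with limit $\Theta_\varphi(O,0)$, this forces $\Theta_\varphi(O,0)=\varphi(\overline\theta)\,\Theta(O,0)=0$.

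Finally I would propagate this to the rescalings. Fix a bounded parabolic region $\Omega\times I$. For any $(q,\sigma)\in\Omega\times I$ with $q\in L^{X_i,\lambda_i}_\sigma$, rescaling invariance of $\theta$ and $\Theta_\varphi$ together with the smooth-point identity give $\varphi(\theta^i_\sigma(q))=\Theta_\varphi\big(x_i+\lambda_i^{-1}q,\,t_i+\lambda_i^{-2}\sigma\big)$, the right-hand side being the weighted density of the original flow at a centre depending on $(q,\sigma)$ (using the standard parabolic rescaling convention under which $L^{X_i,\lambda_i}$ is a mean curvature flow, for which the hypothesis $-\lambda_i^2t_i\to\infty$ guarantees that for $i$ large the rescaled flows are smooth on any fixed bounded parabolic region). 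As $i\to\infty$ the compact set of such centres $\{(x_i+\lambda_i^{-1}q,\,t_i+\lambda_i^{-2}\sigma):(q,\sigma)\in\overline\Omega\times\overline I\}$ shrinks to $(O,0)$; combining $\Theta_\varphi\ge 0$, $\Theta_\varphi(O,0)=0$, and upper semicontinuity yields $\sup_{\Omega\times I}\varphi(\theta^i)\to 0$, i.e.\ $\theta^i_\tau\to\overline\theta$ uniformly on $\Omega\times I$. When the singularity is Type II and the $L^{X_i,\lambda_i}$ are Type II rescalings, the curvature normalisation makes them converge in $C^\infty_{\mathrm{loc}}$ to an eternal flow $\widetilde L^\infty_\tau$, which by the uniform angle convergence has constant Lagrangian angle $\overline\theta$ and is therefore a static special Lagrangian of angle $\overline\theta$ — the Type II blowup.

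The step I expect to be the main obstacle is the evaluation $\Theta_\varphi(O,0)=\varphi(\overline\theta)\,\Theta(O,0)$: Theorem \ref{thm-nevesab}(A) is stated only for compactly supported test functions and encodes the blowup as a multiplicity-weighted union of cones, so one must carefully insert the (non-compactly-supported, rapidly decaying) Gaussian weight $\rho_{(O,0)}$ and match the resulting limit with the $\sigma_j\to\infty$ limit of the monotone quantity — this is where bounded area ratios and the precise form of Neves's convergence are used. The ancillary identity $\Theta_\varphi(z,s)=\varphi(\theta_s(z))$ at regular points, and its persistence uniformly as the centre collapses to $(O,0)$, are the other places requiring genuine care; the remainder is the standard Huisken/White monotonicity formalism.
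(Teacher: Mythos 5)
The paper does not prove this theorem itself but cites \cite{Wood2019}, and your argument is a correct reconstruction of the proof given there: a Huisken-type monotonicity for the angle-weighted Gaussian density $\int_{L_t}\varphi(\theta_t)\rho\,d\mathcal{H}^n$ with the convex weight $\varphi(\theta)=(\theta-\overline\theta)^2$, evaluation of that density at the singular point via the Type I hypothesis, and upper semicontinuity in the space--time centre to propagate $\varphi(\theta^i)\to 0$ uniformly on bounded parabolic regions. The only points to flag are that bounded area ratios (a standing assumption of \S 2.3, also needed for Theorem \ref{thm-nevesab}) must be invoked for the monotonicity and the Gaussian tail estimates on a noncompact flow, and that at an immersed self-intersection the smooth-point identity should be the inequality $\varphi(\theta_s(q))\le\Theta_\varphi(z,s)$ over preimages $q$ of $z$, which is all the final step requires.
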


We may now conclude that any Type II blowup of a $G$-equivariant Lagrangian mean curvature flow $L_t \subset M$ must be one of the $G$-invariant special Lagrangians in $M$ given by Theorem \ref{thm-splags}.

\begin{theorem}[Structure of Type II Blowups]\label{thm-typeiiblowup}
    Let $L_t \subset M$ be a connected $G$-invariant almost-calibrated Lagrangian MCF for $t \in [0,T)$, with $T$ the singular time. Let $\overline \theta \in \R$ and $k \in \{1,\ldots, \frac{2n}{m}\}$ be the constants given by Theorem \ref{thm-typeiblowup}.
    
    Then there exists $B > 0$ such that any Type II blowup at time $T$ is a translation of the connected $G$-invariant special Lagrangian $\widetilde L^\infty$ with profile curve $\widetilde l^\infty = C_m \cdot \widetilde l_{B,k,\overline \theta}$. In particular, the asymptotes of the Type II blowup are given by the unique Type I blowup of Theorem \ref{thm-typeiblowup}.
\end{theorem}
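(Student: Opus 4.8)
The plan is to combine the angle-convergence result of Theorem~\ref{prop-typeIIangle} with the classification of $G$-invariant special Lagrangians in $M$ (Theorem~\ref{thm-splags}) and the structure of the Type~I blowup (Theorem~\ref{thm-typeiblowup}). Fix a Type~II blowup, arising from Type~II rescalings $L^i_s = A_i(L_{A_i^{-2}s+t_i}-x_i)$ with $A_i\to\infty$, $t_i\to T$ and $x_i=F_{t_i}(p_i)\to O$ (the last by Theorem~\ref{thm-locationofsingularities}). Since $|A|$ is $G$-invariant and, by Proposition~\ref{prop-mu0cohom1}, every $G$-orbit in $M$ meets the plane $P$, I would choose the maximising points $p_i$ so that $x_i\in P$. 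As $M$ is a cone, each dilated slice $A_iL_{A_i^{-2}s+t_i}$ is again a $G$-invariant almost-calibrated Lagrangian in $M$, and $L^i_s$ is obtained from it by translating by $-y_i$, where $y_i:=A_ix_i\in P$. After passing to a subsequence I would treat the cases $\{y_i\}$ bounded and $y_i\to\infty$ separately.

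In the bounded case, $y_i\to y_\infty\in P$ and $A_iL_{A_i^{-2}s+t_i}=L^i_s+y_i$ converges in $C^\infty_{\mathrm{loc}}$ to $\widetilde L:=\widetilde L^\infty+y_\infty$, a complete $G$-invariant Lagrangian in the closed cone $M$; by Theorem~\ref{prop-typeIIangle} it is special Lagrangian with angle $\overline\theta$. Applying Theorem~\ref{thm-splags} to the connected component through $y_\infty$, this component is $G\cdot(C_m\cdot\widetilde l_{B',k',\overline\theta})$ for some $B'>0$ and $k'\in\{1,\dots,\tfrac{2n}{m}\}$, and the normalisation $\sup_{\widetilde L^\infty}|A|=1$ together with the scaling behaviour of the profile curves in Theorem~\ref{thm-splags} pins down a unique admissible value $B'=B$. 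Hence the Type~II blowup is the translate $G\cdot(C_m\cdot\widetilde l_{B,k',\overline\theta})-y_\infty$, and only the identification $k'=k$ remains.

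To rule out $y_i\to\infty$, I would apply Proposition~\ref{prop-convergence2} with $L^i:=L^i_0$ (noting $O\in L^i_0$ and $L^i_0+y_i=A_iL_{t_i}$ is $G$-invariant in $M$), obtaining that $\widetilde L^\infty$ contains the isotropic linear $(n-1)$-plane $\Pi:=T_\nu\mathcal{O}_\nu$. Parametrising $\Pi$ by parallel vector fields and using that the ambient space and $\Pi$ are flat, the second fundamental form of the minimal submanifold $\widetilde L^\infty$ vanishes on pairs of $\Pi$-directions; since $h$ is fully symmetric and $\widetilde L^\infty$ is minimal, a short trace computation forces $h\equiv 0$ along $\Pi$, hence $|A|\equiv 0$ on $\Pi$. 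But $O\in\Pi$, and by construction the Type~II blowup satisfies $\sup|A|=1$ with the maximum attained over the origin --- a contradiction. So this case does not occur.

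Finally, for $k'=k$ I would show that the asymptotic cone of $\widetilde L^\infty$ --- which by the remark after Theorem~\ref{thm-splags} is $G\cdot(C_m\cdot(\widetilde c_{k'-1,\overline\theta}\cup\widetilde c_{k',\overline\theta}))$ --- coincides with the unique Type~I blowup of Theorem~\ref{thm-typeiblowup}, whose profile curve is $C_m\cdot(\widetilde c_{k-1,\overline\theta}\cup\widetilde c_{k,\overline\theta})$. The idea is a diagonal blow-down argument: dilating $\widetilde L^\infty$ by factors tending to $0$ recovers its asymptotic cone, and since each such dilation is itself a limit of rescalings of $L_t$, interleaving the two sequences produces rescalings of $L_t$, centred at space-time points converging to $(O,T)$, that converge to this cone. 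Its Lagrangian angle is $\overline\theta$ by Theorem~\ref{prop-typeIIangle}, and --- running Theorem~\ref{thm-nevesab}(B), Proposition~\ref{prop-convergence1}, and the Multiplicity-One Lemma~\ref{lem-keylemma} exactly as in the proof of Theorem~\ref{thm-typeiblowup} --- the limit is a unit-density $G$-invariant special Lagrangian cone with connected profile components, which by the uniqueness clause of Theorem~\ref{thm-typeiblowup} must be $L^\infty$; thus $k'=k$ and the asymptotes of the Type~II blowup are the Type~I blowup. The \emph{main obstacle} is this last step: one must make the diagonal blow-down precise and, in particular, control the space-time centring $(x_i,t_i)$ relative to the dilation factors so the interleaved rescalings genuinely behave like Type~I blowups of $L_t$; this is where the curvature estimates of Theorem~\ref{thm-equicurvestimates} and Huisken's monotonicity formula are needed.
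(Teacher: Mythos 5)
Most of your outline tracks the paper's proof: the use of Theorem~\ref{prop-typeIIangle} to make the blowup a static special Lagrangian of angle $\overline\theta$, the dichotomy on $y_i = A_i x_i$, the appeal to Proposition~\ref{prop-convergence2} in the unbounded case, and the classification via Theorem~\ref{thm-splags} with $B$ fixed by $\sup|A|=1$. Your way of closing the unbounded case is a legitimate variant: the paper cites \cite[Thm.\ III.5.5]{Harvey1982} to conclude $\widetilde L^\infty$ is a plane, whereas your trace computation (full symmetry of $h$ plus minimality forces $h=0$ at points of the isotropic $(n-1)$-plane, contradicting $|A|(O)=1$) is self-contained and works.

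The genuine gap is the step you yourself flag: identifying $k'=k$, equivalently showing that the asymptotic cone of the Type~II blowup is the Type~I blowup. Your diagonal blow-down does not go through as described. If you dilate $\widetilde L^\infty$ by $\lambda\to 0$ and interleave with the approximating slices $A_iL_{A_i^{-2}s+t_i}$, the resulting rescalings of $L_t$ have dilation factor $\lambda A_i$ but are taken at times $t_i' = A_i^{-2}s+t_i$; rewriting $t_i' = (\lambda A_i)^{-2}s' + T$ gives $s' = \lambda^2\bigl(s + A_i^2(t_i-T)\bigr)$, and since the Type~II construction forces $A_i^2(T-t_i)\ge\omega_i\to\infty$, the parameter $s'$ drifts to $-\infty$. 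So the interleaved sequence is not a Type~I rescaling sequence at a fixed $s'<0$, and Theorem~\ref{thm-nevesab}(A) and Lemma~\ref{lem-keylemma} cannot be run "exactly as in the proof of Theorem~\ref{thm-typeiblowup}"; controlling blowups as $s'\to-\infty$ would require additional monotonicity arguments you have not supplied. The paper avoids all of this with the Wedge Lemma~\ref{lem-wedge}: each connected component of the profile curve of $L_t$ lies in a fixed wedge $W$ of angle $<\tfrac{2\pi}{n}$, this containment is scale-invariant and passes to the limit $\widetilde L^\infty + y_\infty$, and a wedge of angle $<\tfrac{2\pi}{n}$ contains the two asymptotic rays $\widetilde c_{k'-1,\overline\theta},\widetilde c_{k',\overline\theta}$ (which span $\tfrac{\pi}{n}$) for at most one value of $k'$ --- necessarily the same $k$ singled out by the Type~I blowup, whose cone lies in the same $C_m\cdot W$. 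You should replace the diagonal argument with this wedge-containment observation, which also delivers the "asymptotes equal the Type~I blowup" clause for free.
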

\begin{proof}
Let $L_t \subset M$ be a connected $G$-invariant almost-calibrated LMCF for $t \in [0,T)$, with a singularity at time $T$. By Theorem \ref{thm-locationofsingularities}, $(O,T)$ is the unique singular space-time point of the flow. Let $\overline\theta$, $k$ be the constants given by Theorem \ref{thm-typeiblowup}. Let $W \subset \mathbb{C}$ be the wedge given by Lemma \ref{lem-wedge}, and note that by Theorem \ref{thm-splags} there is a unique special Lagrangian $\widetilde L$ with profile curve $\widetilde l \subset C_m \cdot W$, $\sup_{\widetilde L} |A| = 1$, and Lagrangian angle $\overline\theta$, given by $\widetilde l_{B,k,\overline\theta}$ for some unique $B > 0$. We will prove that any Type II blowup is given by this special Lagrangian.

Consider then a Type II blowup $L_s^{(p_i,t_i)} \rightarrow \widetilde L_s^\infty$, denoting by $A_i$ the scaling factors. By the construction of Type II blowups, $\widetilde L^\infty_s$ is a smooth mean curvature flow. Moreover, since any Type I blowup is the same special Lagrangian cone with angle $\overline\theta$ by Theorem \ref{thm-typeiblowup}, it follows by Proposition \ref{prop-typeIIangle} that $\widetilde L^\infty_s$ is a special Lagrangian with angle $\overline\theta$, and so a static mean curvature flow. We may therefore work with a particular time slice. Fix $s \in \mathbb{R}$, and define
\begin{align*}
    x_i := F_{t_i}(p_i), \quad\quad L^i := L_s^{(p_i,t_i)} = A_iL_{A_i^{-2}s + t_i} - A_ix_i, \quad\quad y_i := A_i x_i,
\end{align*}
so that $L^i + y_i$ is a sequence of $G$-invariant Lagrangian submanifolds, and if $\widetilde L^\infty := \widetilde L^\infty_s$,
\[ \int_{L^i} \phi \,d\mathcal{H}^n \longrightarrow \int_{\widetilde L^\infty} \phi \,d\mathcal{H}^n \, \text{ as } i \rightarrow \infty. \] Passing to a subsequence, we may assume $\frac{y_i}{|y_i|} \rightarrow \nu \in S^{2n-1}$, and also that either $|y_i|\rightarrow \infty$ or there exists $y_\infty$ such that $y_i \rightarrow y_\infty$.

Assume that $|y_i|\rightarrow \infty$. Then Proposition \ref{prop-convergence2} implies that $\widetilde L^\infty$ contains an isotropic $(n-1)$-plane. It follows from \cite[Thm.\ III.5.5]{Harvey1982} that $\widetilde L^\infty$ is a special Lagrangian  $n$-plane. This is a contradiction, since by the properties of Type II blowups, $\sup_{\widetilde L^\infty} |A| = 1$. 

We must therefore have that there exists $y_\infty$ such that $y_i \rightarrow y_\infty$. Then 
\[ A_i L_{A_i^{-2}s + t_i} \to \widetilde L^\infty + y_\infty, \]
and $\widetilde L^\infty + y_\infty$ is a $G$-invariant submanifold by Proposition \ref{prop-convergence1}, with $\sup_{\widetilde L^\infty + y_\infty}|A| = 1$ and with profile curve contained in $C_m \cdot W$. As already noted, there is only one such special Lagrangian of angle $\overline\theta$, with profile curve $C_m \cdot \widetilde l_{B,k,\overline\theta}$, and so we are done.
\end{proof}

\subsection{Existence of Singularities with Prescribed Models}

\indent \indent Finally, we consider the behaviour of cohomogeneity-one Lagrangian mean curvature flow for a particular class of initial conditions which form finite-time singularities. In the process, we demonstrate existence of singularities modelled on the examples of Theorems \ref{thm-splagcones} and \ref{thm-splags}. We restrict to the case of connected Lagrangian flows $L_t \subset M$ such that each connected component $\eta_t$ of the profile curve $l_t$ is \textit{asymptotically linear}, i.e.\  there exist distinct half-lines $c_1, c_2$ and a ball $B_R$ such that for all $t$, $\eta_t \setminus B_R$ may be expressed as a graph over $c_1 \cup c_2$ that decays to $0$ in $C^0$ at infinity. We will denote the spanning angle between the asymptotes $c_1,c_2$ of each connected component of $l$ by $\alpha_{\text{span}}(l)$.

Our key tool is the following avoidance principle for equivariant LMCF \cite[Thm.\ 4.2]{Wood2019}, which allows us to employ barrier arguments. 

\begin{theorem}[Preservation of Embeddedness / Avoidance Principle]\label{thm-avoidance}
Let $f_t,\tilde f_t: \mathbb{R} \rightarrow \mathbb{C}\setminus\{0\}$ be asymptotically linear solutions of (\ref{eqn-equivariantLMCF}) for $t \in [0,T)$ with images $l_t, \tilde l_t$ respectively, such that $l_0$ and $\tilde l_0$ are embedded and disjoint and the asymptotes of $l_t$ and $\tilde l_t$ are different. Then $l_t$, $\tilde l_t$ are embedded and disjoint for all $t \in [0,T)$.
\end{theorem}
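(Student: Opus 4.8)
The statement to prove is the avoidance principle (Theorem \ref{thm-avoidance}) for the equivariant LMCF equation (\ref{eqn-equivariantLMCF}), which for asymptotically linear profile curves $f_t$, $\tilde f_t$ asserts preservation of embeddedness and disjointness. The plan is to reduce this to the standard avoidance principle for curve shortening flow by noting that (\ref{eqn-equivariantLMCF}) is exactly the curve shortening flow of $l_t$ with respect to a conformally rescaled metric on the punctured plane. Concretely, I would write the drift term $-(n-1) f_t^\perp / |f_t|^2$ as the normal component of a gradient: using polar coordinates $z = re^{i\alpha}$ on $\C \setminus \{0\}$, one checks that $-(n-1)\tfrac{z^\perp}{|z|^2} = \big((n-1)\nabla^{g_0}(\log r)\big)^\perp$, and more importantly that the conformally rescaled metric $g := r^{-2(n-1)}\, g_{\mathrm{eucl}}$ has the property that curve shortening flow in $(\C\setminus\{0\}, g)$ is precisely equation (\ref{eqn-equivariantLMCF}): a curve is $g$-geodesic-curvature-driven iff it solves (\ref{eqn-equivariantLMCF}). (This is the familiar observation that $SO(n)$-equivariant LMCF descends to a geodesic curvature flow in a metric of the form $r^{2(n-1)}|dz|^2$ or its reciprocal depending on orientation conventions; I would fix the sign by a direct computation with the first variation of the weighted length functional $\int r^{n-1}\,ds$.)

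Once this identification is made, the heart of the argument is the maximum principle applied to the distance function between the two curves. First I would establish short-time existence and the a priori structure: since $f_0, \tilde f_0$ are embedded, disjoint, and asymptotically linear with \emph{distinct} asymptotic half-lines, the Euclidean distance between $l_t$ and $\tilde l_t$ is bounded below by a positive constant \emph{outside} a large ball $B_R$ for all $t$ — this is where the hypothesis on the asymptotes does its work, since near infinity the two curves are $C^0$-close to their fixed distinct asymptotic cones, which are uniformly separated away from the origin. Inside the compact region $\overline{B_R} \setminus B_\epsilon$ (staying away from the puncture, which is legitimate since the curves avoid the origin and the flow cannot reach it in finite time by comparison with the static half-lines $\widetilde c_{k,\bar\theta}$ which solve the equation), the metric $g$ is uniformly equivalent to the Euclidean metric, so the classical avoidance principle for curve shortening flow in a smooth Riemannian surface (e.g. the maximum principle for the evolution of $d_g(l_t, \tilde l_t)$, which is a supersolution of a scalar parabolic equation) applies: if the curves were to touch at a first time $t_0$, the strong maximum principle at the contact point yields a contradiction. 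Self-intersections of a single curve are ruled out by the same argument applied to $f_t$ against a suitably translated/reparametrised copy of itself, or more directly by noting that a first self-tangency contradicts the strong maximum principle for the curvature evolution.

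The main obstacle I anticipate is the noncompactness of the domain, i.e. controlling the behaviour of $d_g(l_t, \tilde l_t)$ near infinity and near the puncture simultaneously, since the conformal factor $r^{-2(n-1)}$ degenerates at $0$ and at $\infty$. The puncture is handled cleanly by the barrier $\widetilde c_{k,\bar\theta}$ (the static conical solutions): any solution starting disjoint from a cone through the origin stays disjoint from it, hence stays a definite distance from $0$ on compact time intervals, so the flow genuinely takes place in a region where $g$ is a smooth complete-away-from-the-ends metric. Infinity is handled by the distinct-asymptotes hypothesis together with a standard barrier at spatial infinity: the asymptotically linear condition with fixed distinct asymptotic lines gives, via the avoidance of each curve with its own asymptotic half-lines (again using that half-lines through the origin are static solutions), a uniform-in-$t$ lower bound on separation outside $B_R$. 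Patching the interior maximum-principle argument with these two end estimates gives global preservation of embeddedness and disjointness. I would remark that this is precisely the argument of \cite[Thm.\ 4.2]{Wood2019}, which goes through verbatim here because equation (\ref{eqn-equivariantLMCF}) — and hence the barriers and the conformal reduction — is literally identical in the general cohomogeneity-one setting, by Corollary \ref{cor-equivariantlmcf} and Proposition \ref{prop-curvature}.
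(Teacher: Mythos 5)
The paper does not actually prove Theorem \ref{thm-avoidance}: it is imported verbatim as \cite[Thm.\ 4.2]{Wood2019}, so there is no in-paper argument to compare against, and your closing remark correctly identifies why the citation is legitimate here (equation (\ref{eqn-equivariantLMCF}) is literally the same PDE as in the $\SO(n)$ case, by Corollary \ref{cor-equivariantlmcf}). Your sketch is essentially the standard argument and I see no structural gap, but two details deserve tightening. First, the conformal identification is not quite as stated: the flow is the gradient flow of the weighted length $\int r^{n-1}\,ds$, whose associated conformal metric is $r^{2(n-1)}g_{\mathrm{eucl}}$ (not $r^{-2(n-1)}g_{\mathrm{eucl}}$), and even then (\ref{eqn-equivariantLMCF}) agrees with curve shortening flow for that metric only after multiplying the normal speed by the positive factor $r^{-2(n-1)}$; this is harmless for avoidance because at a first tangential contact both curves see the same factor, but you should say so rather than claim the flows are ``precisely'' equal. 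Second, the barrier-by-static-rays argument near the puncture proves less than you want: a curve disjoint from a static half-line can still collapse to the origin (this is exactly how the singularities of Section \ref{sec-6} form), so it does not keep the flow ``a definite distance from $0$.'' What saves you is simply the hypothesis that $f_t,\tilde f_t$ map into $\C\setminus\{0\}$ for $t\in[0,T)$ together with properness (asymptotic linearity): on each compact time interval $[0,t_0]\subset[0,T)$ the curves lie in a fixed annulus, which is all the interior maximum principle needs. With those two corrections the proof goes through as you describe.
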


We first consider the case where $\alpha_{\text{span}} > \frac{\pi}{n}$. In this case, we may use a barrier flow constructed by Neves that collapses to the origin to `force' a singularity to occur. The proof is identical to that of \cite[Thm.\ 4.11]{Wood2019}.

\begin{theorem}[Finite-Time Singularity for Wide Asymptotic Angles] 
Let $L_t \subset M$ be a connected, embedded $G$-invariant Lagrangian mean curvature flow such that the profile curve $l_t$ is asymptotically linear, and $\alpha_\text{span}(l_t) > \frac{\pi}{n}.$ Then $L_t$ forms a finite-time singularity.
\end{theorem}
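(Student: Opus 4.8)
The plan is to use the avoidance principle (Theorem~\ref{thm-avoidance}) together with an explicit family of shrinking barriers that sweep through the origin in finite time. The key observation is that when $\alpha_\text{span}(l_t) > \tfrac{\pi}{n}$, the asymptotic cone of the profile curve is ``too wide'' to be a special Lagrangian cone, and in fact wider than the asymptotic angle $\tfrac{\pi}{n}$ of any of the Anciaux expanders $l^\alpha$. This forces the profile curve to intersect suitable barriers.

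First I would identify the barrier family. By Theorem~\ref{thm-anciaux}(a), for each coprime $(p,q)$ with $\tfrac{p}{q} \in (\tfrac{1}{2n}, \tfrac{1}{\sqrt{2n}})$ there is a self-similarly shrinking solution $l^{(p,q)}$ of \eqref{eqn-equivariantLMCF}, which generates a shrinking mean curvature flow $\lambda(t)\cdot l^{(p,q)}$ with $\lambda(t) \to 0$ at a finite time $T_{p,q}$; by rotating and rescaling we obtain a large family of such barrier flows $\tilde l_t$, each asymptotically linear (in fact asymptotically conical with a controlled spanning angle) and each collapsing to the origin as $t \to T_{p,q}$. The crucial point, established by Neves and used in \cite[Thm.~4.11]{Wood2019}, is that one can choose such a barrier whose initial curve $\tilde l_0$ is disjoint from $l_0$ but which ``surrounds'' the relevant portion of $l_0$, in the sense that $l_t$ cannot escape to infinity past $\tilde l_t$ because the asymptotic directions differ. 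Here the hypothesis $\alpha_\text{span}(l_t) > \tfrac{\pi}{n}$ is exactly what guarantees that the asymptotes of $l_t$ and of the chosen barrier $\tilde l_t$ are distinct, so that Theorem~\ref{thm-avoidance} applies and $l_t$, $\tilde l_t$ remain disjoint for all $t$ in their common interval of existence.

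Next I would run the contradiction argument. Suppose $L_t$ (equivalently $l_t$) existed smoothly for all $t \in [0,\infty)$, or more precisely past the collapse time $T$ of the barrier. By Theorem~\ref{thm-avoidance}, $l_t$ stays on the ``outside'' of $\tilde l_t$ for all $t < T$; but as $t \to T$ the barrier $\tilde l_t$ shrinks to the single point $\{0\}$, pinching the region available to $l_t$. A standard argument (as in \cite{Wood2019}) shows that either $l_t$ develops a singularity strictly before time $T$, or it is forced through the origin, in which case the curvature of $l_t$ must blow up near $0$ — by Proposition~\ref{prop-curvature}, $|\vec H|^2 = (k - (n-1)p)^2$ and the term $p(w) = \tfrac{|w^\perp|}{|w|^2}$ is unbounded as $|w| \to 0$ unless $l_t$ passes through $0$ along a straight line, which the geometry of the barrier excludes. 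In either case, $L_t$ cannot be a smooth flow for all time, so by Theorem~\ref{thm-curvatureblowup} it forms a finite-time singularity.

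I expect the main obstacle to be the careful construction and placement of the barrier: one must verify that among the countably many Anciaux shrinkers $l^{(p,q)}$ (after rotation and rescaling) there is one whose initial position is disjoint from $l_0$, whose spanning angle and asymptotic directions are genuinely different from those of $l_0$ — so that Theorem~\ref{thm-avoidance} is applicable — and whose collapse is ``trapping'' rather than allowing $l_t$ to slip past. This is precisely the content of the corresponding step in \cite[Thm.~4.11]{Wood2019}, and since the equivariant flow equation \eqref{eqn-equivariantLMCF} and the avoidance principle are identical in our setting, the argument there carries over verbatim; the remaining steps are then routine.
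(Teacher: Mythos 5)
Your overall strategy---place a collapsing barrier, invoke the avoidance principle, and conclude curvature blowup as the profile curve is squeezed into the origin---is the same as the paper's. But your choice of barrier does not work. The Anciaux \emph{shrinkers} $l^{(p,q)}$ of Theorem \ref{thm-anciaux}(a) are \emph{closed} immersed curves (that is what ``winding number $p$'' refers to); they are compact, not asymptotically linear, so they are not ``asymptotically conical with a controlled spanning angle'' as you claim. This breaks the argument in two ways. First, Theorem \ref{thm-avoidance} as stated applies only to a pair of asymptotically linear solutions with distinct asymptotes, so it cannot be invoked with a closed barrier. Second, and more fundamentally, a closed curve encircling the origin and shrinking self-similarly \emph{toward} the origin moves \emph{away} from a non-compact curve $\gamma_t$ whose ends go to infinity: either $\gamma_0$ already crosses it (so they are not disjoint) or $\gamma_0$ lies outside it, in which case the shrinking barrier exerts no constraint whatsoever and traps nothing. (The Anciaux \emph{expanders} $l^\alpha$ are the asymptotically linear ones, but they expand rather than collapse, so they are equally useless here.)

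The barrier the paper actually uses is Neves' \emph{non-compact} asymptotically linear solution with initial curve $\eta_0(s) = (\sin(\tfrac{\pi s}{\beta}))^{-\beta/\pi}e^{is}$, whose flow under (\ref{eqn-equivariantLMCF}) exists and collapses to the origin in finite time precisely when $\tfrac{\pi}{n} < \beta < \tfrac{2\pi}{n}$ (\cite[Thm.\ 4.10]{Wood2019}). This also corrects where the hypothesis enters: $\alpha_{\text{span}}(l_t) > \tfrac{\pi}{n}$ is needed not merely to make asymptotes distinct, but to guarantee the existence of some $\beta$ with $\tfrac{\pi}{n} < \beta < \alpha_{\text{span}}$, so that the barrier simultaneously (i) collapses in finite time and (ii) can be rotated and scaled so that $\gamma_0$ lies between $\eta_0$ and the origin with the asymptotes distinct. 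Theorem \ref{thm-avoidance} then forces $\gamma_t$ to the origin by the barrier's collapse time, and Proposition \ref{prop-curvature} gives the curvature blowup (via the $r^{-2}$ and $p$ terms) as $|\gamma_t| \to 0$. With this replacement of the barrier family your argument goes through; as written, it does not.
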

\begin{proof}
Let $\eta_t$ be the family of asymptotically linear curves with $\alpha_\text{span}(\eta_t) = \beta$ solving (\ref{eqn-equivariantLMCF}) with initial condition
\[ \eta_0(s) := \left(\sin\left(\tfrac{\pi s}{\beta}\right) \right)^{\frac{-\beta}{\pi}}e^{is}. \]
This flow exists by an argument of Neves \cite[Thm.\ 4.1]{Neves2007}, and forms a finite-time singularity at the origin if $\frac{2\pi}{n} > \beta > \frac{\pi}{n}$ \cite[Thm.\ 4.10]{Wood2019}.

Now consider a connected component $\gamma_t$ of $l_t$, choose $\beta<\alpha$, and rotate and scale the curve $\eta_0$ so that $\gamma_0$ lies in between $\eta_0$ and the origin, and $\eta_0$ and $\gamma_0$ are disjoint. Let $T$ be the singular time of $\eta_t$. It follows by Theorem \ref{thm-avoidance} that $\gamma_t$ approaches the origin by time $T$, and (\ref{eq-meancurvature}) then implies that the curvature of $\gamma_t$ becomes unbounded by time $T$.
\end{proof}
As a corollary, we note that for each of the special Lagrangians of Theorem \ref{thm-splags}, there exists a Lagrangian mean curvature flow forming a finite-time singularity modelled on that special Lagrangian.
\begin{theorem}[Existence of Singularities with Prescribed Models]\label{thm-existenceofsingularities}
Let $L^\infty$ be a complete connected $G$-invariant special Lagrangian of constant isotropy type such that $L^\infty \subset \mu^{-1}(0)$.

Then $L^\infty$ is asymptotically conical, and there exists a Lagrangian mean curvature flow $L_t$ forming a Type II singularity such that:
\begin{itemize}
    \item Any Type I blowup is the asymptotic cone of $L^\infty$,
    \item Any Type II blowup is $L^\infty$.
\end{itemize}
\end{theorem}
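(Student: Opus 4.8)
The plan is to reduce the statement to the classification and barrier results already established, following the strategy of \cite[Thm.\ 4.11]{Wood2019} but using the $G$-independent curve flow \eqref{eqn-equivariantLMCF}. First I would invoke Theorem \ref{thm-splags}: since $L^\infty \subset M \subset \mu^{-1}(0)$ is a complete connected $G$-invariant special Lagrangian of constant isotropy type, it corresponds under Proposition \ref{prop-profilebijection} to a $C_m$-orbit of a curve $\widetilde l_{B,k,\overline\theta}$ for some $B > 0$, $k \in \{1,\dots,\tfrac{2n}{m}\}$, $\overline\theta \in \mathbb{R}$. The explicit formula for $\widetilde l_{B,k,\overline\theta}$ shows each connected component has asymptotes $\widetilde c_{k-1,\overline\theta} \cup \widetilde c_{k,\overline\theta}$, which span an angle $\tfrac{\pi}{n}$; hence $L^\infty$ is asymptotically conical, with asymptotic cone $L_0^\infty$ the $G$-orbit of $C_m \cdot (\widetilde c_{k-1,\overline\theta} \cup \widetilde c_{k,\overline\theta})$ — one of the cones of Theorem \ref{thm-splagcones}. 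This handles the first assertion.

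Next I would construct the initial condition. The idea is to produce a $G$-invariant almost-calibrated $L_0 \subset M$ whose profile curve is an asymptotically linear perturbation $l_0$ of (each $C_m$-translate of) $\widetilde l_{B,k,\overline\theta}$, chosen so that $\alpha_{\text{span}}(l_0) > \tfrac{\pi}{n}$: concretely, perturb the asymptotic directions slightly so that the two half-line asymptotes span a wider angle while keeping $l_0$ embedded, $C_m$-invariant, contained in $C_m \cdot W$ for a wedge $W$ of angle $< \tfrac{2\pi}{n}$, and with Lagrangian angle staying in a window $(\overline\theta - \tfrac{\pi}{2} + \varepsilon, \overline\theta + \tfrac{\pi}{2} - \varepsilon)$ via Lemma \ref{lem-laganglelemma}; one should also keep $l_0$ between the origin and $\widetilde l_{B,k,\overline\theta}$ and bounded-area-ratios should hold since $L_0$ is a cone perturbation. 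By Corollary \ref{cor-equivariantlmcf} this runs as a solution $l_t$ of \eqref{eqn-equivariantLMCF}, hence $L_t = G \cdot l_t$ is a $G$-invariant almost-calibrated LMCF. By the previous theorem (Finite-Time Singularity for Wide Asymptotic Angles), since $\alpha_{\text{span}}(l_t) > \tfrac{\pi}{n}$, the flow $L_t$ forms a finite-time singularity at some time $T$; by Proposition \ref{prop-flowlevelset} it stays in $\mu^{-1}(0)$ and by $G$-invariance in $M$, and by the zero-Maslov/graded property any such singularity is Type II. By Theorem \ref{thm-locationofsingularities} the singular space-time point is $(O,T)$.

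It then remains to identify the blowups. By Theorem \ref{thm-typeiblowup}, every Type I blowup of $L_t$ is the special Lagrangian cone with profile curve $C_m \cdot (\widetilde c_{k'-1,\overline\theta'} \cup \widetilde c_{k',\overline\theta'})$ for constants $\overline\theta', k'$ determined by the flow; and by Theorem \ref{thm-typeiiblowup}, every Type II blowup is the $G$-invariant special Lagrangian with profile curve $C_m \cdot \widetilde l_{B',k',\overline\theta'}$ for some $B' > 0$, asymptotic to the Type I blowup. The task is to show $(\overline\theta', k', B') = (\overline\theta, k, B)$, i.e.\ that the blowup recovers exactly $L^\infty$. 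For the angle and index: the Lagrangian angle $\theta_t$ solves the heat equation on $L_t$ and its range is non-increasing; since $l_0$ was a perturbation of $\widetilde l_{B,k,\overline\theta}$ carrying Lagrangian angle concentrating near $\overline\theta$ at spatial infinity, and the asymptotes are preserved under the flow by the avoidance principle Theorem \ref{thm-avoidance} (the asymptotic half-lines of $l_t$ cannot cross those of the static barriers $\widetilde c_{k-1,\overline\theta}, \widetilde c_{k,\overline\theta}$), one gets $\overline\theta' = \overline\theta$ and $k' = k$. For the scale $B$: this is the main obstacle. The value $B'$ is the unique one for which $\sup |A| = 1$ on the Type II blowup — but $L^\infty$ was not normalized that way, so one must instead argue by trapping. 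Sandwich $l_0$ between two rescaled copies $\lambda^{\pm}\widetilde l_{B,k,\overline\theta}$ of the target profile with $\lambda^- < 1 < \lambda^+$, both embedded and disjoint from $l_0$ with the same asymptotic directions as $l_0$'s limit; apply Theorem \ref{thm-avoidance}. Under Type II rescaling about $(O,T)$, the static barriers $\lambda^{\pm}\widetilde l_{B,k,\overline\theta}$ are sent to $A_i \lambda^{\pm}\widetilde l_{B,k,\overline\theta}$, whose limiting behaviour forces the blowup to lie between dilates of $\widetilde l_{B,k,\overline\theta}$; letting $\lambda^{\pm} \to 1$ pins the blowup to $C_m \cdot \widetilde l_{B,k,\overline\theta}$, hence to $L^\infty$ (after the translation ambiguity of Theorem \ref{thm-typeiiblowup}, which the trapping also removes). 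I expect the delicate point to be making this dilation-trapping under Type II (rather than Type I) rescaling rigorous — Type II rescalings blow up the time interval unboundedly and one must check the barriers survive the rescaling on every compact parabolic region — together with verifying that the initial perturbation can be chosen to simultaneously satisfy all the constraints (embedded, $C_m$-invariant, almost-calibrated, wide asymptotic angle, trapped between dilates of $\widetilde l_{B,k,\overline\theta}$).
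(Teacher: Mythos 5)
Your overall outline --- identify $L^\infty$ via Theorem \ref{thm-splags} to get asymptotic conicality, produce a $G$-invariant almost-calibrated flow whose asymptotically linear profile curve has $\alpha_{\text{span}} > \tfrac{\pi}{n}$ so that the preceding theorem forces a finite-time singularity, and then invoke Theorems \ref{thm-typeiblowup} and \ref{thm-typeiiblowup} --- matches the paper's, which simply runs the explicit Neves flow $\eta_t$ itself rather than a perturbation of $L^\infty$. The gap is in the last step, where you try to force the blowup constants $(\overline\theta', k', B')$ to equal the data $(\overline\theta, k, B)$ of the given $L^\infty$ by engineering the initial condition.

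That identification does not go through as written. Once you widen the asymptotic angle of $l_0$ to $\beta > \tfrac{\pi}{n}$, the asymptotes of $l_t$ are no longer $\widetilde c_{k-1,\overline\theta}\cup\widetilde c_{k,\overline\theta}$; by Lemma \ref{lem-laganglelemma} the Lagrangian angle takes values differing by $n\beta - \pi > 0$ at the two ends, these end values persist for all time, and the cone angle $\overline\theta'$ of the Type I blowup is only constrained to lie in the closure of the range of $\theta_t$. Neither the heat equation for $\theta$ nor the avoidance principle applied to the asymptotes pins $\overline\theta'$ to $\overline\theta$: the blowup angle is determined by where the profile curve collapses at the origin, not by its behaviour at infinity. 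Likewise the dilation-trapping argument for $B'$ cannot succeed, because the Type II blowup is normalized to $\sup|A|=1$, so its scale is forced independently of any barriers. The paper resolves both points with a much cheaper observation: the theorem only asserts \emph{existence} of some flow with the prescribed blowups, so one runs the Neves flow, reads off whatever $\overline\theta', k'$ Theorems \ref{thm-typeiblowup} and \ref{thm-typeiiblowup} produce, and then applies a $\U(1)_\Delta$ rotation to the entire flow (which shifts the Lagrangian angle by $n\phi$) to match the prescribed $\overline\theta$ and $k$; the remaining freedom in $B$ is only the internal dilation of the one-parameter family $\widetilde l_{B,k,\overline\theta}$, all of whose members are dilates of one another, which is how the conclusion is to be read. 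Replacing your dynamic identification with this a posteriori rotation closes the gap.
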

\begin{proof}
Let $(H)$ be the isotropy type of $L^\infty$, so there exists a connected component $M \subset \mu^{-1}(0) \cap \mathbb{C}^n_{(H)}$ with $L^\infty \subset M$. It follows from Theorems \ref{thm-splagcones} and \ref{thm-splags} that there exist $B>0,\, k \in \mathbb{Z},\, \overline\theta \in \mathbb{R}$ such that $L^\infty$ has profile curve $C_m \cdot \tilde l_{B,K,\overline\theta}$, and is asymptotically conical to a special Lagrangian cone with profile curve $C_m \cdot (\tilde c_{k-1,\overline\theta} \cup \tilde c_{k, \overline\theta}).$

Now, the Neves example $\eta_t \subset \mathbb{C}$ provides an example of a flow in $M$ with a finite-time singularity. By Theorems \ref{thm-typeiblowup} and \ref{thm-typeiiblowup}, the Type I and Type II blowups are of the required forms, after applying a $\U(1)_\Delta$ rotation to the flow.
\end{proof}

\begin{remark}
Consider the $T^{n-1}$-action on $\mathbb{C}^n$, as discussed in Examples \ref{ex-Torus} and \ref{ex-CyclicGroup}(b) and Remark \ref{remark-TorusCone}. Let $L_t$ be an almost-calibrated Lagrangian mean curvature flow in $M_0 := \mu^{-1}(0) \setminus \{0\}$; by Propositions \ref{prop-embedded} and \ref{prop-exact} $L_t$ must be homeomorphic to $\mathbb{R}\times T^{n-1}$. Assuming that $T$ is the singular time of the flow, it follows by Theorem \ref{thm-locationofsingularities} that the unique singular point is the origin. By Theorems \ref{thm-typeiblowup} and \ref{thm-typeiiblowup}, there exist $\overline{\theta} \in \R$ and $k \in \{1,2\}$ such that:
\begin{itemize}
    \item Any Type I blowup at time $T$ is the \textit{pair} of Harvey-Lawson $T^{n-1}$-invariant special Lagrangian cones whose profile curve is $C_n \cdot (\widetilde{c}_{k-1, \overline{\theta}} \cup \widetilde{c}_{k,\overline{\theta}})$.
    \item There is a unique $B > 0$ such that any Type II blowup at time $T$ is a translation of the connected $T^{n-1}$-invariant special Lagrangian with profile curve $C_n \cdot \widetilde l_{B,k,\overline \theta}$.
\end{itemize}
Moreover, by Theorem \ref{thm-existenceofsingularities}, there exists a flow exhibiting such a finite-time singularity. This contrasts with the result of Lambert-Lotay-Schulze \cite[Thm. 1.2]{Lambert2021} that there is no singularity of almost-calibrated Lagrangian mean curvature flow in a Calabi-Yau 3-fold such that the blowdown of the Type II blowup is given by a \textit{single} Harvey-Lawson $T^{2}$-cone.
\end{remark}

\section{Examples}\label{sec-7}

\indent \indent Above, we studied $G$-invariant Lagrangians and $G$-equivariant LMCF, where $G \leq \SU(n)$ is a compact connected Lie subgroup acting linearly on $\C^n$.  Our analysis focused on the cohomogeneity-one case, in which there exists an orbit type with $(n-1)$-dimensional isotropic orbits.  In this section, we provide examples of compact connected Lie subgroups $G \leq \SU(n)$ admitting such an orbit type.

\indent In $\S$7.1, we explain how, if $G$ is semisimple, $(n-1)$-dimensional isotropic orbits in $\C^n$ may be recast as Lagrangian orbits in $\CP^{n-1}$.  Then, in $\S$7.2, we quote Bedulli and Gori's classification \cite{Bedulli2008} of the unitary representations of compact \emph{simple} Lie groups admitting a Lagrangian orbit in $\CP^{n-1}$.  In this way, their list --- consisting of 7 infinite families and 14 sporadic exceptions --- also classifies the compact simple Lie subgroups $G \leq \SU(n)$ that admit $(n-1)$-dimensional isotropic orbits in $\C^n$.  Finally, in $\S$7.3, we mention a further example in which the compact Lie group $G$ is not semisimple.  

\subsection{Preliminaries on the Semisimple Case}

\indent \indent Let $G \leq \U(n)$ be a compact semisimple Lie group, so that $\mathfrak{z}(\mathfrak{g}) = 0$.  For $z \in \C^n \setminus 0$, let $\mathcal{O}_z \subset \C^n$ be the $G$-orbit of $z$, and let $H$ be the stabiliser of $z$.  By Lemma \ref{lem-isotropiclevelset}, the orbit $\mathcal{O}_z \subset \C^n$ is isotropic if and only if $z \in \mu^{-1}(0)$. \\
\indent As discussed in $\S$4.2, the $G$-action on $\C^n\setminus 0$ induces a $G$-action on $\CP^{n-1}$ via $g \cdot [z] := [gz]$, where we write $[z] = P_z \in \CP^{n-1}$ for the complex line through $z \in \C^n \setminus 0$.  We let
$$\mathcal{O}_{[z]} = \{[w] \in \CP^{n-1} \colon w \in \mathcal{O}_z\} \subset \CP^{n-1}$$
denote the $G$-orbit of $[z]$, and let $\widetilde{H}$ be the stabiliser of $[z]$.  Note that the $G$-equivariant map $q_z \colon \mathcal{O}_z \to \mathcal{O}_{[z]}$ via $w \mapsto [w]$ is a principal $(\widetilde{H}/H)$-bundle.  In particular, $\dim(\mathcal{O}_z) = \dim(\mathcal{O}_{[z]}) + \dim(\widetilde{H}/H)$. \\ 
\indent Let $\tau \colon \C^n\setminus 0 \to \CP^{n-1}$ be the usual $\U(n)$-equivariant surjective submersion $\tau(z) = [z]$, and equip $\CP^{n-1}$ with its standard (Fubini-Study) K\"{a}hler structure ($g_{\text{FS}}, J_{\text{FS}}, \omega_{\text{FS}})$.  With respect to the Fubini-Study structure, the $G$-action on $\CP^{n-1}$ is K\"{a}hler and Hamiltonian.  To see that the action is Hamiltonian, we recall a well-known formula for the moment map (see, e.g., \cite[pg. 12-14]{Kirwan1984}).  Let $\widetilde{\mu} \colon \C^n \setminus 0 \to \mathfrak{g}^*$ be
$$\widetilde{\mu}(z) = \frac{1}{|z|^2} \mu(z).$$
Then $\widetilde{\mu}$ is $\C^*$-invariant, so descends to a function $\nu \colon \CP^{n-1} \to \mathfrak{g}^*$, meaning $\widetilde{\mu} = \tau^*\nu$.  In the sequel, we use the same symbol $\rho$ to denote the infinitesimal $G$-action on both $\C^n$ and $\CP^{n-1}$.

\begin{lemma} The map $\nu \colon \CP^{n-1} \to \mathfrak{g}^*$ is a moment map for the $G$-action on $\CP^{n-1}$, i.e.\ :
\begin{align*}
    d\nu^X & = -\iota_{\rho(X)}\omega_{\mathrm{FS}}, \ \ \text{ for all } X \in \mathfrak{g}, \\
    \nu(g \cdot [z]) & = \Ad_g^*\!\left(\nu([z])\right), \ \ \text{ for all } g \in G,\, [z] \in \CP^{n-1}.
\end{align*}
\end{lemma}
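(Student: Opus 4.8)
The plan is to deduce this from the fact that $\widetilde\mu = \tau^*\nu$ together with the already-established moment map property of $\mu$ on $\C^n$ (Proposition \ref{prop-cnmomentmap}). First I would address the equivariance. The map $\widetilde\mu(z) = |z|^{-2}\mu(z)$ is manifestly $\C^*$-invariant, and since $\mu$ is $G$-equivariant (Proposition \ref{prop-cnmomentmap}) and $|gz| = |z|$ for $g \in \U(n)$, the map $\widetilde\mu$ is $G$-equivariant as well. Because $\tau \colon \C^n \setminus 0 \to \CP^{n-1}$ is a surjective submersion intertwining the $G$-actions, and $\widetilde\mu = \tau^*\nu = \nu \circ \tau$, the $G$-equivariance of $\nu$ follows pointwise: for any $[z]$, pick a lift $z$, and compute $\nu(g\cdot[z]) = \nu([gz]) = \widetilde\mu(gz) = \Ad_g^*(\widetilde\mu(z)) = \Ad_g^*(\nu([z]))$.

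Next I would verify the Hamiltonian condition $d\nu^X = -\iota_{\rho(X)}\omega_{\mathrm{FS}}$. The cleanest route uses the relation between $\omega_{\mathrm{FS}}$ and the flat K\"ahler form $\omega$ on $\C^n$. Restrict to the unit sphere $\Sph^{2n-1}(1) \subset \C^n$, where the Hopf fibration $\tau|_{\Sph^{2n-1}} \colon \Sph^{2n-1}(1) \to \CP^{n-1}$ is a Riemannian submersion onto $(\CP^{n-1}, g_{\mathrm{FS}})$, and one has $\tau^*\omega_{\mathrm{FS}} = \iota_{\Sph}^*\omega$ where $\iota_{\Sph} \colon \Sph^{2n-1} \hookrightarrow \C^n$ is the inclusion. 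On $\Sph^{2n-1}$ we have $\widetilde\mu = \mu|_{\Sph^{2n-1}}$, so $\nu^X \circ \tau = \mu^X|_{\Sph^{2n-1}}$. Pulling back the desired identity by $\tau$ and using that $\tau$ is a submersion, it suffices to check $d(\mu^X|_{\Sph^{2n-1}}) = -\iota_{\rho(X)}(\iota_{\Sph}^*\omega)$ on $\Sph^{2n-1}$; but this is just the restriction to $\Sph^{2n-1}$ of the identity $d\mu^X = -\iota_{\rho(X)}\omega$ on $\C^n$ from Proposition \ref{prop-cnmomentmap}, using that $\rho(X) = -Xz$ is tangent to the sphere (since $X \in \mathfrak{u}(n)$ generates isometries fixing $|z|$). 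Since every $G$-orbit in $\CP^{n-1}$ has a representative coming from $\Sph^{2n-1}$ and both sides of $d\nu^X = -\iota_{\rho(X)}\omega_{\mathrm{FS}}$ are tensorial, checking it on the image of $\Sph^{2n-1}$, i.e.\ all of $\CP^{n-1}$, suffices.

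The main obstacle is bookkeeping the pullback relation between $\omega$, $\omega_{\mathrm{FS}}$, and the Hopf map carefully enough that the submersion argument is rigorous — in particular, justifying that an identity of $1$-forms on $\CP^{n-1}$ may be verified after pulling back along $\tau|_{\Sph^{2n-1}}$, and that the vertical direction of the Hopf fibration (generated by $\U(1)_\Delta$) causes no trouble, since both $\nu^X$ and $\iota_{\rho(X)}\omega_{\mathrm{FS}}$ are $\U(1)_\Delta$-basic. An alternative, if one prefers to avoid the sphere, is to work directly on $\C^n \setminus 0$: differentiate $\widetilde\mu^X = |z|^{-2}\mu^X$ to get $d\widetilde\mu^X = |z|^{-2} d\mu^X - 2|z|^{-4}\mu^X \langle z, \cdot\rangle$, observe that the second term annihilates horizontal vectors for $\tau$ (as $\mu^X(z) = \lambda(\rho(X))$ and the radial/Hopf directions are exactly the $\tau$-vertical ones), and compare with $-\iota_{\rho(X)}\tau^*\omega_{\mathrm{FS}}$ using the standard formula $\tau^*\omega_{\mathrm{FS}} = \frac{1}{|z|^2}\omega - \frac{1}{|z|^4}(\text{radial}\wedge\text{Hopf terms})$; either way the computation is routine once the vertical/horizontal split is in hand, so I would cite \cite{Kirwan1984} for the formula and present only the verification.
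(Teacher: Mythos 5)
Your proposal is correct, but your primary route differs from the paper's. The paper works entirely on $\C^n \setminus 0$: it computes $\tau^*\omega_{\mathrm{FS}} = \tfrac{i}{2}\partial\overline\partial \log|z|^2 = -\tfrac{i}{2}|z|^{-4}\gamma\wedge\overline\gamma + |z|^{-2}\omega$ with $\gamma = \partial(|z|^2)$, evaluates $\tau^*(-\iota_{\rho(X)}\omega_{\mathrm{FS}})$ and $\tau^*(d\nu^X) = d(|z|^{-2}\mu^X)$ separately, matches the two expressions term by term, and concludes by injectivity of $\tau^*$ — essentially the ``alternative'' you sketch at the end. Your main argument instead restricts to $\Sph^{2n-1}(1)$, where $\widetilde\mu = \mu|_{\Sph^{2n-1}}$ and $(\tau|_{\Sph^{2n-1}})^*\omega_{\mathrm{FS}} = \iota_{\Sph}^*\omega$ (the $\gamma\wedge\overline\gamma$ term dies on the sphere since $\iota_{\Sph}^*(\gamma+\overline\gamma) = \iota_{\Sph}^*d(|z|^2) = 0$), so the Hamiltonian identity on $\CP^{n-1}$ pulls back, via the $\tau$-relatedness of $\rho(X)$ and injectivity of the pullback along the surjective submersion $\tau|_{\Sph^{2n-1}}$, to the restriction of the flat identity $d\mu^X = -\iota_{\rho(X)}\omega$ already established in Proposition~\ref{prop-cnmomentmap}. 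This is the ``$\CP^{n-1}$ as symplectic reduction of $\C^n$ by $\U(1)_\Delta$'' viewpoint: it buys a more conceptual, essentially computation-free derivation at the cost of invoking (or verifying) the standard Hopf-submersion compatibility of $\omega_{\mathrm{FS}}$ with the flat form, whereas the paper's Dolbeault computation is self-contained and fixes the normalization of $\omega_{\mathrm{FS}}$ explicitly. Your equivariance argument coincides with the paper's. The closing remark about orbits and tensoriality is redundant — surjectivity of $\tau|_{\Sph^{2n-1}}$ plus injectivity of its pullback already suffices — but harmless.
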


\begin{proof} Before beginning, we set $\gamma := \partial(|z|^2) = \sum \overline{z}_j dz_j$ for convenience.  Noting that $\text{Re}(\gamma) = \frac{1}{2}(\gamma + \overline{\gamma}) = \frac{1}{2}d(|z|^2)$, we see that every $X \in \mathfrak{g}$ satisfies $\text{Re}(\gamma)(\rho(X)) = 0$, so that $\gamma(\rho(X)) = -\overline{\gamma}(\rho(X))$.  Recalling the Liouville $1$-form
$$\lambda = \frac{1}{2} \sum x_jdy_j - y_jdx_j = -\frac{i}{4}\sum \overline{z}_j dz_j - z_jd\overline{z}_j = -\frac{i}{4} (\gamma - \overline{\gamma}),$$
we use Lemma \ref{prop-cnmomentmap} to observe
$$\mu^X(z) = \lambda(\rho(X)) = -\frac{i}{4}\left[ \gamma(\rho(X)) - \overline{\gamma}(\rho(X)) \right] = -\frac{i}{2} \gamma(\rho(X)).$$
\indent Now, for the first claim of the lemma, we note that $\omega = \frac{i}{2}\partial\overline{\partial}(|z|^2) = \frac{i}{2}\partial\overline{\gamma}$ to compute
\begin{align*}
    \tau^*\omega_{\text{FS}} = \frac{i}{2}\partial\overline{\partial}(\log |z|^2) = \frac{i}{2}\partial\!\left( |z|^{-2} \overline{\gamma}\right) 
    & = -\frac{i}{2}|z|^{-4} \gamma \wedge \overline \gamma + |z|^{-2}\omega,
\end{align*}
and hence
\begin{align*}
    \tau^*(-\iota_{\rho(X)}\omega_{\text{FS}}) = -\iota_{\rho(X)}(\tau^*\omega_{\text{FS}}) & = \frac{i}{2}|z|^{-4} \iota_{\rho(X)}(\gamma \wedge \overline{\gamma})  - |z|^{-2}\omega(\rho(X), \cdot) \\
    & = \frac{i}{2}|z|^{-4} \gamma(\rho(X)) \left(  \gamma + \overline{\gamma} \right) - |z|^{-2} d\mu^X \\
    & = -|z|^{-4} \mu^X(z) \left( \gamma + \overline{\gamma} \right) - |z|^{-2} d\mu^X.
\end{align*}
On the other hand, we may also compute
\begin{align*}
\tau^*(d\nu^X) = d(\tau^*\nu^X) = d\widetilde{\mu}^X = d(|z|^{-2}\mu^X) & = -|z|^{-4}\mu^X(z)d(|z|^2) - |z|^{-2}d\mu^X
\end{align*}
from which we obtain
$$\tau^*(d\nu^X) = \tau^*(-\iota_{\rho(X)}\omega_{\text{FS}}).$$
Noting that $\tau^*$ is injective gives the first claim.  For the $G$-equivariance, we simply note that
$$\nu([gz]) = \widetilde \mu(gz) = \frac{1}{|gz|^2}\mu(gz) = \text{Ad}_g^*\!\left[ \frac{1}{|z|^2}\mu(z) \right] = \text{Ad}_g^*(\nu([z])).$$
\end{proof}

\begin{lemma}
Let $G \leq \U(n)$ be a compact semisimple Lie group, and let $z \in \C^n \setminus 0$. \\
\indent (a) The $G$-orbit $\mathcal{O}_z \subset \C^n$ is isotropic if and only if the $G$-orbit $\mathcal{O}_{[z]} \subset \CP^{n-1}$ is isotropic. \\
\indent (b) If $G \leq \SU(n)$ and $\mathcal{O}_z \subset \C^n$ is isotropic, then $\dim(\mathcal{O}_z) = \dim(\mathcal{O}_{[z]})$.
\end{lemma}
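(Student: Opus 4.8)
Here is how I would prove the lemma.

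\textbf{Proof plan.} The plan is to leverage the two moment maps $\mu \colon \C^n \to \mathfrak{g}^*$ and $\nu \colon \CP^{n-1} \to \mathfrak{g}^*$, together with the relation $\widetilde{\mu} = \tau^*\nu$ where $\widetilde{\mu}(z) = |z|^{-2}\mu(z)$, and the characterisation of isotropic orbits from Lemma \ref{lem-isotropiclevelset}. For part (a), I would argue as follows. By Lemma \ref{lem-isotropiclevelset} applied to the $G$-action on $\C^n$ (noting $\mathfrak{z}(\mathfrak g) = 0$ since $G$ is semisimple), the orbit $\mathcal{O}_z$ is isotropic if and only if $\mu(z) = 0$, which is if and only if $\widetilde{\mu}(z) = 0$ (as $z \neq 0$), which is if and only if $\nu([z]) = 0$ by the relation $\widetilde\mu = \tau^*\nu$. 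On the other hand, by Lemma \ref{lem-isotropiclevelset} applied to the Hamiltonian $G$-action on $(\CP^{n-1}, \omega_{\mathrm{FS}})$ with moment map $\nu$, the orbit $\mathcal{O}_{[z]}$ is isotropic if and only if $\nu([z])$ is a central value; since $G$ is semisimple, $\mathfrak{z}(\mathfrak{g}) = 0$, so the only central value is $0$, hence $\mathcal{O}_{[z]}$ is isotropic if and only if $\nu([z]) = 0$. Chaining these equivalences gives part (a).

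\textbf{Part (b).} Here the key is the principal bundle $q_z \colon \mathcal{O}_z \to \mathcal{O}_{[z]}$ with structure group $\widetilde H / H$, which gives $\dim(\mathcal{O}_z) = \dim(\mathcal{O}_{[z]}) + \dim(\widetilde H / H)$; so it suffices to show $\dim(\widetilde H / H) = 0$ under the hypotheses $G \leq \SU(n)$ and $\mathcal{O}_z$ isotropic. The argument should run parallel to the proof of Proposition \ref{prop-profilesymmetry}: let $\widetilde{\mathfrak h}$, $\mathfrak h$ be the Lie algebras of $\widetilde H$, $H$, pick a complement $\mathfrak k$ so $\widetilde{\mathfrak h} = \mathfrak h \oplus \mathfrak k$, and for $X \in \mathfrak k$ consider $\rho_z(X) \in T_z\C^n$. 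On one hand $\rho_z(X) \in T_z\mathcal{O}_z$; on the other hand, since $\widetilde H$ preserves the complex line $P_z$, the vector $\rho_z(X)$ lies in $P_z$. But when $\mathcal{O}_z$ is isotropic, $P_z$ is orthogonal to $T_z\mathcal{O}_z$ — this is exactly Proposition \ref{prop-zerosymmetry}(d) (using that $z \in \mu^{-1}(0)$), which applies here because isotropy of $\mathcal{O}_z$ forces $z \in \mu^{-1}(0)$. Hence $\rho_z(X) = 0$, and faithfulness of the $G$-action gives $X = 0$, so $\mathfrak k = 0$ and $\widetilde H / H$ is discrete.

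\textbf{Main obstacle.} The one point needing care is that Proposition \ref{prop-zerosymmetry}(d) as stated concerns a point $z \in M$ where $M$ is a connected component of $M_0 = \mu^{-1}(0) \cap \C^n_{(H)}$, i.e.\ it implicitly assumes $z$ lies in the stratum of constant isotropy type. For a general isotropic orbit $\mathcal{O}_z$ this is automatic once we restrict to the stratum $\C^n_{(H)}$ containing $z$, so there is no real difficulty, but I would make sure to invoke the orthogonality $P_z \perp (T_z\mathcal{O}_z \oplus J\,T_z\mathcal{O}_z)$ correctly — it follows directly from the $\C^*$-invariance of $\mu^{-1}(0)$ and the fact that $T_z\mathcal{O}_z$ is isotropic while $\mathcal{H}_z$ is complex, exactly as in the proof of Proposition \ref{prop-zerosymmetry}(d), so even if one prefers not to cite that proposition verbatim the argument is short and self-contained. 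Everything else is a routine dimension count, so I do not anticipate a genuine obstacle.
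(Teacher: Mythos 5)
Your proof is correct and follows essentially the same route as the paper: part (a) is the same chain of equivalences $\mathcal{O}_z$ isotropic $\iff \mu(z)=0 \iff \nu([z])=0 \iff \mathcal{O}_{[z]}$ isotropic via Lemma \ref{lem-isotropiclevelset} and $\mu = |z|^2\tau^*\nu$, and part (b) rests on the finiteness of $\widetilde H/H$. The only difference is that the paper simply cites Proposition \ref{prop-profilesymmetry} for the finiteness of $\widetilde H/H$, whereas you re-derive its key Lie-algebra step; both are fine.
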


\begin{proof} (a) By definition of $\nu$, we have $\mu(z) = |z|^2\widetilde{\mu}(z) = |z|^2\nu([z])$.  Consequently,
$$\mathcal{O}_z \text{ is isotropic } \iff z \in \mu^{-1}(0) \iff [z] \in \nu^{-1}(0) \iff \mathcal{O}_{[z]} \text{ is isotropic.}$$
\indent (b) Suppose that $G \leq \SU(n)$.  If $\mathcal{O}_z \subset \C^n$ is isotropic, then $z \in \mu^{-1}(0)$.  Now, Proposition \ref{prop-profilesymmetry} implies that $\widetilde{H}/H \cong C_m$ is finite, so $q_z$ is a local diffeomorphism and $\dim(\mathcal{O}_{[z]}) = \dim(\mathcal{O}_z)$. 
\end{proof}

\begin{corollary}\label{cor-LagIsoCorresp} Let $G \leq \SU(n)$ be a compact semisimple Lie group, and let $z \in \C^n \setminus 0$.  The $G$-orbit $\mathcal{O}_{[z]} \subset \CP^{n-1}$ is Lagrangian if and only if the $G$-orbit $\mathcal{O}_z \subset \C^n$ is an isotropic submanifold of dimension $(n-1)$.  
\end{corollary}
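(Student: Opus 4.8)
The plan is to combine the two lemmas immediately preceding the corollary with the elementary fact that a submanifold of $\CP^{n-1}$ is Lagrangian exactly when it is isotropic and of real dimension $n-1$, since $\dim_\R \CP^{n-1} = 2(n-1)$. So the argument is pure bookkeeping once those lemmas are in hand.

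First, for the forward direction, suppose $\mathcal{O}_z \subset \C^n$ is an isotropic submanifold of dimension $n-1$. By the first of the two preceding lemmas, $\mathcal{O}_{[z]} \subset \CP^{n-1}$ is also isotropic, and by the second lemma (here invoking $G \leq \SU(n)$), we have $\dim \mathcal{O}_{[z]} = \dim \mathcal{O}_z = n-1$. An isotropic submanifold of $\CP^{n-1}$ whose dimension equals half the ambient real dimension is Lagrangian, so $\mathcal{O}_{[z]}$ is Lagrangian. Conversely, suppose $\mathcal{O}_{[z]} \subset \CP^{n-1}$ is Lagrangian. Then it is in particular isotropic, so the first lemma gives that $\mathcal{O}_z$ is isotropic; consequently the second lemma applies and yields $\dim \mathcal{O}_z = \dim \mathcal{O}_{[z]} = n-1$. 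Thus $\mathcal{O}_z$ is an isotropic submanifold of $\C^n$ of dimension $n-1$, completing the equivalence.

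There is no genuine obstacle: the substantive content has already been isolated --- in Lemma \ref{lem-isotropiclevelset} (relating isotropy of $\mathcal{O}_z$ to the condition $z \in \mu^{-1}(0)$), in the descent of $\widetilde\mu$ to the Fubini--Study moment map $\nu$ on $\CP^{n-1}$, and in the finiteness of $\widetilde H/H$ from Proposition \ref{prop-profilesymmetry}. The one place requiring a moment's care is the role of the $\SU(n)$ (equivalently, semisimplicity) hypothesis: it is exactly what forces $\widetilde H/H \cong C_m$ to be finite, so that the fibration $q_z \colon \mathcal{O}_z \to \mathcal{O}_{[z]}$ is a local diffeomorphism and the two orbit dimensions coincide; without it $\mathcal{O}_z$ would generically have dimension one greater than $\mathcal{O}_{[z]}$ and the clean correspondence would fail.
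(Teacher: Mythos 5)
Your proof is correct and is exactly the intended deduction: the paper leaves the corollary unproved precisely because it follows from part (a) (isotropy passes between $\mathcal{O}_z$ and $\mathcal{O}_{[z]}$) and part (b) (equality of dimensions when $G \leq \SU(n)$) of the preceding lemma, together with the observation that Lagrangian in $\CP^{n-1}$ means isotropic of real dimension $n-1$. Your remark on where the $\SU(n)$/semisimplicity hypothesis enters --- via the finiteness of $\widetilde{H}/H$ from Proposition \ref{prop-profilesymmetry} --- matches the paper's proof of part (b).
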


\begin{remark}
In fact, if $G \leq \SU(n)$ is a compact semisimple Lie group, each Lagrangian $G$-orbit in $\CP^{n-1}$ is a minimal submanifold; see \cite[Prop. 3.1]{Bedulli2008}. Consequently, the corresponding isotropic $G$-orbit in the unit sphere $\Sph^{2n-1}(1) \subset \C^n$ is a special Legendrian submanifold, and hence is the link of a special Lagrangian cone in $\C^n$, which accords with our discussion in $\S$5.1.
\end{remark}

\subsection{The Classification for $G$ Simple}

\indent \indent We now turn to Bedulli and Gori's classification of compact simple groups in $\U(n)$ that admit a Lagrangian orbit in $\CP^{n-1}$.

\begin{theorem}[\cite{Bedulli2008}] Let $G$ be a compact simple Lie group.  Suppose $G$ acts on $\CP^{n-1}$ via a unitary representation $\sigma \colon G \to \U(n)$.  Then there exists a Lagrangian $G$-orbit in $\CP^{n-1}$ if and only if $G$ appears in the table of Figure \ref{fig-bedulligori}.
\end{theorem}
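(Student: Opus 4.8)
This is the main classification theorem of Bedulli and Gori \cite{Bedulli2008}; what $\S$\ref{sec-7} contributes is the translation, via Corollary \ref{cor-LagIsoCorresp}, showing that it also classifies which compact simple $G \leq \SU(n)$ admit cohomogeneity-one $G$-invariant Lagrangians in $\C^n$. Accordingly our ``proof'' is to cite their result and reproduce their table as Figure \ref{fig-bedulligori}. For the reader's benefit we indicate the shape of the argument, which is the route we would take otherwise.

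By Corollary \ref{cor-LagIsoCorresp}, a Lagrangian $G$-orbit in $\CP^{n-1}$ is the same datum as an $(n-1)$-dimensional isotropic $G$-orbit in $\C^n$, and by the remark following that corollary such an orbit is, up to scaling, the link of a special Lagrangian cone, hence a minimal Lagrangian submanifold of $\CP^{n-1}$. The basic tool is the Fubini--Study moment map $\nu \colon \CP^{n-1} \to \mathfrak{g}^*$ of $\S$\ref{sec-7}: a $G$-orbit is Lagrangian precisely when it lies in $\nu^{-1}(0)$ and has the maximal dimension $n-1 = \dim_\C \CP^{n-1}$, so the problem reduces to locating $G$-minimal vectors $z \in \C^n\setminus 0$ with $\dim(G/G_z) = n-1$. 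One then records the decomposition $\C^n = V_1 \oplus \cdots \oplus V_r$ into irreducibles and runs through the classification of irreducible representations of each compact simple $G$ --- the classical series $A_n,B_n,C_n,D_n$ and the five exceptional types --- together with the associated flag manifolds and isotropy representations. In each configuration one either exhibits an explicit Lagrangian orbit (most often a real form $G/K\subset\CP^{n-1}$, arising as the fixed locus of an anti-holomorphic isometric involution, with a twistor-type variant in the quaternionic case) or rules it out by a dimension or weight count; the bookkeeping is streamlined by the known classification of coisotropic (multiplicity-free) linear actions, to which the existence of a Lagrangian orbit is closely linked.

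The main obstacle is exactly this exhaustive case analysis: there is no single uniform argument, the cohomogeneity of each action must be controlled, and the sporadic exceptional representations must be checked individually. This verification is the substance of \cite{Bedulli2008}, which is why we are content to cite it rather than reprove it.
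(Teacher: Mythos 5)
Your proposal matches the paper exactly: this theorem is quoted from Bedulli--Gori and the paper offers no proof of its own, only the citation and the reproduced table, which is precisely what you do. Your additional sketch of the argument in \cite{Bedulli2008} is a reasonable bonus but not something the paper attempts.
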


\begin{remark}\label{rem-guncompact} If $G \leq \U(n)$ is a compact simple Lie group, then $G \leq \SU(n)$.  To see this, let $K$ be the kernel of the determinant map $\det \colon G \to \U(1)$, and let $K^0$ be its identity component.  Since $K^0$ is a connected normal subgroup of $G$, we have $K^0 = \{\mathrm{Id}\}$ or $K^0 = G$ by the simplicity of $G$.  In the former case, $K$ is finite, so $\dim(G) = \dim(G/K) = \dim(\det(G)) \leq 1$, contradicting the simplicity of $G$.  Thus, $K^0 = G$, so $K = G$, so $G \leq \SU(n)$.
\end{remark}

The upshot is that, by Corollary \ref{cor-LagIsoCorresp}, the table of Figure \ref{fig-bedulligori} classifies the compact simple Lie subgroups $G \leq \SU(n)$ that admit $(n-1)$-dimensional isotropic orbits in $\C^n$.  For each such subgroup $G$, the results of the preceding sections yield information about $G$-equivariant Lagrangian mean curvature flow.  For illustration, here is an explicit example.


\begin{example} Let $G = \SU(2)$ act on $\C^4 = \Sym^3(\C^2)$ in the standard way.  That is, for $g \in \SU(2)$ and a homogeneous cubic polynomial $p \in \Sym^3(\C^2)$ in two variables $w = (w_1, w_2) \in \C^2$, we define
$$(g \cdot p)(w) := p\!\left(g^Tw \right) = p\!\left(g^T\begin{bmatrix} w_1 \\ w_2 \end{bmatrix} \right).$$
More explicitly, with respect to the basis $\{w_1^3, \sqrt{3}w_1^2w_2,\sqrt{3}w_1w_2^2, w_2^3\}$ of $\Sym^3(\C^2)$, the $\SU(2)$-action on $\C^4$ is given by
$$\begin{pmatrix} a & -\overline{b} \\ b & \overline{a} \end{pmatrix} \cdot \begin{bmatrix} z_1 \\ z_2 \\ z_3 \\ z_4 \end{bmatrix} := \begin{pmatrix}
a^3 & -\sqrt{3}a^2\overline{b} & \sqrt{3}a\overline{b}^2 & -\overline{b}^3 \\
\sqrt{3}a^2b & a(|a|^2 - 2|b|^2) & -\overline{b}(2|a|^2 - |b|^2) & \sqrt{3}\overline{a}\overline{b}^2 \\
\sqrt{3}ab^2 & b(2|a|^2 - |b|^2) & \overline{a}(|a|^2 - 2|b|^2) & -\sqrt{3}\overline{a}^2\overline{b} \\
b^3 & \sqrt{3}\overline{a}b^2 & \sqrt{3}\overline{a}^2b & \overline{a}^3
\end{pmatrix}
\begin{bmatrix} z_1 \\ z_2 \\ z_3 \\ z_4 \end{bmatrix}$$
 where $a,b \in \C$ satisfy $|a|^2 + |b|^2 = 1$.  A calculation shows that the moment map of the $\SU(2)$-action is the function $\mu \colon \C^4 \to \mathfrak{su}(2) \subset \mathfrak{su}(4)$ given by
 $$\mu(z) = \frac{i}{40}\begin{bmatrix}
  3p(z) & \sqrt{3}r(z) & 0 & 0 \\
  \sqrt{3}\,\overline{r(z)} & p(z) & 2r(z) & 0 \\
  0 & 2\overline{r(z)} & -p(z) & \sqrt{3}r(z) \\
  0 & 0 & \sqrt{3}\,\overline{r(z)} & -3p(z)
 \end{bmatrix},$$
 where
 \begin{align*}
     p(z) & := 3|z_1|^2 + |z_2|^2 - |z_3|^2 - 3|z_4|^2 & r(z) & := 2\sqrt{3}(z_1\overline{z}_2 + z_3\overline{z}_4) + 4z_2\overline{z}_3.
 \end{align*}
By Proposition \ref{prop:constrain}, every $\SU(2)$-invariant Lagrangian submanifold $L_0 \subset \C^4$ lies in the level set
$$\mu^{-1}(0) = \left\{ z \in \C^4 \colon p(z) = 0,\, r(z) = 0\right\}\!.$$

While the principal $\SU(2)$-orbits in $\C^4$, such as that of $(0,1,i,0)$, have trivial stabiliser, the $\SU(2)$-action also admits both exceptional and singular orbits. For example, the point $z = (1,0,0,1) \in \mu^{-1}(0)$ has stabiliser
\begin{align*}
    H = \text{Stab}(z) &= C_3 = \left\{\begin{pmatrix} \zeta & 0 \\ 0 & \overline{\zeta}\end{pmatrix} \colon \zeta^3 = 1\right\},\\
    \widetilde H = \text{Stab}([z]) &= C_3 \rtimes C_4 = \left\{ \begin{pmatrix} \zeta & 0 \\ 0 & \overline\zeta \end{pmatrix}\begin{pmatrix} 0 & i \\ i & 0 \end{pmatrix}^k \,:\, \zeta^3 = 1, \,k \in \{0,1,2,3\}\right\} 
\end{align*}
and the $\SU(2)$-orbit is an isotropic submanifold of $\mathbb{C}^4$ diffeomorphic to the Lens space $\mathbb{S}^3/C_3$. It follows that $M := \SU(2) \cdot (P_z \setminus 0) \subset \mu^{-1}(0)$ is a connected component of $M_0 := \mu^{-1}(0) \cap  \mathbb{C}^4_{(C_3)}$, and in fact further calculation yields that $M = M_0 = \mu^{-1}(0)\setminus 0$ (see \cite[Lem. 3.14]{Marshall1999}). Therefore, any $\SU(2)$-invariant Lagrangian lies in $M$. 

Now, let $L_0 \subset M$ be a connected, immersed $\SU(2)$-invariant Lagrangian of type $(C_3)$. If $L_0$ is almost-calibrated, then $L_0$ is exact, embedded, and homeomorphic to $\R \times (\Sph^3/C_3)$ by Propositions \ref{prop-embedded} and \ref{prop-exact}, and any mean curvature flow starting at $L_0 \subset M$ must stay within $M$ by Proposition \ref{prop-flowlevelset}.

\indent The $\SU(2)$-invariant special Lagrangians of type $(C_3)$ are given in Theorems \ref{thm-splagcones} and \ref{thm-splags}, and were discussed by Marshall in \cite{Marshall1999}.  Theorem \ref{thm-shrinkers} classifies the cohomogeneity-one $\SU(2)$-invariant Lagrangian shrinkers and expanders in $\C^4$, while Theorem \ref{thm-translators} rules out the existence of cohomogeneity-one $\SU(2)$-invariant translators.

Let $L_t \subset M \subset \C^4$, $t \in [0,T)$, be a connected $\SU(2)$-equivariant almost-calibrated Lagrangian mean curvature flow.  By Theorem \ref{thm-locationofsingularities}, a singularity occurs at time $T$ if and only if $(O,T)$ is the unique singular space-time point for $L_t$. Suppose that $T$ is, in fact, the singular time.  Then there exist $\theta \in \R$ and $k \in \{1,2\}$ such that, by Theorems \ref{thm-typeiblowup} and \ref{thm-typeiiblowup}: 
\begin{itemize}
    \item Any Type I blowup at time $T$ is Marshall's $\SU(2)$-invariant special Lagrangian cone --- namely, the special Lagrangian with profile curve $C_4 \cdot (\widetilde{c}_{k-1, \overline{\theta}} \cup \widetilde{c}_{k,\overline{\theta}})$.
    \item There is a unique $B > 0$ such that any Type II blowup at time $T$ is a translation of the connected $\SU(2)$-invariant special Lagrangian with profile curve $C_4 \cdot \widetilde l_{B,k,\overline \theta}$. In particular, the asymptotes of the Type II blowup are given by the unique Type I blowup of Theorem \ref{thm-typeiblowup}.
\end{itemize}
Finally, Theorem \ref{thm-existenceofsingularities} shows that there exists a mean curvature flow with such a finite-time singularity, and therefore all $\SU(2)$-invariant smooth special Lagrangians occur as Type II blowups of Lagrangian mean curvature flows.
\end{example}

\begin{figure}[h]
$$\begin{tabular}{| c | c | c | c | c |} \hline
$G$ & $\sigma$ & $n$ & $\widetilde{H}^0$ & $\widetilde{H}/\widetilde{H}^0$
 \\ \hline \hline
 $\SU(p)$ & $2\Lambda_1$ & $\frac{1}{2}p(p+1)$ & $\SO(p)$ & $C_p$ \\ \hline
 $\SU(p)$ & $\Lambda_1 \oplus \Lambda_1^*$ & $2p$ & $\SU(p-1)$ & $C_2$ \\ \hline
 $\SU(p)$ & $\Lambda_1 \oplus \cdots \oplus \Lambda_1$ & $p^2$ & $1$ & $C_p$ \\ \hline
 $\SU(2p)$ & $\Lambda_2$ & $p(2p-1), \ p \geq 3$ & $\Sp(p)$ & $C_{2p}$ \\ \hline
 $\SU(2p+1)$ & $\Lambda_2 \oplus \Lambda_1$ & $2p^2 + 3p + 2, \  p \geq 2$ & $\Sp(p)$ & $C_{p+1}$ \\ \hline
$\SU(2)$ & $3\Lambda_1$ & $4$ & $1$ & $C_3 \rtimes C_4$ \\ \hline
$\SU(6)$ & $\Lambda_3$ & $20$ & $\SU(3) \times \SU(3)$ & $C_4$ \\ \hline
$\SU(7)$ & $\Lambda_3$ & $35$ & $\text{G}_2$ & $C_7$ \\ \hline
$\SU(8)$ & $\Lambda_3$ & $56$ & $\text{Ad}(\SU(3))$ & $C_{16}$ \\ \hline \hline
$\Sp(p)$ & $\Lambda_1 \oplus \Lambda_1$ & $4p$ & $\Sp(p-1)$ & $C_2$ \\ \hline
$\Sp(3)$ & $\Lambda_3$ & $14$ & $\SU(3)$ & $C_4$ \\ \hline  \hline
$\SO(p)$ & $\Lambda_1$ & $p, \ p \geq 3$ & $\SO(p-1)$ & $C_2$ \\ \hline
$\Spin(7)$ & spin rep. & $8$ & $\text{G}_2$ & $C_2$ \\ \hline
$\Spin(9)$ & spin rep. & $16$ & $\Spin(7)$ & $C_2$ \\ \hline
$\Spin(10)$ & $\Lambda_{\text{even}} \oplus \Lambda_{\text{even}}$ & $32$ & $\text{G}_2$ & -- \\ \hline
$\Spin(11)$ & spin rep. & $32$ & $\SU(5)$ & $C_4$ \\ \hline
$\Spin(12)$ & $\Lambda_{\text{even}}$ & $32$ & $\SU(6)$ & $C_4$ \\ \hline
$\Spin(14)$ & $\Lambda_{\text{even}}$ & $64$ & $\text{G}_2 \times \text{G}_2$ & $C_8$ \\ \hline \hline
$\text{E}_6$ & $\Lambda_1$ & $27$ & $\text{F}_4$ & $C_3$ \\ \hline
$\text{E}_7$ & $\Lambda_1$ & $56$ & $\text{E}_6$ & -- \\ \hline
$\text{G}_2$ & $\Lambda_2$ & $7$ & $\SU(3)$ & $C_2$ \\ \hline
\end{tabular}$$
\caption{Bedulli-Gori's classification of compact simple Lie group actions on $\mathbb{CP}^{n-1}$ with a Lagrangian orbit. In the table, we identify a representation $\sigma \colon G \to \U(n)$ with the highest weights of its irreducible components.  Here, $\Lambda_1, \Lambda_2, \ldots$ are the fundamental dominant weights of $G$, $\Lambda_{\text{even}}$ denotes the even half-spin representation of $\Spin(2p)$, and $\widetilde{H}^0$ is the identity component of $\widetilde{H}$.}
\label{fig-bedulligori}
\end{figure}

\subsection{A Non-Semisimple Example}

\indent \indent To conclude, we mention a further example involving a non-semisimple group, discussed in \cite[pg. 4]{Bryant2004}.

\begin{example} For $p,q \geq 3$, consider the action of $G = \Sph^1 \times \SO(p) \times \SO(q)$ on $\C^{p+q}$ via
$$(e^{i\theta}, A, B) \cdot (z,w) := (e^{qi\theta}Az, e^{-pi\theta}Bw).$$
For $p \neq q$, there are at least $9$ orbit types.  In the following table we use the shorthand ``$\{x,y\}$ (in)dependent" to mean that the set of vectors $\{x,y\}$ is an $\R$-linearly (in)dependent set. 
$$\begin{tabular}{| c | c | c |} \hline
$z = x+iy$ & $w = u+iv$ & dim of $G$-orbit $\mathcal{O}_{(z,w)}$ \\ \hline \hline
$\{x,y\}$ independent & $\{u,v\}$ independent & $2p + 2q - 5$ \\ \hline
$\{x,y\}$ independent & $\{u,v\}$ dependent, $w \neq 0$ & $2p + q - 3$ \\ \hline
$\{x,y\}$ independent & $0$ & $2p - 2$ \\ \hline
$\{x,y\}$ dependent, $z \neq 0$ & $\{u,v\}$ independent & $p + 2q - 3$ \\ \hline
$\{x,y\}$ dependent, $z \neq 0$ & $\{u,v\}$ dependent, $w \neq 0$ & $p+q-1$ \\ \hline
$\{x,y\}$ dependent, $z \neq 0$ & $0$ & $p$ \\ \hline
$0$ & $\{u,v\}$ independent & $2q - 2$ \\ \hline
$0$ & $\{u,v\}$ dependent, $w \neq 0$ & $q$ \\ \hline
$0$ & $0$ & $0$ \\ \hline
\end{tabular}$$
In particular, we note that the points $(z,w) = (x+iy, u+iv)$ having $z \neq 0$, $w \neq 0$, and $\{x,y\}$ and $\{u,v\}$ both $\R$-linearly dependent have $G$-orbits of real dimension $p+q-1$.  Moreover, one can check that these orbits are isotropic submanifolds of $\C^{p+q}$. \\
\indent The moment map is the function
\begin{align*}
 \mu \colon \C^{p+q} & \to \mathfrak{u}(1) \oplus \mathfrak{so}(p) \oplus \mathfrak{so}(q) \\
    \mu(z,w) & = \left( i(q|z|^2 - p |w|^2),\, \text{Im}(z\overline{z}^T),\, \text{Im}(w\overline{w}^T) \right)\!.
\end{align*}
Note that $\mathfrak{z}(\mathfrak{u}(1) \oplus \mathfrak{so}(p) \oplus \mathfrak{so}(q)) = \{(ic, 0,0) \colon c \in \R\}$.  The $\mu$-level sets at the central values $(ic,0,0)$ are
\begin{align*}
    \mu^{-1}(ic,0,0) & = \left\{ \left(\zeta \mathbf{r}, \eta \mathbf{s} \right) \in \C^p \oplus \C^q \colon \mathbf{r} \in \Sph^{p-1}, \mathbf{s} \in \Sph^{q-1} \text{ and } \zeta, \eta \in \C \text{ s.t. } q|\zeta|^2 - p|\eta|^2 = c \right\}\!.
\end{align*}
By Proposition \ref{prop:constrain}, every $G$-invariant Lagrangian submanifold $L_0 \subset \C^{p+q}$ lies in one of the level sets $\mu^{-1}(ic,0,0)$.  Further, by Proposition \ref{prop-flowlevelset}, any mean curvature flow starting at $L_0$ must stay within its initial level set $\mu^{-1}(ic,0,0)$. In the case of $\mu^{-1}(0,0,0)$, the results of $\S$6 follow as in the previous examples. \end{example}

\bibliographystyle{abbrv}
\bibliography{main}

\end{document}